\documentclass[10pt]{article}
\usepackage{graphicx}
\usepackage{amsfonts, amsmath, amsthm, amssymb}
\usepackage{mathrsfs}
\usepackage[utf8]{inputenc}
\usepackage[all]{xy}
\usepackage[english]{babel}
\usepackage{authblk}
\usepackage{xcolor}

\newtheorem{theorem}{Theorem}[section]
\newtheorem{lemma}[theorem]{Lemma}

\theoremstyle{definition}
\newtheorem{remark}[theorem]{Remark}

\numberwithin{equation}{section}

\begin{document}

\title{A Seifert-van Kampen Theorem for Legendrian Submanifolds and Exact Lagrangian Cobordisms}
\author{Mark C. Lowell}
%\affil{University of Massachusetts\\Lowell@Math.UMass.edu}

\maketitle

\begin{abstract}

We prove a Seifert-van Kampen theorem for Legendrian submanifolds and exact Lagrangian cobordisms, and use it to calculate the change in the DGA caused by critical Legendrian ambient surgery.

\end{abstract}

%\keywords{Symplectic, Legendrian, Reeb chords, exact Lagrangian cobordism, gradient flow tree, contact homology}

%\amscode{53D12, 53D42}

\tableofcontents

\section{Introduction}
\label{sec:intro}

Let \(Y\) be a \((2n+1)\)-dimensional manifold, and let \(\zeta \subset TY\) be a hyperplane distribution.   We say the pair \((Y, \zeta)\) is a \textbf{contact manifold} if \(\zeta\) is maximally non-integrable.   If \(\zeta = \ker \alpha\), where \(\alpha\) is a one-form, then maximal non-integrability is equivalent to \(\alpha \wedge (d\alpha)^n\) being non-vanishing.   The primary example of a contact manifold we will use is the \textbf{one-jet bundle of \(M\)}, \(J^1(M) = (T^*M \times \mathbb{R}, \zeta), \zeta = \mbox{ker }\alpha\), where \(\alpha = dz - \theta\), \(z\) is the coordinate of \(\mathbb{R}\), \(\theta\) is the pullback to \(J^1(M)\) of the tautological one-form of \(T^*M\), and \(J^1(M)\) is \(n\)-dimensional, where \(n = \dim M\).

In one-jet bundles, we define the \textbf{Lagrangian projection} \(\pi_\mathbb{C}\), \textbf{front projection} \(\pi_F\), and \textbf{base projection} \(\pi_M\) to be the projections:
\[
\pi_\mathbb{C}:J^1(M) \to T^*M
\]\[
\pi_F:J^1(M) \to M \times \mathbb{R}
\]\[
\pi_M:J^1(M) \to M
\]
If our contact manifolds has a contact form \(\alpha\), we define the \textbf{Reeb vector field} \(R_\alpha\) to be the unique smooth vector field determined by:
\[
\alpha(R_\alpha) = 1
\]\[
\iota_{R_\alpha}(d\alpha) = 0
\]
Where \(\iota_X\) is the inclusion of the vector field \(X\).   For \(J^1(M)\), the Reeb vector field is always \(R_\alpha = \partial_z\).

We say an \(n\)-dimensional submanifold \(\Lambda\) of a contact manifold \((Y, \zeta)\) is \textbf{Legendrian} if \(T\Lambda \subset \zeta\).   This is equivalent to:
{\color{black}\[
\left.\alpha\right|_{T\Lambda} = 0
\]}
When \(Y = J^1(M)\), which for us it always will, we will frequently work with the Lagrangian or front projections of \(\Lambda\), because these are easier to handle since they are lower-dimensional.   Fortunately, we can always recover the entire Legendrian submanifold up to \(\mathbb{R}\)-translation from either projection, because if we regard the submanifold as a collection of graphs \(x \to (x, y(x), z(x))\), then {\color{black}\(\alpha|_{T\Lambda} = 0\)} is equivalent to:
\[
\frac{\partial z}{\partial x_i} = y_i \mbox{ for all }i
\]
We define a \textbf{Reeb chord} of \(\Lambda\) to be a trajectory of \(R_\alpha\) that begins and ends on \(\Lambda\).   Since \(R_\alpha = \partial_z\) for the contact manifold \(J^1(M)\), Reeb chords of Legendrian submanifolds of \(J^1(M)\) are precisely equivalent to self-intersections of \(\pi_\mathbb{C}(\Lambda)\).

Let \(X\) be a \((2n)\)-dimensional manifold, and let \(\omega\) be a two-form on \(X\).   We say \(\omega\) is a \textbf{symplectic form} and \(X\) is a \textbf{symplectic manifold} if \(\omega\) is closed and non-degenerate, that is, \(d\omega = 0\) and \(\omega^n\) is non-vanishing.   We say \((X, \omega)\) is \textbf{exact symplectic} if \(\omega\) is exact, that is, \(\omega = d\beta\) for some one-form \(\beta\).   The \textbf{symplectization} of \(J^1(M)\) is given by \((J^1(M) \times \mathbb{R}, \omega = d\beta), \beta = e^t\alpha\), where \(t\) is the coordinate of \(\mathbb{R}\); this is then a symplectic manifold.   We say an \((n+1)\)-dimensional submanifold \(L \subset J^1(M) \times \mathbb{R}\) is \textbf{Lagrangian} if:
{\color{black}\[
\left.\omega\right|_{TL} = 0
\]}
We say \(L\) is \textbf{exact Lagrangian} if:
{\color{black}\[
\left.\beta\right|_{TL} = df
\]}
For some function \(f\).

We say two Legendrian submanifolds \(\Lambda_0, \Lambda_1\) are \textbf{Legendrian isotopic} if there exists an isotopy \(\Lambda_t\) between them, so that \(\Lambda_t\) is Legendrian for all \(t\).   {\color{black}Similarly, we say that two exact Lagrangian submanifolds are \textbf{exact Lagrangian isotopic} if they are isotopic through exact Lagrangian submanifolds.}   We are interested in classifying the compact Legendrian embedded submanifolds up to Legendrian isotopy.   In general, determining if two Legendrian submanifolds are Legendrian isotopic is extremely difficult.   {\color{black}To support this effort, we calculate invariants of the Legendrian isotopy classes.   In this paper we will examine the Legendrian Contact Homology of a Legendrian submanifold, which is the homology of a unital graded algebra \(\mathcal{A}_K(\Lambda)\) over a field \(K\) generated by the Reeb chords of \(\Lambda\), with a differential given by counting rigid pseudoholomorphic maps of disks into \(T^*M\) with boundary on \(\pi_\mathbb{C}(\Lambda)\)}.   We will explain these terms in section 2.   We will ordinarily assume \(K = \mathbb{Z}_2\), in which case we will suppress the subscript, but our results hold for arbitrary field.

In this paper we prove a Seifert-van Kampen theorem for this Legendrian Contact Homology. If \(U \subset M\), we define \((\mathcal{A}(\Lambda)|_U, \partial|_U)\) to be the graded algebra generated by the Reeb chords lying over \(U\), paired with a graded algebra morphism \(\partial|_U:\mathcal{A}(\Lambda)|_U \to \mathcal{A}(\Lambda)|_U\) given by counting the pseudoholomorphic disks that lie entirely over \(U\).   By abuse of notation, we will use \(\mathcal{A}(\Lambda)|_U\) to refer to the pair.   This map may or may not be a differential, since for arbitrary \(U\), it is possible that \(\partial|_U^2 \neq 0\).   Then:

\begin{theorem}\label{theorem1.1} 
Let \(J^1(M)\) be the one-jet bundle equipped with its standard contact form, let \(\Lambda \subset J^1(M)\) be a compact Legendrian submanifold, and let \(S \subset M\) be a hypersurface that divides \(M\) into two components, \(R_1\) and \(R_2\), and which does not intersect a codimension-2 singularity of the front projection of \(\Lambda\).   Let \(N\) be an arbitrarily small neighborhood of \(S\), and let \(Q_i = R_i \cup N\).   Then, after a Legendrian isotopy that does not change \(\Lambda\) outside of \(\pi_M^{-1}(N)\), \(\mathcal{A}_K(\Lambda)|_N, \mathcal{A}_K(\Lambda)|_{Q_i}\) are well-defined differential graded algebras, and the following diagram is a push-out square:
\[
\xymatrix{
\mathcal{A}_K(\Lambda)   & \mathcal{A}_K(\Lambda)|_{Q_1}\ar[l]^{i_1}\\
\mathcal{A}_K(\Lambda)|_{Q_2} \ar[u]^{i_2} &\mathcal{A}_K(\Lambda)|_N \ar[u]^{j_1}\ar[l]^{j_2}}
\]
Where \(i_1, i_2, j_1, j_2\) are the inclusion maps.
\end{theorem}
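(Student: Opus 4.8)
The plan is: (1) a Legendrian isotopy supported over \(N\) puts \(\Lambda\) into a normal form in which it is \(s\)-independent on two collars of \(S\); (2) a Stokes-type argument then shows that the pseudoholomorphic disks defining \(\partial\) cannot cross such a collar; (3) the push-out property follows algebraically.

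\emph{Normal form.} Fix a small tubular neighborhood of \(S\) and identify it with \(S\times(-1,1)\), with normal coordinate \(s\), \(S=\{s=0\}\), and \(R_1,R_2\) the two sides. Using the hypothesis that \(S\) misses the codimension-\(2\) front singularities (after, if needed, a small generic Legendrian isotopy making the cusp locus of the front transverse to \(S\)), over this neighborhood \(\Lambda\) is the Legendrian suspension of a path \(\{\Lambda_s\}\) of Legendrians in \(J^1(S)\). The (finitely many) Reeb chords of \(\Lambda\) lying over this neighborhood occur over isolated values of \(s\); choosing the neighborhood so that its boundary levels avoid these, a Legendrian isotopy supported over \(N\) — reparametrizing the suspension and inserting constant segments, while staying clear of the chord values — arranges that \(\Lambda\) is \(s\)-independent over two collars \(C_\pm\subset N\), each separating the Reeb chords of \(\Lambda\) over \(N\) from \(R_\pm\setminus N\). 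Over \(C_\pm\) then \(\pi_\mathbb{C}(\Lambda)\) lies in the zero section of the \(T^*(-1,1)\)-factor (i.e.\ \(p_s=0\)) and carries no Reeb chords. Finally choose the almost complex structure \(J\) defining \(\partial\) so that over \(\pi_M^{-1}(C_\pm)\cong T^*S\times T^*(\text{interval})\) it is a product; a generic such \(J\) retains the transversality needed to define \(\partial\).

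\emph{Disks do not cross the collars.} I claim a pseudoholomorphic disk \(u\) with boundary on \(\pi_\mathbb{C}(\Lambda)\) and positive puncture at a Reeb chord over \(R_i\) (resp.\ over \(N\)) has image over \(Q_i\) (resp.\ over \(N\)). If not, the image of \(u\) — being connected — would have to run fully across one collar \(C_\pm\), so for a generic choice of that collar (Sard's theorem) some connected component \(D_0\) of the part of the domain mapping into \(\pi_M^{-1}(C_\pm)\) is a compact surface with corners carrying no punctures (no Reeb chords lie over \(C_\pm\)), with boundary on \(\pi_\mathbb{C}(\Lambda)\) and on level sets of \(s\). Over \(C_\pm\) the structure \(J\) is a product, so the composition of \(u|_{D_0}\) with the projection to the \(T^*(-1,1)\cong\mathbb{C}\)-factor is holomorphic; by Stokes its symplectic area equals \(\int_{\partial D_0}p_s\,ds\), which vanishes since \(p_s=0\) on the \(\pi_\mathbb{C}(\Lambda)\)-boundary and \(ds=0\) on the level-set boundary. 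Hence this composition is constant on \(D_0\), so \(s\) is constant there, contradicting that \(D_0\) spans the collar. Consequently the subalgebra of \(\mathcal{A}(\Lambda)\) generated by the Reeb chords over \(N\), resp.\ over \(Q_i\), is preserved by \(\partial\); these subalgebras are exactly \(\mathcal{A}(\Lambda)|_N\) and \(\mathcal{A}(\Lambda)|_{Q_i}\), which are therefore DGAs, the maps \(i_1,i_2,j_1,j_2\) are DGA morphisms, and \(\partial|_N\), \(\partial|_{Q_i}\) are the induced differentials.

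\emph{The push-out.} Since \(Q_1\cap Q_2=N\), every Reeb chord of \(\Lambda\) lies over exactly one of \(R_1\setminus N\), \(R_2\setminus N\), \(N\), and \(\partial|_{Q_1},\partial|_{Q_2}\) agree with \(\partial|_N\) on the chords over \(N\); hence \(\mathcal{A}(\Lambda)\) is, as a graded algebra, the free product of \(\mathcal{A}(\Lambda)|_{Q_1}\) and \(\mathcal{A}(\Lambda)|_{Q_2}\) amalgamated over \(\mathcal{A}(\Lambda)|_N\), with \(\partial\) the unique differential restricting to \(\partial|_{Q_1}\) and \(\partial|_{Q_2}\). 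Checking the universal property against an arbitrary test DGA is then immediate. I expect the normal-form step to be the crux: producing, by a Legendrian isotopy supported over \(N\), a genuinely \(s\)-independent model over the collars while matching \(\Lambda\) at \(\partial N\) and avoiding the Reeb chords, and choosing the product \(J\) without losing transversality. Once that is in place, the Stokes argument and the algebra are routine.
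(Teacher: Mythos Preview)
Your Stokes argument in step (2) is clean, but the normal form in step (1) does not work as stated. If you make \(\Lambda\) genuinely \(s\)-independent over a collar \(C\), the sheet functions become \(f_i(q)\) with no \(s\)-dependence, so every height difference \(f_i-f_j\) is \(s\)-independent as well. A Reeb chord of \(\Lambda\) over \(C\) is a critical point of some \(f_i-f_j\); since \(\partial_s(f_i-f_j)\equiv 0\) automatically, the critical locus is \(\{\nabla_q(f_i-f_j)=0\}\times C\). Generically the slice \(\Lambda_{s_0}\subset J^1(S)\) does have Reeb chords (they occur in codimension \(\dim S\) in \(S\)), so your collar acquires a one-parameter Morse--Bott family of Reeb chords for each of them. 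This directly contradicts your claim that the collars ``carry no Reeb chords''; worse, \(\Lambda\) is no longer chord-generic, so \(\mathcal{A}(\Lambda)\) is not even defined. Any small perturbation restoring chord-genericity destroys \(p_s=0\) on \(\pi_\mathbb{C}(\Lambda)\), which is exactly the input your Stokes computation \(\int_{\partial D_0} p_s\,ds=0\) needs. The ``staying clear of the chord values'' clause only avoids the \emph{original} Reeb chords of \(\Lambda\); it cannot prevent these new families from appearing once you freeze in \(s\).

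The paper avoids this by taking a different isotopy and a different analytic argument. The pinching isotopy multiplies all sheet heights by an inverted bump \(\lambda(\mu)\) that dips to a small value \(\delta\) at \(S\); this \emph{does} create new Reeb chords over \(N\) (isolated and nondegenerate, sitting near \(S\)), which become the generators of \(\mathcal{A}(\Lambda)|_N\), but it keeps \(\Lambda\) chord-generic. The disks-don't-cross argument is then not a projection/Stokes trick but a limiting statement proved in Appendix~A: after rescaling, disk boundaries converge (away from \(\pi_M(\Sigma_2)\)) to partial gradient flow trees, each with a single positive puncture. Along any branch of such a tree the height difference strictly decreases, so a tree entering \(N\) from one side cannot exit on the other, since the pinch forces the height difference at \(S\) to be strictly smaller than at either component of \(\partial N\). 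Your push-out step (3) is correct and in fact more explicit than the paper's.
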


The Legendrian isotopy in theorem~\ref{theorem1.1} is called the \textbf{pinching isotopy} and is defined in section~\ref{subsec:proofofmaintheorem}. It has also been referred to as \textbf{dipping} in \cite{Sa}, and is similar to the \textbf{splashing} of \cite{Fu}.   This result has been known for \(\dim \Lambda = 1\) since \cite{Si}, and for \(\dim\Lambda = 2\) and for higher dimensions where the front projection has no codimension-2 singularities since \cite{HS}.   We prove it for all cases.

Note that, since pinching does not change \(\Lambda\) outside of \(N\), we can apply multiple pinchings to separate \(\Lambda\) into more then two components.   In addition, we can pinch inside the pinch zone, to separate \(N\) itself into multiple components.

{\color{black} Given how difficult the infinite-dimensional Legendrian Contact Homology is to work with, it is common to work instead with the Linearized Contact Homologies, which are finite-dimensional invariants of the Legendrian Contact Homology.   The Linearized Contact Homologies are calculated by finding chain maps \(\epsilon: \mathcal{A}(\Lambda) \to \mathbb{Z}_2\), where \(\mathbb{Z}_2\) is turned into a differential graded algebra by equipping it with the trivial differential; we call such a map an \textbf{augmentation}.   An augmentation then induces an algebra isomorphism \(E_\epsilon = \mbox{Id} + \epsilon\).   We can then define a new differential \(\partial^\epsilon = E_\epsilon \circ \partial \circ E_\epsilon^{-1}\).   Then, the restriction of \(\partial^\epsilon\) to words of length 1, which we denote \(\partial^\epsilon_1\), is itself a differential, that is, \((\partial^\epsilon_1)^2 = 0\).   The Linearized Contact Homology of \(\epsilon\) is the homology of the algebra restricted to words of length 1, equipped with the differential \(\partial^\epsilon_1\).   This is a finite-dimensional homology, and thus much easier to work with then the original Legendrian Contact Homology.   For a given Legendrian submanifold, multiple augmentations may exist, and two different augmentations can induce the same or different Linearized Contact Homology; however, the set of Linearized Contact Homologies is an invariant of the stable tame isomorphism class of the differential graded algebra, and therefore an invariant of the Legendrian isotopy class.   Theorem~\ref{theorem1.1} descends to the Linearized Contact Homologies to provide a Mayer-Veitoris theorem:}

\begin{theorem}\label{theorem1.3} Let \(\Lambda \subset J^1(M)\) be a compact Legendrian submanifold that has been pinched along a neighborhood \(N\) of a hyper surface \(S\) that divides \(M\) into \(Q_1, Q_2\) and which does not intersect a codimension-2 singularity of the front projection.   If \(\Lambda\) has an augmentation \(\epsilon\), then \(\epsilon\) induces augmentations \(\epsilon_N, \epsilon_{Q_i}\) of \(\mathcal{A}(\Lambda)|_N, \mathcal{A}(\Lambda)|_{Q_i}\), and there is a long exact sequence:
\[
... \to LCH_k(\left.\mathcal{A}_K(\Lambda)\right|_N, \partial, \epsilon_N) \to
LCH_k(\left.\mathcal{A}_K(\Lambda)\right|_{Q_1}, \partial, \epsilon_{Q_1}) \oplus
\]\[
LCH_k(\left.\mathcal{A}_K(\Lambda)\right|_{Q_2}, \partial, \epsilon_{Q_2}) \to LCH_k(\mathcal{A}_K(\Lambda), \partial, \epsilon) \to 
LCH_{k+1}(\left.\mathcal{A}_K(\Lambda)\right|_N, \partial, \epsilon_N)
\to ...
\]
Where \(LCH_*\) denotes the linearized contact homology.\end{theorem}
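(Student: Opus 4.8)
The plan is to descend the pushout square of Theorem~\ref{theorem1.1}, along each augmentation, to a Mayer--Vietoris short exact sequence of linearized chain complexes and then feed that to the zig-zag lemma. Since the pinching isotopy of Theorem~\ref{theorem1.1} has been performed, $\mathcal{A}_K(\Lambda)|_N$ and $\mathcal{A}_K(\Lambda)|_{Q_i}$ are honest differential graded subalgebras of $\mathcal{A}_K(\Lambda)$ fitting into that pushout square, so all of the pseudoholomorphic-curve input—that the restrictions are well defined and that a disk with positive puncture over $Q_i$ has all of its image over $Q_i$—is already packaged in Theorem~\ref{theorem1.1}. What remains is essentially homological algebra.

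First I would check that $\epsilon$ restricts to augmentations and that linearization is compatible with restriction. Because $j_1,j_2,i_1,i_2$ are inclusions of sub-DGAs and $\epsilon\circ\partial=0$, the restrictions $\epsilon_N:=\epsilon|_{\mathcal{A}_K(\Lambda)|_N}$ and $\epsilon_{Q_i}:=\epsilon|_{\mathcal{A}_K(\Lambda)|_{Q_i}}$ are again unital graded algebra maps that kill the image of the (restricted) differential, hence augmentations. Since $\epsilon$ takes values in the scalars $K$, which lie in every subalgebra, the automorphism $E_\epsilon=\mathrm{Id}+\epsilon$ and its inverse preserve $\mathcal{A}_K(\Lambda)|_N$ and $\mathcal{A}_K(\Lambda)|_{Q_i}$; therefore $\partial^\epsilon=E_\epsilon\circ\partial\circ E_\epsilon^{-1}$ preserves them as well, and restricting $\partial^\epsilon$ to length-one words shows that $\partial^\epsilon_1$ carries the span of the Reeb chords lying over $U$ into itself and there agrees with $\partial^{\epsilon_U}_1$ for $U=N,Q_1,Q_2$. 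Write $(C_*,\partial^\epsilon_1)$ for the linearized complex of $(\mathcal{A}_K(\Lambda),\partial,\epsilon)$ and $C^N_*\subseteq C^{Q_i}_*\subseteq C_*$ for the subcomplexes spanned, respectively, by the Reeb chords over $N$ and over $Q_i$. This verification that linearization commutes with passing to a sub-DGA is the one place I expect to have to be careful, but it is short once one notices that $E_\epsilon$ fixes $K$; in particular it makes $i_1,i_2,j_1,j_2$ into chain maps of the linearized complexes.

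Finally, because $Q_1\cup Q_2=M$ and $Q_1\cap Q_2=N$, every Reeb chord lies over $Q_1$ or over $Q_2$, and over both precisely when it lies over $N$; hence $C^{Q_1}_*+C^{Q_2}_*=C_*$ and $C^{Q_1}_*\cap C^{Q_2}_*=C^N_*$ as subcomplexes (equivalently, this is the statement that the pushout in Theorem~\ref{theorem1.1} is free on the pushout of the Reeb-chord generating sets). This is exactly the data for the Mayer--Vietoris short exact sequence of complexes
\[
0\longrightarrow C^N_*\xrightarrow{\ (j_1,j_2)\ }C^{Q_1}_*\oplus C^{Q_2}_*\xrightarrow{\ i_1-i_2\ }C_*\longrightarrow 0 ,
\]
whose exactness is the usual elementary check using those two identities. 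Applying the zig-zag lemma and identifying $H_*(C^N_*)=LCH_*(\mathcal{A}_K(\Lambda)|_N,\partial,\epsilon_N)$, and likewise for $Q_1,Q_2$ and for $\mathcal{A}_K(\Lambda)$ itself, produces the asserted long exact sequence, the indexing of the connecting homomorphism being the one dictated by the grading convention of section~2.
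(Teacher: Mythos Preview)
Your proposal is correct and takes essentially the same approach as the paper: define the restricted augmentations via the inclusion maps, set up the Mayer--Vietoris short exact sequence of linearized complexes using $C^{Q_1}_*+C^{Q_2}_*=C_*$ and $C^{Q_1}_*\cap C^{Q_2}_*=C^N_*$, and invoke the long exact sequence in homology. You are in fact more careful than the paper about checking that $E_\epsilon$ preserves the subalgebras (so that linearization commutes with restriction), a point the paper leaves implicit; the paper also works explicitly over $\mathbb{Z}_2$ and uses $i_1+i_2$ rather than your $i_1-i_2$, but this is the same map there and your formulation is the one that generalizes.
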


Next, let \(\Lambda_+, \Lambda_-\subset J^1(M)\) be Legendrian submanifolds.   {\color{black}We define the \textbf{symplectization} of \(J^1(M)\) to be the exact symplectic manifold \(J^1(M)\times\mathbb{R}\) equipped with the symplectic form \(\omega = d\beta, \beta = e^t\alpha\)}.   We define an \textbf{exact Lagrangian cobordism} from \(\Lambda_+\) to \(\Lambda_-\) to be an exact Lagrangian submanifold \(L \subset J^1(M) \times \mathbb{R}\) such that:

\begin{itemize}

\item There exists \(T > 0\) such that:
\[
\mathscr{E}_+(L) = L \cap \left(J^1(M) \times (T, \infty)\right) = \Lambda_+ \times (T, \infty)
\]\[
\mathscr{E}_-(L) = L \cap \left(J^1(M) \times (-\infty, -T)\right) = \Lambda_- \times (-\infty, -T)
\]

\item If {\color{black}\(df = \beta|_{TL}\)}, then \(f\) is constant on \(\mathscr{E}_+(L)\) and on \(\mathscr{E}_-(L)\).

\item \(L - (\mathscr{E}_+(L) \cup \mathscr{E}_-(L))\) is compact with boundary \(\Lambda_+ \cup \overline{\Lambda_-}\).

\end{itemize}

We refer to \(\mathscr{E}_+(L), \mathscr{E}_-(L)\) as the \textbf{positive and negative cones of \(L\)}.   An exact Lagrangian cobordism \(L\) {\color{black}gives rise to a unital DGA morphism} \(\Phi_L:\mathcal{A}_K(\Lambda_+) \to \mathcal{A}_K(\Lambda_-)\), as discussed in section~\ref{subsec:linearized}.   We have an equivalent of our Siefert-van Kampen theorem for these cobordisms as well:

\begin{theorem}\label{theorem1.4} Let \(J^1(M)\) be the one-jet bundle equipped with its standard contact form, let \(L \subset J^1(M) \times \mathbb{R}\) be an exact Lagrangian cobordism from the Legendrian submanifold \(\Lambda_+ \subset J^1(M)\) to \(\Lambda_- \subset J^1(M)\), and let \(\hat{S} \subset M \times \mathbb{R}\) be a hypersurface that divides \(M \times \mathbb{R}\) into two components \(\hat{R}_1\) and \(\hat{R}_2\), such that {\color{black}\(\hat{S}\) intersects the level set \(\{t = t_0\}\) transversely for all \(t_0\)}, and such that \(\hat{S}\) {\color{black}is disjoint from the projection to \(M \times \mathbb{R}\) of the singularities of the front projection of the} Legendrian lift of \(L\).   Let \(R^\pm_i = \hat{R}_i \cap \{t = \pm T\}\), \(S^\pm = \hat{S} \cap \{t = \pm T\}\), let \(N^\pm\) be an arbitrarily small neighborhood of \(S^\pm\), and let \(Q^\pm_i = R^\pm_i \cup N^\pm\).   Then \(L\) is exact Lagrangian isotopic to an exact Lagrangian cobordism \(L'\) from \(\Lambda'_+\) to \(\Lambda'_-\), where \(\Lambda'_\pm\) are Legendrian isotopic to \(\Lambda_\pm\), {\color{black}\(\Lambda'_\pm\) and \(\Lambda_\pm\) coincide outside of \(N^\pm\)}, and such that the image of the restriction of \(\Phi_{L'}\) to \(\mathcal{A}_K(\Lambda'_\pm)|_{Q^+_i}, \mathcal{A}_K(\Lambda'_\pm)|_{N^+}\) lies in \(\mathcal{A}_K(\Lambda'_\pm)|_{Q^-_i}, \mathcal{A}_K(\Lambda'_\pm)|_{N^-}\).\end{theorem}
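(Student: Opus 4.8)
The plan is to reduce the cobordism statement to the Legendrian case (Theorem \ref{theorem1.1}) applied fiberwise in the $t$-direction, using the fact that $\Phi_L$ is defined by counting rigid pseudoholomorphic disks in the symplectization $J^1(M)\times\R$ with boundary on $L$ and asymptotics on Reeb chords of $\Lambda_\pm$. First I would build the separating data: since $\hat S$ meets each level $\{t=t_0\}$ transversely, $\hat S\cap\{t=t_0\}$ is a hypersurface $S_{t_0}\subset M\times\{t_0\}$ (after identifying with $M\times\R$, a moving hypersurface $S_t$ in the front-projection base), and $\hat R_i\cap\{t=t_0\}$ are the two pieces it separates. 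The hypothesis that $\hat S$ avoids the front-singularity locus of the Legendrian lift of $L$ guarantees that for every $t_0$ the slice $S_{t_0}$ avoids the codimension-$2$ front singularities of the corresponding Legendrian slice, so Theorem \ref{theorem1.1} applies slice-by-slice; in particular it applies at $t=\pm T$, where the cobordism is cylindrical and the slices are $\Lambda_\pm$, giving the hypersurfaces $S^\pm\subset M$ and neighborhoods $N^\pm$, $Q^\pm_i$ as in the statement.

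Second, I would perform the pinching isotopy not on a single Legendrian but on the whole cobordism $L$ at once. Concretely, run the pinching isotopy of Theorem \ref{theorem1.1} in a $t$-parametrized family along the moving hypersurface $\hat S$, choosing the pinch zone to be $\pi_M^{-1}$ of a neighborhood of $\hat S$ inside $J^1(M)\times\R$; because pinching is supported in an arbitrarily small neighborhood of the dividing hypersurface and is a Legendrian isotopy slicewise, the $t$-family assembles to an ambient exact Lagrangian isotopy from $L$ to some $L'$ — here one uses that an isotopy of $J^1(M)$ through contactomorphisms supported near $\hat S$ lifts to a symplectomorphism isotopy of $J^1(M)\times\R$ preserving $\beta=e^t\alpha$ up to an exact term, so exactness of the Lagrangian is preserved (one checks the primitive $f$ is only changed by a controlled amount and can be corrected to keep $f$ constant on the cones). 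On the cylindrical ends this $t$-family is eventually constant, so it restricts to the pinching isotopies of $\Lambda_\pm$ producing $\Lambda'_\pm$, which by construction agree with $\Lambda_\pm$ outside $N^\pm$ and are Legendrian isotopic to them.

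Third, with $L'$ pinched, I would establish the containment of $\Phi_{L'}$. After pinching, the Reeb chords of $\Lambda'_\pm$ are partitioned by which of $N^\pm, R^\pm_1, R^\pm_2$ they lie over, and a rigid holomorphic disk contributing to $\Phi_{L'}$ has a positive puncture at a Reeb chord of $\Lambda'_+$ and negative punctures at Reeb chords of $\Lambda'_-$, with boundary on $L'$. The key point is a confinement lemma: the projection to $M\times\R$ of such a disk cannot cross $\hat S$, so a disk with positive puncture over $Q^+_i$ (resp. over $N^+$) has its boundary trapped in $\pi_M^{-1}(\hat R_i)$ (resp. in the pinch zone over $\hat S$) and hence all negative punctures lie over $Q^-_i$ (resp. over $N^-$); this gives exactly $\Phi_{L'}\big(\mathcal A_K(\Lambda'_+)|_{Q^+_i}\big)\subset \mathcal A_K(\Lambda'_-)|_{Q^-_i}$ and likewise for $N$. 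To prove confinement I would use the same mechanism as in the proof of Theorem \ref{theorem1.1}: the pinching introduces, over the neighborhood $N$ of $\hat S$, a pair of sheets that are $C^0$-close and whose Reeb chords have actions so short that any disk boundary attempting to pass from one side of $\hat S$ to the other would have to have negative or zero symplectic area, contradicting positivity of area for a nonconstant pseudoholomorphic disk.

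The main obstacle I anticipate is the second step — making the $t$-parametrized pinching into a bona fide exact Lagrangian isotopy rather than merely a smooth one — together with the slicewise transversality needed for the confinement lemma in the symplectization (Theorem \ref{theorem1.1} controls disks in $T^*M$, while here the disks live in $J^1(M)\times\R$ and are asymptotic to Reeb chords at $\pm\infty$). I expect both to be handled by the same idea used there: choose the pinch function and an almost complex structure that is cylindrical and adapted to the pinch near $\hat S$, so that the area/action bookkeeping that forbids boundary-crossing in the Legendrian setting applies verbatim to the projection of cobordism disks, and so that the primitive $f$ changes only inside the compact pinch region and can be renormalized to remain constant on the cones; the rest is then the same combinatorial confinement argument as in Theorem \ref{theorem1.1}.
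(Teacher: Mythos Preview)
Your overall strategy---pinch the cobordism along $\hat S$, then argue confinement of the disks defining $\Phi_{L'}$---matches the paper's. The execution differs in two places, and one of them is a genuine gap in your sketch.

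First, on exactness of the isotopy: rather than invoking a family of contactomorphisms of $J^1(M)$ and tracking the primitive $f$ by hand, the paper lifts $L$ once and for all to a (non-compact) Legendrian $\hat L\subset J^1(M\times\mathbb R^+)$, defines a pinching function $\lambda(\mu,\tau)$ with $\partial_\tau\lambda\ge 0$ that is linear in $\tau$ on the cylindrical ends, and proves (Lemma~\ref{lemma5.20}) that the resulting Legendrian isotopy of $\hat L$ creates no Reeb chords---hence descends to an exact Lagrangian isotopy of $L$. This is cleaner than your renormalize-on-the-cones suggestion and is exactly where the hypothesis that $\hat S$ avoids the front singularities of $\hat L$ is used.

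Second, and more importantly, your confinement mechanism is not the one in the paper, and as stated it has a gap. You propose a direct action/area argument: after pinching, the Reeb chords over $N$ are short, so a disk crossing $\hat S$ would have nonpositive area. But a pseudoholomorphic disk whose boundary traverses the pinch zone is not obliged to acquire a puncture there, so shortness of the $N$-chords alone does not bound its area from above. The paper's argument (Appendix~\ref{sec:maintechnical}, invoked in the proof of Theorem~\ref{theorem1.1} and then declared ``precisely analogous'' for Theorem~\ref{theorem1.4}) instead scales the fibers by $\sigma\to 0$ and proves that disk boundaries converge, away from $\pi_M(\Sigma_2)$, to partial gradient flow trees each with a single positive puncture; one then checks that along any path in such a tree the local height-difference $f_i-f_j$ is monotone decreasing, while the pinch forces this quantity to be strictly smaller over $S$ than over either component of $\partial N$. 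That contradiction is what blocks crossing. If you want your action argument to stand on its own you must supply the missing step linking boundary-crossing to a puncture in $N$; otherwise, replace it with the flow-tree monotonicity argument applied to $\hat L$ in $J^1(M\times\mathbb R^+)$, which is what the paper does.
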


{\color{black}Finally, as an application of Theorems~\ref{theorem1.1}, we calculate the change in the differential graded algebra of a Legendrian submanifold caused by a critical Legendrian ambient surgery.   Legendrian ambient surgery is the Legendrian analogue of Morse surgery; Georgios Dmitroglou Rizell calculated the change in the algebra caused by surgeries of index 0 through \(n-2\), but was unable to calculate the change when the index is \(n-1\), which is called the critical Legendrian ambient surgery.   We calculate the change caused by the critical surgery, by using the Legendrian ambient surgery data to ensure that the neighborhood of the surgery is contactomorphic to a standard case, and then pinching around the neighborhood to isolate it from pseudoholomorphic disks outside the neighborhood:}

\begin{theorem}\label{theorem1.5} Let \(\Lambda \subset J^1(M)\) be a Legendrian submanifold, and let \(\hat{\Lambda} \subset J^1(M)\) be the {\color{black}result} of a critical Legendrian ambient surgery on \(\Lambda\).   Then \(\Lambda, \hat{\Lambda}\) are Legendrian isotopic to Legendrian submanifolds \(\Lambda', \hat{\Lambda}'\) with differential graded algebras \(\mathcal{A}(\Lambda')\) and \(\mathcal{A}(\hat{\Lambda}')\), such that:

\begin{itemize}

\item \(\mathcal{A}(\Lambda')\) has one more generator than \(\mathcal{A}(\hat{\Lambda}')\), and, excluding this additional generator, there is a bijection between the generators of \(\mathcal{A}(\Lambda')\) and \(\mathcal{A}(\hat{\Lambda}')\). We denote the generators of \(\mathcal{A}(\hat{\Lambda}')\) by \(\hat{x}_1, ..., \hat{x}_m, \hat{d}^1_0\), and the generators of \(\mathcal{A}(\Lambda')\) by \(x_1, ..., x_m, d^1_0, c\), where \(c\) is the additional generator.

\item Let \(\partial\) be the differential of \(\mathcal{A}(\Lambda')\) and let \(\hat{\partial}\) be the different of \(\mathcal{A}(\hat{\Lambda}')\). For \(x \neq c\), \(\hat{\partial} \hat{x} = \widehat{\partial x}\),  where \(\widehat{\partial x}\) denotes \(\partial x\) with \(x_i\) replaced with \(\hat{x}_i\) for all \(i\). \(c\) does not appear in \(\partial x\) for any \(x\).

\item \(\partial c = 1 + d^1_0\).

\item \(\partial d^1_0 = 0\) and \(\hat{\partial} \hat{d}^1_0 = 0\).

\end{itemize}
\end{theorem}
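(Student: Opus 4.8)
The plan is to reduce the global statement to an explicit local computation, using the pinching isotopy of Theorem~\ref{theorem1.1} to cut off all pseudoholomorphic disks that do not lie over a small neighborhood of the surgery region.

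\textbf{Step 1 (standard local model).} First I would recall from Dimitroglou Rizell's construction that the data of a critical Legendrian ambient surgery --- a framed isotropic sphere in $\Lambda$ together with its compatible ambient handle --- determines, after a Legendrian isotopy supported inside $\pi_M^{-1}(U)$ for some precompact $U\subset M$ containing the surgery locus, a neighborhood $\pi_M^{-1}(U)$ on which $\Lambda$ is contactomorphic to an explicit standard pre-surgery model, and on which $\hat\Lambda$ is obtained by replacing this piece with the standard post-surgery model. Outside $\pi_M^{-1}(U)$ the submanifolds $\Lambda$ and $\hat\Lambda$ literally agree; this produces the Legendrians $\Lambda'$ and $\hat\Lambda'$ of the statement, together with a Legendrian isotopy between them supported in $\pi_M^{-1}(U)$.

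\textbf{Step 2 (pinch and reassemble).} Next I would choose $S=\partial U$ to be a hypersurface meeting no codimension-2 singularity of the front projection and disjoint from the surgery locus, and apply Theorem~\ref{theorem1.1} to pinch along a small neighborhood $N$ of $S$, taking $Q_1=U\cup N$ (which still contains the surgery locus) and $Q_2$ the complementary region. After the pinching, $\mathcal{A}(\Lambda)|_N$ and $\mathcal{A}(\Lambda)|_{Q_i}$ are honest DGAs and $\mathcal{A}(\Lambda)\cong \mathcal{A}(\Lambda)|_{Q_1}\ast_{\mathcal{A}(\Lambda)|_N}\mathcal{A}(\Lambda)|_{Q_2}$ is a push-out of DGAs, and similarly for $\hat\Lambda$. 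Since the surgery is supported in $U$ and the pinching can be taken identical for $\Lambda$ and $\hat\Lambda$, we get $\mathcal{A}(\Lambda)|_N=\mathcal{A}(\hat\Lambda)|_N$ and $\mathcal{A}(\Lambda)|_{Q_2}=\mathcal{A}(\hat\Lambda)|_{Q_2}$ with matching structure maps. Hence it suffices to compare $\mathcal{A}(\Lambda)|_{Q_1}$ with $\mathcal{A}(\hat\Lambda)|_{Q_1}$: once it is shown that the former is the latter together with one additional free generator $c$ not lying over $N$, with $\partial c=1+d^1_0$ and with $c$ absent from every other differential, the corresponding statements for the full algebras follow by pushing out --- the free factor $K\langle c\rangle$ passes through the amalgamation because $c\notin\mathcal{A}(\Lambda)|_N$, and the differential of every $x_i$ in $\mathcal{A}(\Lambda)$ is assembled from exactly the same $Q_1$-, $N$-, and $Q_2$-disks as before, giving $\hat\partial\hat x=\widehat{\partial x}$.

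\textbf{Step 3 (model disk count).} The core of the argument is the computation of $\mathcal{A}(\Lambda)|_{Q_1}$ and $\mathcal{A}(\hat\Lambda)|_{Q_1}$ directly in the standard model, where, thanks to the pinch, only disks contained in $\pi_M^{-1}(Q_1)$ contribute. Using the remaining freedom in the surgery data together with a dip performed inside $N$ (permitted by the remark following Theorem~\ref{theorem1.1}), one arranges that the Reeb chords over $Q_1$ are $x_1,\dots,x_m,d^1_0,c$ for $\Lambda$ and $\hat x_1,\dots,\hat x_m,\hat d^1_0$ for $\hat\Lambda$, where $x_i\leftrightarrow\hat x_i$ and $d^1_0\leftrightarrow\hat d^1_0$ are the evident identifications of the self-intersections of $\pi_{\mathbb C}$ away from the one resolved by the surgery. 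The chord $c$ is the distinguished chord created by the handle; near it the model looks like a dip of the standard Legendrian sphere, whose DGA has the single generator $c$ with $\partial c=1$, and the dip inside $N$ modifies this to $\partial c=1+d^1_0$. One then checks, by an action/index estimate in the model, that no rigid disk has a negative corner at $c$ --- so $c$ appears in no $\partial x_i$ --- and that deleting $c$ and its self-intersection leaves all other rigid disk counts unchanged; finally $\partial d^1_0=\hat\partial\hat d^1_0=0$ follows from applying $\partial^2=0$ to $c$ (and to the corresponding dip chord), using $\partial 1=0$.

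\textbf{Main obstacle.} The hard part is Step 3: making the standard model for the critical surgery explicit enough that all rigid disks over $Q_1$ can be enumerated, and in particular ruling out stray disks with a corner at $c$ --- this is exactly the count that resisted a direct treatment, and the purpose of the pinching is to turn it into a finite, compactly supported problem. A subsidiary point requiring care is the compatibility of the normalizing isotopies for $\Lambda$ and $\hat\Lambda$ (they must coincide outside $\pi_M^{-1}(U)$) so that the two push-out squares can be compared factor by factor.
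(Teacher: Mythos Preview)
Your Steps 1 and 2 match the paper's strategy: build $\Lambda'$ and $\hat\Lambda'$ to agree outside a neighborhood of the surgery, and use the pinch of Theorem~\ref{theorem1.1} (which in the paper is built directly into the model as the annular region $D_{2.51}\setminus D_{2.49}$) to separate the chords outside the surgery region from those inside. Your pushout formulation is a mild repackaging of what the paper does in the proof of Lemma~\ref{lemma3.11}, but it is equivalent.

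The gap is in Step 3. Your proposed mechanism --- ``by an action/index estimate in the model, no rigid disk has a negative corner at $c$'' --- is not what happens, and I do not see how to make such an estimate work: $c$ has very small action and Morse index $1$, so neither action filtration nor index constraints rule it out as a negative puncture. In the paper the claim that $c$ does not appear in any $\partial x$ is \emph{not} proved by showing such trees are absent, but by showing they \emph{cancel in pairs} over $\mathbb Z_2$. Concretely, the model is built so that every rigid gradient flow tree of $\hat\Lambda'$ avoids a small disk $D'$ containing $c$; arranging this already requires a battery of Morse perturbation lemmas (Lemmas~\ref{lemma3.1}--\ref{lemma3.4}) controlling how flow trees move under $C^1$-small changes of the sheet functions. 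Then, for $\Lambda'$, one proves (Lemmas~\ref{lemma3.5}--\ref{lemma3.10}) that any rigid tree passing through $D'$ must carry either a switch on the new cusp circle or a negative puncture at $c$, and that in either case the tree has a unique partner with the same punctures --- coming from the two components of the tangency locus of the cusp (for switches) or the two branches of the index-$1$ unstable manifold of $c$ (for the puncture at $c$). This pairing argument, not an a priori action/index bound, is what yields both $\hat\partial\hat x=\widehat{\partial x}$ and the absence of $c$ from $\partial x$.

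So your outline is correct at the architectural level, but it leaves exactly the hard local analysis unfilled, and the mechanism you propose for it is not the one that works.
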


(The reason we write one of the generators as \(d^1_0\) is to harmonize the notation of Theorem~\ref{theorem1.5} with notation used in section~\ref{sec:surgery}.)

%And:
%
%
%
%\begin{theorem}\label{theorem1.6} For any exact Lagrangian cobordism \(L\) between \(\Lambda_+, \Lambda_- \subset J^1(\mathbb{R})\), the Legendrian DGA cobordism map \(\Phi_L\) is chain homotopic and isomorphic to the composition of saddle crossings and fillings.\end{theorem}

In section~\ref{sec:overview}, we provide an overview of the concepts behind Legendrian Contact Homology and exact Lagrangian cobordisms.   In section~\ref{sec:surgery}, we will use theorem ~\ref{theorem1.1} to prove theorem ~\ref{theorem1.5}.   In section~\ref{sec:proof}, we will prove theorems ~\ref{theorem1.1}, ~\ref{theorem1.3}, and ~\ref{theorem1.4}.

\section{Overview of Legendrian Contact Homology}
\label{sec:overview}

\subsection{Topological Preliminaries}
\label{subsec:topology}

A \textbf{contact manifold} is a \((2n+1)\)-dimensional manifold \(Y\) equipped with a hyperplane distribution \(\zeta\) such that there exists no embedded {\color{black}\((n+1)\)}-dimensional submanifold that is everywhere tangent to \(\zeta\).   A \textbf{contact form} \(\alpha\) is a one-form in \(T^*Y\) such that \(\zeta = \ker \alpha\).   A contact manifold is not necessarily equipped with a contact form, but in our case we are limiting our interest to one-jet manifolds \(Y=J^1(M) = T^*M \times \mathbb{R}\), where \(\zeta\) is the kernel of the contact form \(\alpha = dz - \theta\), where \(\theta\) is the tautological one-form of \(T^*M\) and \(z\) is the \(\mathbb{R}\) coordinate.   In addition, we are particularly interested in the case \(M = \mathbb{R}^n\), \(J^1(M) = \mathbb{R}^{2n+1}\).   In this subcase, we use \(x_1, ..., x_n\) for coordinates on \(M\) and \(y_1, ..., y_n\) to be the corresponding cotangent coordinates.   Then:
\[
\alpha = dz - \sum_{i=1}^n y_idx_i
\]
An \(n\)-dimensional submanifold \(\Lambda \subset Y\) is called \textbf{Legendrian} if \(T \Lambda \subset \zeta\).   This is equivalent to {\color{black}\(\alpha|_{T\Lambda} = 0\)}.

A \textbf{symplectic manifold} is a \((2n)\)-dimensional manifold \(X\) equipped with a closed 2-form \(\omega\) such that \(\omega^n\) is a volume form.   {\color{black}An \textbf{exact symplectic manifold} is a symplectic manifold such that the symplectic form \(\omega = d\beta\) for some one-form \(\beta\).}   We are interested in symplectic manifolds in part because we will frequently be working in \(T^*M\), which is equipped with the canonical symplectic form \(\omega = d\theta\), {\color{black} where \(\theta\) is the tautological one-form}.   Recall that an \(n\)-dimensional submanifold \(L\) of a symplectic manifold is called \textbf{Lagrangian} if {\color{black}\(\omega|_{TL} = 0\)}.   {\color{black}Further, \(L\) is called \textbf{exact Lagrangian} if the symplectic manifold is exact, and \(\beta|_{TL} = df\) for some function \(f:L\to\mathbb{R}\).}  If \(\Lambda \subset J^1(M)\) is an embedded Legendrian submanifold, then \(\pi_\mathbb{C}(\Lambda) \subset T^*M\) is an immersed {\color{black}exact} Lagrangian submanifold, because we can define a function \(z:\pi_\mathbb{C}(\Lambda) \to \mathbb{R}\) that is simply the \(z\) coordinate of that point in \(\Lambda\), and:
{\color{black}
\[
\theta\big|_{T\pi_\mathbb{C}(\Lambda)} = 
(dz - \alpha)\big|_{T\pi_\mathbb{C}(\Lambda)} = 
dz\big|_{T\pi_\mathbb{C}(\Lambda)}
\]\[
(d\theta)\big|_{T\pi_\mathbb{C}(\Lambda)} = 
d(dz)\big|_{T\pi_\mathbb{C}(\Lambda)} = 0
\]}
An \textbf{almost complex structure} on a \((2n)\)-dimensional manifold \(X\) is a smooth linear map \(TX \to TX\) such that \(J \circ J = -\mbox{Id}\).   The standard example is the complex plane \(\mathbb{C}\) with real coordinates \(x,y\) and canonical {\color{black}complex structure} \(i\), where:
\[
i(\partial_x) = \partial_y
\]\[
i(\partial_y) = -\partial_x
\]
Given two manifolds with almost complex structures \((X_1, J_1)\) and \((X_2, J_2)\), we say that a function \(f:X_1 \to X_2\) is \textbf{pseudoholomorphic} if \(f_* \circ J_1 = J_2 \circ f_*\).

Let \((X, J, g, \omega)\) be a \((2n)\)-dimensional manifold equipped with an almost complex structure \(J\), a Riemannian metric \(g\), and a symplectic form \(\omega\).   We say that the triple \((J, g, \omega)\) is \textbf{tame} if \(g\) is complete and there exists constants \(r_0, C_1, C_2\) such that:

\begin{itemize}

\item Every loop \(\gamma \subset X\) contained in a ball \(B_r(x)\) with \(r \leq r_0\) bounds a disc in \(B\) of area less then \(C_1(\mbox{length}(\gamma))^2\).

\item \(||\omega_x||_g \leq 1\) for all \(x \in X\)

\item For every vector \(V \in T_xX\), \(|X|^2 \leq C_2\omega(X, JX)\).

\end{itemize}

We can always equip \(T^*M\) with a tame triple \((J, g, \omega)\) (\cite{AL}, Ch. 5, Sec. 4.1), and from now on will assume we have done so.   Furthermore, the space of almost complex structures \(J\) on \(T^*M\) which is part of a tame triple is contractible (\cite{MS}, Proposition 4.1.)

\subsection{Definition of the DGA}
\label{subsec:dga}

We say that a Legendrian submanifold is \textbf{front generic} if it has the following properties:

\begin{itemize}

\item The base space projection \(\pi_M : \Lambda \to M\) is an immersion outside of a codimension-1 submanifold \(\Sigma_1\).

\item We define \(\Sigma_k\) inductively to be a codimension-1 subset of \(\Sigma_{k-1}\), such that the map \(\pi_M : (\Sigma_{k-1} - \Sigma_k) \to M\) is an immersion.

\item At points \(s \in \Sigma_1 - \Sigma_2\), \(\pi_F:\Lambda \to M \times \mathbb{R}\) has a standard \textbf{cusp edge singularity}.   That is, there exist coordinates \(u_1, ..., u_n\) of \(\Lambda\) around \(s\) and coordinates \(x_1, ..., x_n\) of \(M\) around \(\pi_M(s)\) such that, if \(z\) is the fiber coordinate in \(J^0(M) = M \times \mathbb{R}\), then \(\pi_F(u) = (x_1(u), ..., x_n(u), z(u))\), where:
\[
x_1(u) = \frac{1}{2}u_1^2
\]\[
x_j(u) = u_j\mbox{ for }j = 2, ..., n
\]\[
z(u) = \frac{1}{3}u_1^3 + \beta \frac{1}{2}u_1^2 + \alpha_2u_2 + ... + \alpha_nu_n
\]
Where \(\alpha_2, ..., \alpha_n, \beta\) are constants.

\end{itemize}

As discussed in \cite{Ek}, Section 2.2.1, any Legendrian submanifold \(\Lambda\) can be made front generic after an arbitrarily small Legendrian isotopy.    We will therefore assume from here on that our Legendrian submanifolds are all front generic.

A \textbf{Reeb vector field} \(R_\alpha\) is the vector field of \(J^1(M)\) such that:
\[
\alpha(R_\alpha) = 1
\]\[
(d\alpha)(R_\alpha) = 0
\]
For \(J^1(M)\), \(R_\alpha = \partial_z\).   A \textbf{Reeb chord} of a Legendrian submanifold \(\Lambda\) is a trajectory of \(R_\alpha\) that begins and ends on \(\Lambda\).   For \(R_\alpha = \partial_z\), Reeb chords correspond precisely to self-intersections of \(\pi_\mathbb{C}(\Lambda)\).   These intersections will generically be transverse double points.   We label the quadrants of each Reeb chord in \(\pi_\mathbb{C}(\Lambda)\) with the signs shown in Figure~\ref{fig:signsOfQuadrants}.

\begin{figure}\begin{center}
\includegraphics{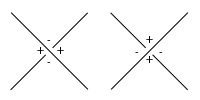}
\caption{Labels of Quadrants of \(\pi_\mathbb{C}(\Lambda)\)\label{fig:signsOfQuadrants}}\end{center}
\end{figure}

We define \(\mathcal{A}_K(\Lambda)\) to be the formal unital algebra over \(K\), where \(K = \mathbb{Z}_2\) or \(\mathbb{Z}_2H_1(\Lambda)\), generated by the Reeb chords of \(\Lambda\) - that is, elements of \(\mathcal{A}_K(\Lambda)\) are sentences made of words \(kb_1...b_m\), where \(k \in K\) and \(b_1, ..., b_m\) are Reeb chords.   We provide a differential to \(\mathcal{A}_K(\Lambda)\) by counting rigid pseudoholomorphic disks in \(T^*M\).

First, for every Reeb chord \(b\) of \(\Lambda\), let \(b^+, b^-\) be the upper and lower end points of the Reeb chord.   We choose a \textbf{capping path} for each chord \(b\), which is a path \(\gamma_b:[0, 1] \to \Lambda\) such that \(\gamma(0) = b^+, \gamma(1) = b^-\).   We define \(D_r\) to be the unit disk in \(\mathbb{C}\) with the canonical complex structure, with \(r\) punctures on its boundary labeled \(q_0, ..., q_{r-1}\).   Then, if \(a, b_1, ..., b_m\) are Reeb chords and \(A\) is an element in \(H_1(\Lambda)\), we define the \textbf{moduli space} \(\mathcal{M}_A(a; b_1...b_m)\) to be the space of maps \(u:(D_{m+1}, \partial D_{m+1}) \to (T^*M, \pi_\mathbb{C}(\Lambda))\) such that:

\begin{itemize}

\item \(u\) is pseudoholomorphic.

\item As \(z\) limits to \(q_0\), \(u(z)\) limits to a positive quadrant of \(a\).

\item As \(z\) limits to \(q_i\), \(i > 0\), \(u(z)\) limits to a negative quadrant of \(b_i\).

\item The restriction of \(u\) to \(\partial D_{m+1}\) has a continuous lift \(\tilde{u}:\partial D_{m+1} \to \Lambda \subset J^1(M)\).

\item The homology class of \(\tilde{u}(\partial D_{m+1} \cup \gamma_{a} \cup \gamma_{b_1} \cup ... \gamma_{b_m}\) equals \(A \in H_1(\Lambda)\).

\end{itemize}

We define \(\mathcal{M}(a; b_1...b_m)\) in the same fashion, except that we drop the requirement on the homology class.

These moduli spaces are open manifolds with boundary.   We define the differential \(\partial\) of \(\mathcal{A}_K(\Lambda)\) by counting 0-dimensional moduli spaces.   If \(K = \mathbb{Z}_2\), then:
\[
\partial a = \sum_{\dim \mathcal{M}(a; b_1...b_m) = 0} \left(\#\mathcal{M}(a; b_1...b_m)\right)b_1...b_m
\]
If \(K = \mathbb{Z}_2H_1(\Lambda)\), then:
\[
\partial a = \sum_{\dim \mathcal{M}_A(a; b_1...b_m) = 0} \left(\#\mathcal{M}_A(a; b_1...b_m)\right)Ab_1...b_m
\]
We extend these differentials to \(\partial:\mathcal{A}_K(\Lambda) \to \mathcal{A}_K(\Lambda)\) using the linearity and the product rules:
\[
\partial (x + y) = (\partial x) + (\partial y)
\]\[
\partial (xy) = (\partial x)y + x(\partial y)
\]
We say that a pseudoholomorphic disk in a 0-dimensional moduli space is \textbf{rigid}.   Note that we could have zero negative punctures of a rigid pseudoholomorphic disk, in which case the disk contributes 1 to the differential.   And:

\begin{theorem}\label{theorem2.1} \(\partial^2 = 0\), so the homology of \(\mathcal{A}_K(\Lambda)\) is well-defined.\end{theorem}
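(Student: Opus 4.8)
The plan is to run the standard ``boundary of the one-dimensional moduli space'' argument, adapted to disks in \(T^*M\) with boundary on the immersed exact Lagrangian \(\pi_\mathbb{C}(\Lambda)\). Fix a Reeb chord \(a\), and for each word \(b_1\ldots b_m\) of Reeb chords consider \(\mathcal{M}(a;b_1\ldots b_m)\). I would invoke three analytic inputs. \emph{Transversality}: for a generic almost complex structure \(J\) belonging to a tame triple (the space of such \(J\) being contractible, as noted above, so that the resulting homology does not depend on the choice), every \(\mathcal{M}(a;b_1\ldots b_m)\) is a manifold of dimension equal to its formal index. \emph{Gromov compactness with an a priori energy bound}: the energy of a disk \(u\) is the symplectic area \(\int u^*\,d\theta = \int_{\partial} u^*\theta\), which by Stokes together with the primitive \(z\) of \(\theta\) on \(\pi_\mathbb{C}(\Lambda)\) equals the action difference \(\ell(a) - \sum_i \ell(b_i)\) of the asymptotic Reeb chords; this is bounded and forces \(\ell(a) > \sum_i \ell(b_i)\) for nonconstant \(u\), so one-parameter families have compact SFT-type limits with a bounded number of levels. \emph{Gluing}: two rigid disks sharing a common Reeb-chord asymptotic can be glued to a one-parameter family inside a larger moduli space, accounting for a half-open collar around the corresponding broken configuration.

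Next I would analyze the one-dimensional spaces \(\mathcal{M}(a;b_1\ldots b_m)\). Because \(\omega = d\theta\) is exact there are no nonconstant pseudoholomorphic spheres, and because \(\pi_\mathbb{C}(\Lambda)\) is exact there are no nonconstant boundary disk bubbles without punctures (such a bubble would have zero symplectic area); combined with the energy bound, every sequence in a one-dimensional \(\mathcal{M}(a;b_1\ldots b_m)\) converges either to an interior point or to a two-level configuration of rigid disks glued along a single Reeb chord. Hence the compactification \(\overline{\mathcal{M}}(a;b_1\ldots b_m)\) is a compact one-manifold with boundary, and by gluing its boundary points are exactly the pairs \((u_1, u_2)\) with \(u_1 \in \mathcal{M}(a; c_1\ldots c_j\, a'\, c_{j+1}\ldots c_k)\), \(u_2 \in \mathcal{M}(a'; d_1\ldots d_l)\), and \(b_1\ldots b_m = c_1\ldots c_j\, d_1\ldots d_l\, c_{j+1}\ldots c_k\).

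Now count modulo \(2\). A compact one-manifold with boundary has evenly many boundary points, so for each output word \(b_1\ldots b_m\) the number of such two-level configurations is even. Summing over all words, this says precisely that every coefficient of \(\partial(\partial a)\) vanishes: applying \(\partial\) once records the disks \(u_1\), and the product rule applied to each letter of the resulting words records, through the disks \(u_2\), exactly the insertion above. Thus \(\partial^2 a = 0\) for every generator \(a\); since \(\partial^2(xy) = (\partial^2 x)y + x(\partial^2 y)\) by the Leibniz rule, \(\partial^2 = 0\) on all of \(\mathcal{A}_K(\Lambda)\), so its homology is well defined. For coefficient fields other than \(\mathbb{Z}_2\), and for \(K = \mathbb{Z}_2H_1(\Lambda)\), the same argument goes through once the moduli spaces carry coherent orientations compatible with the gluing maps, so that the boundary points of each oriented \(\overline{\mathcal{M}}\) cancel in signed pairs.

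The hard part is entirely in the compactness-and-gluing package: identifying exactly which degenerations occur in codimension one, ruling out the rest (sphere and disk bubbling via the two exactness hypotheses, higher-level breakings since those are codimension \(\geq 2\)), and showing that the gluing map is a homeomorphism onto a collar of each broken configuration, which in turn needs transversality not just for the top-dimensional spaces but along their boundary strata. I would carry this out following the established treatment of Legendrian contact homology in one-jet spaces (cf.\ \cite{Ek} and the references therein); the bookkeeping that converts ``evenly many boundary points'' into ``\(\partial^2 = 0\)'' is then purely formal.
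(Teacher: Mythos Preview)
Your outline is correct and is precisely the standard argument; the paper itself gives no proof but simply cites \cite{EES2}, Proposition 2.6, which carries out exactly the transversality/compactness/gluing package you describe. So your approach is the same as the paper's (deferred) proof, only made explicit.
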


\begin{proof} See \cite{EES2}, Proposition 2.6.\end{proof}

The differential graded algebra \(\mathcal{A}_K(\Lambda)\) is \emph{not} an invariant of the Legendrian isotopy class of \(\Lambda\).   However, its stable tame isomorphism class \emph{is}.   A \textbf{stabilization} of \(\mathcal{A}_K(\Lambda)\), which we denote \(S(\mathcal{A}_K(\Lambda))\), is a differential graded algebra whose generators consist of the generators of \(\mathcal{A}_K(\Lambda)\), plus two generators \(b, c\), and whose differential \(\partial_S\) is defined to be:
\[
\partial_S x = \partial x\mbox{ if }x \in \mathcal{A}_K(\Lambda)
\]\[
\partial_S b = c
\]\[
\partial_S c = 0
\]

An \textbf{elementary automorphism} of a differential graded algebra generated by \(a_1, ..., a_n\) is an automorphism such that there exists some \(j\) so that \(a_i \to a_i\) for \(i \neq j\) and \(a_j \to a_j + u\), where \(u\) is a word in \(a_1, ..., a_{j-1}, a_{j+1}, ..., a_n\).   A \textbf{tame automorphism} is an automorphism that is the composition of a series of elementary automorphisms.   A \textbf{tame isomorphism} between a differential graded algebra generated by \(a_1, ..., a_n\) and a differential graded algebra generated by \(b_1, ..., b_n\) is the composition of a tame automorphism with the isomorphism sending \(a_1 \to b_1, ..., a_n \to b_n\).

We say that two differential graded algebras \(\mathcal{A}_K(\Lambda_1), \mathcal{A}_K(\Lambda_2)\) are \textbf{stable tame isomorphic} if they are tame isomorphic after some series of stabilizations.   And:

\begin{theorem}\label{theorem2.2} The stable tame isomorphism class of \(\mathcal{A}_K(\Lambda)\) is invariant under Legendrian isotopy.\end{theorem}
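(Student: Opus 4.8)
The plan is to deduce invariance from a bifurcation analysis of a generic one-parameter family of Legendrian embeddings, following the scheme used by Chekanov in dimension one and by Ekholm--Etnyre--Sullivan in general. First I would use general position to replace the given Legendrian isotopy $\Lambda_t$, $t\in[0,1]$, from $\Lambda_0$ to $\Lambda_1$ by a homotopic one (rel endpoints) that is \emph{generic}: there are finitely many times $0<t_1<\cdots<t_N<1$ so that on each complementary interval $\Lambda_t$ is front generic and chord generic with transversely cut out moduli spaces of formal dimension $\le 1$, while at each $t_j$ exactly one codimension-one degeneracy occurs. Working inside the symplectization over the fixed base $M$, the possibilities are: (i) a Reeb chord of $\Lambda_{t_j}$ is degenerate — the Lagrangian projection undergoes a self-tangency that resolves into the birth or death of a canceling pair of transverse double points — or (ii) $\Lambda_{t_j}$ stays chord generic but carries a single exceptional holomorphic disk of formal dimension $-1$ (a \emph{handle slide}), which glues to genuinely rigid disks on the two sides of $t_j$. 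Verifying that these are the only generic events is where the transversality package of \cite{EES2} enters.

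Next I would treat the three types of intervals and events. On an interval containing no $t_j$, the Reeb chords vary continuously and the zero-dimensional moduli spaces assemble into a compact one-manifold fibered over the interval, so the count of rigid disks is a regular-value invariant and $\mathcal{A}_K(\Lambda_t)$ is literally unchanged up to the tautological relabeling, i.e.\ the identity tame isomorphism. At the death of a canceling pair of Reeb chords $b,c$ a short-disk argument shows that near the degenerating chord the only rigid disk is one trivial bigon, so after a preliminary tame automorphism the differential takes the standard form $\partial b = c$ with $c\notin\operatorname{im}\partial$ elsewhere, and the algebra on the far side of $t_j$ is exactly the corresponding destabilization; a birth is the same run backwards and gives a stabilization.

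The main obstacle is the handle-slide case, where the generator set is unchanged but the differential jumps. Here one identifies the chord $a$ at which the exceptional dimension-$(-1)$ disk has its positive puncture, lets $w$ be the monomial read off from its negative punctures, and proves via a gluing/compactness analysis near $t_j$ that every rigid disk of $\Lambda_{t_j+\epsilon}$ is either a rigid disk of $\Lambda_{t_j-\epsilon}$ or arises by gluing the exceptional disk into a negative corner at $a$ of one, and conversely. This yields $\partial_{t_j+\epsilon}=\Phi\circ\partial_{t_j-\epsilon}\circ\Phi^{-1}$ for the elementary automorphism $\Phi$ fixing all generators except $a\mapsto a+w$ (note $|a|=|w|$, so $\Phi$ is graded), hence a tame isomorphism $(\mathcal{A}_K,\partial_{t_j-\epsilon})\cong(\mathcal{A}_K,\partial_{t_j+\epsilon})$. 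Carrying this out rigorously requires the delicate gluing theorem for holomorphic disks — controlling the one-parameter family as $t\to t_j$, ruling out competing degenerations by index and area bounds, and checking orientations (vacuous over $\mathbb{Z}_2$) — and is the technical heart of the proof.

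Finally I would observe that every step is compatible with the grading: the Conley--Zehnder-type grading varies continuously and is thus locally constant, the pair $b,c$ produced in a birth satisfies $|b|=|c|+1$ as a graded stabilization requires, and handle-slide automorphisms preserve grading. Composing the elementary equivalences along $t\in[0,1]$ exhibits $\mathcal{A}_K(\Lambda_0)$ and $\mathcal{A}_K(\Lambda_1)$ as stable tame isomorphic, and the case $K=\mathbb{Z}_2H_1(\Lambda)$ follows verbatim once the homology classes of the capping paths are tracked through each bifurcation. A fully detailed argument can be extracted from \cite{EES2}.
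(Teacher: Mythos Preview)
Your proposal is correct and is precisely the bifurcation argument of Ekholm--Etnyre--Sullivan that the paper invokes: the paper's own proof is simply the citation ``See \cite{EES2}, Proposition 2.7,'' and your sketch faithfully outlines that argument.
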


\begin{proof} See \cite{EES2}, Proposition 2.7.\end{proof}

The homology of \(\mathcal{A}_K(\Lambda)\) is invariant under stable tame isomorphism, and so is in turn an invariant of the Legendrian isotopy class.   This homology is called the Legendrian Contact Homology of \(\Lambda\).

When \(\dim\Lambda = 1\), it is easy to find these pseudoholomorphic disks, because, thanks to the Riemann Mapping Theorem, they correspond to polygons in \(T^*M\) whose vertices are Reeb chords and whose edges lie on \(\pi_\mathbb{C}(\Lambda)\).   It is significantly harder to calculate the differential when \(\dim \Lambda > 1\).   Fortunately, as we discuss in the next subsection, \cite{Ek} provides a technique for calculating the differential in a very broad special case.

\subsection{Gradient Flow Trees}
\label{subsec:flowtrees}

\cite{Ek} provides a technique for calculating \(\partial\) provided that either \(\dim \Lambda = 1\) or 2 or the front projection \(\pi_F\) has no codimension-2 or higher singularities that are not cusp edge intersections.   This is a large and very interesting class of Legendrian submanifolds.   He does this using an object called a gradient flow tree.

Consider the front projection of a Legendrian submanifold.   We can regard this as the graph of some collection of \textbf{height functions} \(f_i:U_i \to \mathbb{R}, U_i \subset M\).   A path \(\gamma:[0,1] \to M\) is a \textbf{flow line} of the height functions \(f_1, f_2\) if \(\gamma'(t) = -\nabla(f_1 - f_2)(\gamma(t))\).   If \(\gamma:[0,1]\to M\) is a flow line, the \textbf{1-jet lift} of \(\gamma\) is an unordered pair \(\{\tilde{\gamma}^1, \tilde{\gamma}^2\}:[0, 1] \to \Lambda \subset J^1(M)\) of continuous paths such that \(\pi_F(\tilde{\gamma}_i(t)) = (\gamma(t), f_i(\gamma(t)))\).   We define the \textbf{flow orientation} of \(\tilde{\gamma}_1\) at a point \(p \in M\) to be the unique lift of the vector \(-\nabla(f_1 - f_2)(\pi_M(p)) \in T_{\pi_M(p)}M\) to \(T_p\Lambda\).

Let \(\Gamma\) be a tree with finitely many edges.   For \(k \geq 2\), at each \(k\)-valent vertex, place a cyclic ordering on the edges.   A map \(F:\Gamma \to M\) is a \textbf{gradient flow tree} if it satisfies the following conditions:

\begin{itemize}

\item If \(e\) is an edge of \(\Gamma\) then the restriction of \(F\) to \(e\) is a flow line.

\item If \(v\) is a \(k\)-valent vertex with ordered edges \(e_1, ..., e_k\), and \(\tilde{\gamma}^1_j, \tilde{\gamma}^2_j\) are the 1-jet lifts of \(F\) restricted to \(e_j\), then we require that \(\pi_\mathbb{C}(\tilde{\gamma}^2_j(v)) = \pi_\mathbb{C}(\tilde{\gamma}^1_{j+1}(v))\), with the convention \(\gamma_{k+1} = \gamma_1\).   In addition, we require that the flow orientation of \(\gamma^2_j\) at \(v\) is directed towards \(v\) if and only if the flow orientation of \(\gamma^1_{j+1}\) is directed away.

\item The Lagrangian projection of the 1-jet lifts of the edges of \(\Gamma\) fit together to form a closed oriented curve \(\bar{\Gamma}\) in \(\pi_\mathbb{C}(\Lambda)\).

\end{itemize}

Let \(\mathcal{T}(a; b_1...b_m)\) be the space of gradient flow trees with a positive puncture at \(a\) and negative punctures at \(b_1, ..., b_m\).   Let \(\tilde{\Gamma}\) be the lift of \(\bar{\Gamma}\) to \(\Lambda\), and let \(\mathcal{T}_A(a; b_1...b_m)\) be the restriction of \(\mathcal{T}(a; b_1...b_m)\) to trees such that the homology class of \(\tilde{\Gamma} \cup \gamma_a \cup \gamma_{b_1} \cup ... \cup \gamma_{b_m}\) equals \(A\) in \(H_1(\Lambda)\).

\begin{theorem}\label{theorem2.3} If \(\dim \Lambda = 1\) or 2 or if the only singularities of \(\pi_F\) are cusp edges, then there exists an almost complex structure such that there is a bijection between \(\mathcal{T}(a; b_1...b_m)\) and \(\mathcal{M}(a; b_1...b_m)\), and between \(\mathcal{T}_A(a; b_1...b_m)\) and \(\mathcal{M}_A(a; b_1...b_m)\), if the disks are rigid.\end{theorem}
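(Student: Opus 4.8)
The statement is Ekholm's flow-tree theorem \cite{Ek}, and the plan is to follow his strategy: introduce a one-parameter family of almost complex structures adapted to a rescaling of \(\Lambda\) in the fiber directions, and show that in the rescaling limit rigid \(J\)-holomorphic disks degenerate to rigid gradient flow trees, while conversely each rigid flow tree is the limit of a unique rigid disk. First I would fix a metric \(g\) on \(M\), lift it to a tame almost complex structure \(J\) on \(T^*M\), and replace \(\Lambda\) by the family \(\Lambda_\lambda\) obtained by scaling the \(y\)- and \(z\)-coordinates by \(\lambda\) (equivalently, shrinking the front in the height direction). As \(\lambda \to 0\), \(\pi_\mathbb{C}(\Lambda_\lambda)\) collapses onto the zero section, the Reeb chords shrink, and the relevant geometry is governed entirely by the differences of local height functions \(f_i - f_j\). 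The heart of the matter is a Gromov-type compactness theorem with the rescaling built in: a sequence of rigid disks \(u_\lambda \in \mathcal{M}(a; b_1\dots b_m)\) for \(\Lambda_\lambda\) has a subsequence that converges, after suitable reparametrization, to a gradient flow tree in \(\mathcal{T}(a; b_1\dots b_m)\) — thin parts of the disks lying over regions where \(\pi_M|_\Lambda\) is an immersion converge to flow lines, and the pieces over cusp edges and over Reeb chords converge to the vertices of the tree. This requires careful a priori area and gradient estimates, a bubbling analysis (there are no nonconstant holomorphic spheres or disks by exactness), and control of the behavior near cusp edges.

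Conversely, given a rigid flow tree \(\Gamma \in \mathcal{T}(a; b_1\dots b_m)\), I would build the corresponding disk by a gluing and Newton-iteration argument. One assigns to each edge of \(\Gamma\) a thin holomorphic strip modeled on the gradient flow line (whose graph is approximately holomorphic for small \(\lambda\)) and to each vertex a local holomorphic model: a small triangle at an ordinary trivalent vertex, the standard strip-like end at a puncture over a Reeb chord, and normal-form solutions at cusp-edge vertices, switches, and ends. These pieces are patched into an approximately holomorphic map, and one shows the linearized \(\bar\partial\)-operator on suitable \(\lambda\)-dependent weighted Sobolev spaces is uniformly (in \(\lambda\)) surjective with kernel of the expected dimension, so that the implicit function theorem produces, for \(\lambda\) small, a unique genuine holomorphic disk near the preglued map; rigidity of the tree translates into rigidity of the disk. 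Combined with the compactness statement above, the uniqueness clause of the implicit function theorem shows that for small \(\lambda\) there are no disks near \(\Lambda_\lambda\) other than those produced this way, which gives the bijection.

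For the refinements and bookkeeping: for a residual set of metrics \(g\) (equivalently, of perturbations of \(\Lambda\) within its front-generic class) the flow trees are cut out transversally, and the formal dimension of \(\mathcal{T}(a; b_1\dots b_m)\) equals that of \(\mathcal{M}(a; b_1\dots b_m)\), so ``rigid'' on one side matches ``rigid'' on the other; the refinement \(\mathcal{T}_A \leftrightarrow \mathcal{M}_A\) is automatic since the preglued map carries the same boundary homology class as the tree. Since the present paper only needs the existence of such an almost complex structure, I would simply invoke \cite{Ek} for the bijection itself. The main obstacle is the gluing near the degenerate vertices — the cusp edges and switches — where the limiting disk is genuinely singular, so the standard Fredholm setup must be replaced by one on rescaled weighted spaces: obtaining uniform invertibility of the linearization there, and matching local orientations (if one works over \(\Z\) or with \(H_1\)-coefficients), is where essentially all of the difficulty lies.
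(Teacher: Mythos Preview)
Your proposal is correct and in fact goes well beyond what the paper does: the paper's own proof of this theorem is simply ``See \cite{Ek}, Theorem 1.1.'' Your sketch of the rescaling/compactness/gluing strategy is an accurate outline of Ekholm's argument, and you explicitly note that one can just invoke \cite{Ek}, which is exactly what the paper does.
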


\begin{proof} See \cite{Ek}, Theorem 1.1.\end{proof}

For a Reeb chord \(p\), we will use \(I(p)\) to denote the Morse index of the height difference function at \(p\).   If \(I(p) = 0\) we say \(p\) is a \textbf{minimum}, and if \(I(p) = \dim \Lambda\) we say \(p\) is a \textbf{maximum}.

The vertices of a gradient flow tree may correspond to Reeb chords or they may correspond to other joinings of flow lines.   We label the following kinds of non-chord vertices:

\begin{itemize}

\item \textbf{End:} One-valent vertex where a flow line between two sheets meets a cusp edge of that sheet.

\item \textbf{Switches:} Two-valent vertex where a flow line between sheets \(i\) and \(j\), where \(i\) lies above \(j\), meets a cusp edge between sheets \(i\) and \(k\) (or \(k\) and \(j\)), and leaves as a flow line between sheets \(k\) and \(j\) (or \(i\) and \(k\)).

\item \textbf{\(Y^0\) vertices:} Three-valent vertex where a flow line between sheets \(i\) and \(j\) splits into two flow lines, one between sheets \(i\) and \(k\) and one between sheets \(k\) and \(j\), where the \(z\) coordinate of sheet \(k\) lies between the \(z\) coordinates of sheets \(i\) and \(j\).

\item \textbf{\(Y^1\) vertices:} Three-valent vertex where a flow line between sheets \(i\) and \(j\) meets a cusp edge between sheets \(k\) and \(l\), where \(i\) lies above \(k\) and \(l\) lies above \(j\), and leaves as two flow lines, one between sheets \(i\) and \(l\) and one between sheets \(k\) and \(j\).

\end{itemize}

\begin{figure}\begin{center}
\includegraphics{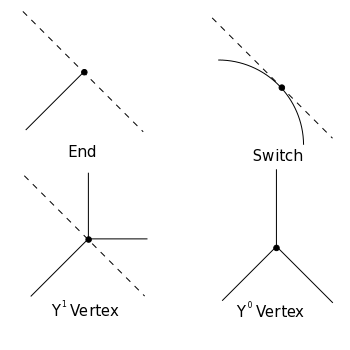}
\caption{Non-Chord Vertices of Gradient Flow Trees\label{fig:picturesOfVertices}}
(\emph{Solid lines are gradient flow lines, dotted lines are cusp edges})\end{center}
\end{figure}

\begin{theorem}\label{theorem2.4} Let \(\Gamma\) be a gradient flow tree with positive Reeb chords \(p_1, ..., p_m\), negative Reeb chords \(q_1, ..., q_r\), and non-chord vertices \(v_1, ..., v_s\).   The dimension of the moduli space containing \(\Gamma\) is then:
\begin{equation}
\label{eqn:GFTdimFormula}\dim \Gamma = (n - 3) + \sum_{i=1}^m \left(I(p_i) - (n - 1)\right) - \sum_{j=1}^r \left(I(q_j) - 1\right) + \sum_{k=1}^s \mu(v_k)
\end{equation}

Where \(I(x)\) is the Morse index of \(x\) if \(x\) is a Reeb chord, \(I(x) = n + 1\) if \(x\) is a positive special puncture, \(I(x) = -1\) if \(x\) is a negative special puncture, \(\mu(v_k) = 1\) if \(v_k\) is an end, 0 if \(v_k\) is a \(Y^0\) vertex, and -1 if \(v_k\) is a switch or \(Y^1\) vertex.\end{theorem}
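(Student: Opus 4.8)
The plan is to realize the space $\mathcal{T}(a;b_1\dots b_m)$ of gradient flow trees of a fixed combinatorial type as the zero set of a Fredholm section, so that computing its dimension becomes computing a Fredholm index, and then to evaluate that index by an additivity argument that decomposes the tree into elementary pieces. First I would fix the combinatorial type of $\Gamma$: the underlying tree with its cyclic orderings, the pair of sheets interpolated by each edge, and the type (End, Switch, $Y^0$, $Y^1$, or Reeb chord) of each vertex. With this data fixed, a flow tree is specified by the positions in $M$ of its vertices together with a flow line on each edge joining the prescribed vertex positions with the prescribed sheet pair, subject to the matching conditions at vertices (agreement of Lagrangian lifts and compatibility of flow orientations, as in the definition of a flow tree) and the asymptotic conditions at the punctures. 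This is precisely the Fredholm problem set up in \cite{Ek}; near a regular tree the moduli space is a manifold whose dimension equals the index of the linearized operator $D_\Gamma$. The theorem then reduces to the identity $\operatorname{ind} D_\Gamma = \text{RHS of \eqref{eqn:GFTdimFormula}}$.

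Next I would prove that $\operatorname{ind} D_\Gamma$ is additive under decomposition of the tree. Cutting an edge at a generic interior point, or excising a vertex together with stubs of its incident edges, splits $D_\Gamma$ into a direct sum of operators attached to the pieces, up to a finite-dimensional gluing correction — a linear Mayer–Vietoris argument for Fredholm indices. The normalization constants $I(x)=n+1$ at a positive special puncture and $I(x)=-1$ at a negative special puncture are exactly those forced by requiring this additivity to hold on the nose once the artificial ends produced by a cut are themselves recorded as special punctures. Iterating the decomposition expresses $\operatorname{ind} D_\Gamma$ as a sum of local contributions: one per edge (governed by its two endpoint conditions), one correction $\mu(v)$ per interior non-chord vertex, and one residual constant.

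I would then compute each local contribution. Near a positive Reeb-chord puncture $p$ the tree is constrained to lie in the unstable manifold of $p$ for the flow $\gamma'=-\nabla(f_i-f_j)$, which has dimension $I(p)$; normalizing by $(n-1)$ records the contribution $I(p)-(n-1)$, and the case of a positive special puncture is covered by declaring $I=n+1$. Near a negative Reeb-chord puncture $q$ the tree lies in the corresponding stable manifold, of dimension $n-I(q)$; bookkeeping this as $-(I(q)-1)$ handles the sign and the shift (with $I=-1$ for negative special punctures), and all of the $(n-1)$, $1$, $n+1$, $-1$ shifts are dictated by the additivity normalization of the previous paragraph. Each interior non-chord vertex imposes the matching conditions listed in the definition of a flow tree; a direct computation in the four local models of Figure~\ref{fig:picturesOfVertices} produces the correction $\mu(v)$ ($1$ for an End, $0$ for a $Y^0$ vertex, $-1$ for a Switch or a $Y^1$ vertex). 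The leftover constant $n-3$ collects what is attached to neither an edge nor a vertex: the $n$ from the freedom of placing the whole configuration in $M$ and the $-3$ from the reparametrization group of the disk domain, which is exactly the $-3$ appearing in the Fredholm formula for punctured holomorphic disks. Summing everything gives \eqref{eqn:GFTdimFormula}; consistency with the holomorphic-disk dimension formula then follows from Theorem~\ref{theorem2.3} in the rigid case and, in general, from the identification of $D_\Gamma$ with the linearized $\bar\partial$-operator of the associated disk family carried out in \cite{Ek}.

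The main obstacle is the local analysis at the Switch and $Y^1$ vertices. There the tree crosses a cusp edge and changes which pair of sheets it interpolates, so the relevant height-difference function degenerates and the naive Morse-theoretic dimension count of (un)stable manifolds does not apply directly; one must use the explicit cusp-edge normal form from the definition of a front generic submanifold to pin down the correct local model and verify that it contributes exactly $-1$, and then check that these models patch into the global Fredholm picture with the claimed index. Establishing the transversality needed to pass from ``$\operatorname{ind} D_\Gamma$'' to ``$\dim$ of the moduli space'' in the presence of these vertices is the heaviest analytic point, and is where I would either reproduce or directly invoke the analytic package of \cite{Ek}.
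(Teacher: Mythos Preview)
The paper does not prove this theorem at all: its entire proof is the single line ``See \cite{Ek}, Prop.~3.14.''  So there is no approach in the paper to compare against; the result is simply imported from Ekholm.

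Your sketch is broadly along the lines of how Ekholm actually proves it---fix the combinatorial type, view the moduli space as cut out by transversality conditions on (un)stable manifolds, and compute the dimension additively vertex by vertex.  One caveat: your reading of the constant $(n-3)$ as ``$n$ from the ambient freedom in $M$ minus $3$ from the disk reparametrization group'' is heuristic at best for flow trees, since flow trees have no disk domain and no $\mathrm{PSL}(2,\mathbb{R})$ acting on them; in Ekholm's argument the $(n-3)$ falls out of the combinatorial bookkeeping of intersecting stable/unstable manifolds and the Euler characteristic of the underlying tree, not from any automorphism group.  This does not break your outline, but if you actually carry out the computation you should expect the constant to appear for a different reason than the one you stated.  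For the purposes of this paper, however, a citation suffices.
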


\begin{proof} See \cite{Ek}, Prop. 3.14.\end{proof}

\begin{theorem}\label{theorem2.5} Any vertex of a rigid gradient flow tree with one positive puncture must be a one-valent vertex at a positive or negative Reeb chord; a two-valent vertex at a positive minimum or a negative maximum; a cusp edge; a switch; a \(Y^0\) vertex; or a \(Y^1\) vertex.\end{theorem}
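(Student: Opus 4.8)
The plan is to read the conclusion off the dimension formula (\ref{eqn:GFTdimFormula}) of Theorem~\ref{theorem2.4} combined with the combinatorics of a tree. Since $\Gamma$ is rigid, $\dim\Gamma=0$; and since it has a single positive puncture $a$, negative punctures $q_1,\dots,q_r$, and non-chord vertices $v_1,\dots,v_s$, substituting $m=1$ into (\ref{eqn:GFTdimFormula}) and collecting the constant terms yields
\[
(I(a)-2)+\sum_{j=1}^{r}\bigl(1-I(q_j)\bigr)+\sum_{k=1}^{s}\mu(v_k)=0 .
\]
I would supplement this with the identity $\sum_{v}\bigl(\mathrm{val}(v)-2\bigr)=-2$, valid for every finite tree (here $v$ runs over all vertices, including punctures), and with the elementary bounds $0\le I(p)\le n$ for a Reeb-chord vertex $p$ and $\mu(v_k)\le 1$, with equality only at an End.

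First I would recall, from the local structure theory of generic gradient flow trees, the exhaustive list of vertex types that can occur a priori: a $1$-valent vertex is a Reeb-chord puncture (positive or negative) or an End; a $2$-valent vertex is a Reeb-chord vertex, a crossing of a cusp edge, or a switch; a $3$-valent vertex is a $Y^0$ or $Y^1$ vertex; and a vertex of valence $\ge 4$ can be removed by a generic perturbation. For each type I would record its valence together with the term it contributes to the three sums above, so that the allowed list in the statement becomes the list of vertex types compatible with the identity $=0$.

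Next I would organize the argument along the flow. Orienting every edge of $\Gamma$ by the negative gradient flow makes $a$ the unique source and $\Gamma$ a rooted tree. Inspecting the local normal forms of the allowed vertices, I expect to establish a monotonicity statement — across each such vertex the number of involved sheets and the Morse index of the local height-difference function vary in a controlled manner — so that the quantities $I(a)-2$, $1-I(q_j)$, and $\mu(v_k)$ telescope along the rooted tree. The constraint $\dim\Gamma=0$ then leaves no slack: a vertex that is not of one of the listed types (for instance a $2$-valent Reeb-chord vertex that is neither a positive minimum nor a negative maximum, or a vertex of valence $\ge 4$) would inject a strictly positive contribution that cannot be compensated, forcing $\dim\Gamma\ge 1$ and contradicting rigidity.

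The step I expect to be the main obstacle is precisely this telescoping/monotonicity analysis. The constraint $\dim\Gamma=0$ does not break up termwise, since a positive puncture of non-maximal index and a negative puncture that is not a minimum both contribute negatively; one must therefore genuinely track how sheet labels and indices propagate across the vertices involving cusp edges (switches and $Y^1$ vertices) to exclude the possibility that a ``bad'' vertex is balanced by another vertex elsewhere in the tree. Carrying this out rests on having the precise local models for each vertex type, essentially those of Ekholm.
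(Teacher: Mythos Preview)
The paper does not prove this theorem independently; it simply cites \cite{Ek}, Lemma~3.7. So your proposal is really a sketch of how Ekholm's argument might go, and in broad strokes it is in the right spirit: Ekholm's proof does combine a local classification of generic vertex types with a dimension count.

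That said, there is a circularity in the way you set things up. Theorem~\ref{theorem2.4} as stated in this paper only \emph{defines} $\mu(v_k)$ for the four non-chord vertex types End, switch, $Y^0$, $Y^1$; no value of $\mu$ is assigned to a hypothetical ``bad'' vertex, so the formula~(\ref{eqn:GFTdimFormula}) cannot by itself be used to exclude such vertices. To make your argument run you would need Ekholm's more general dimension formula, in which $\mu$ is computed from a Maslov-type index attached to an arbitrary local model --- and computing those indices \emph{is} the local analysis you later flag as the main obstacle. In other words, by the time you have assembled the data needed to make the dimension-formula argument rigorous, you have essentially reproduced the content of \cite{Ek}, Lemma~3.7 directly.

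A second, smaller point: most of the content of Theorem~\ref{theorem2.5} is a \emph{genericity} statement (after a generic perturbation of the front, only the listed local vertex models survive), not a \emph{rigidity} statement. Your tree identity and index bounds only enter at the very end, to force a two-valent Reeb-chord vertex to be a positive minimum or a negative maximum. You correctly identify that step as the place where the bookkeeping is delicate, but it is a small part of the overall classification; the bulk of the work --- the ``exhaustive list of vertex types'' you propose to recall --- is exactly what Ekholm's lemma supplies.
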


\begin{proof} See \cite{Ek}, Lemma 3.7.\end{proof}

We will also make use of objects called \textbf{partial flow trees}, which are gradient flow trees that have one or more \textbf{special punctures}, which are 1-valent vertices that are neither cusp edges nor Reeb chords.   A special puncture may be considered to be either positive or negative, depending on the orientation of the 1-jet lift.   If \(v\) is a special puncture, then we treat \(I(v) = n + 1\) if \(v\) is positive, \(I(v) = -1\) if \(v\) is negative.   Critically, given any gradient flow tree \(\Gamma\), and any point \(p\) in a flow line (not a vertex) of that tree, we can separate \(\Gamma\) into a pair of partial flow trees, \(\Gamma_1\) and \(\Gamma_2\), by breaking \(\Gamma\) at \(p\).   Then:
\[
\dim \Gamma = \dim \Gamma_1 + \dim \Gamma_2 - (n + 1)
\]

The advantage of gradient flow trees is that they reduce finding the rigid pseudoholomorphic disks from an infinite-dimensional problem to a finite-dimensional problem.   Although we cannot always use them, given the requirements on \(\pi_F\), they are a very valuable tool when they are useable.   The essence of the proof of theorems 1.1 through 1.5 will be to show that, although we cannot prove that pseudoholomorphic disks are globally equivalent to gradient flow trees if \(\pi_F\) has higher-codimension singularities, we can show they are \emph{locally} equivalent.

%\subsection{Generalized Flow Graphs}
%\label{subsec:generalizedflowgraphs}
%
%{\color{black}
%We will show that, as we scale the cofiber coordinates of \(T^*M\) by \(\sigma \to 0\), the projection to \(M\) of the boundary of a pseudoholomorphic disk with only one positive puncture, restricted to \(M - \pi_M(\Sigma_2)\), converges to an object called a \textbf{generalized flow graph}.   A generalized flow graph is a generalization of a gradient flow tree, consisting of a collection of partial flow trees \(F_i : \Gamma_i \to M\) of trees into the base space such that:
%
%\begin{itemize}
%
%\item Each tree \(\Gamma_i\) has exactly one positive puncture, which is either a one-valent special puncture or a positive puncture at a Reeb chord.
%
%\item The special punctures of each tree lie on \(\pi_M(\Sigma_2)\).
%
%\item There exists a set of arcs \(\gamma_1, ..., \gamma_k\) in \(\pi_M(\Sigma_2)\) such that:
%\[
%\left(\bigcup_i F_i(\Gamma_i)\right) \cup \left(\bigcup_j \gamma_j\right)
%\]
%is connected.
%
%\end{itemize}
%
%A gradient flow tree is then a generalized flow graph with a single component \(F:\Gamma \to M\) and no special punctures.}

\subsection{Augmentations and Linearized Contact Homology}
\label{subsec:linearized}

The Legendrian Contact Homology of a Legendrian submanifold \(\Lambda \subset J^1(M)\) is, generally, infinite-dimensional, and often quite difficult to work with.   We therefore define algebraic invariants of the Legendrian Contact Homology which will be finite-dimensional.

First, we split the differential \(\partial\) into components \(\partial_0, \partial_1, \partial_2\), etc., where \(\partial_k\) denotes the restriction of \(\partial\) to words of length \(k\).   Since \(\partial \circ \partial = 0\), we obtain:
\[
\partial_0 \circ \partial_0 = 0
\]\[
(\partial_0 \circ \partial_1) + (\partial_1 \circ \partial_0) = 0
\]\[
(\partial_0 \circ \partial_2) + (\partial_1 \circ \partial_1) + (\partial_2 \circ \partial_0) = 0
\]
Etc.   This means that, if \(\partial_0 = 0\), then \(\partial_1 \circ \partial _1 = 0\), which means that we can define a \textbf{linearized chain complex} given by the restriction of \(\mathcal{A}(\Lambda)\) to words of unit length, and equipped with the differential \(\partial_1\).   The homology of this algebra will then be finite-dimensional.   Unfortunately, \(\partial_0 \neq 0\) in general.   However, \(\mathcal{A}(\Lambda)\) may be stably tame isomorphic to a DGA where \(\partial_0\) \emph{does} equal zero.

Finding such an isomorphism is equivalent to finding a graded algebra map \(\epsilon:\mathcal{A}(\Lambda) \to \mathbb{Z}_2\) such that \(\epsilon(1) = 1\) and \(\epsilon \circ \partial = 0\).   We call such a map an \textbf{augmentation}.   Given an augmentation, we define a DGA map by \(H_\epsilon(x) = x + \epsilon(x)\) if \(x\) is a Reeb chord, and extend to products multiplicatively - that is:
\[
H_\epsilon(xy) = H_\epsilon(x)H_\epsilon(y)
\]
\(H_\epsilon\) is a stable tame isomorphism to the differential graded algebra with the same generators as \(\mathcal{A}(\Lambda)\), but with differential \(\partial_\epsilon = H_\epsilon \circ \partial \circ H_\epsilon^{-1}\).   \((\partial_\epsilon)_0\) will always equal zero, so we can define a linearized chain complex for this augmented algebra.   The homology of this complex is called the \textbf{linearized contact homology}, or \(LCH_*(\mathcal{A}(\Lambda), \partial, \epsilon)\).   Since \(\epsilon\) is not necessarily unique, a given Legendrian may have multiple linearized contact homologies, and two different maps \(\epsilon\) may produce the same linearized contact homology.   However:

\begin{theorem}\label{theorem2.6} The set of linearized contact homologies of \(\Lambda\) is an invariant of the stable tame isomorphism class of \(\mathcal{A}(\Lambda)\), and therefore is an invariant of the Legendrian isotopy class of \(\Lambda\).\end{theorem}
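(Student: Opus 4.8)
The plan is to first reduce to a purely algebraic statement and then verify invariance under the generating moves. By Theorem~\ref{theorem2.2} the stable tame isomorphism class of \(\mathcal{A}(\Lambda)\) is already a Legendrian isotopy invariant, so it suffices to prove that the \emph{set} of linearized contact homologies is unchanged by (i) tame isomorphisms and (ii) stabilizations and de-stabilizations, since every stable tame isomorphism is a composition of such moves.

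For a tame isomorphism \(\phi:\mathcal{A}\to\mathcal{A}'\) I would first observe that pullback \(\epsilon'\mapsto \epsilon'\circ\phi\) is a bijection from the augmentations of \(\mathcal{A}'\) onto those of \(\mathcal{A}\), with inverse given by pullback along \(\phi^{-1}\); this guarantees that no linearized homology is created or destroyed by the move, provided corresponding augmentations yield isomorphic linearized homologies. To establish the latter, fix \(\epsilon'\) and set \(\epsilon=\epsilon'\circ\phi\). Conjugation produces a DGA isomorphism \(\tilde\phi := H_{\epsilon'}\circ\phi\circ H_\epsilon^{-1}\) from \((\mathcal{A},\partial_\epsilon)\) to \((\mathcal{A}',\partial_{\epsilon'})\), and both conjugated differentials have vanishing length-zero part. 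A short computation using \(\epsilon=\epsilon'\circ\phi\) shows that the constant term of \(\tilde\phi\) on each generator vanishes, so \(\tilde\phi\) preserves the decreasing filtration of each DGA by word length; since \(\tilde\phi^{-1}=H_\epsilon\circ\phi^{-1}\circ H_{\epsilon'}^{-1}\) has the same property, \(\tilde\phi\) is a filtered isomorphism. It therefore induces an isomorphism of associated graded complexes, and the summand in filtration degree one is exactly the linearized complex carrying \((\partial_\epsilon)_1\); hence \(LCH_*(\mathcal{A},\partial,\epsilon)\cong LCH_*(\mathcal{A}',\partial,\epsilon')\).

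For a stabilization \(\mathcal{A}^S=S(\mathcal{A})\), obtained by adjoining \(b,c\) with \(\partial_S b=c\) and \(\partial_S c=0\), I would check that restriction \(\epsilon^S\mapsto\epsilon^S|_\mathcal{A}\) surjects onto the augmentations of \(\mathcal{A}\): every augmentation of \(\mathcal{A}\) extends by sending \(b\) and \(c\) to \(0\), the relation \(\epsilon^S(c)=\epsilon^S(\partial_S b)=0\) is automatic, and the possibly unconstrained value \(\epsilon^S(b)\) plays no role because conjugating by \(H_{\epsilon^S}\) still gives \(\partial_{\epsilon^S}b=c\). For a matched pair \(\epsilon^S,\ \epsilon=\epsilon^S|_\mathcal{A}\), conjugating by \(H_{\epsilon^S}\) identifies the linearized complex of \((\mathcal{A}^S,\epsilon^S)\) with the direct sum of the linearized complex of \((\mathcal{A},\epsilon)\) and the acyclic two-term complex \(\mathbb{Z}_2\langle b\rangle\xrightarrow{\cong}\mathbb{Z}_2\langle c\rangle\), which contributes nothing to homology. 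The same argument applies to de-stabilization. Combining the two cases, the set of linearized homologies is a stable tame isomorphism invariant, hence a Legendrian isotopy invariant.

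I expect the main obstacle to be the bookkeeping in the tame-isomorphism step: verifying simultaneously that pullback of augmentations is a genuine bijection (so the two sets of linearized homologies are matched up exactly) and that the conjugated map \(\tilde\phi\) really respects the word-length filtration even though \(\phi\) itself mixes words of different lengths. The grading conventions on the stabilizing generators \(b\) and \(c\) are a minor additional nuisance, requiring one to confirm the degree of the linearized complexes is preserved throughout; but once the filtered-isomorphism picture is in place, the remaining arguments are routine homological algebra.
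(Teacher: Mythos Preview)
Your proposal is correct and follows exactly the standard argument of Chekanov (\cite{Che}, Theorem~5.2), which is precisely what the paper cites for its proof; the paper gives no independent argument beyond that reference. Your bookkeeping for the tame-isomorphism step (in particular the observation that the constant term of \(\tilde\phi\) on generators vanishes because \(\epsilon'(\phi(a))=\epsilon(a)\), so over \(\mathbb{Z}_2\) the two constants cancel) and your treatment of the stabilization step are both sound.
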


\begin{proof} See \cite{Che}, Theorem 5.2.\end{proof} 

\subsection{Cobordism Maps}
\label{subsec:cobordismmaps}

Let \(\mathbb{R}^+ = (0, \infty)\).   The symplectization \((J^1(M) \times \mathbb{R}, d(e^t\alpha))\) is symplectomorphic to the cotangent bundle with canonical symplectic form \((T^*(M \times \mathbb{R}^+), \omega)\) by the map:
\[
F(x_1, ..., x_n, y_1, ..., y_n, z, t) = (x_1, ..., x_n, e^t, e^ty_1, ..., e^ty_n, z)
\]
Although this is not an \emph{exact} symplectomorphism, the difference between the primitive of the symplectic form of \(J^1(M) \times \mathbb{R}\) and the tautological one-form of \(T^*(M\times\mathbb{R}^+)\) \emph{is} exact:
\[
F^*\left(-\sum_{i=1}^n y_idx_i - zdt\right) = -\sum_{i=1}^ne^ty_i dx_i - e^t z dt = e^t\alpha - d(e^tz)
\]
Therefore, if {\color{black}\((e^t\alpha)|_{TL} = df\)}, then \(\theta|_{F(L)} = d(f + tz)\), where \(\theta\) is the tautological one-form of \(T^*(M \times \mathbb{R}^+)\).   So \(F(L)\) is still an exact Lagrangian cobordism.   We will use \(L\) to refer to \(F(L)\) from here on, and work exclusively in \(T^*(M \times \mathbb{R}^+)\).   We prefer this environment because it is the native environment for our pseudoholomorphic disks.

An exact Lagrangian cobordism \(L\) from \(\Lambda_+\) to \(\Lambda_-\) induces a DGA map \(\Phi_L:\mathcal{A}(\Lambda_+) \to \mathcal{A}(\Lambda_-)\), given by counting rigid pseudoholomorphic disks whose punctures limit to the Reeb chords at infinity.   That is, if \(a\) is a Reeb chord in \(\Lambda_+\) and \(b_1, ..., b_m\) are Reeb chords in \(\Lambda_-\), we define the moduli space \(\widetilde{\mathcal{M}}(a; b_1...b_m)\) to be the space of maps \(u:(D_{m+1}, \partial D_{m+1}) \to (T^*(M\times\mathbb{R}^+), L))\) such that:

\begin{itemize}

\item \(u\) is pseudoholomorphic, that is, \(du + J \circ du \circ J_{\mathbb{C}} = 0\), where \(J_\mathbb{C}\) is the almost complex structure on the domain of \(u\).

\item \(u(p)\) limits to \(a\times\{\infty\}\) as \(p\to q_0\), where \(a\) is a Reeb chord of \(\Lambda_+\).

\item \(u(p)\) limits to \(b_i\times\{-\infty\}\) as \(p \to q_i\), where \(b_i\) is a Reeb chord of \(\Lambda_-\).

\end{itemize}

We then define \(\Phi_L\) by:
\[
\Phi_L(a) = \sum_{\dim\widetilde{\mathcal{M}}(a; b_1...b_m) = 0} \left(\#\widetilde{\mathcal{M}}(a; b_1...b_m)\right)b_1...b_m
\]
Let \(\partial_\pm\) denote the differential of \(\Lambda_\pm\).   Then:

\begin{theorem}\label{theorem2.7} \(\Phi_L \circ \partial_+ = \partial_- \circ \Phi_L\)\end{theorem}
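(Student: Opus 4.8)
The statement $\Phi_L \circ \partial_+ = \partial_- \circ \Phi_L$ is the assertion that the count of rigid pseudoholomorphic disks in the symplectization (now $T^*(M\times\mathbb{R}^+)$) defines a chain map. The plan is the standard cobordism-map argument: analyze the boundary of the compactified one-dimensional moduli spaces $\widetilde{\mathcal{M}}(a;b_1\dots b_m)$ and show that the two sides of the claimed identity enumerate the same broken configurations. First I would recall the compactness and gluing theory for these moduli spaces (referring to \cite{EES2} and the cobordism analogues in the literature), which tells us that a one-dimensional moduli space $\widetilde{\mathcal{M}}(a;b_1\dots b_m)$ is an open one-manifold whose ends are in bijection with two-level broken buildings. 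Because the symplectization has a positive end modeled on $\Lambda_+\times(T,\infty)$ and a negative end modeled on $\Lambda_-\times(-\infty,-T)$, an SFT-type compactness argument shows each such building is of exactly one of two types: either a rigid disk in the symplectization with positive puncture $a$ and one of its negative punctures $b_i$ broken off into a rigid disk in $T^*M$ counted by $\partial_-$ (contributing to $\partial_- \circ \Phi_L$), or a rigid disk in $T^*M$ counted by $\partial_+$ with a negative puncture $c$ capped off by a rigid disk in the symplectization with positive puncture $c$ (contributing to $\Phi_L \circ \partial_+$).

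The key steps, in order, are: (1) establish that $\widetilde{\mathcal{M}}(a;b_1\dots b_m)$ for $\dim = 1$ can be compactified to a compact one-manifold with boundary, using the tameness hypotheses on $(J,g,\omega)$ to rule out bubbling and the exactness of $L$ together with the action/energy estimate $\theta|_{F(L)} = d(f+tz)$ to control energy and exclude disk bubbles without punctures; (2) identify the boundary points of this compactification with two-level broken curves, invoking gluing to show each such broken curve is a genuine boundary point and is a transverse end; (3) partition these broken curves into the two families above using the fact that the symplectization is noncompact only in the $\mathbb{R}^+$ (equivalently $t$) direction, so breaking occurs either ``at $+\infty$'' (peeling off a $T^*M$-disk on $\Lambda_+$) or ``at $-\infty$'' (peeling off a $T^*M$-disk on $\Lambda_-$); (4) count: summing $\#\partial\widetilde{\mathcal{M}} = 0$ over all rigid output words $b_1\dots b_m$ and matching coefficients yields exactly $\Phi_L(\partial_+ a) + \partial_-(\Phi_L a) = 0$, which over $\mathbb{Z}_2$ (or with signs in the general case) is the claimed identity; finally extend multiplicatively from generators using that both $\Phi_L$ and the differentials satisfy the Leibniz rule.

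The main obstacle I anticipate is step (1)–(2): ensuring the requisite transversality for the moduli spaces in the symplectization and proving that no other codimension-one degenerations occur — in particular, that no pseudoholomorphic disk or sphere bubbles off and that no breaking occurs ``in the middle'' of the cobordism region (which would not correspond to a composition of DGA maps). This is where one uses that $L$ is exact and the triple on $T^*(M\times\mathbb{R}^+)$ is tame, so that a monotonicity/area argument bounds energy and forbids bubbling, and where one uses the translation invariance of the cylindrical ends to force any breaking to happen asymptotically rather than at a finite level. For the purposes of this paper I would state this as following from the established foundational results (the relevant analytic package is in \cite{EES2} and its cobordism extensions), rather than redeveloping it; the novel content of later sections uses only the formal properties of $\Phi_L$ established here.

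Since this theorem is a cited background result (its proof will presumably read ``See \cite{...}''), the appropriate write-up is a brief pointer to the literature together with the schematic broken-curve picture above, which is enough for the reader to see why the identity holds and which is all that Theorem~\ref{theorem1.4} will need.
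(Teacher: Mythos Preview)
Your proposal is correct and matches the paper's treatment: the paper's entire proof is ``See \cite{EKH}, Theorem 1.2.'' You correctly anticipated this and supplied the standard broken-curve sketch that underlies the cited result, which is more than the paper itself provides.
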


\begin{proof} See \cite{EKH}, Theorem 1.2.\end{proof}

\begin{theorem}\label{theorem2.8} If \(L_1\) and \(L_2\) are exact Lagrangian isotopic, then \(\Phi_{L_1}\) is chain homotopic to \(\Phi_{L_2}\).\end{theorem}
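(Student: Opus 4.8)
The plan is to upgrade the proof of Theorem~\ref{theorem2.7} to a one-parameter family, extracting the chain homotopy from the boundary of a parametrized moduli space. Write the two cobordisms as $L$ and $L'$, so the claim is that $\Phi_L$ is chain homotopic to $\Phi_{L'}$, and fix an exact Lagrangian isotopy $\{\mathcal{L}_s\}_{s\in[0,1]}$ with $\mathcal{L}_0=L$ and $\mathcal{L}_1=L'$. First I would normalize the isotopy: after a preliminary exact Lagrangian isotopy one may assume the family is constant near $s=0$ and near $s=1$ and that each $\mathcal{L}_s$ is cylindrical over its Legendrian ends outside a compact region of $s$-independent size; and, using that a Legendrian isotopy of an end induces a DGA isomorphism via its exact Lagrangian trace, one may further reduce to the case where the ends $\Lambda_+$ and $\Lambda_-$ are independent of $s$.

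Next I would choose a generic smooth family $\{J_s\}_{s\in[0,1]}$ of tame almost complex structures compatible with the $\mathcal{L}_s$ and cylindrical at infinity, constant near $s=0,1$, and form the parametrized moduli space
\[
\widehat{\mathcal{M}}(a;b_1\cdots b_m)\;=\;\bigl\{\,(s,u)\ :\ s\in[0,1],\ u\in\widetilde{\mathcal{M}}_{\mathcal{L}_s,J_s}(a;b_1\cdots b_m)\,\bigr\}.
\]
For a generic family this has dimension $\dim\widetilde{\mathcal{M}}(a;b_1\cdots b_m)+1$, and counting its rigid points --- the $(s,u)$ for which the unparametrized space has formal dimension $-1$ --- defines a map $K(a)=\sum\bigl(\#\widehat{\mathcal{M}}(a;b_1\cdots b_m)\bigr)b_1\cdots b_m$ on generators, extended to words by the twisted Leibniz rule appropriate to an algebra homotopy between the DGA morphisms $\Phi_L$ and $\Phi_{L'}$. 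The chain homotopy identity $\Phi_L+\Phi_{L'}=\partial_-\circ K+K\circ\partial_+$ over $\Z_2$ then drops out of the compactified one-dimensional part of $\widehat{\mathcal{M}}$ by the same Gromov--SFT compactness used for Theorem~\ref{theorem2.7}: exactness of $\beta$ and tameness of the triple exclude interior bubbling and disk bubbling, so the only boundary phenomena are (i) the ends $s\to 0$ and $s\to 1$, contributing $\Phi_L$ and $\Phi_{L'}$; (ii) two-level buildings with a rigid disk over $\Lambda_+\times\R$ splitting off at $t=+\infty$, contributing $K\circ\partial_+$; and (iii) two-level buildings with a rigid disk over $\Lambda_-\times\R$ splitting off at $t=-\infty$, contributing $\partial_-\circ K$. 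Since the mod-$2$ count of the boundary of a compact $1$-manifold is zero, the identity follows.

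The hard part will be transversality for the parametrized spaces $\widehat{\mathcal{M}}$: because the front projections allowed here may carry codimension-$2$ and higher singularities, perturbing $J$ alone need not achieve regularity, so I would instead degenerate $\{J_s\}$ toward a family adapted to the front and run the whole argument through a parametrized version of Ekholm's flow-tree correspondence (Theorem~\ref{theorem2.3}), so that $K$ counts rigid one-parameter families of gradient flow trees and the degenerations (i)--(iii) above are realized as the combinatorial boundary of such families. The subsidiary point of excluding spurious ends --- a formal-dimension-$0$ configuration splitting off at an interior parameter value, or a component degenerating inside the cobordism region --- is then dealt with exactly as in the proof of Theorem~\ref{theorem2.7}, using that such configurations remain non-generic within a single one-parameter family.
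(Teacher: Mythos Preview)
The paper does not give a self-contained proof here; it simply cites \cite{EKH}, Theorem~1.3. Your sketch of the parametrized-moduli-space argument --- defining $K$ by counting rigid formal-dimension-$(-1)$ disks over the family $\{\mathcal{L}_s\}$ and reading the chain-homotopy identity off the boundary of the compactified one-dimensional parametrized space --- is precisely the argument that \cite{EKH} carries out, so the first two paragraphs of your proposal are correct and match the cited source.

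Your last paragraph, however, has the transversality issue backwards. Transversality for the parametrized spaces $\widehat{\mathcal{M}}$ is obtained in \cite{EKH} (building on \cite{EES}) by choosing a generic family $\{J_s\}$ of compatible almost complex structures; this is a statement about the linearized $\bar\partial$-operator and has nothing to do with the singularities of the front projection of the Legendrian lift. It is the flow-tree correspondence (Theorem~\ref{theorem2.3}) that carries the hypothesis ``$\dim\Lambda\le 2$ or no codimension-$2$ front singularities,'' not the transversality theory for holomorphic disks. So invoking a parametrized flow-tree description to repair a transversality problem that does not exist would only impose restrictions the theorem does not need --- and, in the very case you are worried about, Theorem~\ref{theorem2.3} would not apply anyway. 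Drop the last paragraph and rely on generic $J$-perturbation; then your argument is the one in \cite{EKH}.
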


\begin{proof} See \cite{EKH}, Theorem 1.3.\end{proof}

Note that, given an exact Lagrangian cobordism \(L_{12}\) from \(\Lambda_1\) to \(\Lambda_2\), and a second exact Lagrangian cobordism \(L_{23}\) from \(\Lambda_2\) to \(\Lambda_3\), we can form a new exact Lagrangian cobordism \(L_{13}\) from \(\Lambda_1\) to \(\Lambda_3\) by concatenating \(L_{12}\) and \(L_{23}\): we delete the negative cone of \(L_{12}\) and positive cone of \(L_{23}\) and glue them together.   Then:

\begin{theorem}\label{theorem2.9} \(\Phi_{L_{13}} = \Phi_{L_{23}} \circ \Phi_{L_{12}}\)\end{theorem}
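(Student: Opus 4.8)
The plan is to compute $\Phi_{L_{13}}$ by stretching the neck of the concatenated cobordism along the slice where $L_{12}$ and $L_{23}$ are glued. After a compactly supported exact Lagrangian isotopy---which by Theorem~\ref{theorem2.8} changes the induced DGA map only up to chain homotopy---I would arrange that $L_{13}$ contains a long cylindrical region on which it is invariant under $t$-translation and coincides there with a copy of $\Lambda_2$ times an interval, with $L_{13}$ above this region equal to the truncation of $L_{12}$ and below it equal to the shifted truncation of $L_{23}$. One then chooses the almost complex structure defining $\Phi_{L_{13}}$ to agree, near this $\Lambda_2$-slice, with a cylindrical almost complex structure adapted to the symplectization $\mathbb{R}\times J^1(M)$, and for $\tau>0$ lets $J_\tau$ be the result of inserting a neck of length $\tau$. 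Since all the $J_\tau$ are admissible, $\Phi_{L_{13}}$ computed with any of them represents the same chain homotopy class, so it suffices to evaluate the count for $\tau\to\infty$.

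Next I would invoke SFT compactness in this setting. As $\tau\to\infty$, a sequence of rigid $J_\tau$-holomorphic disks in $\widetilde{\mathcal{M}}(a;b_1\dots b_m)$ for $L_{13}$ converges, after passing to a subsequence, to a holomorphic building whose top level is a disk with boundary on the truncation of $L_{12}$, with positive puncture at $a$ and negative punctures at Reeb chords $c_1,\dots,c_k$ of $\Lambda_2$; whose bottom level consists of disks $u^1,\dots,u^k$ with boundary on the truncation of $L_{23}$, where $u^i$ has positive puncture at $c_i$; and whose intermediate levels, if any, are holomorphic disks with boundary on $\mathbb{R}\times\Lambda_2$ in the symplectization. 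Running the index count of Theorem~\ref{theorem2.4} (in its version for cobordism moduli spaces) on the limiting building, and using that the total index is $0$, I would argue that each level is rigid in the appropriate sense, and in particular that there can be no intermediate symplectization level: such a level would be a nontrivial disk over $\Lambda_2$ and would force the top or bottom level to have negative dimension. Hence the limiting configuration is precisely a rigid disk of $L_{12}$ over $a$ with negative punctures $c_1\cdots c_k$, together with a rigid disk of $L_{23}$ over each $c_i$. Conversely, the SFT gluing theorem produces, from any such two-story configuration and for all sufficiently large $\tau$, a unique rigid $J_\tau$-holomorphic disk on $L_{13}$, and these two operations are mutually inverse.

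Assembling the count, over $K=\mathbb{Z}_2$ one obtains
\[
\Phi_{L_{13}}(a)=\sum_{c_1,\dots,c_k}\#\widetilde{\mathcal{M}}_{L_{12}}(a;c_1\cdots c_k)\,\Phi_{L_{23}}(c_1)\cdots\Phi_{L_{23}}(c_k)=\Phi_{L_{23}}\big(\Phi_{L_{12}}(a)\big),
\]
the last equality being multiplicativity of the DGA map $\Phi_{L_{23}}$; for $K=\mathbb{Z}_2H_1(\Lambda)$ one checks in addition that homology classes add under gluing. Extending multiplicatively to all of $\mathcal{A}_K(\Lambda_1)$ then gives $\Phi_{L_{13}}=\Phi_{L_{23}}\circ\Phi_{L_{12}}$.

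The main obstacle is the analytic input: establishing neck-stretching convergence for these Lagrangian cobordisms in $T^*(M\times\mathbb{R}^+)$, verifying transversality for the moduli spaces of $L_{12}$, $L_{23}$ and of the broken configurations, and---the delicate step---excluding intermediate symplectization levels and the other possible boundary degenerations (disk bubbling, or breaking at Reeb chords of $\Lambda_1$ or $\Lambda_3$) for index-$0$ configurations. These are the same compactness and gluing results that underlie Theorems~\ref{theorem2.7} and~\ref{theorem2.8}, and I would quote them from the references cited there rather than reprove them.
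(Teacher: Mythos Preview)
Your sketch is the standard neck-stretching and SFT gluing argument and is essentially correct as an outline; note, however, that the paper does not prove this statement at all but simply cites \cite{EKH}, Theorem 1.2. What you have written is a reasonable summary of the argument behind that cited result, so in effect you have expanded the reference rather than diverged from the paper's approach.
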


\begin{proof} See \cite{EKH}, Theorem 1.2.\end{proof}

\subsection{Lifting Cobordisms}
\label{subsec:liftingcobordisms}

This section is derived from \cite{EKH}, Section 2, with minor modifications to adapt the concept to higher dimensions - principally adding extra coordinates as appropriate.

Recall that, according to the definition of an exact Lagrangian cobordism \(L \subset T^*(M\times \mathbb{R}^+)\), there exists a function \(f:L \to \mathbb{R}\) such that {\color{black}\(\beta|_{TL} = d(f + tz)\)}.   We can lift \(L \subset T^*(M\times\mathbb{R}^+)\) to \(\hat{L} \subset J^1(M \times \mathbb{R}^+)\), with \(\hat{L}\) unique up to \(\mathbb{R}\)-translation, by mapping \(p \in L\) to \((p, f(p) + t(p)z(p)) \in \hat{L}\).   Let \(\hat{\alpha}\) be the canonical contact form of \(J^1(M\times\mathbb{R}^+)\); then:
\[
\left.\hat{\alpha}\right|_{\hat{L}} = d(f + tz) - \theta = \theta - \theta = 0
\]
So \(\hat{L}\) is Legendrian, though it is definitely \emph{not} compact.   However, as we will see in section~\ref{sec:proof}, we do not \emph{need} compactness.   We will prove convergence of the boundary of pseudoholomorphic curves to gradient flow lines in \(\Lambda\), where \(\Lambda\) may denote a compact Legendrian submanifold of \(J^1(M)\) or it may denote the Legendrian lift \(\hat{L} \subset J^1(M\times\mathbb{R})^+\) of an exact Lagrangian cobordism, and where \(M\) may mean \(M\) or may mean \(M \times \mathbb{R}^+\).

This does create some confusion between the \(z\) coordinate of \(\Lambda_\pm\) and the new \(z\) coordinate of \(\hat{L}\) given by integrating \(d(f + tz)\).   In section~\ref{sec:proof}, when we refer to the \(z\) coordinate of \(\Lambda\), we will always mean the \(\mathbb{R}\) coordinate of the one-jet space, whether that one-jet space happens to be \(J^1(M)\) or \(J^1(M\times\mathbb{R}^+)\).

\section{Critical Legendrian Ambient Surgery}
\label{sec:surgery}

Legendrian Ambient Surgery is defined in \cite{Ri}, Sec. 3, as an analogue of Morse surgery for Legendrians.  We begin with the following data: let \(\Lambda \subset J^1(M)\) be an \(n\)-dimensional Legendrian submanifold, and let \(k < n\).   Let \(S \subset \Lambda\) be an embedded \(k\)-sphere with a choice of framing \(F\) of its normal bundle \(NS \subset T\Lambda\).  We define the \textbf{symplectic normal bundle of \(NS\)} over \(S\) to be:
\[
NS^{(d\alpha)} = \left\{v \in \mbox{Ker }\alpha | (d\alpha)(v, NS) = \{0\}\right\}
\]

Let \(D_S \subset J^1(M)\) be an isotropic embedded \((k+1)\)-disk that satisfies the following properties:

\begin{itemize}

\item \(\partial D_S = S\)

\item \(\mbox{Int } D_S \subset J^1(M) - \Lambda\)

\item Any outward normal vector field to \(D_S\) is nowhere tangent to \(\Lambda\)

\item For any vector field \(H\) in \(NS\) such that \((d\alpha)(G, H) > 0\) for any vector \(G\) that is outward normal to \(D_S\), we require that the frame given by adjoining \(H\) to the Lagrangian frame of \((TD_S)^{d\alpha}|_S\) is homotopic to \(F\).

\end{itemize}

\cite{Ri} uses the above data to construct a standard model of a neighborhood of \(S \subset \Lambda\), then performs the surgery on this standard model, obtaining a surgered Legendrian submanifold \(\hat{\Lambda}\).  He calculates the change in \(\mathcal{A}(\Lambda)\) for \(k < n - 1\), but is unable to calculate the change for \(k = n - 1\).  Using pinching techniques, we can calculate the change in the algebra for \(k = n - 1\), using a modified version of the standard model, which is Legendrian isotopic to [Ri]'s version.

\subsection{Preliminary Morse Lemmas} 

We begin with some preliminary Morse lemmas, which will be used in constructing our local neighborhood model.   First, we define:
\[
\mathcal{F}(f_1, \delta) = \left\{f_2:M\to\mathbb{R}\mbox{ }|\mbox{ }f_2\mbox{ is Morse}, |f_1 - f_2|_{C^1} < \delta\right\}
\]

We then have the following Morse lemmas:

\begin{lemma}\label{lemma3.1} Let \(f_1: M \to \mathbb{R}\) be a Morse function, where \(M\) is a compact manifold.   For any \(\epsilon > 0\), there exists \(\delta > 0\) such that, for any \(f_2 \in \mathcal{F}(f_1, \delta)\), there is a bijection between the critical points of \(f_1\) and \(f_2\), the ascending manifold of every critical point of \(f_2\) lies within \(\epsilon\) of the ascending manifold for the corresponding critical point of \(f_1\), and the descending manifold of every critical point of \(f_2\) lies within \(\epsilon\) of the descending manifold of the corresponding critical point of \(f_1\).\end{lemma}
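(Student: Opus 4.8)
The plan is to combine a degree-theoretic argument for the critical points with the stable and unstable manifold theorem, in the form that records continuous dependence on the vector field, for the ascending and descending manifolds; a downward induction on the critical value then glues the local statements into the global ones.

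First I would pin down the bijection of critical points. Since $M$ is compact, $f_1$ has finitely many critical points $p_1,\dots,p_r$; nondegeneracy means that in a chart around $p_i$ the differential $df_1$ is a diffeomorphism of a small closed ball $B_i\ni p_i$ onto a neighbourhood of $0\in\R^n$, so $df_1$ restricted to $\partial B_i$ has degree $(-1)^{\operatorname{ind}(p_i)}\neq 0$. Taking the $B_i$ pairwise disjoint, compactness gives $c>0$ with $|df_1|\ge c$ on each $\partial B_i$ and on $K:=M\setminus\bigcup_i\operatorname{int}B_i$. For $f_2\in\mathcal F(f_1,\delta)$ with $\delta<c/2$ one gets $|df_2|\ge c/2$ on $K$, so every critical point of $f_2$ lies in $\bigcup_iB_i$; on $\partial B_i$ the segment from $df_1$ to $df_2$ misses $0$, so $df_2$ restricted to $\partial B_i$ still has nonzero degree and $f_2$ has at least one critical point in $B_i$. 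To force exactly one, I would shrink $B_i$ so that the Hessian part of $df_1$ dominates its quadratic remainder there and take $\delta$ small enough that $df_2$ remains a local diffeomorphism, hence injective, on $B_i$; the unique zero $q_i$ then tends to $p_i$ as $\delta\to 0$, and $\operatorname{ind}_{f_2}(q_i)=\operatorname{ind}_{f_1}(p_i)$ because nearby nondegenerate Hessians have equal signature. This uniqueness step is the one delicate point here: it needs $df_2$ close to $df_1$ finely enough to control its linearization near $p_i$.

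For the invariant manifolds I would proceed in two stages. Locally, $p_i$ is a hyperbolic rest point of the gradient field $-\nabla f_1$, and $-\nabla f_2\to -\nabla f_1$ as $\delta\to 0$, so the stable and unstable manifold theorem with parameters gives that the local stable and unstable manifolds at $q_i$ converge to those at $p_i$; in particular they are within $\epsilon/2$ of one another once $\delta$ is small. Globally, treat, say, the ascending manifold of $p_i$ (the descending one being the ascending manifold of $-f_1$): it is the forward flow-out of its local disk under the flow $\phi$ of $-\nabla f_1$, and likewise the ascending manifold of $q_i$ is a flow-out under the flow $\psi$ of $-\nabla f_2$. On any fixed interval $[0,T]$, Gronwall's inequality and uniform continuity of the flow in the vector field make $\psi$ applied over $[0,T]$ to $q_i$'s local disk lie within $\epsilon/2$ of $\phi$ applied over $[0,T]$ to $p_i$'s local disk, which sits inside the ascending manifold of $p_i$. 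For the tails beyond time $T$ one uses that on the compact $M$ every gradient trajectory converges to a critical point and $|\nabla f_1|$ is bounded below off arbitrarily small neighbourhoods of $p_1,\dots,p_r$, so after a finite time the flowed disk is absorbed into the union of small neighbourhoods of critical points lying in the closure of the ascending manifold of $p_i$; an induction downward on $f_1(p_i)$, in which those critical points' ascending manifolds have already been matched to within $\epsilon$, together with the fact that the closure of an ascending manifold is a union of ascending manifolds, then keeps the tail of $q_i$'s ascending manifold $\epsilon$-close to that of $p_i$. The descending-manifold statements follow by applying everything to $-f_1$ and $-f_2$.

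The step I expect to be the main obstacle is exactly this global matching of the ascending and descending manifolds: they are noncompact, accumulating onto lower critical points, and a trajectory emanating from $q_i$ can take arbitrarily long to transit past an intermediate saddle when it shadows that saddle's stable manifold, so there is no single time $T$ after which everything has settled. That is why the argument must be organized as a downward induction on the critical value, matching each ascending manifold only after the strata onto which the unperturbed one limits have themselves been matched, and propagating the $\epsilon$-control forward along the flow. By contrast the critical-point part is a routine degree count, with the uniqueness of $q_i$ in $B_i$ its only subtle point.
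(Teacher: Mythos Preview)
Your outline is sound but takes a genuinely different route from the paper. The paper never invokes the parameterized stable/unstable manifold theorem, Gronwall, or an induction on critical value. Instead it works one flow line at a time: given a trajectory $\gamma$ of $-\nabla f_1$ from $p_i$ to $p_j$, it covers $\gamma$ by a Morse chart $\psi_i(N_i)$ at $p_i$, a finite chain of Tubular Flow charts $\phi_k:[-1,1]^n\to U_k$ in which $\nabla f_1$ is straightened to $\lambda(x)\partial_{x_1}$, and a Morse chart at $p_j$. In $N_i$ it builds a slab $S$ around the descending direction, with $\partial S=V\cup W$, and uses the radial fields $R_V,R_W$ to show that for small $\delta$ the perturbed flow still enters through $V$ and exits through $W$; a homology argument in $H_{k_i-1}(W)\cong\mathbb{Z}$ then forces $\mathcal{D}_{f_2}(p_i')\cap W$ to represent the nontrivial class, so it meets the first tubular chart in a $k_i$-disk. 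That disk is then pushed through each $U_k$ by the explicit estimate $|\dot\gamma_j^1|<\|\phi_k\|_{C^1}\delta$ for $j\neq 1$, arriving in the Morse chart of $p_j$ and meeting $\mathcal{A}_{f_2}(p_j')$. Compactness of the sphere $S^{k_i-1}$ of outgoing directions is then used to make $\delta$ uniform over all flow lines and all critical points.

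Your induction on critical value is arguably the more robust organization for the noncompact tails: it absorbs the broken-trajectory phenomenon structurally, whereas the paper's final compactness step over $S^{k_i-1}$ leaves implicit how $\delta_\gamma$ behaves as $\gamma$ degenerates to a broken flow line. Conversely, the paper's argument is more elementary---only the Tubular Flow Theorem and first-order estimates in straightened coordinates---and avoids the stable-manifold-with-parameters machinery. One point to tighten in your sketch: the claim that after a finite time the flowed disk is ``absorbed into the union of small neighbourhoods of critical points'' is not literally what happens; rather it lands in a sublevel set, and to continue you need the $\lambda$-lemma (Inclination Lemma) at each intermediate saddle to say that what exits is $C^1$-close to that saddle's unstable disk, which then feeds the induction. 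On the critical-point bijection, both you and the paper face the same issue you flag: with only $C^1$ control one cannot literally make $df_2$ a local diffeomorphism on $B_i$; the paper simply asserts the bijection as ``well known.''
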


\begin{proof} See Appendix~\ref{sec:MorseLemmas}.\end{proof}

\begin{lemma}\label{lemma3.2} Let \(f:M \to \mathbb{R}\) be a Morse function, where \(M\) is a closed manifold.   Let \(Q\) be a compact codimension-0 subset of \(M\) that includes no critical points of \(f\).   Then for any \(\epsilon > 0\) there exists \(\delta > 0\) such that for any critical point \(q\) and any points \(p_1, p_2 \in Q\), if:
\[
d(p_1, p_2) < \delta
\mbox{, and}
\]\[
p_1, p_2\mbox{ lie in the same component of }\mathcal{A}_f(q)
\]
Then:
\[
d(\mathcal{D}_f(p_1), \mathcal{D}_f(p_2)) < \epsilon
\]
And, for any \(\epsilon > 0\), there exists \(\delta > 0\) such that, for any points \(p_1, p_2 \in Q\), if
\[
d(p_1, p_2) < \delta
\mbox{, and}
\]\[
p_1, p_2\mbox{ lie in the same component of }\mathcal{D}_f(q)
\]
Then:
\[
d(\mathcal{A}_f(p_1), \mathcal{A}_f(p_2)) < \epsilon
\]\end{lemma}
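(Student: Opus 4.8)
The plan is to deduce both halves of the statement from the compactness of \(Q\) together with smooth dependence of the gradient flow on initial conditions. Since \(Q\) contains no critical points of \(f\), the gradient \(\nabla f\) is bounded away from zero on \(Q\); choose \(c>0\) with \(|\nabla f| \geq c\) on \(Q\). Because \(M\) is closed, there is also a uniform upper bound on \(|\nabla f|\) and on the relevant derivatives of the flow. Consider the time-\(s\) gradient flow \(\phi_s\) of \(f\). By the standard theorem on continuous (indeed \(C^1\)) dependence of solutions of ODEs on initial conditions, for any \(\epsilon>0\) and any bounded time interval \([0,T]\) there is \(\delta'>0\) so that \(d(p_1,p_2)<\delta'\) implies \(d(\phi_s(p_1),\phi_s(p_2))<\epsilon\) for all \(s\in[0,T]\). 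The work is to promote this finite-time estimate to a statement about whole descending/ascending manifolds, i.e. to control the flow from \(p_i\) all the way down to (a neighborhood of) the critical point \(q\) and then out along its descending manifold.

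First I would treat the first assertion. Suppose \(p_1,p_2\in Q\) lie in the same component of the ascending manifold \(\mathcal{A}_f(q)\). Running the gradient flow backward (i.e. following \(-\nabla f\)) from \(p_i\), both trajectories converge to \(q\). I would fix a Morse chart around \(q\) in which \(f\) is quadratic, and use the local linear picture (the \(\lambda\)-lemma / inclination lemma of dynamical systems) to see that any two points entering a small ball \(B_\rho(q)\) near \(q\) on the same local stable leaf have their forward orbits along the unstable manifold \(\mathcal{D}_f(q)\) staying uniformly close, with closeness controlled by how close they are when they enter \(B_\rho(q)\). Combining this with the finite-time continuous-dependence estimate on the compact region \(M\setminus B_\rho(q)\) — where the flow time needed to travel from \(Q\) into \(B_\rho(q)\) is bounded above, since \(|\nabla f|\geq c\) outside \(B_\rho(q)\) forces \(f\) to change at a definite rate and \(M\) is compact — gives a single \(\delta>0\) that works. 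The hypothesis that \(p_1,p_2\) are in the \emph{same component} of \(\mathcal{A}_f(q)\) is exactly what guarantees their backward orbits approach \(q\) along the \emph{same} local stable leaf, so that the inclination-lemma step applies; without it the two descending manifolds \(\mathcal{D}_f(p_1),\mathcal{D}_f(p_2)\) could be separated leaves of \(\mathcal{D}_f(q)\) and no such \(\delta\) would exist. The second assertion follows by applying the first to \(-f\) (whose gradient flow is the time-reversed flow of \(f\)), which interchanges ascending and descending manifolds.

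The main obstacle I anticipate is making the "near the critical point" argument rigorous and uniform over all critical points \(q\) simultaneously. Continuous dependence on initial conditions is only a finite-time statement, and the gradient trajectories from \(Q\) to \(q\) take infinite time; the fix is the inclination-lemma estimate in the Morse chart, but one must check that the constants there can be chosen uniformly in \(q\) (there are finitely many critical points, so one just takes the worst case) and that the patching of the finite-time estimate outside \(B_\rho(q)\) with the local estimate inside is done with compatible \(\epsilon\)'s and \(\rho\)'s. A clean way to organize this is: (i) shrink \(\rho\) so the local hyperbolic estimate gives closeness \(<\epsilon/2\) provided entry points are within \(\delta_1\); (ii) use uniform boundedness of the flow time from \(Q\) to \(\partial B_\rho(q)\) plus continuous dependence to find \(\delta\) so that \(d(p_1,p_2)<\delta\) forces the entry points into \(B_\rho(q)\) to be within \(\delta_1\) and also forces closeness \(<\epsilon/2\) of the portions of \(\mathcal{D}_f(p_i)\) lying outside \(B_\rho(q)\); (iii) take the minimum over the finitely many \(q\). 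Everything else is the routine ODE estimate and I would not spell it out in detail. The transversality and genericity language elsewhere in the paper is not needed here; this is a purely Morse-theoretic / dynamical-systems lemma, proved in the appendix.
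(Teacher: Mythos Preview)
Your proposal has two problems, one a misreading and one a genuine gap.

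The misreading: in this paper \(\mathcal{D}_f(p_i)\) denotes the single descending trajectory of \(-\nabla f\) starting at \(p_i\); since \(p_i \in \mathcal{A}_f(q)\), this trajectory converges to \(q\) and stays in the stable manifold of \(q\) throughout. There is no ``forward orbit along the unstable manifold \(\mathcal{D}_f(q)\)'' to control, and the \(\lambda\)-lemma/inclination lemma is irrelevant. Inside a Morse chart at \(q\) the needed fact is elementary: points of the local stable manifold have descending trajectories that are straight segments to the origin, so close entry points give uniformly close trajectories inside the chart.

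The gap: your claim that the flow time from \(Q\) to \(\partial B_\rho(q)\) is uniformly bounded is false. The bound \(|\nabla f|\ge c\) holds on \(Q\), but the trajectory from \(p_1\) to \(B_\rho(q)\) can leave \(Q\) and pass arbitrarily close to \emph{other} critical points (a saddle, say), where \(|\nabla f|\) is small and the flow time blows up. This occurs exactly when \(p_1\) lies near the frontier of \(\mathcal{A}_f(q)\) inside \(Q\), and nothing in the hypotheses rules that out. So you cannot fix a single \(T\) in advance and invoke continuous dependence on \([0,T]\).

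The paper sidesteps flow time altogether by arguing spatially and \emph{pointwise}. For a fixed \(p_1\), the closure of the trajectory \(\gamma\) from \(p_1\) to \(q\) is compact; the paper covers it by a Morse ball around \(q\) together with finitely many tubular flow boxes (available at every point of \(\gamma\setminus\{q\}\), where \(\nabla f\neq 0\)), and back-propagates open sets through these boxes to produce a radius \(\delta(p_1)>0\) such that any \(p_2\in B_{\delta(p_1)}(p_1)\) in the same component of \(\mathcal{A}_f(q)\) has \(\mathcal{D}_f(p_2)\) trapped in the same \(\epsilon\)-tube around \(\gamma\). A finite subcover of \(Q\) by the balls \(B_{\delta(p)/2}(p)\) then yields the uniform \(\delta\). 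Your argument can be repaired along the same pointwise-then-compactness pattern: for each \(p_1\) there \emph{is} a finite time \(T(p_1)\) to reach \(B_\rho(q)\), continuous dependence on \([0,T(p_1)]\) plus the trivial local picture near \(q\) gives \(\delta(p_1)>0\), and compactness of \(Q\) finishes. What does not work is choosing a single \(T\) uniformly.
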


\begin{proof} See Appendix~\ref{sec:MorseLemmas}.\end{proof}

\begin{lemma}\label{lemma3.3} Let \(f_1:M \to \mathbb{R}\) be a Morse function, where \(M\) is a compact manifold.   For any choice of \(\epsilon > 0\) and any compact codimension-0 submanifold \(Q \subset M\) that lies in the descending manifold of maxima of \(f_1\) and contains no critical points of \(f_1\), there exists \(\delta\) such that, for any generic choice of \(f_2 \in \mathcal{F}(f_1, \delta)\) and any \(p \in Q\), the ascending manifold of \(p\) for \(\nabla f_2\) will lie within an \(\epsilon\)-neighborhood of the ascending manifold of \(p\) for \(\nabla f_1\).\end{lemma}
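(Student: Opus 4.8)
The plan is to read Lemma~\ref{lemma3.3} as a uniform continuous-dependence statement for gradient trajectories, and to deduce it from a Gronwall estimate once the hypotheses have been converted into uniform control over $p\in Q$. Throughout I interpret the \emph{ascending manifold} $\mathcal{A}_f(p)$ of a regular point $p$ as the closure of the positive-gradient (uphill) trajectory of $f$ issuing from $p$. The assumption that $Q$ lies in the descending manifolds of the maxima of $f_1$ says precisely that for every $p\in Q$ this trajectory limits to a local maximum of $f_1$ and meets no critical point along the way, so $\mathcal{A}_{f_1}(p)$ is an embedded arc from $p$ to a maximum; and after shrinking $\delta$, Lemma~\ref{lemma3.1} guarantees the critical points of a generic $f_2\in\mathcal{F}(f_1,\delta)$ are nondegenerate, correspond bijectively to those of $f_1$, and lie outside $Q$, so $\mathcal{A}_{f_2}(p)$ is likewise an embedded arc.

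First I would reduce to a single maximum. The descending manifolds $\mathcal{D}_{f_1}(c)$ of the finitely many maxima $c$ of $f_1$ are pairwise-disjoint open sets whose union contains $Q$, so each $Q_c:=Q\cap\mathcal{D}_{f_1}(c)$ is both relatively open and relatively closed in $Q$, hence compact and contained in the \emph{open} set $\mathcal{D}_{f_1}(c)$. It therefore suffices to find, for each $c$, a $\delta_c$ working for all $p\in Q_c$, and set $\delta=\min_c\delta_c$. On the compact set $Q_c$, the first time the $f_1$-trajectory of $p$ enters a small trapping ball $B_\rho(c)$ around $c$ is a continuous, finite function of $p$, hence uniformly bounded by some $T=T(\rho)$; and since the closure $\overline{\gamma^{f_1}_p}=\gamma^{f_1}_p\cup\{c\}$ is a compact set disjoint from the non-maximal critical points of $f_1$, a compactness argument (using continuous dependence on initial conditions to make $p\mapsto\overline{\gamma^{f_1}_p}$ Hausdorff-continuous on $Q_c$) produces a uniform $r>0$ such that every $\gamma^{f_1}_p$, $p\in Q_c$, stays at distance $\geq r$ from all non-maximal critical points of $f_1$ and from $\overline{B_\rho(c')}$ for every other maximum $c'$.

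Now fix $\rho$ smaller than $\tfrac13$ of $r$, of the common $f_1$-trapping radius at the maxima, and of $\epsilon$. On $[0,T]$ the Gronwall inequality $\tfrac{d}{dt}\|\gamma^{f_1}_p-\gamma^{f_2}_p\|\leq L\|\gamma^{f_1}_p-\gamma^{f_2}_p\|+\|\nabla f_2-\nabla f_1\|_{C^0}$, with $L$ a Lipschitz constant for $\nabla f_1$, gives $\|\gamma^{f_1}_p(t)-\gamma^{f_2}_p(t)\|\leq\delta\,C_0$ for $t\in[0,T]$, where $C_0=C_0(T,L)$. Hence at the first time $t^*\leq T$ at which $\gamma^{f_1}_p$ enters $B_\rho(c)$, the point $\gamma^{f_2}_p(t^*)$ lies in $B_{2\rho}(c)$; for $\delta$ small the latter is an $f_2$-trapping ball (it contains the nearby maximum of $f_2$), so $\gamma^{f_2}_p$ stays in $B_{2\rho}(c)$ forever afterward, and for $\delta$ small it cannot previously have been trapped near any other maximum, since it stays $\delta C_0$-close to $\gamma^{f_1}_p$, which avoids those trapping balls with room to spare. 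Combining the two regimes, $\mathcal{A}_{f_2}(p)\subset N_{\delta C_0}(\mathcal{A}_{f_1}(p))\cup B_{2\rho}(c)\subset N_{\max(\delta C_0,\,2\rho)}(\mathcal{A}_{f_1}(p))$, using $c\in\mathcal{A}_{f_1}(p)$. Choosing $\rho<\epsilon/2$ and then $\delta<\epsilon/(2C_0)$, and small enough for the earlier smallness requirements and for Lemma~\ref{lemma3.1}, finishes the proof.

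The main obstacle is exactly the uniformity in $p$: the Gronwall estimate by itself yields a $\delta$ depending on how closely the trajectory of $p$ skirts a saddle, and it is the hypothesis on $Q$ — unpacked through the clopen decomposition $Q=\bigsqcup_c Q_c$ into compact pieces sitting inside the open basins $\mathcal{D}_{f_1}(c)$ — that upgrades this to a $p$-independent bound. The behavior near the terminal maximum (the trapping-ball bookkeeping) and the exclusion of accidental trapping near other maxima are then routine once the uniform $T$ and $r$ are in hand; the genericity hypothesis on $f_2$ is used only to keep $f_2$ Morse–Smale with critical points off $Q$, so that $\mathcal{A}_{f_2}(p)$ is again a genuine arc terminating at a maximum.
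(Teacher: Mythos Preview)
Your argument is correct and reaches the same conclusion as the paper, but by a genuinely different and more streamlined route. The paper's proof of Lemma~\ref{lemma3.3} follows the template of Lemma~\ref{lemma3.1}: it covers the $f_1$-trajectory from $p$ to its maximum by a finite chain of tubular-flow boxes (via Theorem~\ref{tubularflowtheorem}), straightens the flow to $\partial_{x_1}$ in each box, and propagates an explicit coordinate estimate on the deviation of the $f_2$-trajectory through the chain; near the maximum it uses the radial-vector-field trapping argument, and at the very end it passes from a pointwise $\delta(p)$ to a uniform $\delta$ by compactness of $Q$. You instead (i) decompose $Q$ into clopen compact pieces $Q_c$, one per basin, up front, (ii) extract from that a uniform hitting time $T$ and a uniform clearance $r$ from the other critical points, and (iii) replace the box-by-box iteration with a single Gronwall estimate on $[0,T]$, followed by the same trapping step.

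What your approach buys is brevity and a cleaner handling of uniformity in $p$: because you establish the uniform $T$ and Lipschitz constant $L$ first, the Gronwall constant $C_0=C_0(T,L)$ is already $p$-independent, so no final compactness pass is needed. The paper's approach, in exchange, is more self-contained (it never names Gronwall) and makes the geometry of the tubular-flow boxes explicit, which matches the style of the surrounding lemmas. Both arguments rely on exactly the same two nontrivial inputs: the trapping property of a small ball around a maximum under $C^1$-small perturbations, and the fact that trajectories from $Q_c$ stay uniformly away from non-maximal critical points.
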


\begin{proof} See Appendix~\ref{sec:MorseLemmas}.\end{proof}

\begin{lemma}\label{lemma3.4} Let \(\Lambda \subset J^1(M)\) be a front-generic Legendrian submanifold whose front projection is defined by sheet functions \(f_1:U_1 \to \mathbb{R}, ..., f_m:U_m \to \mathbb{R}\), where \(U_1, ..., U_m \subset M\).   Let \(\hat{\Lambda} \subset J^1(M)\) be a second front-generic Legendrian submanifold whose front projection is defined by sheet functions \(\hat{f}_1:U_1 \to \mathbb{R}, ..., \hat{f}_m:U_m \to \mathbb{R}\).   Then, for any choice of \(\epsilon > 0\), there exists \(\delta > 0\) such that, if:
\[
\left|\left| f_i - \hat{f}_i\right|\right|_{C^1} < \delta\mbox{ for all }i
\]
Then, there exists a bijection between the rigid gradient flow trees \(\Lambda\) with one positive Reeb chord and the rigid gradient flow trees of \(\hat{\Lambda}\) with one positive Reeb chord, such that a tree \(\Gamma\) of \(\Lambda\) shares the same Reeb chords with its corresponding tree \(\hat{\Gamma}\) of \(\hat{\Lambda}\), and such that the projection of \(\hat{\Gamma}\) to \(M\) lies within an \(\epsilon\)-neighborhood of the projection of \(\Gamma\) to \(M\).\end{lemma}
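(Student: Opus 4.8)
The plan is to leverage the finite-dimensional combinatorial nature of rigid gradient flow trees together with the Morse-theoretic stability lemmas already established (Lemmas~\ref{lemma3.1}--\ref{lemma3.3}). First I would recall from Theorem~\ref{theorem2.5} that a rigid gradient flow tree with a single positive puncture has a tightly constrained structure: its vertices are either one-valent vertices at Reeb chords or cusp ``ends,'' two-valent switches or two-valent vertices at a positive minimum or negative maximum, $Y^0$ vertices, or $Y^1$ vertices. Moreover, by Theorem~\ref{theorem2.4} the dimension formula~\eqref{eqn:GFTdimFormula} is purely combinatorial, depending only on the index data $I(\cdot)$ of the Reeb chords and the types $\mu(v_k)$ of the non-chord vertices, not on the precise geometry of $\Lambda$. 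So if I can match up the \emph{combinatorial types} of the trees of $\Lambda$ and $\hat\Lambda$, the rigidity condition transfers automatically. Since $\Lambda$ and $\hat\Lambda$ have the same index sets $U_i$ and the $C^1$-closeness forces a bijection of critical points of each height-difference function $f_i - f_j$ with matching Morse indices (Lemma~\ref{lemma3.1}), there is a natural candidate bijection on Reeb chords to begin with; similarly the cusp-edge loci of $\pi_F$ move only slightly.

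Next I would set up the correspondence on trees. Given a rigid tree $\Gamma$ of $\Lambda$, I would build $\hat\Gamma$ edge by edge, starting from the positive puncture and flowing downward along the tree. Each edge of $\Gamma$ is a flow line of some $\nabla(f_i - f_j)$; I replace it with the flow line of $\nabla(\hat f_i - \hat f_j)$ emanating from the (slightly moved) corresponding vertex in the same direction. Lemmas~\ref{lemma3.2} and~\ref{lemma3.3} guarantee that, after shrinking $\delta$, the descending/ascending manifolds and hence these replacement flow lines stay within an $\epsilon$-neighborhood of the originals and still terminate at the correct vertices — a switch at a nearby cusp edge, a $Y^0$ or $Y^1$ vertex where the intermediate sheet ordering is preserved (the sheet $z$-coordinate inequalities are open conditions, stable under $C^1$-perturbation), or a Reeb-chord puncture of the matching type. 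Because the tree has finitely many edges and vertices, I can apply the Morse lemmas finitely many times with a common $\delta$; the matching conditions on flow orientations at each vertex (directed toward vs.\ away) are again open conditions preserved by the perturbation. The map $\Gamma \mapsto \hat\Gamma$ is then well-defined, and running the same construction with the roles of $\Lambda$ and $\hat\Lambda$ exchanged produces an inverse, so it is a bijection; by construction $\hat\Gamma$ has the same Reeb chords (under the fixed identification) and projects into an $\epsilon$-neighborhood of the projection of $\Gamma$.

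The main obstacle I anticipate is not any single estimate but the uniformity: I must show that a \emph{single} $\delta$ works simultaneously for \emph{all} rigid trees, not just one at a time. A priori the number of rigid gradient flow trees could be large, and the required $\delta$ for the Morse lemmas depends on the compact region $Q$ avoided. The resolution is that $\Lambda$ is compact and front-generic, so the cusp-edge locus is a compact stratified set and the Reeb chords are finite in number; consequently the flow lines making up rigid trees, having uniformly bounded ``length'' (bounded by the total action, which is bounded because there are finitely many chords), sweep out a compact subset of $M$ staying a definite distance from the relevant critical points — here one uses that a rigid tree cannot pass through an interior critical point of a height-difference function without violating Theorem~\ref{theorem2.5}. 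Feeding this uniform $Q$ into Lemmas~\ref{lemma3.2} and~\ref{lemma3.3} yields one $\delta$. A secondary subtlety is ensuring no \emph{new} rigid trees appear for $\hat\Lambda$ that do not come from $\Lambda$; this follows from applying the forward construction with $\Lambda$ and $\hat\Lambda$ swapped, which is legitimate since the hypothesis $\|f_i - \hat f_i\|_{C^1} < \delta$ is symmetric. Finally, I would remark that genericity of the perturbation (so that $\hat\Lambda$ is again front-generic and its flow trees are cut out transversally) is available by the standard argument of~\cite{Ek}, and can be arranged within any $\delta$-ball.
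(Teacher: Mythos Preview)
Your proposal has a real gap at the internal vertices. When you write ``build $\hat\Gamma$ edge by edge, starting from the positive puncture and flowing downward,'' replacing each flow line with the perturbed flow line ``emanating from the (slightly moved) corresponding vertex,'' you presuppose that the corresponding vertex has already been located. But for a $Y^0$ vertex $v$ (and similarly for switches or $Y^1$ vertices), its position in $\hat\Gamma$ is \emph{not} determined by the incoming flow from above: a $Y^0$ vertex is a free branch point with no local geometric marker to hit, and its location is fixed only by the global constraint that both outgoing branches must eventually land on their respective (perturbed) targets. So the top-down recipe is circular, and Lemmas~\ref{lemma3.2}--\ref{lemma3.3}, which control individual flow lines once their endpoints are given, do not by themselves produce the vertex.

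The paper's proof supplies exactly this missing ingredient by running the induction \emph{bottom-up}. For each vertex $v$ it defines $Z(v)$ as the ascending manifold (for the relevant height-difference flow) of a set $Y(v)$, where $Y(v)$ is the transverse intersection $Z(w_v^1)\cap Z(w_v^2)$ of the ascending manifolds coming up from the two children (starting from $Y(v)=\{v\}$ at the negative Reeb chords). Lemmas~\ref{lemma3.1}--\ref{lemma3.3} are then applied to show that the perturbed sets $\widehat Y(v), \widehat Z(v)$ stay in controlled neighborhoods of the unperturbed ones at each stage, with rigidity of $\Gamma$ forcing the intersections to have the correct dimension. Only after these sets have been propagated up to the vertex $v_a$ just below the positive chord does one intersect $\widehat Z(v_a)$ with the descending manifold of $\hat a$ and then trace the now uniquely determined tree $\hat\Gamma$ back down edge by edge. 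Your remarks on uniformity of $\delta$ and on symmetry for the inverse are fine, but the core construction needs this transversality-of-ascending-manifolds scheme rather than a direct edge replacement.
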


\begin{proof} See Appendix~\ref{sec:MorseLemmas}.\end{proof}

\subsection{Setup} 

In what follows, most of our Reeb chords will appear in clusters, corresponding to critical points of a preliminary function, which we scale and then perturb by an arbitrarily small amount to produce the functions we will actually use to define our Legendrian submanifold. With one exception, discussed later, our Reeb chord clusters will contain a Reeb chord between every pair of sheets. We will use ordinary letters \(x\) to denote a Reeb chord cluster prior to the surgery, and the superscript \(\hat{x}\) to denote the cluster after the surgery. We will use \(x^i_j\) to denote the Reeb chord within cluster \(x\) that lies between sheets \(i\) and \(j\) before the surgery, and \(\hat{x}_j^i\) to denote the chord after the surgery. We will also have various preliminary functions denote by superscripts \(\bar{f}\), \(\tilde{f}\), etc., and we will use \(\bar{x}, \tilde{x}\), etc. to denote the clusters of Reeb chords for these preliminary functions.

If a sheet in the front projection \(\pi_F(\Lambda)\) crosses \(\pi_F(D_S)\), we can perform a Legendrian isotopy to contract \(D_S\) until this is no longer the case. We can do this because, since \(S \subset \Sigma_1\), in our Legendrian isotopy we can pass \(S\) through any intervening sheet in the front projection. Therefore, we can assume that, after a Legendrian isotopy, there exists local coordinates \(x\) in the base space \(M\) such that \(\pi_M(D_S)\) is contained in a neighborhood \(D_4\), where:
\[
D_r = \{x_1^2 + ... + x_n^2 \leq r^2\}
\]
And \(\pi_F(D_S)\) has the form:
\[
\pi_F(D_S) = \{x_1^2 + ... + x_n^2 \leq 4 \mbox{ and } z = z_0\}
\]
And that, after a Legendrian isotopy, the sheets in the front projection that lie over a neighborhood \(U\) of \(\pi_M(D_S)\), and which do not form the cusps containing \(S\), are graphs of functions \(g_i\) such that:
\[
g_i(x) = x_1v_{1i} + ... + x_nv_{ni}
\]\[
g_i(x) \neq z_0\mbox{ for }x\in U
\]

Label the sheets in \(\pi_F(D_S)\) with (possibly negative) integers by \(z\) order, so that sheet 0 and 1 are the sheets containing \(S\).  We have a natural projection \(\pi_S:(D_r - \{0\}) \to S^n\).   We perform a Legendrian isotopy so that, over \(D_3\), the front projection of sheet \(i\) is the graph of a sheet function \(f_i\) which will be specified below, and over \(D_4 - D_3\) we have an interpolation between those sheet functions and the original form of the Legendrian submanifold.   We will use \(\hat{f}_i\) to denote the sheet functions after the surgery.  Note that, for \(i \neq 0, 1\), \(\hat{f}_i = f_i\).

We begin by defining a preliminary function \(\tilde{f}(x)\):

\begin{itemize}

\item \(\tilde{f}(x)\) has a maximum at \(p = (2, 0, ..., 0)\), with the value \(\tilde{f}(p) = 2\).

\item Let \(K\) be a 0.01-neighborhood of \(p\).  \(\tilde{f}\) has no other critical points in \(K\).

\item Over \(D_{2.51} - D_{2.49}\), \(\tilde{f}\) has the value:
\[
\tilde{f}(x) = \left(
(|x| - 2.5)^2 + 0.01
\right) + 0.0001s(\pi_S(x))
\]
where \(\pi_S\) is the projection to \(S^n\) and \(s:S^n \to [0,1]\) is a function that has a maximum at the north pole, a minimum at the south pole, and no other critical points.

\item Over \(D_{2.48} - K\), \(\tilde{f}\) has the value:
\[
\tilde{f}(x) = 2\left(\frac{l(x) - |p - x|}{l(x)}\right) + 0.01\left(\frac{|p - x|}{l(x)}\right)
\]
Where \(l(x)\) denotes the length of the straight line from \(p\) to \(\partial D_{2.5}\) through \(x\).

\item \(\tilde{f}\) has no other critical points in \(D_3\).

\end{itemize}

Define:
\[
M_i = \left\{\begin{array}{ll} 2^i&i > 1\\
0 &i = 1\\
0&i=0\\
-2^{-i}&i < 0\end{array}\right\}
\]
Then define further preliminary functions \(\tilde{f}_i(x) = M_i\tilde{f}(x)\).   (It is intentional that \(\tilde{f}_1 = \tilde{f}_0 = 0\) at this stage.)   If we consider the graphs of \(\tilde{f}_i\) as the front projection of some Legendrian submanifold, this submanifold is not generic, but it will have clusters of Reeb chords \(\tilde{a}, \tilde{b}\), and \(\tilde{d}\) corresponding to the critical points of \(\tilde{f}\), where \(\tilde{a}\) is the cluster at the maximum \(p\), \(\tilde{b}\) is the cluster at the maximum of \(s\) in \(\partial D_{2.5}\) - corresponding to an index-\((n-1)\) Reeb chord - and \(\tilde{d}\) is the cluster at the minimum of \(s\) in \(\partial D_{2.5}\) - corresponding to a minimum. In each case, we will have as many Reeb chords \(\tilde{a}_j^i, \tilde{b}_j^i, \tilde{d}_j^i\) as we have pairs of sheets, excluding the pair \(0, 1\), and each Reeb chord in a cluster \(\tilde{x}\) will have the same projection to the base space. A cross-section of \(\tilde{f}_i\) is shown in Figure~\ref{fig:CrossSectionOfD3}.

We will use \textbf{formally rigid} to denote the gradient flow trees on these height difference functions, excluding \(\tilde{f}_1 - \tilde{f}_0\), that have formal dimension zero (they are not actually rigid because \(\tilde{f}_i\) are not generic).   Pick an arbitrarily small \(\epsilon > 0\), and let \(V\) be an \(\epsilon\)-neighborhood of the image in the base space \(M\) of the formally rigid gradient flow trees of all of the height difference functions \(\tilde{f}_i - \tilde{f}_j, i > j\), excluding \(\tilde{f}_1 - \tilde{f}_0\).   Note that \(V\) includes, in particular, the gradient flow line from \(\tilde{a}^i_j\) to \(\tilde{b}^i_j\), which coincide for all \(i, j\); this fact will be important later.

Find a disk \(D' \subset D_{2.4} - V\) of radius \(\rho\), choosing \(\rho\) to be arbitrarily small.   In particular, choose \(\rho\) to be small enough that we can approximate the height functions \(\tilde{f}_i, i \neq 0, 1\) linearly over \(D'\).   Let \(\eta\) be the infinum of \(|\nabla\tilde{f}_i|, i \neq 0,1\) over \(D'\).   Let \(\delta_1 > 0\) be a quantity small enough that:
\[
\delta_1 \leq \frac{\eta}{2(1+m) \rho^{3/2}}
\]
Where \(m\) is the supremum of \(|\nabla \tilde{f}|\).

By Lemma~\ref{lemma3.4}, we can find \(\delta_2 > 0\) such that, if we perturb our height functions by an amount less then \(\delta_2\), the image of the rigid gradient flow trees of the perturbed height functions will be within \(\epsilon\) of the images of the rigid gradient flow trees of the height functions \(\tilde{f}_i\), excepting the undefined flow trees on \(\tilde{f}_1 - \tilde{f}_0\).   Let \(\delta\) be the minimum of \(\delta_1, \delta_2\).

For \(i \neq 0,1\), define \(\bar{f}_i = \tilde{f}_i\), and define \(\bar{f}_1 = \frac{1}{3m}\delta\tilde{f}, \bar{f}_0 = -\frac{1}{3m}\delta\tilde{f}\).   Observe that \(\bar{f}_1, \bar{f}_0\) are \(\frac{1}{3}\delta\)-close to \(\tilde{f}_1, \tilde{f}_0\) in the \(C^1\) metric.

Define \(\hat{f}_i\) to be generic perturbations of \(\bar{f}_i\) that are \(\frac{1}{3}\delta\)-close to \(\bar{f}_i\) in the \(C^1\)-metric.   The height difference functions \(\hat{f}_i - \hat{f}_j\) now have three clusters of Reeb chords: the maxima at \(\hat{a}\); the index-\((n-1)\) chords around the north pole of \(\partial (D_{2.5})\) at \(\hat{b}\); and the minima around the south pole of \(\partial (D_{2.5})\) at \(\hat{d}\). Let \(\hat{\Lambda}'\) be the Legendrian submanifold which is equal to \(\hat{\Lambda}\) outside of \(\pi_M^{-1}(D)\), and whose front projection over \(\pi_M^{-1}(D)\) is the graphs of the functions \(\hat{f}_i\) over \(D\). \(\hat{\Lambda}'\) is then isotopic to \(\hat{\Lambda}\).

Observe that, by construction, the rigid gradient flow trees of \(\bar{f}_1 - \bar{f}_0\) coincide with the rigid gradient flow trees of \(\bar{f}_{i+1} - \bar{f}_{i}, i \neq 0\).   Therefore, the rigid gradient flow trees of \(\hat{f}_i\) lie within \(V\) for all \(i\), and thus avoid \(D'\).

\begin{figure}
\begin{center}\includegraphics{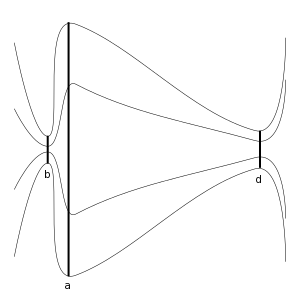}
\caption{Cross-section of \(D_3\)\label{fig:CrossSectionOfD3}}\end{center} 
\end{figure}

Let \(q\) be the center of \(D'\), and let \(\pi_1:(D' - \{q\}) \to \partial D'\) be the natural projection.   Define \(h_i(x) = f_i(\pi_1(x))\).   Define the set:
\[
D'_r = \left\{x \in D' \left| |x - q| \leq r \right.\right\}
\]
We define \({f}^1_0, f^1_1\) as follows:

\begin{itemize}

\item Outside of \(D'_{\rho}\), \(f^1_i = \hat{f}_i\).

\item Within \(D'_{0.9\rho} - D'_{0.8\rho}\), \(f^1_i\) has the form:
\[
f^1_1(x) = \frac{h_1(x)}{(0.1\rho)^{3/2}} \left(|x - q| - 0.8\rho\right)^{3/2}
\]\[
f^1_0(x) = \frac{h_0(x)}{(0.1\rho)^{3/2}} \left(|x - q| - 0.8\rho\right)^{3/2}
\]

\item \(f^1_0, f^1_1\) are not defined inside \(D'_{0.8\rho}\).

\item Within \(D'_\rho - D'_{0.9\rho}\):
\[
|\nabla f^1_1|, |\nabla f^1_0| < \frac{1}{2}\eta
\]
Where \(\eta\) is the infinum of \(|\nabla f_i|, i \neq 0, 1\), over \(D'\).

\end{itemize}

We define \(f_0, f_1\) to be generic \(\frac{1}{3}\delta\)-small perturbations of \(f^1_1, f^1_0\), and for \(i \neq 0, 1\) we define \(f_i = \hat{f}_i\) over our entire base space. Define \(\Lambda'\) to be the Legendrian submanifold equal to \(\Lambda\) outside of \(\pi_M^{-1}(D)\), and whose front projection is equal to the graphs of \(f_i\) over \(\pi_M^{-1}(D)\); \(\Lambda\) and \(\Lambda'\) are then Legendrian isotopic.

Thus, prior to the Legendrian ambient surgery, outside of \(D'\) we will have clusters of Reeb chords \(a, b, d\), with corresponding clusters after the surgery \(\hat{a}, \hat{b}, \hat{d}\).   Within \(D'\), before the surgery, we will have a single index-1 Reeb chord \(c\) between sheets 0 and 1.   However, observe that, for \(i = 0, 1\), over \(D'\):
\[
\nabla f_i = \nabla h_i\left(|x - q| - 0.8\rho\right)^{3/2} + \frac{3}{2}h_i\left(|x - q| - 0.8\rho\right)^{1/2} \nabla |x - q|
\]\[
|\nabla f_i| \leq |\nabla h_i| (0.1\rho)^{3/2} + \frac{3}{2} |h_i| (0.1\rho)^{1/2}\rho
\]
Since \(h_i\) is equal to \(\hat{f}_i\) on the boundary, we can bound \(|h_i|, |\nabla h_i|\) by \(|\hat{f}_i|, |\nabla \hat{f}_i|\):
\[
|\nabla f_i| \leq |\nabla \hat{f}_i| (0.1\rho)^{3/2} + \frac{3}{2} |\hat{f}_i| (0.1\rho)^{1/2}\rho
\]\[
|\nabla f_i| \leq \delta (1+m) \rho^{3/2} \leq \frac{1}{2}\eta
\]
Where \(\eta\) is strictly less then the supremum of \(|\nabla f_i|, i \neq 0, 1\) over \(D'\).   Therefore, no other Reeb chords are generated besides \(c\), so unlike our other Reeb chords, it is solitary, rather than appearing in a cluster of chords.   An ``overhead'' view of \(\pi_F(\Lambda')\) will have the form shown in Figure~\ref{fig:OverheadOfD3}.

\begin{figure}
\begin{center}\includegraphics{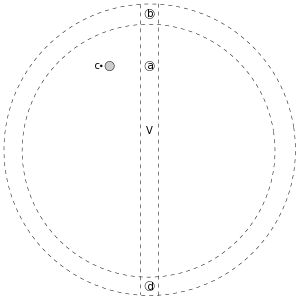}
\caption{Overhead View of \(D_3\)\label{fig:OverheadOfD3}}\end{center} 
\end{figure}

\subsection{DGA Calculations:} We will do this in several steps.   First of all:

\begin{lemma}\label{lemma3.5} If \(\delta, \epsilon, \rho\) are small enough, then there exists no partial flow tree \(\Gamma\) in \(\Lambda'\) or \(\hat{\Lambda}'\) with a positive special puncture over \(D'\) and a negative puncture at an \(a\) or \(\hat{a}\) Reeb chord.\end{lemma}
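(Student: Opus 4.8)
The plan is to argue by contradiction, exploiting two features of the model built in the Setup. First, over $D'$ the sheets $0$ and $1$ are nearly flat --- on $D'\setminus D'_{0.8\rho}$ one has $|\nabla f_0|,|\nabla f_1| \le \tfrac12\eta$, and the two sheets come together in a cusp edge along the locus $\{|x-q| = 0.8\rho\}$ --- whereas every other sheet $f_k$, $k\neq 0,1$, satisfies $|\nabla f_k| \ge \eta$ there once $\delta$ is small. Second, $D'$ was chosen disjoint from the neighbourhood $V$, which contains the image in $M$ of \emph{every} rigid gradient flow tree of the height difference functions $\tilde f_k - \tilde f_l$, $k>l$, other than $\tilde f_1-\tilde f_0$, and in particular contains the descending manifolds of the maxima $\tilde a^i_j$. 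Since $\Lambda'$ and $\hat\Lambda'$ agree outside $\pi_M^{-1}(D')$, it suffices to rule out such a $\Gamma$ in $\hat\Lambda'$ and transfer the statement using this agreement.

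So suppose $\Gamma$ exists, with positive special puncture $v$ over $D'$ and a negative puncture at $a^i_j$, which lies over $p$. Since a special puncture is $1$-valent, there is a single edge $e$ of $\Gamma$ at $v$, a flow line of $f_k-f_l$ for the pair of sheets $k,l$ meeting at $v$, and I would split into cases. If $\{k,l\}=\{0,1\}$, then over $D'\setminus D'_{0.8\rho}$ the speed of $e$ is $|\gamma_e'| = |\nabla(f_1-f_0)| < \eta$ and, running between sheets $0$ and $1$, $e$ can only leave $D'$ across $\partial D'$ or terminate at the cusp edge of sheets $0,1$ as an End vertex; in the latter case the branch of $\Gamma$ issuing from $v$ carries no further punctures and so cannot reach $a^i_j$, a contradiction, hence $e$ exits $D'$. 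If exactly one of $k,l$ lies in $\{0,1\}$, or if $k,l\notin\{0,1\}$, then because $\rho$ is small enough that the $\tilde f_k$ are affine to first order over $D'$ we get $\nabla(f_k-f_l) = (M_k-M_l)\nabla\tilde f(q) + O(\tfrac12\eta)$ (with $M_0 = M_1 = 0$), a nearly constant nonzero vector; so $e$ runs in an essentially fixed direction across the convex radius-$\rho$ disk $D'$ and again exits through $\partial D'$. In every case the connected component of $\Gamma$ containing $v$ --- hence containing the path in $\Gamma$ from $v$ to $a^i_j$ --- must cross $\partial D'$.

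Next I would push the argument outside $D'$. Over $M\setminus D'$ the sheet functions of $\hat\Lambda'$ are $\hat f_0,\dots,\hat f_m$, each $\tfrac13\delta$-close in $C^1$ to the corresponding $\tilde f_k$, so cutting $\Gamma$ where it crosses $\partial D'$ yields a partial flow tree $\Gamma'$ for the $\hat f_k-\hat f_l$, still with a negative puncture at $a^i_j$, all of whose edges lie in $M\setminus D'$. By Lemmas~\ref{lemma3.1} and~\ref{lemma3.3}, for $\delta$ small the descending manifold of each maximum $\hat a^i_j$ of $\hat f_i-\hat f_j$ lies within $\epsilon$ of that of $\tilde a^i_j\subset V$; propagating this sheet-comparison vertex by vertex along $\Gamma'$ --- invoking Lemma~\ref{lemma3.4} to match the tree combinatorics and Lemma~\ref{lemma3.2} to control the flow lines between consecutive vertices --- shows every edge of the component of $\Gamma'$ through $a^i_j$ lies within $\epsilon$ of a flow line in $V$. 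For $\epsilon$ small this confines that component to a fixed small neighbourhood of $V$, which is disjoint from $D'$ and hence from $\partial D'$, contradicting the previous paragraph. Choosing $\delta,\epsilon,\rho$ small enough makes all of these estimates hold at once.

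The step I expect to be the main obstacle is the last one, since $\Gamma'$ is a \emph{partial} flow tree with special punctures, whereas Lemma~\ref{lemma3.4} and the localization of rigid trees in $V$ were stated for honest, rigid trees. I see two routes to close the gap: (i) note that the $\epsilon$-control in Lemmas~\ref{lemma3.1}--\ref{lemma3.3} is purely Morse-theoretic and applies edge by edge regardless of rigidity, so the confinement of the component of $\Gamma'$ near $V$ can be obtained by propagating the control outward from the $a^i_j$-puncture through each vertex, using Theorem~\ref{theorem2.5} to list the admissible vertex types and the dimension count of Theorem~\ref{theorem2.4} to bound how large such a component can be; or (ii) cap off the special punctures of $\Gamma'$ by short flow lines supported near $D'$, where sheets $0,1$ are nearly flat, to get an honest (not necessarily rigid) flow tree, apply Lemma~\ref{lemma3.4}, and absorb the correction from the capping region into the smallness of $\rho$ and of $|\nabla f_0|,|\nabla f_1|$ there. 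Either way, the delicate bookkeeping is tracking which pairs of sheets occur along the tree and the orientation conditions at its vertices.
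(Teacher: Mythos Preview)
Your approach is genuinely different from the paper's, and the gap you flag in your final paragraph is real and not easily closed. The set $V$ was built as a neighbourhood of the images of \emph{rigid} trees, and nothing forces a general partial flow tree $\Gamma'$ to stay near $V$; Lemma~\ref{lemma3.4} concerns rigid trees only, and your route~(ii) (capping the special punctures) produces a tree with no rigidity guaranteed, so the lemma still does not apply. Route~(i), propagating control outward from $a^i_j$, points in the right direction but is undeveloped: without rigidity you have no a~priori control on where the intermediate vertices of $\Gamma'$ sit, so the edge-by-edge argument has no usable data beyond the first edge.

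The paper's proof is much simpler and entirely local to the $a$-cluster, exploiting the one feature you never use: each $a^i_j$ is a \emph{maximum} of $f_i - f_j$. Take a Morse neighbourhood $U(a)$ of the common base point of the $\bar a^i_j$; since $\bar a$ is a maximum for every $\bar f_k - \bar f_l$, every flow $-\nabla(\bar f_k - \bar f_l)$ is outward across $\partial U(a)$, and for $\delta$ small so is every $-\nabla(f_k - f_l)$. Hence the ascending manifold of any point of $U(a)$, for \emph{any} height-difference flow, stays in $U(a)$, and inductively the iterated ascending sets $A_1, A_2, \dots, A_m$ of the $a^i_j$ all lie in $U(a)$. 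Now restrict $\Gamma$ to a slightly larger neighbourhood $U(a)' \supset U(a)$ still disjoint from $D'$, and trace the path in the restricted tree from $a^i_j$ toward its positive special puncture $q \in \partial U(a)'$: the successive vertices lie in $A_1, A_2, \dots$, hence in $U(a)$, forcing $q \in U(a)$ --- but $q \in \partial U(a)'$ lies outside $U(a)$, a contradiction. No reference to $V$ and no rigidity hypothesis is needed; your first paragraph (showing the edge at $v$ exits $D'$) is correct but turns out to be irrelevant to the argument.
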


\begin{proof} We will prove the theorem for \(\Lambda'\); the argument is precisely the same for \(\hat{\Lambda}'\). Let \(A_1\) be the union of the ascending manifolds of every Reeb chord \(a^i_j\) for every pair of height difference functions \(-\nabla(f_k - f_l)\); let \(A_2\) be the union of the ascending manifolds of \(A_1\) for every pair of height difference functions \(-\nabla(f_k - f_l)\); let \(A_3\) be the union of the ascending manifolds of \(A_2\); etc.   Let \(m\) be the total number of sheets of \(\Lambda'\).   We claim that \(A_1, ..., A_m\) are disjoint from \(D'\) if \(\epsilon, \rho\) are small enough.

Recall that \(\bar{f}_i\) denotes the sheet height functions before they are genericized.   Let \(U(a)\) be a Morse neighborhood of \(\bar{a}^1_0\) for \(-\nabla (\bar{f}_i - \bar{f}_j)\).   (Since \(\bar{f}_i\) all equal each other multiplied by a constant, \(\pi_M(\bar{a}^1_0) = \pi_M(\bar{a}^i_j)\) for all \(i, j\), and the \(\bar{f}_i\) functions will have a common Morse neighborhood, except that the functions will take the form \(\bar{f}_i(x) - \bar{f}_j(x) = \bar{f}_i(0) - \bar{f}_j(0) - m_i(x_1^2 + ... + x_n^2)\) instead of \(\bar{f}_i(0) - \bar{f}_j(0) - (x_1^2 + ... + x_n^2)\).)   If \(\delta\) is small enough, the Reeb chords \(a^i_j, \hat{a}^i_j\) will lie in \(U(a)\).   Define an outward-pointing radial vector field of \(U(a)\):
\[
R = x_1\partial_{x_1} + ... + x_n\partial_{x_n}
\]
And observe \(-\nabla(\bar{f}_i - \bar{f}_j) \cdot R > 0\) on \(\partial U(a)\) for all \(i, j\), so all gradient flows of the height difference functions \(-(\bar{f}_i - \bar{f}_j)\) are leaving \(U(a)\).   Therefore, if \(\delta\) is small enough, then \(-\nabla(f_i - f_j) \cdot R > 0\) on \(\partial U(a)\) for all \(i, j\), so all gradient flows of \(-(f_i - f_j)\) on \(\partial U(a)\) are also leaving.

What this means is that, for every \(p \in \partial U(a)\) and every height difference function \(-(f_i - f_j)\), the ascending manifold of \(p\) for that height difference function will lie inside \(U(a)\).   Therefore, if \(\delta\) is small enough, the ascending manifold of every point \(p \in U(a)\) for every choice of height difference function \(-(f_i - f_j)\) will lie in \(U(a)\).   Therefore, \(A_1, ..., A_m \subset U(a)\).   Since \(U(a)\) is disjoint from \(D'\) if \(\delta, \epsilon, \rho\) are small enough, we conclude that \(A_1, ..., A_m\) is disjoint from \(D'\).

Why does this matter?   Well, consider a partial flow tree \(\Gamma\) in \(\Lambda'\) with a positive special puncture over \(D'\) and a negative puncture at a chord \(a^i_j\).   Let \(U(a)'\) be an arbitrarily small open neighborhood of \(U(a)\) that is also disjoint from \(D'\).   Let \(\Gamma'\) be the subtree of \(\Gamma\) that is obtained by restricting \(\Gamma\) to \(U(a)'\) and deleting the components of the restricted tree that do not contain our Reeb chord at \(a^i_j\).   \(\Gamma'\) will then have a positive special puncture \(q\) on \(\partial (U(a)')\), a negative puncture \(a^i_j\), and possibly some \(Y^0\) vertices and/or negative special punctures on \(\partial (U(a))'\).   Consider the sequence of edges \(e_1, ..., e_k\) in \(\Gamma'\) between \(a^i_j\) and \(q\): \(a^i_j\) is a boundary point of \(e_1\), define \(p_1\) to be the joint boundary point of \(e_1\) and \(e_2\), \(p_2\) to be the joint boundary point of \(e_2\) and \(e_3\), and so on, up to \(p_k = q\) is a boundary point of \(e_k\), as shown in Figure~\ref{fig:EdgesInUa}.   Note that, since each edge \(e_i\) must lie between fewer sheets then the edge \(e_{i+1}\), we can conclude that \(k \leq m\).

\begin{figure}
\begin{center}\includegraphics{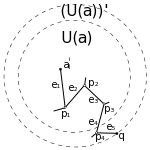}
\caption{Labels of Edges in \(U(a)\)\label{fig:EdgesInUa}}
\end{center}
\end{figure}

Then \(p_1 \in A_1, p_2 \in A_2, ..., p_k \in A_k\), so \(q\) lies in \(A_k\).   But \(A_k \subset U(a)\), and \(q \notin U(a)\), which is a contradiction.   Therefore \(\Gamma\) cannot have a negative puncture at a Reeb chord \(a^i_j\). The same argument applies to \(\hat{\Lambda}'\). \end{proof}

\begin{lemma}\label{lemma3.6} If \(\delta, \epsilon, \rho\) are small enough, then there exists no partial flow tree \(\Gamma\) in \(\Lambda'\) or \(\hat{\Lambda}'\) with a positive special puncture over \(D'\) and a negative puncture at a \(b\) or \(\hat{b}\) Reeb chord.\end{lemma}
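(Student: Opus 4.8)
The plan is to run the argument from the proof of Lemma~\ref{lemma3.5} with the $b$-cluster in place of the $a$-cluster; I treat $\Lambda'$, the case of $\hat{\Lambda}'$ being identical (and in fact slightly easier, since for $\hat{\Lambda}'$ the sheet functions $\hat{f}_0,\hat{f}_1$ are ordinary perturbations of scalar multiples of $\tilde{f}$ everywhere). First form the iterated ascending manifolds: let $B_1$ be the union, over every chord $b^i_j$ and every height difference function $-\nabla(f_k-f_l)$ (with sheet $k$ above sheet $l$), of the ascending manifold of $b^i_j$; let $B_2$ be the union of the ascending manifolds of $B_1$ over all such height difference functions; and so on up to $B_m$, with $m$ the number of sheets. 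The lemma reduces to the claim that, for $\delta,\epsilon,\rho$ small enough, each $B_j$ is disjoint from $D'$: granting this, a hypothetical partial flow tree $\Gamma$ with a positive special puncture over $D'$ and a negative puncture at $b^i_j$ is cut off to a neighborhood of the region containing $B_1,\dots,B_m$ that misses $D'$, and following the chain of edges $e_1,\dots,e_k$ outward from $b^i_j$ — with $k\le m$ since each edge lies between strictly more sheets than the last — forces the manufactured positive special puncture of the cut-off tree into $B_k$, a contradiction. This concluding step is word-for-word the end of the proof of Lemma~\ref{lemma3.5}.

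The substance is the disjointness claim, where the geometry differs from Lemma~\ref{lemma3.5}: the $b^i_j$ are index $(n-1)$, not maxima, so their iterated ascending manifolds are not trapped in a small Morse ball. Instead I use that each preliminary sheet function $\tilde{f}_i$ is a scalar multiple of $\tilde{f}$. At $\pi_M(b^i_j)$ the function $\tilde{f}$ has a strict minimum in the radial direction, whereas $\tilde{f}$ restricted to $\partial D_{2.5}$ equals $0.01+0.0001\,s\circ\pi_S$ and has exactly two critical points — $\pi_M$ of the $b$- and $d$-clusters — so the ascending manifold of $b^i_j$, being the one-dimensional stable manifold for $-\nabla(f_i-f_j)$, is precisely the radial line through the north pole of $\partial D_{2.5}$. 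Since all the preliminary height differences are positive multiples of $\tilde{f}$, this radial line is exactly the gradient flow line from $\tilde{a}^i_j$ to $\tilde{b}^i_j$ — which coincides for all $i,j$ — extended radially past $\partial D_{2.5}$; by construction that flow line lies in $V$, and $V$ contains the maximum $p$ of $\tilde{f}$ in its interior. The ascending manifold of any point on this line is, in turn, a sub-arc of the same line, funnelling near $p$ into the maximum. Thus in the preliminary picture every $B_j$ lies inside the radial line together with a small neighborhood of $p$, so $B_j\subset V\cup(M\setminus D_{2.4})$; since $D'\subset D_{2.4}-V$, this gives $B_j\cap D'=\emptyset$.

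Passing to the generic functions $f_i$: a thin tube $Q$ about the portion of this radial line lying in $D_{2.4}$ is a compact codimension-$0$ set contained in the descending manifold of the maximum $p$ and containing no critical point, so Lemma~\ref{lemma3.3} (with Lemma~\ref{lemma3.1}) controls how far the ascending manifolds through points of $Q$ move under a small perturbation; choosing $\delta$ small in terms of $\epsilon$ and $m$, and choosing $D'$ at positive distance from $V$, keeps all of $B_1,\dots,B_m$ within a neighborhood of the radial line too narrow to meet $D'$. The exceptional pair $\{0,1\}$ is harmless: near the radial line $f_1-f_0$ is still a small positive multiple of $\tilde{f}$, so its ascending trajectories behave as above, and it is non-standard only over $D'$, which the iterates never reach. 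The hard part — and the reason this is more than a restatement of Lemma~\ref{lemma3.5} — is exactly this perturbation step. In Lemma~\ref{lemma3.5} the trap $U(a)$ is a fixed region on whose boundary $-\nabla(f_i-f_j)$ points strictly outward for every sheet pair and for the perturbed functions as well, so the confinement is exact and survives genericization automatically; here no such region is available, because near the radial line $\tilde{f}$ is nearly constant on the transverse slices and its gradient nearly tangent to any tube about the line, so the confinement of the iterated ascending manifolds rests entirely on the attracting character of the uphill flow at the maximum $p$. I expect the technical heart of the proof to be verifying that this funnelling is robust enough to survive $m$-fold iteration after an arbitrarily small generic perturbation.
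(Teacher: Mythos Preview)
Your proposal is correct and follows essentially the same approach as the paper: define the iterated ascending manifolds $B_1,\dots,B_m$, show they stay inside a thin tube about the $a$-to-$b$ flow line (union the region outside $D_{2.49}$), and conclude with the chain-of-edges contradiction from Lemma~\ref{lemma3.5}. The paper's execution differs only in technical detail: it first confines the $B_j$ near $b$ using a Morse neighborhood $U(b)$ with an explicit $S$--$V$--$W$ decomposition (splitting the exit set $W$ into inward and outward components $W^\pm$), and for the iteration $B_j\to B_{j+1}$ it invokes Lemma~\ref{lemma3.2} (nearby points have nearby ascending manifolds) rather than Lemma~\ref{lemma3.3} alone --- the latter compares ascending manifolds of the \emph{same} point under different functions, so you will want Lemma~\ref{lemma3.2} as well to propagate the tube control through the $m$ iterates.
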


\begin{proof} We prove the lemma for \(\Lambda'\); the argument is precisely the same for \(\hat{\Lambda}'\). Let \(B_1\) denote the union of the ascending manifolds of \(b^i_j\) for \(f_i - f_j = \hat{f}_i - \hat{f}_j\) for all \(i, j\).   Let \(B_2\) denote the union of the ascending manifolds of \(B_1\) for \(f_k - f_l\) for all \(k, l\); let \(B_3\) denote the union of the ascending manifolds of \(B_2\); and so on, up to \(B_m\), where \(m\) is the number of sheets of \(\pi_F(\Lambda')\).

Let \(U(b)\) be a Morse neighborhood of \(\bar{b}\), and assume \(\delta\) is small enough that \(b^i_j \in U(b)\) for all \(i, j\).   As in equation~\ref{eqn:defineS} in the proof of Lemma~\ref{lemma3.1}, define \(S\) to be a \(\epsilon_0\)-neighborhood of the ascending manifolds of \(\bar{b}\) for \(-\nabla (\bar{f}_k - \bar{f}_l)\):
\[
S = \left\{(x_1, ..., x_n) \in U(b) | x_2^2 + ... + x_n^2 \leq \epsilon_0^2\right\}
\]
Consider \(\partial S\) as the union of subsets \(V, W\), as in equation~\ref{eqn:defineVW}
\[
V = \left\{(x_1, ..., x_n) \in U(b) | x_2^2 + ... + x_n^2 = \epsilon_0^2\right\}
\]\[
W = \left\{(x_1, ..., x_n) \in \partial U(b) | x_2^2 + ... + x_n^2 \leq \epsilon_0^2\right\}
\]
And let \(R_V, R_W\) be vector fields on \(V, W\):
\[
R_V = -x_2\partial_{x_2} - ... - x_n\partial_{x_n}
\]\[
R_W = x_1\partial x_1
\]
Then, if \(\delta\) is small enough, for every \(-\nabla (f_k - f_l)\):
\[
\left.-\nabla (f_k - f_l)\right|_V \cdot R_V < 0
\]\[
\left.-\nabla (f_k - f_l)\right|_W \cdot R_W < 0
\]
Therefore, for any point \(p \in S\), and any choice \(i, j\), we know that \(\mathcal{A}_{-(f_i - f_j)}(p) \cap U(b) \subset S\), and in particular the ascending manifold must leave \(U(b)\) through \(W\).   Therefore, \(B_1 \cap U(b), B_2 \cap U(b), ..., B_m \cap U(b)\) all lie in \(S\), and \(B_1 \cap (\partial U(b)), B_2 \cap \partial (U(b)), ..., B_m \cap \partial (U(b))\) all lie in \(W\).   We can separate \(W\) into two components: \(W^+\), whose ascending manifolds lie in \(D_{2.49}\), and \(W^-\), whose ascending manifolds are disjoint from \(D_{2.49}\).

Now consider the ascending manifold of \(W^+\) for any choice of \(-\nabla (f_k - f_l)\).   Define \(Q'_\rho\) to be a \(\rho\)-neighborhood of the ascending manifold of \(\bar{b}^i_j\) for \(-\nabla (\bar{f}_k - \bar{f}_l)\), as shown in Figure~\ref{fig:WandQ}, and define \(Q_\rho\) to be the closure of \(Q'_\rho - (U(b) \cup U(a))\).   \(Q_\rho\) then contains no critical points of \(\bar{f}_k - \bar{f}_l\) for any \(k, l\), and lies in the descending manifold of \(\bar{a}\) for \(-\nabla (\bar{f}_k - \bar{f}_l)\).   In addition, \(W^+ \subset \partial Q_\rho\) if \(\epsilon_0 < \rho\).

\begin{figure}
\begin{center}\includegraphics{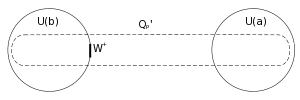}
\caption{Diagram of \(Q_\rho, U(a), U(b),\) and \(W^+\)\label{fig:WandQ}}
\end{center}
\end{figure}

By Lemma~\ref{lemma3.3}, if \(\delta\) is small enough, the ascending manifold of every point in \(W^+\) for \(-\nabla (f_k - f_l)\) will lie within \(\frac{1}{2}\epsilon_1\) of the ascending manifold of that point for \(-\nabla (\bar{f}_k - \bar{f}_l)\).   By Lemma~\ref{lemma3.2}, if \(\epsilon_0\) is small enough, then the ascending manifold of every point of \(W\) for \(-\nabla (\bar{f}_k - \bar{f}_l)\) will lie within \(\frac{1}{2}\epsilon_1\) of the ascending manifold of \(\bar{b}\) for \(-\nabla (\bar{f}_k - \bar{f}_l)\).   Therefore, if \(\delta, \epsilon_0\) are small enough, the ascending manifold of \(W^+\) for every \(-\nabla (f_i - f_j)\) will lie within \(Q'_{\epsilon_1}\).   Therefore, if \(\delta, \epsilon_0\) are small enough, then \(B_1 \subset Q'_{\epsilon_1} \cup (M - D_{2.49})\).

Now consider \(B_2\).   Since \(B_1\) consists of the union of the ascending manifolds of \(W^+\) for every choice of height difference functions \(-\nabla(f_i - f_j)\), then for any choice \(-\nabla(f_i - f_j)\), there exists some ascending manifold in \(B_1\) of that function, and every other point in \(B_1\) lies within \(2\epsilon_1\) of that ascending manifold.   Therefore, by Lemma~\ref{lemma3.2}, we can choose \(\epsilon_1\) to be small enough that \(B_2\) lies within \(\frac{1}{2}\epsilon_2\) of \(B_1\) for any choice of \(\epsilon_2\), and therefore \(B_2 \subset Q'_{\epsilon_2} \cup (M - D_{2.49})\).

We can then repeat this process for \(B_3, ..., B_m\), until we obtain that \(B_m \subset Q'_{\epsilon_m} \cup (M - D_{2.49})\).   What we obtain from this is that, if \(\epsilon_m, \epsilon_{m-1}, ..., \epsilon_1, \epsilon_0,\) and \(\delta\) are all small enough, then \(B_1, ..., B_m\) will be disjoint from \(D'\).   Therefore, for the same reasons as for the \(a\) chords in Lemma~\ref{lemma3.5}, there can exist no partial flow tree with a positive special puncture over \(D'\) and a negative puncture at a \(b\) chord.   The same argument applies to \(\hat{\Lambda}'\).\end{proof}

\begin{lemma}\label{lemma3.7} If \(\epsilon, \rho\) are small enough, then there exist no rigid gradient flow trees of \(\Lambda'\) whose image passes through \(D'\) and which does not have either a switch or a puncture at \(c\).\end{lemma}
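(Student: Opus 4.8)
The plan is to argue by contradiction: assume a rigid gradient flow tree $\Gamma$ of $\Lambda'$ has image meeting $D'$ and has neither a switch nor a puncture at $c$, and to derive a contradiction by analysing the part of $\Gamma$ lying over $D'$.

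\textbf{Step 1 (reduction to $\hat\Lambda'$).} First I would observe that $\Gamma$ cannot lie entirely over $D'$, since its unique positive puncture is at a Reeb chord and $c$ is the only Reeb chord over $D'$. Writing $F\colon\Gamma\to M$, let $\Gamma_0$ be the closure of a component of $F^{-1}(\operatorname{int}D')$ whose image meets $D'$. Outside $\pi_M^{-1}(D'_\rho)$ the submanifold $\Lambda'$ agrees with $\hat\Lambda'$, whose rigid gradient flow trees lie in $V\subset M\smallsetminus D'$; since the dimension formula of Theorem~\ref{theorem2.4} is insensitive to the replacement $f_0,f_1\leftrightarrow\hat f_0,\hat f_1$ away from $D'_\rho$, a rigid tree of $\Lambda'$ none of whose flow lines over $D'_\rho$ see the modified functions $f_0,f_1$ would be a rigid tree of $\hat\Lambda'$ meeting $D'$, which is impossible. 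Hence some edge of $\Gamma_0$ runs between a sheet $0$ or $1$ and another sheet and enters $D'_\rho$; and since a flow line between two sheets both distinct from $0,1$ is a trajectory of an unmodified height difference and so, by $D'\cap V=\varnothing$, avoids $D'$, \emph{every} edge of $\Gamma_0$ has sheet $0$ or $1$ among its two sheets.

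\textbf{Step 2 (cusp interaction).} Next I would invoke Theorem~\ref{theorem2.5}: the internal vertices of $\Gamma_0$ can only be ends, switches, $Y^0$ or $Y^1$ vertices, and switches are excluded by hypothesis. An end or a $Y^1$ vertex must sit on a cusp edge, and $\partial D'_{0.8\rho}$ --- where sheets $0$ and $1$ pinch together --- is the only cusp edge over $D'$. Since near $D'$ the sheets $0,1$ are consecutive in the $z$-order, a $Y^1$ vertex on $\partial D'_{0.8\rho}$ would require an incoming edge between two sheets distinct from $0,1$, excluded by Step~1; so $\Gamma_0$ has no $Y^1$ vertex, and every $Y^0$ vertex of $\Gamma_0$ must emit the ``core'' edge $\{0,1\}$. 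Finally I would use the $3/2$-power normal form of the cusp: $-\nabla(f_1-f_0)$ points radially into $D'_{0.8\rho}$ on a collar of $\partial D'_{0.8\rho}$, so a core edge, once in that collar, flows monotonically to $\partial D'_{0.8\rho}$ and, sheets $0,1$ not extending across it and a switch there being excluded, must terminate at an end; the estimate $|\nabla f_0|,|\nabla f_1|<\tfrac12\eta\le\tfrac12|\nabla f_i|$ on $D'_\rho\smallsetminus D'_{0.9\rho}$ should further prevent a core edge from wandering back out through the outer collar and should pin down the direction of the mixed edges $\{0,j\},\{1,j\}$. The upshot is that $\Gamma_0$ has at least one end on $\partial D'_{0.8\rho}$, one inflow special puncture $v_0\in\partial D'$, possibly further special punctures on $\partial D'$ carried by mixed edges, and a number of ends no smaller than its number of $Y^0$ vertices plus one.

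\textbf{Step 3 (dimension count and contradiction).} Cutting $\Gamma$ along $\partial D'$ yields $0=\dim\Gamma=\dim\Gamma_0+\sum_j\dim\Xi_j-(n+1)N$ with all summands of nonnegative formal dimension, $\Xi_j$ the pieces over $M\smallsetminus D'$ and $N$ the number of $\partial D'$-crossings. The pieces $\Xi_j$ flow toward negative Reeb-chord punctures of $\Gamma$ which, by Lemmas~\ref{lemma3.5} and~\ref{lemma3.6}, cannot be $a$- or $b$-chords; the only alternatives are $d$-chords (minima, lying outside $D'$ since they sit on $\partial D_{2.5}$) or $c$ (excluded). Inserting the vertex contributions to $\dim\Gamma_0$ ($+2$ per special puncture, $+1$ per end, $0$ per $Y^0$) together with the inequalities from Step~2 and the admissible Morse-index data for the $\Xi_j$, one finds $\dim\Gamma_0$ forced above the ceiling $(n+1)-\min_j\dim\Xi_j$ that rigidity of $\Gamma$ permits, the deficit being exactly the one unit of index that a positive puncture at $c$ would supply. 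Since $\Gamma$ has no puncture at $c$, the count is impossible, giving the contradiction.

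\textbf{Expected main difficulty.} I expect the crux to be the ``no escape'' claim in Step~2 --- showing that a core $\{0,1\}$-edge entering the collar $D'_{0.9\rho}$ is genuinely trapped and must die at an end on $\partial D'_{0.8\rho}$, while simultaneously controlling the mixed edges --- since this is exactly where the $3/2$-power normal form of the cusp and the quantitative bounds $|\nabla f_0|,|\nabla f_1|<\tfrac12\eta$ built into the construction have to be used in concert, together with the bookkeeping in Step~3 that converts these geometric constraints plus Lemmas~\ref{lemma3.5}--\ref{lemma3.6} into a numerical contradiction; ruling out a rigid tree whose positive puncture is a $d$-chord but which still reaches $D'$ may require a supplementary Morse-index argument.
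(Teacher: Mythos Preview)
Your Step~1 contains a genuine gap that undermines the rest of the argument. The inference ``a flow line between two sheets both distinct from $0,1$ is a trajectory of an unmodified height difference and so, by $D'\cap V=\varnothing$, avoids $D'$'' is incorrect: $V$ is only an $\epsilon$-neighborhood of the \emph{formally rigid trees} of the $\tilde f_i$, not of all gradient trajectories. For $i,j\neq 0,1$ the vector field $-\nabla(f_i-f_j)$ is (up to the small perturbation) a nonzero multiple of $-\nabla\tilde f$, which over $D_{2.48}\setminus K$ points radially away from $p$; its integral curves sweep out the whole region and certainly cross $D'$. So an individual edge of $\Gamma_0$ between sheets $i,j\neq 0,1$ is perfectly possible, and with it your exclusion of $Y^1$ vertices in Step~2 and the resulting count in Step~3 collapse. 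What you \emph{can} conclude from the $\hat\Lambda'$ comparison is only the weaker statement that \emph{some} edge of $\Gamma$ over $D'$ involves sheet $0$ or $1$; that is not enough to run your structural analysis of $\Gamma_0$.

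The paper's proof avoids this trap by using $\hat\Lambda'$ in a different, more global way. It cuts $\Gamma$ at its $\partial D'$-crossings into a connected piece $\Gamma'$ (outside $D'$, containing the positive Reeb chord) and pieces $\Gamma''_1,\dots,\Gamma''_m$. Because $\Gamma'$ is disjoint from $D'$, it is literally a partial tree for $\hat\Lambda'$; the paper then \emph{completes} it inside $\hat\Lambda'$ by appending to each cut point $y_i$ the simple flow line $\gamma_i$ down to the minimum $\hat d^{r_i}_{s_i}$, producing a genuine tree $\hat\Gamma$ of $\hat\Lambda'$ that visibly crosses $D'$. Since all rigid trees of $\hat\Lambda'$ were arranged to miss $D'$, one gets $\dim\hat\Gamma>0$, hence $\dim\Gamma'>m$. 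Separately, each $\Gamma''_i$ (which, by Lemmas~\ref{lemma3.5}--\ref{lemma3.6} and the hypothesis, can only hit minima and may contain $Y^0$ and $Y^1$ vertices) satisfies $\dim\Gamma''_i\ge n$ by the elementary leaf-count $k\ge l+1$. Combining via the splitting formula gives $\dim\Gamma>m+mn-m(n+1)=0$, the contradiction. The point is that the paper never needs to know which sheets the edges inside $D'$ lie on; the $\hat\Lambda'$ comparison is used to bound $\dim\Gamma'$, not to constrain the local structure over $D'$.
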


\begin{proof}  Let \(\Gamma\) be a rigid flow tree in \(\Lambda'\) which passes through \(D'\) and which has no switch or puncture at \(c\).   Let \(Y(\Gamma)\) denote the image of \(\Gamma\) in the base space \(M\).   We can break \(\Gamma\) at some points \(y_1, ..., y_m \in Y(\Gamma) \cap \partial D'\) into a connected partial flow tree \(\Gamma'\) and a collection of partial flow trees \(\Gamma''_1, ..., \Gamma''_m\), so that \(\Gamma'\) is disjoint from \(D'\), \(\Gamma'\) has negative special punctures at \(y_1, ..., y_m\), and \(\Gamma''_i\) has a positive special puncture at \(y_i\).

Let \(r_i, s_i\) be the sheets of \(\Gamma\) at the point \(y_i\).   Generically, every point \(y_i\), if translated to \(\hat{\Lambda}'\), will be in the ascending manifold of the minimum \(\hat{d}^{r_i}_{s_i}\).   Therefore, we can find a partial flow tree \(\gamma_i\) in \(\hat{\Lambda}'\) - \emph{not} in \(\Lambda'\) - consisting of a flow line on \(-\nabla(f_{r_i} - f_{s_i})\) from \(y_i\) to the minimum \(\hat{d}_{r_i}^{s_i}\).   Since, by construction, the gradient flow \(-\nabla(f_{r_i} - f_{s_i})\) is entering \(D'\) at \(y_i\), \(\gamma_i\) must cross \(D'\).   Observe that \(\Gamma'\) is also a valid partial flow tree in \(\hat{\Lambda}'\), because it is disjoint from \(D'\), and \(\Lambda', \hat{\Lambda}'\) agree except over \(D'\).   Define \(\hat{\Gamma}\) to be the flow tree in \(\hat{\Lambda}'\) obtained by joining \(\Gamma', \gamma_1, ..., \gamma_m\).   By construction, the rigid flow trees of \(\hat{\Lambda}'\) do not cross \(D'\), so \(\hat{\Gamma}\) cannot be rigid.

Recall from section~\ref{subsec:flowtrees} that the formal dimension of the moduli space containing a gradient flow tree \(T\) is given by:
\begin{equation}\label{eqn:dimT}
\dim T = (n - 3) + \sum_{i=1}^m (I(p_i) - (n -1)) - \sum_{j=1}^l(I(q_j) - 1) + \sum_{k=1}^r \mu(v_k)
\end{equation}
Where \(n = \dim M\), \(p_i\) are the positive punctures, \(q_j\) are the negative punctures, \(v_k\) are the other vertices, \(I(x)\) is the Morse index of the Reeb chord \(x\), and \(\mu(z)\) is the Maslov content of a vertex \(z\).   Recall further that, if \(x\) is a positive special puncture, then \(I(x) = n + 1\), and if \(x\) is a negative special puncture, then \(I(x) = -1\).   Finally, recall that, if we divide a flow tree \(T\) into two partial flow trees \(T_1, T_2\), then:
\[
\dim T = \dim T_1 + \dim T_2 - (n + 1)
\]
Therefore, if we divide \(T\) into \(m+1\) partial flow trees \(T_1, ..., T_{m+1}\), then:
\begin{equation}\label{eqn:sumDimTi}
\dim T = \dim T_1 + ... + \dim T_{m+1} - m(n+1)
\end{equation}
We now apply these formulas to our current case.   Each \(\gamma_i\) consists of a positive special puncture, an edge, and a negative puncture at a minimum; therefore:
\[
\dim \gamma_i = (n - 3) + ((n + 1) - (n - 1)) - (0 - 1) = n
\]
And by equation~\ref{eqn:sumDimTi}:
\[
\dim \hat{\Gamma} = \dim \Gamma' + \sum_{i=1}^m \dim \gamma_i - m(n + 1) = \dim \Gamma' - m > 0
\]
Therefore:
\begin{equation}\label{eqn:dimGammaPrime}
\dim \Gamma' > m
\end{equation}
Now consider \(\Gamma''_i, i = 1,..., m\).   Since \(\Gamma''_i\) does not have a switch or a negative puncture at \(c\), and since by Lemma~\ref{lemma3.4} \(\Gamma''_i\) cannot have negative punctures at our \(a\) and \(b\) Reeb chords, \(\Gamma''_i\) can have no Reeb chords other than minima.   Besides minima, it can have \(Y^0\) and \(Y^1\) vertices.   Let \(k\) be the number of minima in \(\Gamma''_i\), and let \(l\) be the number of \(Y^1\) vertices.   Observe that \(k \geq l + 1\).   Therefore:
\[
\dim \Gamma''_i = (n - 3) + ((n + 1) - (n - 1)) - k(0 - 1) + l(-1) = 
n - 1 + k - l 
\]\begin{equation}\label{eqn:dimGammaDoublePrime}
\dim \Gamma''_i \geq n
\end{equation}
Therefore, combining equations~\ref{eqn:sumDimTi},~\ref{eqn:dimGammaPrime}, and~\ref{eqn:dimGammaDoublePrime}, we obtain:
\[
\dim \Gamma > m + mn - m(n+1)
\]\[
\dim \Gamma > 0
\]
Therefore, \(\Gamma\) is not rigid.\end{proof}

\begin{lemma}\label{lemma3.8} If \(\epsilon, \rho\) are small enough, then if \(\Gamma\) is a partial flow tree in \(\Lambda'\) which has a positive special puncture followed by a switch inside \(D'\) and does not have a negative puncture at \(c\), then \(\Gamma\) does not have a \(Y^1\) vertex.\end{lemma}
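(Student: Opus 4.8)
The plan is to follow the pair of sheets carried by each edge of \(\Gamma\) and to show that the hypotheses confine every such pair to one side of the ``\(0\)–\(1\) gap'', which is incompatible with the four-sheet local picture of a \(Y^1\) vertex. To set this up, first record the structure of the front of \(\Lambda'\) over \(D'\): every sheet except sheets \(0\) and \(1\) is the graph of a nearly linear smooth function, and these sheets are uniformly separated in the \(z\)-direction, whereas sheets \(0\) and \(1\) are the perturbed \(3/2\)-power graphs \(f^1_0,f^1_1\), which are smooth on \(D'\setminus D'_{0.8\rho}\), agree exactly along \(\partial D'_{0.8\rho}\), and meet there in a single standard cusp edge. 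Thus the only cusp edge of \(\pi_F(\Lambda')\) lying over \(D'\) is this \(0\)–\(1\) cusp along \(\partial D'_{0.8\rho}\); since \(\Gamma\) lies over \(\overline{D'}\) and its switch, as well as any \(Y^1\) vertex it may have, lies over \(D'\), every such vertex of \(\Gamma\) is attached to this cusp.

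Next I would read off, in terms of the \(z\)-ordering of the sheets (sheet \(1\) lying just above sheet \(0\) away from the cusp), which edges can be incident to these vertices. At a switch attached to the \(0\)–\(1\) cusp, the cusp involves either the upper or the lower sheet of the incoming flow line, so that flow line runs between a pair of sheets \(p>q\) with \(p\le 1\) or with \(q\ge 0\); in particular one cannot have both \(p\ge 2\) and \(q\le -1\). At a \(Y^1\) vertex attached to the \(0\)–\(1\) cusp, the two outer sheets must lie strictly above sheet \(1\) and strictly below sheet \(0\), so its incoming flow line runs between a pair \(\alpha>\beta\) with \(\alpha\ge 2\) and \(\beta\le -1\); call such an edge \emph{full span}. (An edge between sheets \(0\) and \(1\) — the only kind that could limit to the chord \(c\), which the hypothesis forbids anyway — is not full span.)

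Then I would propagate types along \(\Gamma\), rooted at its unique positive puncture with edges oriented away from the root. Because the positive special puncture is immediately followed by the switch, the edge joining them is that switch's incoming edge, hence has \(p\le 1\) or \(q\ge 0\), i.e.\ is not full span. Each vertex met moving downstream preserves both conditions ``upper sheet \(\le 1\)'' and ``lower sheet \(\ge 0\)'': a \(Y^0\) vertex splits \(\{p,q\}\) into \(\{p,k\}\) and \(\{k,q\}\) with \(q<k<p\), so both children keep upper sheet \(\le p\) and lower sheet \(\ge q\); a further switch attached to the \(0\)–\(1\) cusp merely trades a \(0\) for a \(1\) (or conversely) at the top or the bottom of the pair, which cannot raise the upper sheet above \(1\) once it is \(\le 1\), nor lower the lower sheet below \(0\) once it is \(\ge 0\); and ends, negative special punctures, and Reeb-chord punctures merely terminate edges. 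Hence every edge of \(\Gamma\) has upper sheet \(\le 1\) or lower sheet \(\ge 0\), so no edge is full span, and by the previous paragraph \(\Gamma\) has no \(Y^1\) vertex.

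The part I expect to be delicate is the case analysis in the middle step: matching the two mirror forms each of ``switch'' and ``\(Y^1\) vertex'' to the local \(z\)-order of sheets \(0\) and \(1\) at the cusp, and verifying from the construction of \(\Lambda'\) that no additional cusp edges intrude into \(D'\), so that every switch and every \(Y^1\) vertex of \(\Gamma\) is genuinely attached to the \(0\)–\(1\) cusp. One must also check that \(\Gamma\) lies over \(\overline{D'}\) and has a single positive puncture, so that the rooting-and-propagation argument applies as stated.
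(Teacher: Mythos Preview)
Your proof is correct and follows essentially the same approach as the paper: both arguments track the sheet labels along the edges of \(\Gamma\) and show that after the switch at the \(0\)–\(1\) cusp the pair of sheets on any downstream edge stays on one side of the \(0\)–\(1\) gap, which is incompatible with the requirement that the edge entering a \(Y^1\) vertex at this cusp have upper sheet \(\ge 2\) and lower sheet \(\le -1\). Your version is slightly more thorough in that you explicitly verify additional switches preserve the ``not full span'' condition, whereas the paper tacitly uses that the only vertices between the switch and the putative \(Y^1\) are \(Y^0\) vertices; your caveat that \(\Gamma\) need lie over \(\overline{D'}\) is unnecessary, since the paper's observation that there are no cusp edges in the ambient region \(D\) outside \(D'\) already forces every switch and \(Y^1\) vertex to sit on the \(0\)–\(1\) cusp.
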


\begin{proof} The edge emerging from the switch must be on the height difference function \(-(f_1 - f_k)\) or \(-(f_k - f_0)\).   Suppose without loss of generality that it is \(-(f_1 - f_k)\).   Since there are no cusp edges within \(D\) except inside \(D'\), if the partial flow tree has a \(Y_1\) vertex, the height difference function of the edge entering the \(Y^1\) vertex must be on \(-(f_i - f_j)\), where \(1 \geq i > j \geq k\).   But, for a \(Y^1\) vertex, we must have \(i > 1 > 0 > j\).   This is a contradiction.\end{proof}

\begin{lemma}\label{lemma3.9} If \(\epsilon, \rho\) are small enough, than for every rigid gradient flow tree \(\Gamma\) in \(\Lambda'\) which has a switch inside \(D'\) and does not have a negative puncture at \(c\), there is a unique, second rigid gradient flow tree with the same positive and negative punctures.\end{lemma}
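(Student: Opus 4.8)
The plan is to reduce $\Gamma$ to a normal form by cutting along $\partial D'$, isolate the minimal piece of $\Gamma$ that carries the switch, and then produce the partner tree by a modification of that piece dictated by the explicit local model of the cusp edge between sheets $0$ and $1$.

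\emph{Normal form.} Decompose $\Gamma$ at $Y(\Gamma)\cap\partial D'$ into the exterior piece $\Gamma'$ (disjoint from $D'$) and interior pieces $\Gamma''_1,\dots,\Gamma''_k$, the switch lying in a single interior piece, say $\Gamma''_1$. I would first show, using the dimension formula (Theorem~\ref{theorem2.4}, equation~\ref{eqn:GFTdimFormula}), the splitting formula $\dim T=\dim T_1+\dim T_2-(n+1)$, the $\hat\Lambda'$-comparison of Lemma~\ref{lemma3.7} (replacing an interior branch by a flow line into a minimum of $\hat\Lambda'$, which bounds $\dim\Gamma'$ from above and kills the potential configurations with extra $Y^0$ vertices and several exits from $D'$), Lemmas~\ref{lemma3.5} and~\ref{lemma3.6} (no negative punctures at $a$- or $b$-chords), the hypothesis (no negative puncture at $c$), Lemma~\ref{lemma3.8} (no $Y^1$ after the switch), and the absence of minima over $D'\setminus D'_{0.8\rho}$, that a rigid $\Gamma$ has exactly one interior switch and that $\Gamma''_1$ is the minimal \emph{switch bridge}: a positive special puncture at $y\in\partial D'$, a flow line on a height-difference function $-(f_1-f_j)$ or $-(f_i-f_0)$ reaching the cusp sphere $\{|x-q|=0.8\rho\}$ at a point $p$, the switch at $p$, a flow line on $-(f_0-f_j)$ (resp.\ $-(f_i-f_1)$) back out to a negative special puncture at $y'\in\partial D'$, and nothing else; and that there are no other interior pieces. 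Thus $\Gamma$ is $\Gamma'$ with a single switch bridge attached at $y$ and $y'$.

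\emph{The partner bridge.} Over $D'_\rho\setminus D'_{0.8\rho}$ sheets $0$ and $1$ are the explicit graphs of $f^1_0$ and $f^1_1$, which have $C^1$-norm of size $O(\delta)$ (indeed $|\nabla f^1_0|,|\nabla f^1_1|<\tfrac12\eta$ there), so near the cusp each edge of the bridge is a small perturbation of a flow line of $\nabla f_j$ (resp.\ $-\nabla f_i$), the incoming and outgoing edges are nearly parallel across the cusp, and the bridge meets the cusp sphere only tangentially; moreover to leading order sheets $0$ and $1$ are interchanged by $z\mapsto-z$. Using this explicit picture, I would analyze the one-parameter family of switch bridges obtained by freeing the position of the in-point $y$ along $\partial D'$ while keeping the exterior data $\Gamma'$: this space of compatible bridges is generically one-dimensional and, by compactness of the cusp sphere and the controlled behavior of the flow lines near it, a closed interval whose two endpoints are exactly the rigid switch bridges with the given exterior data. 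One endpoint reproduces $\Gamma$; gluing the other to the unchanged $\Gamma'$ produces the second rigid gradient flow tree $\Gamma_2$, which has the same positive and negative punctures as $\Gamma$ (the bridge carries none) and the same formal dimension, so $\dim\Gamma_2=0$. Since there are exactly two endpoints, $\Gamma\mapsto\Gamma_2$ is an involution, which gives both existence and uniqueness of the second tree.

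\emph{Main obstacle.} The crux is the last step: establishing that the relevant space of switch bridges is a closed interval with exactly two endpoints, each a rigid tree with the prescribed exterior data, rather than a circle, a half-open interval, or a manifold with more boundary components. This demands a careful study of the gradient flow of the height-difference functions in a neighborhood of the cusp sphere --- in particular that the tangency locus governing where the switch can occur is cut out transversally, and that the only boundary degenerations of the one-dimensional family are the two rigid configurations and nothing worse --- together with a check that this structure survives the passage from $f^1_i$ to the generic perturbations $f_i$ and the $O(\delta)$ breaking of the $z\mapsto-z$ symmetry. This is exactly where the freedom to shrink $\epsilon$, $\rho$, and $\delta$ is used, via Lemmas~\ref{lemma3.1}--\ref{lemma3.4}, to keep the perturbed flow lines close enough to their explicit models that the count of endpoints is preserved.
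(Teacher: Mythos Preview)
Your overall plan---decompose, isolate the switch piece, study a one-parameter family, conclude parity---is in the right spirit, but both the decomposition and the core counting argument differ from the paper's, and your version has a real gap at exactly the point you flag as the ``Main obstacle.''

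The paper does \emph{not} cut at \(\partial D'\). It cuts at a single point \(y_0\) immediately above one switch, so that \(\Gamma''\) contains the switch together with everything downstream (which, by Lemmas~\ref{lemma3.5}, \ref{lemma3.6}, \ref{lemma3.8} and the hypothesis, consists only of \(Y^0\) vertices and minima at \(d\)-chords). This avoids your whole ``normal form'' step with several interior pieces and the need to argue there is only one. The point \(y_0\) lives in the base, and one studies the locus \(P(\Gamma')\) of possible negative special punctures of \(\Gamma'\) and the locus \(P_{\mathrm{sw}}(\Gamma'')\) of possible positive special punctures of \(\Gamma''\).

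The parity argument is then topological, not an interval-endpoint count. The set of admissible switch points is the tangency locus \(\Sigma'\subset\partial D'_{0.8\rho}\) where \(\nabla f_j\in T\Sigma_1\); since \(\rho\) is small enough to linearize \(f_j\) over \(D'\), this \(\Sigma'\) is generically an \(S^{n-2}\). Its ascending manifold under the relevant height-difference flow is an \((n-1)\)-dimensional cylinder \(P_{\mathrm{sw}}(\Gamma'')\cong S^{n-2}\times[0,1)\), which together with the cusp disk \(D''\) bounds a solid cone; by Lemma~\ref{lemma3.2} and smallness of \(\rho\) this cylinder is arbitrarily narrow. Since \(\Gamma\) is rigid, \(P(\Gamma')\) is a \(1\)-manifold, and once \(\rho\) is small enough its boundary points lie outside the solid cone. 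Hence the mod-\(2\) intersection number \(\#\bigl(P(\Gamma')\cap P_{\mathrm{sw}}(\Gamma'')\bigr)\) is even, and each intersection point glues to a rigid tree with the same punctures as \(\Gamma\).

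Your ``closed interval with two endpoints'' picture is the wrong topology for \(n>2\): the switch locus is \(S^{n-2}\), not an arc, and the relevant family is the intersection of a \(1\)-curve with a codimension-\(1\) cylinder that bounds, not the endpoints of an interval. What you identify as the hard step---ruling out a circle or extra boundary components---is precisely what the bounding-cone argument handles for free. If you rephrase your moduli analysis in these terms (tangency locus \(\cong S^{n-2}\), narrow cylinder, solid cone, mod-\(2\) intersection) and move the cut from \(\partial D'\) to a point just above the switch, your outline becomes the paper's proof.
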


\begin{proof} We can find a point \(y_0\) in \(\Gamma\) immediately above a switch, so that we can break \(\Gamma\) at \(y_0 \in D'\) into partial flow trees \(\Gamma', \Gamma''\), where \(\Gamma'\) has a negative special puncture at \(y_0\), \(\Gamma''\) has a positive special puncture at \(y_0\), and \(\Gamma''\) has only a single switch.

Suppose the switch is from the sheets whose height difference function is \(f_0 - f_j\) to the sheets whose height difference function is \(f_1 - f_j\) (the following argument works equivalently if it switches from or to other sheets).   Let \(\Sigma' \subset M\) be the tangency locus of the cusp edge inside \(D'\) for the relevant sheets, and let \(\mathcal{M}(\Gamma'')\) be the component of the moduli space containing \(\Gamma''\).   Since \(\Gamma''\) does not have a negative puncture at \(c\), by Lemmas~\ref{lemma3.4} and~\ref{lemma3.6} it must consist of a single positive puncture at \(y_0\), the switch on \(\Sigma'\), one or more minima at Reeb chords \(d^k_l\), and possibly \(Y^0\) vertices.   Therefore, for any point \(s \in \Sigma'\), we can find a partial flow tree \(\Gamma''_s \in \mathcal{M}(\Gamma'')\) that has a switch at \(s\).

Let \(P(\Gamma'') \subset M\) denote the set of points \(p \in M\) such that there is a partial flow tree in \(\mathcal{M}(\Gamma'')\) with a special positive puncture at \(p\).   Since we can find a partial flow tree \(\Gamma''_s \in \mathcal{M}(\Gamma'')\) for any \(s \in \Sigma'\), the ascending manifold of \(\Sigma'\) for the vector field \(-\nabla(f_1 - f_j)\) is contained in \(P(\Gamma'')\).

Because we have chosen \(\rho\) small enough that we can treat \(f_j\) as linear over \(D'\), and because \(\nabla f_1, \nabla f_0 = 0\) on the cusp edge, \(\Sigma'\) will consist of those points under the cusp edge in \(D'\) where \(\nabla f_j \in T\Sigma_1\), which will generically be diffeomorphic to \(S^{n-2}, n = \dim \Lambda'\).   Therefore the ascending manifold is diffeomorphic to the cylinder \(S^{n-2} \times [0, 1)\); let \(P_{\mbox{sw}}(\Gamma'')\) denote the restriction of \(P(\Gamma'')\) to this ascending manifold.   Furthermore, since any two points on \(\Sigma'\) are at most \(2\rho\) apart, and \(\rho\) is arbitrarily small, by Lemma~\ref{lemma3.2} \(P_{\mbox{sw}}(\Gamma'')\) must lie within an arbitrarily small neighborhood of the ascending manifold of \(s_0 \in \Sigma'\), where \(s_0\) is the switch in \(\Gamma''\).   This ascending manifold is a 1-dimensional curve between \(s_0\) and a maximum chord.   The upshot is that, since \(\rho\) is arbitrarily small, \(P_{\mbox{sw}}(\Gamma'')\) is arbitrarily ``narrow".

Let \(\mathcal{M}(\Gamma')\) denote the component of the moduli space containing \(\Gamma'\), and let \(P(\Gamma') \subset M\) denote the set of points \(p \in M\) such that there is a partial flow tree in \(\mathcal{M}(\Gamma')\) with a special negative puncture at \(p\).   If \(\Gamma\) is rigid, then the intersection \(P(\Gamma') \cap P_{\mbox{sw}}(\Gamma'')\) must be zero-dimensional.   Since \(P_{\mbox{sw}}(\Gamma'')\) is codimension-1, \(P(\Gamma)\) must therefore be codimension-\((n-1)\), that is, 1-dimensional.   And, if \(\rho\) is small enough, \(P_{\mbox{sw}}(\Gamma'')\) will be narrow enough that the boundary points of \(P(\Gamma')\) will not lie in the solid cone whose boundary is \(P_{\mbox{sw}}(\Gamma'') \cup D''\), where \(D''\) is the disk whose boundary is the cusp edge inside \(D'\).   Then, \(\#(P_{\mbox{sw}}(\Gamma'') \cap P(\Gamma'))\) must be even.   And, for any point \(y_1 \in P_{\mbox{sw}}(\Gamma'') \cap P(\Gamma')\), we can find partial flow trees \(\Gamma''_{y_1} \in P_{\mbox{sw}}(\Gamma''), \Gamma'_{y_1} \in P(\Gamma')\) which have special punctures at \(y_1\), and connect them together to obtain a rigid gradient flow tree \(\Gamma_{y_1}\) which has the same negative and positive punctures as \(\Gamma\).   This concludes the proof.\end{proof}

\begin{lemma}\label{lemma3.10} If \(\epsilon, \rho\) are small enough, than for every rigid gradient flow tree \(\Gamma\) in \(\Lambda'\) which has a negative puncture at \(c\), there is a unique, second rigid gradient flow tree with the same positive and negative punctures.\end{lemma}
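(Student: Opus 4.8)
The plan is to mirror the proof of Lemma~\ref{lemma3.9} essentially verbatim, with the stable manifold of the chord \(c\) playing the role that the ascending manifold of the switch locus \(\Sigma'\) played there. First I would break \(\Gamma\) near \(c\). Since \(c\) is the only Reeb chord lying over \(D'\) and the only cusp edge over \(D'\) is the one between sheets \(0\) and \(1\) along \(\partial D'_{0.8\rho}\), the edge of \(\Gamma\) running into the negative puncture \(c\) lies on \(-\nabla(f_1-f_0)\), and, traced backward from \(c\), it can acquire neither a switch nor an End nor a \(Y^0\) nor a \(Y^1\) vertex before it leaves a small neighborhood of \(c\): a switch, End or \(Y^1\) would require a cusp edge, and a \(Y^0\) would require a third sheet with \(z\)-value between sheets \(0\) and \(1\), which does not exist over \(D'\) because sheets \(0,1\) are \(\delta\)-close while every other sheet is of order \(1\) away. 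So I pick a point \(y_0\) on this edge inside \(D'\), just above \(c\), and break \(\Gamma\) at \(y_0\) into the short flow line \(\Gamma''\) from a positive special puncture at \(y_0\) to \(c\), and a partial flow tree \(\Gamma'\) carrying the positive puncture and the other negative punctures of \(\Gamma\) and having a negative special puncture at \(y_0\). Using Lemmas~\ref{lemma3.5},~\ref{lemma3.6},~\ref{lemma3.7} and~\ref{lemma3.8} to control any further excursion of \(\Gamma'\) into \(D'\) (breaking on \(\partial D'\) and reabsorbing the resulting flow lines into the \(\Gamma''\)-side as in Lemma~\ref{lemma3.7} if necessary), I may assume \(\Gamma'\) is disjoint from a fixed small neighborhood of \(c\).

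Next I would set up the position sets, exactly as in Lemma~\ref{lemma3.9}. Let \(\mathcal{M}(\Gamma'')\) be the component of the moduli space containing \(\Gamma''\), and let \(P_c(\Gamma'')\subset M\) be the set of positions its positive special puncture can take; since the only constraint is that a flow line of \(-\nabla(f_1-f_0)\) run from that point into \(c\), and since \(c\) has Morse index \(1\), the set \(P_c(\Gamma'')\) is the stable manifold of \(c\), a codimension-\(1\) submanifold emanating from the point \(c\). Let \(\mathcal{M}(\Gamma')\) be the component of the moduli space containing \(\Gamma'\), and let \(P(\Gamma')\subset M\) record the position of the negative special puncture of trees in \(\mathcal{M}(\Gamma')\). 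Because \(\Gamma\) is rigid, the dimension bookkeeping from equation~\ref{eqn:sumDimTi} (as carried out in Lemma~\ref{lemma3.9}) forces \(P(\Gamma')\cap P_c(\Gamma'')\) to be \(0\)-dimensional, so \(P(\Gamma')\) must be a \(1\)-dimensional curve.

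Finally I would run the parity argument. Since \(\rho\) is arbitrarily small, \(P_c(\Gamma'')\) is confined to an arbitrarily small neighborhood of \(c\), while the boundary points of the curve \(P(\Gamma')\) correspond to degenerations of \(\Gamma'\) (breakings at honest Reeb chords or non-chord vertices), which occur away from \(c\); so the endpoints of \(P(\Gamma')\) lie outside the solid region \(\Omega\) bounded by \(P_c(\Gamma'')\) together with the disk \(D'' = D'_{0.8\rho}\). Moreover \(P(\Gamma')\) cannot meet \(D''\): the negative special puncture of \(\Gamma'\) always lies on an edge between sheets \(0\) and \(1\), which are undefined over \(D'_{0.8\rho}\). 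Hence \(P(\Gamma')\) enters and exits \(\Omega\) through \(P_c(\Gamma'')\), so \(\#\big(P(\Gamma')\cap P_c(\Gamma'')\big)\) is even, and since it contains \(y_0\) it has a second element \(y_1\). Gluing a tree \(\Gamma'_{y_1}\in\mathcal{M}(\Gamma')\) with negative special puncture at \(y_1\) to a tree \(\Gamma''_{y_1}\in\mathcal{M}(\Gamma'')\) with positive special puncture at \(y_1\) produces a rigid gradient flow tree with the same positive and negative punctures as \(\Gamma\) (still including \(c\)), which is the required second tree; narrowness of \(P_c(\Gamma'')\) guarantees the curve \(P(\Gamma')\) crosses it exactly twice, so this tree is unique.

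The step I expect to be the main obstacle is the last one: pinning down the geometry of the stable manifold \(P_c(\Gamma'')\) of \(c\) near the cusp edge, where \(f_1-f_0\) and its gradient degenerate like \((|x-q|-0.8\rho)^{3/2}\), and choosing the capping disk \(D''\) so that \(P_c(\Gamma'')\cup D''\) genuinely bounds, the endpoints of \(P(\Gamma')\) genuinely lie outside, and \(P(\Gamma')\) genuinely meets it exactly twice. A secondary nuisance is ruling out, via Lemmas~\ref{lemma3.5},~\ref{lemma3.6} and~\ref{lemma3.8}, any unexpected structure in \(\Gamma'\) that would obstruct reducing it to a partial tree disjoint from the neighborhood of \(c\).
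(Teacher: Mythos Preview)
Your plan diverges from the paper's argument in a structural way, and the divergence is exactly where your proof breaks. The paper does \emph{not} rerun Lemma~\ref{lemma3.9}. Rather than cutting \(\Gamma\) at an edge point near \(c\) into two pieces, it first observes that (since \(I(c)=1\) rules out a 2-valent vertex, and the ascending manifold of \(c\) misses the cusp edge) the edge into \(c\) must originate at a \(Y^0\) vertex \(y_0\), or else \(\Gamma\) is just the flow line \(a^1_0\to c\), which already has an obvious partner on the other side of \(c\). It then breaks \(\Gamma\) \emph{at the \(Y^0\) vertex} into \emph{three} partial trees \(\Gamma_1,\Gamma_2,\Gamma_3\), with \(\Gamma_3\) the flow line \(y_0\to c\). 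The pairing mechanism is not a bounding-region parity argument but a ``two close curves'' argument: the two branches of \(P_c(\Gamma_3)\) exit \(D'\) at points \(q_1,q_2\in\partial D'\) within \(2\rho\) of each other, and since \(f_i=\hat f_i\) outside \(D'\) one can transport to \(\hat\Lambda'\) and invoke Lemma~\ref{lemma3.2} to conclude the two branches stay arbitrarily close everywhere. Hence intersections with the codimension-one set \(P(\Gamma_1)\cap P(\Gamma_2)\) come in pairs, and each pair glues (via a \(Y^0\) vertex) to a second rigid tree.

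The gap in your version is the sentence ``since \(\rho\) is arbitrarily small, \(P_c(\Gamma'')\) is confined to an arbitrarily small neighborhood of \(c\).'' This is false: the stable manifold of an index-\(1\) critical point is a \emph{global} codimension-\(1\) hypersurface, running from \(c\) all the way up to the maximum \(a^1_0\); shrinking \(\rho\) shrinks \(D'\) but does nothing to shrink this hypersurface. Your region \(\Omega\) is therefore not small, you have no control over where the boundary of \(P(\Gamma')\) sits relative to it, and the parity count cannot be pinned down to exactly two. The narrowness in Lemma~\ref{lemma3.9} came from a different source: there \(P_{\mathrm{sw}}(\Gamma'')\) was a \emph{union} of one-dimensional ascending manifolds indexed by the \((n-2)\)-sphere \(\Sigma'\), and narrowness followed because all points of \(\Sigma'\) are within \(2\rho\) of each other. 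There is no analogous ensemble here --- \(c\) is a single point --- so that mechanism does not transfer. A secondary issue is that breaking on an edge introduces a one-parameter sliding redundancy, so \(\dim\mathcal M(\Gamma')=2\), not \(1\); the paper's break at the \(Y^0\) vertex avoids this.
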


\begin{proof} The proof works broadly similarly to the proof of Lemma~\ref{lemma3.9}.   Because \(c\) is of Morse index 1, \(\Gamma\) cannot have a 2-valent vertex there.   Since the ascending manifolds of \(c\) of \(-\nabla(f_1 - f_0)\) do not intersect any cusp edges, that means that \(\Gamma\) must either be the flow line from \(a^1_0\) to \(c\) with no other vertices, or the edge of \(\Gamma\) leading to the vertex at \(c\) must begin with a \(Y^0\) vertex, \(y_0\).   If \(\Gamma\) is the flow line, there is a second flow line approaching the other side of \(c\), so assume it is not.

Break \(\Gamma\) at \(y_0\) into \(\Gamma_1, \Gamma_2, \Gamma_3\), where \(\Gamma_1\) has a negative special puncture at \(y_0\), \(\Gamma_2\) and \(\Gamma_3\) have positive special punctures at \(y_0\), and \(\Gamma_3\) is the flow line from \(y_0\) to \(c\).   Recall that \(\mathcal{M}(\Gamma_i)\) denotes the component of the moduli space containing the partial flow tree \(\Gamma_i\), and that \(P(\Gamma_i) \subset M\) denotes the set of points where a partial flow tree in \(\mathcal{M}(\Gamma_i)\) has a special puncture.   Since \(\Gamma\) is rigid, we know that:
\[
\dim \left(P(\Gamma_1) \cap P(\Gamma_2) \cap P(\Gamma_3)\right) = 0
\]
Further, since \(c\) is index-1, its ascending manifold on \(-\nabla(f_1 - f_0)\) is a pair of 1-dimensional curves, \(S^0 \times (0, 1)\), and these are submanifolds of \(P(\Gamma_3)\); call them \(P_c(\Gamma_3)\).   These are codimension-\((n-1)\), so, generically, \(P(\Gamma_1) \cap P(\Gamma_2)\) must be codimension-1.

Consider \(P_c(\Gamma_3) - D'\).   This consists of a pair of curves, each of which has one boundary point at \(a^1_0\), and the other boundary point on \(\partial D'\); label the boundary points on \(\partial D'\) by \(q_1, q_2\).   \(q_1, q_2\) must be within \(2\rho\) of each other.   Since \(f_i = \hat{f}_i\) outside of \(D'\), the ascending manifolds of \(q_1, q_2\) by \(-\nabla(f_1 - f_0)\) equal the ascending manifolds of \(q_1, q_2\) by \(-\nabla(\hat{f}_1 - \hat{f}_0)\), and for the function \(\hat{f}_1 - \hat{f}_0\), \(q_1, q_2\) lie in the same component of the descending manifold of \(\hat{a}^1_0\) and ascending manifold of \(\hat{d}^1_0\).   Therefore, by Lemma~\ref{lemma3.2}, for any \(\epsilon' > 0\) there exists \(\delta' > 0\) such that if \(2\rho < \delta'\), then the two components of \(P_c(\Gamma_3) - D'\) lie within \(\epsilon'\) of each other.   Therefore, since we can make \(\rho\) arbitrarily small, we can ensure that the two curves in \(P_c(\Gamma_3)\) are arbitrarily close together outside of \(D'\) - and since they must necessarily be within \(2\rho\) of each other inside \(D'\), we can bound the distance between the two curves everywhere.

Therefore, the points in \(P(\Gamma_1) \cap P(\Gamma_2) \cap P(\Gamma_3)\) will appear in pairs.   For every such point \(y_1\), we can find partial flow trees \(\Gamma'_1 \in \mathcal{M}(\Gamma_1), \Gamma'_2 \in \mathcal{M}(\Gamma_2), \Gamma'_3 \in \mathcal{M}(\Gamma_2)\) that have special punctures at \(y_1\), and then connect them together with a \(Y^0\) vertex to form a new rigid gradient flow tree \(\Gamma'\) with the same positive and negative punctures as \(\Gamma\).\end{proof}

\begin{lemma}\label{lemma3.11} If \(\epsilon, \rho\) are small enough, then for \(x \neq c\), the differential after the Legendrian ambient surgery is given by:
\[
\partial \hat{x} = \widehat{(\partial x)}
\]
\end{lemma}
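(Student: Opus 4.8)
The plan is to organize the sum defining the differential according to whether or not the contributing rigid gradient flow trees meet the isolating region $\pi_M^{-1}(D')$: the trees that enter $D'$ will be shown to cancel in pairs over $\mathbb{Z}_2$, and the trees that stay out of $D'$ are the \emph{same} for $\Lambda'$ and $\hat\Lambda'$. Fix a generator $x\neq c$; by construction $x$ is a Reeb chord of $\Lambda'$ lying over $M\setminus D'$, and by Theorem~\ref{theorem2.3} (gradient flow trees computing the differential of the front model built in the Setup, whose only singularities over the relevant region are cusp edges) $\partial x$ equals the count of rigid gradient flow trees of $\Lambda'$ with positive puncture at $x$. Write $\mathcal{T}^{\mathrm{out}}$ for those whose image in $M$ misses $D'$ and $\mathcal{T}^{\mathrm{in}}$ for those whose image meets $D'$.

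For $\mathcal{T}^{\mathrm{in}}$ I would show the total count vanishes mod $2$. By Lemma~\ref{lemma3.7} every tree of $\mathcal{T}^{\mathrm{in}}$ has a negative puncture at $c$ or a switch inside $D'$ (or both), so $\mathcal{T}^{\mathrm{in}} = \mathcal{T}^{c}\sqcup\mathcal{T}^{\mathrm{sw}}$, where $\mathcal{T}^{c}$ is the set of trees with a negative $c$-puncture and $\mathcal{T}^{\mathrm{sw}}$ the set with an interior switch and no negative $c$-puncture. Lemma~\ref{lemma3.10} assigns to each tree of $\mathcal{T}^{c}$ a distinct second tree of $\mathcal{T}^{c}$ with the same positive and negative punctures, and Lemma~\ref{lemma3.9} does the same within $\mathcal{T}^{\mathrm{sw}}$; in both cases this assignment is a fixed-point-free involution, since the partner is again characterized by the presence (resp.\ absence) of a negative $c$-puncture, so it lies in the same family, and applying the lemma to it returns the original tree by uniqueness of the ``second tree.'' Hence every word contributed to $\partial x$ by trees of $\mathcal{T}^{\mathrm{in}}$ is contributed an even number of times, and their net contribution is $0$.

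It remains to match the contribution of $\mathcal{T}^{\mathrm{out}}$ with $\hat\partial\hat x$. Each tree of $\mathcal{T}^{\mathrm{out}}$ projects into $M\setminus D'$, where $\Lambda'$ and $\hat\Lambda'$ coincide (which can be arranged by supporting the final genericity perturbations of $f_0,f_1$ inside $D'$, or otherwise absorbed via Lemma~\ref{lemma3.4}); moreover, as recorded in the Setup, \emph{all} rigid gradient flow trees of $\hat\Lambda'$ lie over $M\setminus D'$. So restriction of the ambient data to $M\setminus D'$ gives a bijection between $\mathcal{T}^{\mathrm{out}}$ and the rigid gradient flow trees of $\hat\Lambda'$ with positive puncture at $\hat x$, compatible with the generator bijection $x_i\leftrightarrow\hat x_i$; and no tree of $\mathcal{T}^{\mathrm{out}}$ can have $c$ among its negative punctures, since $c$ lies over $D'$. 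Therefore the words contributed by $\mathcal{T}^{\mathrm{out}}$ to $\partial x$, with each $x_i$ replaced by $\hat x_i$, are exactly the words contributed to $\hat\partial\hat x$. Combining with the vanishing contribution of $\mathcal{T}^{\mathrm{in}}$ yields $\hat\partial\hat x = \widehat{\partial x}$.

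The analytic heart of the argument --- that flow trees are confined away from $D'$ except via switches or $c$-punctures, and that the exceptional ones pair off --- has already been established in Lemmas~\ref{lemma3.5}--\ref{lemma3.10}, so what is left here is essentially bookkeeping. The step I expect to need the most care is verifying that the two cancellation mechanisms genuinely decompose $\mathcal{T}^{\mathrm{in}}$ into orbits of a single fixed-point-free involution: a tree carrying both a negative $c$-puncture and an interior switch must be assigned to exactly one mechanism (namely Lemma~\ref{lemma3.10}), and one must check that the partner it produces again carries a negative $c$-puncture so that iterating is consistent. A secondary point is confirming that the identification of $\mathcal{T}^{\mathrm{out}}$ with the rigid trees of $\hat\Lambda'$ respects the chosen labeling $x_i\leftrightarrow\hat x_i$ of the Reeb-chord clusters.
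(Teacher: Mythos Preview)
Your argument for Reeb chords lying inside $D_{2.51}$ is essentially the paper's: split trees into those avoiding $D'$ and those entering $D'$, identify the former with the trees of $\hat\Lambda'$, and cancel the latter in pairs via Lemmas~\ref{lemma3.7}, \ref{lemma3.9}, \ref{lemma3.10}. That part is fine.

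The gap is your global invocation of Theorem~\ref{theorem2.3}. You write that ``$\partial x$ equals the count of rigid gradient flow trees of $\Lambda'$ with positive puncture at $x$'' for every $x\neq c$, but Theorem~\ref{theorem2.3} requires that the front projection have no singularities beyond cusp edges (or $\dim\Lambda\leq 2$). The explicit front model in the Setup only controls $\Lambda'$ over $D_3$; outside $D_4$ the submanifold coincides with the original $\Lambda$, which may carry arbitrary higher-codimension front singularities. For a Reeb chord $x$ lying outside $D_{2.51}$ you therefore have no right to replace pseudoholomorphic disks by gradient flow trees, and Lemmas~\ref{lemma3.5}--\ref{lemma3.10} say nothing about such disks.

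This is exactly why the paper invokes Theorem~\ref{theorem1.1}: the annulus $D_{2.51}\setminus D_{2.49}$ is a pinch region, so by Theorem~\ref{theorem1.1} no pseudoholomorphic disk with a puncture outside $D_{2.51}$ can have a puncture inside $D_{2.49}$. Since $\Lambda'$ and $\hat\Lambda'$ agree outside $D_{2.49}$, this immediately gives $\hat\partial\hat x=\widehat{\partial x}$ for such $x$ without any flow-tree analysis. Your proof needs this step (or an equivalent confinement argument for disks, not trees) to handle the exterior chords; the flow-tree cancellation argument then finishes off the chords inside $D_{2.51}$, where the model does have only cusp singularities and Theorem~\ref{theorem2.3} legitimately applies.
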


\begin{proof} {\color{black}The region \(D_{2.51} - D_{2.49}\) contains a pinch.   In the language of Theorem~\ref{theorem1.1}, \(S = \partial D_{2.50}, N = D_{2.51} - D_{2.49}, Q_1 = M - D_{2.49}, Q_2 = D_{2.51}\).   Therefore, by Theorem~\ref{theorem1.1}, no pseudoholomorphic disks with any puncture in \(\Lambda' - \pi_M^{-1}(D_{2.51})\) can have a puncture in \(\pi_M^{-1}(D_{2.49})\).   The same is true for \(\hat{\Lambda}'\).   Given that \(\Lambda', \hat{\Lambda}'\) coincide outside of \(D_{2.49}\), for any Reeb chord \(x\) outside of \(D_{2.51}\), \(\partial \hat{x} = \widehat{\partial x}\); that is, there is no change in the differential.}

{\color{black}Now consider the Reeb chords inside \(\pi_M^{-1}(D_{2.51})\).}   Because the only difference between \(\Lambda'\) and \(\hat{\Lambda}'\) is over \(D'\), and by construction all of the rigid flow trees of \(\hat{\Lambda}'\) avoid \(D'\), all of the rigid flow trees of \(\hat{\Lambda}'\) are also rigid flow trees of \(\Lambda'\).   However, it is possible that there are now new rigid flow trees for \(\Lambda'\) that pass through \(D'\).   By Lemmas~\ref{lemma3.7},~\ref{lemma3.9}, and~\ref{lemma3.10}, any such new rigid flow trees appear in pairs, and so their contribution to the differential is canceled out.   This concludes the proof.\end{proof} 

{\color{black}\textbf{Proof of Theorem~\ref{theorem1.5}:}} The fact that \(\partial \widehat{x} = \widehat{\partial x}\) for \(x \neq c\) is immediate from Lemma~\ref{lemma3.11}. \(\partial c = 1 + d^1_0\) follows from the following facts: first, by Theorem~\ref{theorem1.1}, \(\partial c\) can only include Reeb chords that lie within \(D_3\). Second, because \(c\) lies on sheets 0 and 1, and the only cusp edges in \(D_3\) join sheets 0 and 1, any gradient flow trees emerging from \(c\) must be flow lines that terminate either in a minimum or a cusp edge. Therefore, \(\partial c = 1 + d_0^1\).

\addtocontents{toc}{\protect\newpage}
\section{Proof of Theorems 1.1, 1.2, and 1.3}
\label{sec:proof}

\subsection{Proof of Theorems 1.1 and 1.3}
\label{subsec:proofofmaintheorem}

Recall that, in Theorem 1.1, we defined \(S\) to be a hypersurface in \(M\) that divides \(M\) into \(R_1\) and \(R_2\) and which does not intersect a codimension-2 singularity of the front projection, \(N\) to be an arbitrarily small neighborhood of \(S\), and \(Q_i = R_i \cup N\).   We can generically assume that \(S\) intersects cusp edges transversely, and that \(N\) does not contain any codimension-2 or higher singularities of \(\pi_F\).   Pick a quantity \(\delta > 0\) that is less then the action of any existing Reeb chord (new Reeb chords will be created by the pinching isotopy), and choose a tubular neighborhood \(S\times[-1,1]\) of \(S\) such that \(N = S \times [-1, 1]\), and let \(\mu\) denote the tubular coordinate.

We define \(\lambda_{\epsilon, \delta}: [-\epsilon,\epsilon] \to \mathbb{R}\) to be a smooth inverted bump function such that:
\[
1 \geq \lambda_{\epsilon, \delta}(\mu) > 0
\]\[
\lambda_{\epsilon, \delta}(\epsilon) = \lambda(-\epsilon) = 1
\]\[
\lambda_{\epsilon, \delta}(0) = \delta
\]\[
\frac{\partial\lambda_{\epsilon, \delta}}{\partial\mu} \mbox{ has the same sign as } \mu
\]
We will ordinarily suppress the subscripts.   A graph of this function has the form shown in figure~\ref{fig:graphOfLambda}.

\begin{figure}\begin{center}
\includegraphics{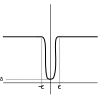}
\caption{Graph of \(\lambda_{\epsilon,\delta}(\mu)\)\label{fig:graphOfLambda}}\end{center}
\end{figure}

Then define \(l: M \to \mathbb{R}\) be a smooth function such that:
\[
l(x) = \left\{\begin{array}{ll}
1&\mbox{for }x \notin S \times [-1, 1]\\
\lambda(\mu)&\mbox{for }x\in S \times [-1, 1]
\end{array}\right\}
\]
We then define a Legendrian isotopy \(\Lambda_t\) in terms of the front projection \(\pi_F(\Lambda)\).   Let \(f_1, ..., f_m\) be the sheet functions \(M \to \mathbb{R}\) of the front projection, and define:
\[
f^t_i(x) = ((1 - t) + tl(x))f_i(x)
\]
This defines a Legendrian isotopy \(\Lambda'_t\), where \(\Lambda'_0 = \Lambda\).   Although this isotopy captures the primary features we need, we need to make some further small modifications in order to preserve front genericity.   Choose a quantity \(\epsilon' < \mbox{min }(\delta, \epsilon / 2)\) and perform a \(C^2\)-small perturbation of \(\Lambda'_t\) of order \(\epsilon'\) in an \(\epsilon'\)-neighborhood of any cusp edges that intersect \(N\), to make it front generic, as defined in section~\ref{subsec:dga}.   This gives us a new Legendrian isotopy \(\Lambda_t\).   We call this isotopy a \textbf{pinching isotopy along S}.

We need to modify our approach to cope with exact Lagrangian cobordisms, since these are not compact and do not have Reeb chords.   Recall that we define \(\hat{L}\) to be the Legendrian lift of an exact Lagrangian cobordism \(L\); that \(\hat{S} \subset M \times \mathbb{R}^+\) is a hypersurface such that each time-slice is a hypersurface in \(M\) and \(\pi_M^{-1}(\hat{S})\) is disjoint from \(\Sigma_1\); and that \(\hat{N}\) is an arbitrarily small neighborhood of \(\hat{S}\).   A Legendrian isotopy of \(\hat{L}\) descends to an exact Lagrangian \emph{homotopy} of \(L\).   This is an exact Lagrangian isotopy if and only if there are no Reeb chords at any time in the Legendrian isotopy.   This is the reason for the restriction that \(\hat{S}\) cannot cross any cusp edges or self-intersections of \(\pi_F(\hat{L})\) in the statement of Theorem~\ref{theorem1.4}; we claim that this is a sufficient (though not necessary) condition to allow us to find a pinching isotopy of \(\hat{L}\) that descends to an exact Lagrangian isotopy.

Instead of \(\lambda(\mu)\), we use \(\lambda(\mu, \tau)\), where \(\tau\) is the cylindrical coordinate.   Let \(T_-, T_+\) be the cylindrical coordinates such that \(L \cap (J^1(M) \times (0, T_-)), L \cap (J^1(M) \times (T_+, \infty))\) are the cones over \(\Lambda_-, \Lambda_+\).   Let \(\delta > 0\) be some quantity smaller then the action of the smallest Reeb chord of \(\Lambda_- = \partial \mathcal{E}_-(L)\).   We require that \(\lambda(\mu, \tau)\) obey the same requirements as \(\lambda(\mu)\), and, in addition:
\[
\frac{\partial \lambda}{\partial \tau}(\mu, \tau) \geq 0
\]\[
\lambda(\mu, \tau) = \frac{t}{T_-}\lambda(\mu, T_-)\mbox{ for } \tau \leq T_-
\]\[
\lambda(\mu, \tau) = \frac{t}{T_+}\lambda(\mu, T_+)\mbox{ for } \tau \geq T_+
\]
We then define an isotopy on \(\hat{L}\) in a precisely analogous fashion as we did for \(\Lambda\).

\begin{lemma}\label{lemma5.20} For any exact Lagrangian cobordism \(L\), given a hypersurface \(\hat{S} \subset M \times \mathbb{R}^+\) such that each time-slice is a hypersurface in \(M\), and there is a neighborhood \(\hat{N}\) of \(\hat{S}\) such that no cusp edges lie above \(\hat{N}\), the isotopy induced by \(\hat{L}\) descends to an exact Lagrangian isotopy.\end{lemma}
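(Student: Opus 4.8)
The plan is to reduce the statement to a claim about Reeb chords and then analyse the pinching isotopy locally over $\hat N$. As observed just before the lemma, a Legendrian isotopy of $\hat L$ always descends to an exact Lagrangian homotopy of $L=\pi_{\mathbb{C}}(\hat L)$, and this homotopy is an exact Lagrangian \emph{isotopy} if and only if $\hat L_t$ has no Reeb chord for any $t\in[0,1]$. So the whole content is to show that the pinching isotopy $\hat L_t$ never develops a Reeb chord. Since $\hat L_t$ agrees with $\hat L$ outside $\pi_{M\times\mathbb{R}^+}^{-1}(\hat N)$, since $\hat L_0=\hat L$ has no Reeb chord (its Lagrangian projection is the embedded $L$), and since a Reeb chord joins two points of $\hat L_t$ lying over a common point of $M\times\mathbb{R}^+$, it suffices to rule out Reeb chords lying over $\hat N$.

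I would next fix the local model over $\hat N$. By the hypothesis on $\hat S$ in Theorem~\ref{theorem1.4}, no cusp edge and no self-intersection of the front $\pi_F(\hat L)$ lies over $\hat N$, so $\hat L_t$ restricted to $\pi_{M\times\mathbb{R}^+}^{-1}(\hat N)$ is a disjoint union of graphs of functions $\hat f_i^t=\phi_t\,\hat f_i$, where $\phi_t:=(1-t)+t\,\lambda(\mu,\tau)\in(0,1]$ and $h_{ij}:=\hat f_i-\hat f_j$ is nowhere zero on $\hat N$. A Reeb chord over $\hat N$ is then exactly a critical point of some $\phi_t h_{ij}$, i.e.\ a point where $\phi_t\,dh_{ij}=-h_{ij}\,d\phi_t$; since $\phi_t>0$ and $h_{ij}\neq0$ this forces $dh_{ij}\neq0$ and $dh_{ij}$ parallel to $d\phi_t$ there (so, in particular, it is never a critical point of $h_{ij}$ itself).

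I would then split $\hat N$ into the conical ends $\{\tau\le T_-\}\cup\{\tau\ge T_+\}$ and the compact slab $K=\pi_{M\times\mathbb{R}^+}^{-1}(\hat N\cap\{T_-\le\tau\le T_+\})$. Over the conical ends the sheet functions of $\hat L$ have the form $\hat f_k=\tau\,g_k(x)$, with the $g_k$ pairwise distinct over the relevant neighbourhood of $S^\mp$ in $M$ (the front of $\Lambda_\mp$ has neither cusps nor crossings there, again by the hypothesis on $\hat S$); then $\hat f_i^t-\hat f_j^t=\phi_t\,\tau\,(g_i-g_j)$ and, using the prescribed form of $\lambda$ near $\tau=T_\pm$, its $\partial_\tau$-derivative is a strictly positive function times $g_i-g_j$, hence nonzero --- so no critical point and no Reeb chord over the conical ends, for any $t$. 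Over the compact slab, $\overline{K}$ is compact and $h_{ij}$ has no critical point (as $L$ is embedded), so $|dh_{ij}|\ge c>0$ on $\overline{K}$; in coordinates in which $\mu$ is normal to $\hat S$ and $(\tau,s)$ parametrize $\hat S$, $d\phi_t=t\,d\lambda$ has no $ds$-component, so by the previous paragraph a critical point of $\phi_t h_{ij}$ in $K$ must lie on the two-dimensional locus $\Sigma_{ij}$ where all $\hat S$-slice derivatives of $h_{ij}$ vanish, and on $\Sigma_{ij}$ the remaining equations force $dh_{ij}|_{\Sigma_{ij}}$ to be a nonzero multiple of $d\phi_t|_{\Sigma_{ij}}$. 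I would finish by choosing the profile of $\lambda$ in $(\mu,\tau)$ --- using the freedom left by its required shape (inverted bump in $\mu$, nondecreasing in $\tau$, prescribed near $\tau=T_\pm$) --- generically with respect to the finitely many surfaces $\Sigma_{ij}\cap K$, so that $d\phi_t$ is nowhere parallel to $dh_{ij}$ along them; then $\phi_t h_{ij}$ has no critical point in $K$, and combining with the conical ends $\hat L_t$ has no Reeb chord for any $t$.

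The hard part is the estimate over the compact slab. A direct gradient bound does not suffice: as $\hat N$ is shrunk the pinching function still drops from $1$ to its floor, so $|d\lambda|$ remains of order $1$ in the fixed coordinate $\mu$ while $\phi_t$ can be comparably small, so $\phi_t|dh_{ij}|$ need not dominate $|h_{ij}|\,|d\phi_t|$. One must instead exploit that $d\phi_t$ points only in the $\mu$- and $\tau$-directions to confine candidate Reeb chords to the two-dimensional loci $\Sigma_{ij}$, and then run a transversality argument there; making that confinement and the genericity of $\lambda$ precise, while keeping them compatible with the rigid shape $\lambda$ must have and with the matching at $\tau=T_\pm$, is the technical core of the argument.
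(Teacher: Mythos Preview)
Your reduction to Reeb chords over $\hat N$ and your treatment of the conical ends are correct, and the latter is essentially the paper's computation restricted to the ends. Where you diverge is on the compact slab: the paper does not split $\hat N$ or invoke any genericity in $\lambda$. It asserts that $\partial_\tau(f_i-f_j)>0$ on all of $\hat N$ whenever $f_i>f_j$ (attributed to the absence of cusp edges over $\hat N$), and then simply computes
\[
\partial_\tau\bigl(\phi_t(f_i-f_j)\bigr)
= t\,(\partial_\tau\lambda)\,(f_i-f_j) + \phi_t\,\partial_\tau(f_i-f_j),
\]
both terms non-negative and the second strictly positive, so $\phi_t(f_i-f_j)$ has no critical point and hence no Reeb chord for any $t$. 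That is the entire proof.

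Your proposed genericity step on the slab does not close, and the obstruction is structural rather than merely technical. The locus $\Sigma_{ij}$ is $2$-dimensional; on it the critical-point system $\phi_t\,dh_{ij}=-h_{ij}\,t\,d\lambda$ is two scalar equations, so for each fixed $t$ the solutions are generically isolated, and over the family $t\in(0,1]$ one expects a $1$-dimensional set of solutions $(p,t)$. Since $\lambda$ depends only on the two variables $(\mu,\tau)$, perturbing $\lambda$ moves this set around but cannot push its expected dimension below zero; ``choose $\lambda$ generically'' therefore does not rule out Reeb chords for all $t$. What \emph{does} kill the solution set is a sign: the $d\tau$-component of the equation reads $\lambda_\tau = -\tfrac{\phi_t}{h_{ij}\,t}\,\partial_\tau h_{ij}$, so if $\partial_\tau h_{ij}>0$ and $h_{ij}>0$ this would force $\lambda_\tau<0$, contradicting the imposed monotonicity $\partial_\tau\lambda\ge 0$. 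That sign is exactly the paper's key input; without it, your transversality route cannot be completed as stated.
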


\begin{proof} Let \(f_i, f_j\) be sheet functions of \(\pi_F(\hat{L})\) prior to the pinching isotopy, \(f_i > f_j\).   Since, by definition, \(\hat{N}\) may not cross a cusp edge, we further know that:
\[
\left.\frac{\partial}{\partial \tau} (f_i - f_j)\right|_{\hat{N}} > 0
\]

During the pinching, within \(\hat{S} \times [-\epsilon, \epsilon]\) \(f_i, f_j\) are replaced by \(((1- t) + t\lambda(\mu, \tau))f_i(x, t), ((1 - t) + t\lambda(\mu, \tau))f_j(x, t)\).   Then, within \(\hat{N}\), observe that:
\[
\frac{\partial}{\partial \tau}(((1- t) + t\lambda(x, t))(f_i - f_j) = 
t\frac{\partial\lambda}{\partial \tau}(f_i - f_j) + (((1- t) + t\lambda(x, t)) \frac{\partial}{\partial t}(f_i - f_j)
\]
By construction, all of the terms of on the right hand side of the equation are positive, so there are no Reeb chords within \(\hat{N}\) at any value of \(t\).\end{proof}

Once we have performed the pinching isotopy, we can then prove the main theorem, outsourcing the detailed analysis to appendix~\ref{sec:maintechnical}:

\bigskip

{\color{black}\textbf{Proof of Theorem~\ref{theorem1.1}:} In Appendix A, we define \(s_\sigma\) to be the scaling of the \(z\) and cofiber coordinates of \(J^1(M)\) by \(\sigma\) as \(\sigma \to 0\). We define \(\Lambda_\sigma\) to be an arbitrarily small perturbation of \(s_\sigma(\Lambda)\), to ensure that certain technical conditions on the boundaries are met, and we show that for any pseudoholomorphic disk \(u:(\Delta_m, \partial \Delta_m) \to (J^1(M), \Lambda)\), there is a corresponding pseudoholomorphic disk \(u_\sigma:(\Delta_m, \partial\Delta_m) \to (J^1(M), \Lambda_\sigma)\). In Lemmas~\ref{WFGLemma} and \ref{GFGlemma}, we show that there exists a sequence \(\sigma \to 0\) such that:
\[
(\pi_M \circ u_{\sigma_i})(\partial\Delta_r) \cap (M - \pi_M(\Sigma_2))
\]
converges to a collection of partial flow trees \(F_i:\Gamma_i \to M\) as \(\sigma_i \to 0\), such that every partial flow tree has only a single positive puncture.   As a consequence, \(F_i(\Gamma_i)\) may not have a flow line along a pair of sheets whose front projections cross each other, since this would imply the existence of a second positive puncture.

Now suppose we have pinched \(\Lambda\) along \(S\) to form \(\Lambda'\) using arbitrarily small \(\delta, \epsilon\), and suppose we have a pseudoholomorphic disk \(u\) such that \(u\) has punctures over both \(M - Q_1\) and \(M - Q_2\).   By hypothesis, we have chosen \(S\) so that it does not intersect \(\pi_M(\Sigma_2)\).   Since \(N\) is arbitrarily narrow, we can therefore assume that \(N\) does not intersect \(\pi_M(\Sigma_2)\).

By construction, \(\partial N\) will be disconnected: we will have one set of components facing \(Q_1\) and one set facing \(Q_2\), each a slight displacement of \(S\).   Let \(\partial_1N, \partial_2N\) be these two disjoint subsets of \(\partial N\).   If \(u\) has punctures in both \(M - Q_1\) and \(M - Q_2\), then there exists some tree \(F_i:\Gamma_i \to M\) which intersects both \(\partial_1N\) and \(\partial_2N\).   Then, by choosing appropriate edges in \(\Gamma_i\), we can find a piecewise smooth path \(\gamma:[0, 1] \to N\) such that \(\gamma(0) \in \partial N_1, \gamma(1) \in \partial N_2\), and, except at finitely many points \(v_1, ..., v_m\), \(\gamma' = -\lambda \nabla(f_i - f_j)\) for some height functions \(f_i, f_j\) and some strictly positive, continuous function \(\lambda:[0, 1] \to \mathbb{R}^+\).   And, since we established that the flow lines of \(F_i(\Gamma_i)\) may not flow along a pair of sheets whose front projections cross, we know that \(f_i > f_j\) at every point in \([0, 1]\) where \(\gamma'(t)\) is defined.   Let \(h:([0,1] - \{v_1, ..., v_m\}) \to \mathbb{R}^+\) be the function that maps \(t\) to \(f_i(\gamma(t)) - f_j(\gamma(t))\), where \(f_i, f_j\) are the corresponding height functions at \(\gamma(t)\).   Because \(\gamma'(t) = -\nabla(f_i - f_j)\), we know that \(h'(t) < 0\).   Furthermore, since \(\Gamma_i\) is a partial flow tree, \(h(v_k - \epsilon) > h(v_k + \epsilon)\) for \(\epsilon > 0\).   Therefore, \(h\) is monotone decreasing, and since \(f_i > f_j\), we know that \(h(t) > 0\).   This is a contradiction, since the value of \(h(t_0)\) for \(\gamma(t_0) \in S\) must be strictly less than its value at both \(\partial_1N\) and \(\partial_2N\).

Therefore, any pseudoholomorphic disk that has a puncture at a Reeb chord over \(Q_1\) cannot have a puncture at a Reeb chord over \(Q_2 - N\), and any disk that has a puncture over \(Q_2\) cannot have a puncture over \(Q_1 - N\).   A partial flow tree can begin in \(Q_1\) and flow into \(N\), but then cannot escape it.   Therefore, the differential of any Reeb chord lying over \(Q_i\) will consist solely of Reeb chords lying over \(Q_i\).  Furthermore, since the actions of the Reeb chords over \(N\) are arbitrarily small, the differential of any Reeb chord lying over \(N\) will consist of Reeb chords lying over \(N\).   Therefore:
\[
\partial\big|_{Q_i}^2(x) = \partial^2(x) = 0 \mbox{ for any } x \in Q_i
\]\[
\partial\big|_{N}^2(x) = \partial^2(x) = 0 \mbox{ for any } x \in N
\]
So \(\mathcal{A}_K(\Lambda)|_{Q_i}, \mathcal{A}_K(\Lambda)|_{N}\) are well-defined differential graded algebras.   We also have canonical inclusion maps:
\[
\xymatrix{
\mathcal{A}_K(\Lambda)   & \mathcal{A}_K(\Lambda)\big|_{Q_1}\ar[l]^{i_1}\\
\mathcal{A}_K(\Lambda)\big|_{Q_2} \ar[u]^{i_2} &\mathcal{A}_K(\Lambda)\big|_N \ar[u]^{j_1}\ar[l]^{j_2}}
\]
It is clear that \(i_1 \circ j_1 = i_2 \circ j_2\), so the diagram commutes.

\bigskip

\textbf{Proof of Theorem~\ref{theorem1.4}:} The proof is precisely analogous to the proof of Theorem~\ref{theorem1.1}.   By the same steps, we see that any pseudoholomorphic disk with a positive puncture in \(\Lambda_+'|_{Q_1^+}\) cannot have negative punctures in \(\Lambda_-'|_{Q_2^- - N^-}\), and the same for \(\Lambda_+'|_{Q_2^+}\) and \(\Lambda_-'|_{Q_1^- - N^-}\), and for \(\Lambda_+'|_{N^+}\) and \(\Lambda_-'|_{N^-}\).   The result follows.
}

\subsection{Proof of Theorem 1.2}
\label{subsec:proofoflinearizedtheorem}

The argument in this section is taken essentially from \cite{HS}, Sec. 3.

\textbf{Proof of Theorem~\ref{theorem1.3}:} \(\Lambda\) is a Legendrian submanifold pinched as in theorem 1.1 along a neighborhood \(N\) of a hypersurface \(S\) dividing \(M\) into \(Q_1, Q_2\), and that \(\epsilon\) is an augmentation of \(\mathcal{A}(\Lambda)\).

Define \(i_1, i_2, j_1, j_2\) to be the inclusion maps:
\[
\xymatrix{
\mathcal{A}(\Lambda)   & \mathcal{A}(\Lambda)|_{Q_1}\ar[l]^{i_1}\\
\mathcal{A}(\Lambda)|_{Q_2} \ar[u]^{i_2} &\mathcal{A}(\Lambda)|_N \ar[u]^{j_1}\ar[l]^{j_2}}
\]
Observe that \(i_1 \circ j_1 = i_2 \circ j_2\).   Define \(\epsilon_{Q_1} = \epsilon \circ i_1, \epsilon_{Q_2} = \epsilon \circ i_2, \epsilon_N = \epsilon \circ i_1 \circ j_1 = \epsilon \circ i_2 \circ j_2\).   These are obviously augmentations, since \(\epsilon_X \circ \partial|_X = (\epsilon \circ \partial)|_X = 0\).

Now, consider the maps \(j_1 \oplus j_2\) and \(i_1 + i_2\).   Observe that:
\[
(i_1 + i_2) \circ (j_1 \oplus j_2) = (i_1 \circ j_1) + (i_2 \circ j_2) = 0
\]
Since we are working over \(\mathbb{Z}_2\).   Therefore, the image of \(j_1 \oplus j_2\) lies in the kernel of \(i_1 + i_2\).   Now observe that, if \((x, y)\) is in the kernel of \(i_1 + i_2\), that implies that \(x = y\), meaning that \(x\) is in the intersection of \(\mathcal{A}(\Lambda)|_{Q_1}\) and \(\mathcal{A}(\Lambda)|_{Q_2}\), which is \(\mathcal{A}(\Lambda)|_N\).   Therefore the kernel of \(i_1 + i_2\) equals the image of \(j_1 \oplus j_2\) and we have a long exact sequence.

Just as we have a Seifert-van Kampen theorem for both Legendrian submanifolds and exact Lagrangian cobordisms, there is an extension of the Mayer-Veitoris theorem to exact Lagrangian cobordisms.   Let \(L\) be an exact Lagrangian cobordism from \(\Lambda_+\) to \(\Lambda_-\), and let \(\Phi_L\) be the cobordism map.   If \(\Lambda_-\) has an augmentation \(\epsilon\), this induces an augmentation \(\epsilon \circ \Phi_L\) on \(\Lambda_+\).   Then, after splashing, theorem 1.4 holds for the linearized chain complexes of \(\Lambda_+, \Lambda_-\) as well as the differential graded algebra.

\appendix
\section{Convergence of Disk Boundaries to Flow Lines}
\label{sec:maintechnical}

This section is derived, with some modifications and a great many omissions, from \cite{Ek}.   {\color{black}Sections \ref{subsec:deformation_metric} and \ref{subsec:deformation_cusprounding} are derived entirely from \cite{Ek}, Section 4.2.   Section \ref{subsec:monotonicitylemma} is derived from \cite{Ek}, Section 5.1, but we make the reasoning more explicit.}   The remaining sections are derived from \cite{Ek}, Sections 5.2 and 5.3, but with modifications to allow higher-dimensional singularities in the front projection.   For simplicity's sake, we will provide the original source for each lemma in parentheses next to the lemma.

Let \(s_\sigma:J^1(M) \to J^1(M), s_\sigma(x, y, z) = (x, \sigma y, \sigma z)\) be the scaling of the cofiber and \(z\) components by \(\sigma > 0\).   We define:
\[
\tilde{\Lambda}_\sigma = s_\sigma(\Lambda)
\]

We begin by defining some arbitrarily small modifications to \(\tilde{\Lambda}_\sigma\) in sections~\ref{subsec:deformation_metric} and~\ref{subsec:deformation_cusprounding}.   These modifications will produce \(\Lambda_\sigma\), which is what we will work with after those sections.   \(\Lambda_\sigma\) allows us to obtain a modified version of the monotonicity lemma in section~\ref{subsec:monotonicitylemma}, which will still hold even as \(\sigma\) varies.   In section~\ref{subsec:slitmodel}, we define the slit model of the disk.   In section~\ref{subsec:boundingderivative}, we use the slit model and the monotonicity lemma to bound the derivatives of the map of the pseudoholomorphic disk in terms of \(\sigma\).   In section~\ref{subsec:domainsubdivision}, we add punctures to the boundary of the disk, which allows us to restrict our attention to the disk away from an arbitrarily small neighborhood of the codimension-2 singularities of \(\pi_F\).   In section~\ref{subsec:convergenceofdiskboundaries}, we show that, away from this neighborhood, the boundary of the disk converges to a flow line of a height difference function as \(\sigma \to 0\){\color{black}.

\subsection{Deformations of the Legendrian - The Metric}
\label{subsec:deformation_metric}

Recall that section~\ref{subsec:dga} defines \(\Sigma_k \subset \Lambda\) as the codimension-\(k\) components of the singularities of the front projection \(\pi_F\).   Define \(U(k, d)\) to be a product neighborhood of \(\Sigma_k\) of radius \(d\) inside \(\Lambda\), and define \(N(k, d)\) to be a product neighborhood of \(\Sigma_k\) of radius \(d\) in \(T^*M\).   {\color{black}Define \(U_\sigma(k, d), N_\sigma(k, d)\) to be:
\[
U_\sigma(k, d) = U(k, \sigma d) - U(k + 1, \sigma d)
\]\[
N_\sigma(k, d) = N(k, \sigma d) - N(k + 1, \sigma d)
\]}

Let \(\tilde{g}\) be a metric on \(M\) such that the self-intersections of {\color{black}\(\pi_M(\Sigma_1)\)} are orthogonal.   Let \(\epsilon_1 > 0\) be a deformation parameter.   Define \(b\) to be the restriction of \(\tilde{g}\) to \(\pi_M(\Sigma_1)\).   Using \(b\) and \(\tilde{g}\) we can produce a metric {\color{black}\(g_\sigma\)} on {\color{black}\(U_\sigma(1, \epsilon_1)\)} which can be made arbitrarily close to \(\tilde{g}\) by choosing \(\epsilon_1\) small enough, and such that if \(m \in \Sigma_1 - \Sigma_2\), {\color{black}and \(\sigma\) is small enough,} then there exists a coordinate patch around \(m\) disjoint from \(\Sigma_2\) with coordinates \((q, s) \in \mathbb{R}^{n-1} \times \mathbb{R}\), such that:
\[
\Sigma_1 \mbox{ corresponds to } \{s = 0\}
\]\[
g_{q,s} = \sum_{i,j} (b_{i,j})(q) dq_i \otimes dq_j + ds \otimes ds
\]
Now that we have established this product metric and coordinates, we move on to cusp rounding inside {\color{black}\(U_\sigma(1, \epsilon_1)\)}.

\subsection{Deformations of the Legendrian - Cusp Rounding}
\label{subsec:deformation_cusprounding}

Let \(m \in \pi_M(\Sigma_1)\), and {\color{black}assume \(\sigma\) is small enough that we can find} a coordinate patch \((q,s) \in \mathbb{R}^{n-1}\times\mathbb{R}\) around \(m\).   We have two kinds of sheets over \(m\): those on which \(\pi_M\) is an immersion, and those with a cusp edge singularity.   We call the former an \textbf{unfolded sheet}, and the latter a \textbf{folded sheet}.

From section~\ref{subsec:deformation_metric}, we have local coordinates \((q, s, \kappa, \nu, z)\) of \(J^1(M)\), where \((q,s) \in \mathbb{R}^{n-1} \times \mathbb{R}\), \(\kappa_i\) is the cofiber coordinate of \(q_i\), \(\nu\) is the cofiber coordinate of \(s\), and \(z\) is the \(\mathbb{R}\) coordinate.   Then an unfolded sheet can be parameterized locally {\color{black}in \(\pi_M(U_\sigma(1, \epsilon_1))\)} as the graph of a function \(f\):
\[
(q, s) \to \left(q, s, \sigma \partial_qf, \sigma \partial_sf, \sigma f(q, s)\right)
\]
We perform a small Legendrian isotopy for \(|s| < \epsilon_1\), replacing \(f\) with its Taylor polynomial of degree 1 in \(s\):
\[
f(q, s) = a(q) + s h(q) + ...
\]\begin{equation}\label{eqn:unfoldedsheetform}
(q, s) \to \left(q, s, \sigma \partial_q a + \sigma s \partial_qh, \sigma h, \sigma\left(a + sh\right)\right)
\end{equation}

This is a Legendrian isotopy that introduces no new Reeb chords.   {\color{black}We interpolate between this isotopy and \(\tilde{\Lambda}_\sigma\) over \(\pi_M(U(2, \sigma\epsilon_1) - U(2, \frac{1}{2}\sigma\epsilon_1))\).   (This is a modification of \cite{Ek}, which assumes that no higher singularities other than swallowtails in dimension 2 and crossings of cusp edges in arbitrary dimension exist, allowing them to define the cusp rounding without interpolation.)}

Next we perform a similar isotopy for folded sheets.   We assume without loss of generality that the projection of the folded sheet lies in \(\{s \geq 0\}\).   Then the sheet can be parameterized locally {\color{black}in \(\pi_M(U_\sigma(1, \epsilon_1))\)} as the graph of a function \(f\):
\[
(q, s) \to \left(q, \frac{1}{2}s^2, \sigma \partial_q f, \sigma \partial_s f, \sigma f\right)
\]
Where:
\[
\frac{\partial f}{\partial s}(q, 0) = 0
\]\[
\frac{\partial^3f}{\partial s^3}(q, 0) \neq 0
\]
We perform an isotopy over {\color{black}\(\pi_M(U_\sigma(1, \epsilon_1))\)} to replace \(f\) with its Taylor polynomial of degree 3 in \(s\), obtaining:
\[
(q, s) \to \left(q, \frac{1}{2}s^2, \sigma\left(\partial_qa + \frac{1}{2}s^2\partial_q\beta + \frac{1}{3}s^3\partial_q\alpha\right), \right.\]\begin{equation}\label{eqn:foldedsheetform}
\left.\sigma(\beta + s\alpha), \sigma\left(a + \frac{1}{2}s^2\beta + \frac{1}{3}s^3\alpha\right)\right)
\end{equation}
Where \(\alpha, \beta, a\) depend only on \(q\), and \(\alpha(q) \neq 0\), {\color{black}and interpolate between this isotopy and \(\tilde{\Lambda}_\sigma\) over \(\pi_M(U(2, \sigma\epsilon_1) - U(2, \frac{1}{2}\sigma\epsilon_1))\).}

We follow this with an additional isotopy inside an even smaller neighborhood of the cusp edge.   For notational simplicity, we begin with the case \(\dim \Lambda = 1\), where the cusp edge has the form:
\[
s \to \left(\frac{1}{2}s^2, \sigma s, \frac{1}{3}\sigma s^3\right)
\]
Where \(-\delta \leq s \leq \delta\).   Fix \(\epsilon_2 \ll \delta\).   We define \(\Theta_\sigma\) to be the projection of the above cusp edge to \(T^*\mathbb{R}\) for \(0 \leq x \leq l\), where \(x\) is the base space coordinate and \(y\) is the cofiber coordinate, and where \(l > \sigma\epsilon_2\).   We define \(c_\sigma\) to be the curve in \(T^*\mathbb{R}\) given by a half-circle of radius \(\sigma\epsilon_2\) centered at \((\sigma\epsilon_2, 0)\), connected to a pair of horizontal line segments \(y = \pm \sigma\epsilon_2\) from \(x = \sigma\epsilon_2\) to \(x = l\).   Explicitly, this has the formula:
\begin{equation}\label{eqn:definecsigma}
s \to \left(\begin{array}{ll}
(\frac{1}{2}s^2, -\sigma \epsilon_2)& s\in (-\infty, -\sqrt{2\sigma\epsilon_2})\\
(\frac{1}{2}s^2, \frac{1}{2}s\sqrt{4\sigma\epsilon_2 - s^2})& s \in (-\sqrt{2\sigma\epsilon_2}, \sqrt{2\sigma\epsilon_2})\\
(\frac{1}{2}s^2, \sigma\epsilon_2)& s \in (\sqrt{2\sigma\epsilon_2}, \infty)
\end{array}\right)
\end{equation}
Restricted to \(0 \leq x \leq l\).

\begin{figure}\begin{center}
\includegraphics{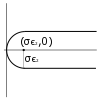}
\caption{Graph of \(c_\sigma\) in \(T^*\mathbb{R}\)}
\end{center}\end{figure}

The area bounded by the curve \(c_\sigma\) is \(\frac{1}{2}\pi\epsilon_2^2\sigma^2 + 2l\sigma\epsilon_2\).   The area bounded by the curve \(\Theta_\sigma\) is \(\frac{4\sqrt{2}}{3}\sigma l^{3/2}\).   The two curves intersect each other at \(x = \epsilon_2^2 / 2\).   Therefore, we can find a Hamiltonian isotopy supported in \(- \epsilon_2^2 \leq x \leq 10\epsilon_2^2\) which deforms \(\Theta_\sigma\) into a smoothed version of \(c_\sigma\) for \(0 \leq x \leq \frac{1}{2}\epsilon_2^2\) and so that the first and second derivatives of the new curve are bounded by \(K\sigma\) for some \(K > 0\).   Therefore, we can lift this isotopy to a Legendrian isotopy, giving us a new curve \(\tilde{c}_\sigma\) parameterized by:
\begin{equation}\label{eqn:1Droundedcuspform}
s \to \left(\frac{1}{2}s^2, \gamma_\sigma(s), \Psi_\sigma(s)\right)
\end{equation}
Where \(\gamma_\sigma, \Psi_\sigma\) have the properties:
\[
\gamma_\sigma(0) = \Psi_\sigma(0) = 0
\]\[
\gamma_\sigma(s) = \sigma s\mbox{ for } |s| \geq 100\epsilon_2^2
\]\[
\Psi_\sigma(s) = \frac{1}{3}\sigma s^3\mbox{ for } |s| \geq 100\epsilon_2^2
\]
We can extend this Legendrian isotopy naturally to \(\dim \Lambda > 1\), giving us a Legendrian isotopy restricted to a very small neighborhood of our cusp edge which changes our folded sheets from equation~\ref{eqn:foldedsheetform} to the form:
\[
(q, s) \to \left(q, \frac{1}{2}s^2,\sigma\left(\partial_qa + \frac{1}{2}s^2\partial_q\beta\right) + \Psi_\sigma(s)\partial_q\alpha, \right.
\]\begin{equation}\label{eqn:manyDroundedcuspform}\left.\sigma\beta + \alpha\gamma_\sigma(s), \sigma\left(a +\frac{1}{2}s^2\beta\right) + \Psi_\gamma(s)\alpha\right)
\end{equation}

{\color{black}Once again, we interpolate between the result of this isotopy and \(\tilde{\Lambda}_\sigma\) over \(\pi_M(U(2, \sigma\epsilon_1) - U(2, \frac{1}{2}\sigma\epsilon_1))\).}

Formally, we denote this deformed Legendrian by \(\Lambda_\sigma(\epsilon_0, \epsilon_1, \epsilon_2)\).   We will ordinarily omit the deformation parameters, and write it simply as \(\Lambda_\sigma\).

To sum up: our deformed Legendrian submanifold \(\Lambda_\sigma\) will be equal to the cofiber-scaling of \(\Lambda\) away from the {\color{black}singularities of \(\pi_F\)}.   Near the {\color{black}singularities but outside of a continuously-shrinking neighborhood of \(\Sigma_2\)}, sheets that are not folded in the cusp edge will be approximated linearly, while sheets that are folded will be approximated by their degree-3 Taylor polynomial.   Finally, in an even smaller neighborhood inside of that neighborhood, the cusp edges will be replaced by semicircles of radius \(\mathcal{O}(\sigma)\) in the Lagrangian projection.

\subsection{The Monotonicity Lemma}
\label{subsec:monotonicitylemma}

The purpose of these deformations is to ensure that we have a useable version of the monotonicity lemma.   This section is based on the treatment in \cite{Ek}, Section 5.1, though made more explicit, and in \cite{AL}, Chapter 5, Section 4, though \cite{AL} considers only fixed Lagrangians.

Let \(p \in T^*M\), and let \(B(p, r)\) be an \(r\)-ball around \(p\).   The standard monotonicity lemma says that if we have a \(J\)-holomorphic map \(u:(D, \partial D) \to (B(p, r), \partial B \cup \pi_\mathbb{C}(\Lambda_\sigma))\), then there exists a constant \(C' > 0\) such that:
\[
\mbox{Area}(u(D)) \geq C'r^2
\]
The problem is that \(C'\) depends on \(\pi_\mathbb{C}(\Lambda_\sigma)\), which in turn depends on \(\sigma\).

Even with our deformations, we cannot retrieve the full version of the monotonicity lemma.   What we \emph{can} do, however, is get a version that is good enough.

\begin{remark}\label{r_0remark} Recall that by the definition of a tame almost complex symplectic manifold, there exists constants \(r_0\) and \(C_1\) such that if \(\gamma\) is a loop in \(T^*M\) contained in a ball \(B(x, r), r \leq r_0\), then \(\gamma\) bounds a disc in \(B(x, r)\) of area less then \(C_1\mbox{length}(\gamma)^2\).\end{remark}

Then:

\begin{lemma}\label{lemma5.2} There exists a constant \(C_2\) such that, for any Riemannian surface with boundary \(D\) and any pseudoholomorphic map \(f:D \to T^*M\), if \(f(D) \subset B(p, r_0)\) then:
\begin{equation}\label{eqn:upperboundonarea}
\textnormal{area}(f(D)) \leq C_2\left(\textnormal{length}(f(\partial D))\right)^2
\end{equation}
In addition, there exists a constant \(C_3\) such that, if \(f(\partial D) \subset \partial B(p, r_0)\), and \(p \in f(D)\), then:
\begin{equation}\label{eqn:standardmonotonicity}
\textnormal{area}(f(D)) \geq C_3 r^2
\end{equation}\end{lemma}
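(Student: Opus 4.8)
The plan is to derive both inequalities directly from the three defining properties of the tame triple \((J,g,\omega)\) on \(T^{*}M\) recorded in Section~\ref{subsec:topology}, together with Remark~\ref{r_0remark}. Since \(\pi_\mathbb{C}(\Lambda_\sigma)\) plays no role in the statement, the constants produced this way depend only on the fixed tame data, and in particular are independent of \(\sigma\), which is exactly what the later limiting arguments need. The single ingredient used repeatedly is the elementary consequence of tameness that, for any pseudoholomorphic \(f\), its \(g\)-area is controlled by its \(\omega\)-energy: from \(|V|^{2}\le C_2\,\omega(V,JV)\) and \(\|\omega\|_g\le 1\) (which also force \(J\) to be bi-Lipschitz for \(g\)) one gets the pointwise bound \(\mathrm{area}(f)\le c\int f^{*}\omega\) with \(c\) depending only on the tame data; I would record this first and then treat the two bounds in turn.

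For \eqref{eqn:upperboundonarea}: because \(f(D)\subset B(p,r_0)\), the boundary loops \(f(\partial D)\) lie in a ball of radius \(r_0\), so by Remark~\ref{r_0remark} they bound a surface \(D'\subset B(p,r_0)\) with \(\mathrm{area}(D')\le C_1(\mathrm{length}\,f(\partial D))^{2}\). The \(2\)-chain \(f_{*}[D]-[D']\) has empty boundary, so, \(\omega=d\theta\) being exact on \(T^{*}M\), \(\int_{f(D)}\omega=\int_{D'}\omega\le\|\omega\|_g\,\mathrm{area}(D')\le C_1(\mathrm{length}\,f(\partial D))^{2}\); combining with the area/energy bound and relabelling the constant gives \eqref{eqn:upperboundonarea}.

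For \eqref{eqn:standardmonotonicity} I would run the classical monotonicity argument of Sikorav (\cite{AL}, Ch.~5, Sec.~4), making explicit the sketch in \cite{Ek}, Sec.~5.1. Read the hypothesis as: \(p\in f(D)\) and \(f(\partial D)\) is disjoint from the open ball \(B(p,r)\) for some \(r\le r_0\) (the stated situation \(f(\partial D)\subset\partial B(p,r)\) is a special case). Put \(h(t)=\mathrm{area}(f(D)\cap B(p,t))\) and \(L(t)=\mathrm{length}(f(D)\cap\partial B(p,t))\) for \(t\in(0,r)\). For almost every \(t\) the sphere \(\partial B(p,t)\) is transverse to \(f(D)\), so the slice \(f(D)\cap\partial B(p,t)\) is a disjoint union of smooth loops lying in \(B(p,t)\subset B(p,r_0)\); applying the argument of \eqref{eqn:upperboundonarea} to the piece \(f(D)\cap B(p,t)\), whose boundary is that slice, gives \(h(t)\le C\,L(t)^{2}\), while the coarea formula gives \(L(t)\le h'(t)\) for a.e.\ \(t\). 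Hence \(\frac{d}{dt}(2\sqrt{h(t)})=h'(t)/\sqrt{h(t)}\ge 1/\sqrt{C}\) a.e.; integrating from \(0\) to \(r\) and using \(h(0^{+})=0\) yields \(h(r)\ge r^{2}/(4C)\), so \(\mathrm{area}(f(D))\ge h(r)\ge C_3 r^{2}\) with \(C_3=1/(4C)\).

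I expect the only delicate point to be the passage from the isoperimetric inequality to the differential inequality for \(h\): one needs Sard's theorem to know that the spheres \(\partial B(p,t)\) meet \(f(D)\) transversally for a.e.\ \(t\) (so that the slices really are loops to which Remark~\ref{r_0remark} applies) and the coarea formula to obtain \(L(t)\le h'(t)\) almost everywhere; with those in hand the monotonicity and the integration are routine. Everything else is a transcription of standard interior estimates for pseudoholomorphic curves, the point being simply to check that none of the constants depends on \(D\), \(f\), \(p\), or \(\sigma\).
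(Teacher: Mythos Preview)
Your proposal is correct and follows exactly the standard argument from \cite{AL}, Chapter~5, Proposition~4.3.1, which is precisely what the paper cites for Lemma~\ref{lemma5.2}; indeed, the differential-inequality step you spell out for \eqref{eqn:standardmonotonicity} is the same computation the paper itself reproduces in the proof of Lemma~\ref{lemma5.5} (the Lagrangian-boundary version). In other words, you have written out what the paper outsources to the reference, and there is nothing substantively different between the two.
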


\begin{proof} See \cite{AL}, Chapter 5, Proposition 4.3.1.i and ii.   In particular, the second statement is the general form of the monotonicity lemma for symplectic manifolds.\end{proof}

Note that \(C_1, C_2, C_3\) do not depend on \(\sigma\), or even on \(\Lambda\).   They are properties of the symplectic manifold \(T^*M\) and its associated symplectic form, almost complex structure, and Riemannian metric.

\begin{lemma}\label{lemma5.4} If \(\epsilon_2\) is small enough, then there exists a constant \(C_4\) independent of \(\sigma\) such that, if \(D\) is a disk in \(\mathbb{C}\) and \(u:(D, \partial D) \to (B(x, r), \partial B \cup \pi_\mathbb{C}(\Lambda_\sigma))\) is a pseudoholomorphic map, {\color{black} \(B(x, r)\) is disjoint from \(N(2, \sigma\epsilon_1)\), and} \(r \leq \sigma r_0\), where \(r_0\) is the \(r_0\) defined in remark \ref{r_0remark}, then:
\begin{equation}\label{eqn:upperboundonareawithlegendrian}
\mbox{Area}(f(D)) \leq C_4r^2
\end{equation}\end{lemma}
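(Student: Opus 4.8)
The strategy is the usual rescaling trick: blow up the ball $B(x,r)$ to unit size, observe that in the blown‑up picture the Lagrangian projection $\pi_{\mathbb C}(\Lambda_\sigma)$ has geometry bounded independently of $\sigma$ — this is precisely what the deformations of Sections~\ref{subsec:deformation_metric} and~\ref{subsec:deformation_cusprounding} were built to guarantee, and it is where the hypotheses $r\le\sigma r_0$ and ``$B(x,r)$ disjoint from $N(2,\sigma\epsilon_1)$'' enter — and then apply the standard upper bound of the monotonicity inequality for pseudoholomorphic curves with totally real boundary, whose constant depends only on that geometry bound. The conclusion $C_4r^2$ then comes out automatically because a homothety scales $2$‑dimensional area by the square of the dilation factor.

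Concretely, first I would fix a coordinate chart on $T^*M$ centered at $x$ in which the metric, the symplectic form and $J$ are constant‑coefficient up to an error that is a bounded multiple of $r$, and set $v=\tfrac1r(u-x)$. Then $v$ is a $\tilde J$‑holomorphic map $(D,\partial D)\to\bigl(B(0,1),\,\partial B(0,1)\cup\mathcal L_\sigma\bigr)$, where $\mathcal L_\sigma=\tfrac1r\bigl(\pi_{\mathbb C}(\Lambda_\sigma)-x\bigr)\cap B(0,1)$ and $\tilde J\to J(x)$ as $r\to 0$; under this homothety $\mathrm{Area}(u(D))=r^2\,\mathrm{Area}(v(D))$ up to a factor bounded uniformly in $\sigma,x,r$. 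It therefore suffices to bound $\mathrm{Area}(v(D))$ by a constant depending only on $\epsilon_2$.

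For that I claim the second fundamental form of $\mathcal L_\sigma$ is bounded by some $K=K(\epsilon_2)$, independent of $\sigma,x,r$. Since $B(x,r)$ is disjoint from $N(2,\sigma\epsilon_1)$, every sheet of $\pi_{\mathbb C}(\Lambda_\sigma)$ met by $B(x,r)$ is of one of the standard types of Section~\ref{subsec:deformation_cusprounding}, with no interpolation: either an unfolded sheet~\eqref{eqn:unfoldedsheetform}, whose cofiber coordinates have first and second base‑derivatives of size $O(\sigma)$, or a rounded folded sheet~\eqref{eqn:manyDroundedcuspform}, which transverse to the cusp is a circular arc of radius $\sigma\epsilon_2$ (the rounded model~\eqref{eqn:definecsigma}), hence of curvature $O\bigl((\sigma\epsilon_2)^{-1}\bigr)$; away from the rounded tip the folded sheet~\eqref{eqn:foldedsheetform} and the plain $s_\sigma(\Lambda)$ both have curvature $O(\sigma)$. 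Dividing all lengths by $r\le\sigma r_0$ multiplies curvatures by $r$: an unfolded or plain sheet acquires curvature $O(\sigma r)\le O(r_0^{-1}r^2)$, and a rounded sheet curvature $O\bigl(r(\sigma\epsilon_2)^{-1}\bigr)\le O(r_0/\epsilon_2)$, so any $K$ that is a fixed multiple of $\max(1,r_0/\epsilon_2)$ works uniformly — and this is the step that forces ``$\epsilon_2$ small enough'', since it is exactly the condition under which the cusp‑rounding isotopy producing~\eqref{eqn:1Droundedcuspform} is defined with the stated derivative bounds. Finitely many sheets — at most the number of sheets of $\Lambda$ — possibly including a transverse double point, may meet $B(x,r)$ at once, but each obeys the same bound, so the corners are controlled as well. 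With $\mathcal L_\sigma$ of $K$‑bounded geometry and $\tilde J$ near the constant structure, the standard monotonicity estimate with totally real boundary (the argument of Lemma~\ref{lemma5.2}, cf.\ \cite{AL}, Ch.~5, \S4) gives $\mathrm{Area}(v(D))\le C(K)$; undoing the homothety yields $\mathrm{Area}(u(D))\le C_4r^2$ with $C_4$ a fixed multiple of $C(K)$, independent of $\sigma$. I expect the main obstacle to be exactly this geometric claim: checking that the curvature bounds read off from~\eqref{eqn:unfoldedsheetform} and~\eqref{eqn:manyDroundedcuspform} are genuinely uniform in $\sigma$ after blowing up, and that the monotonicity estimate tolerates the several Lagrangian sheets and transverse corners that can sit inside $B(x,r)$.
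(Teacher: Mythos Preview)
Your rescaling approach is a legitimate alternative, but the paper organizes the argument differently and does not rescale. Instead it works directly in the unscaled picture: it shows that any two points of $\pi_{\mathbb C}(\Lambda_\sigma)\cap\partial B(p,r)$ can be joined \emph{inside} $\pi_{\mathbb C}(\Lambda_\sigma)$ by a path of length at most $Cr$ with $C$ independent of $\sigma$ (this path-length bound is the analogue of your curvature bound and uses the same case split into unfolded sheets~\eqref{eqn:unfoldedsheetform}, double points, and rounded cusps~\eqref{eqn:manyDroundedcuspform}); it then closes each arc of $u(\partial D)\cap\partial B$ by such a path, applies the isoperimetric inequality of Lemma~\ref{lemma5.2} to the resulting loops to produce capping disks $W$ of area $O(r^2)$, and finally uses that $\pi_{\mathbb C}(\Lambda_\sigma)\cap B(p,r)$ is a disk or pair of disks --- hence contractible --- together with $\|\omega\|\le 1$ to cap $W\cup u(D)$ along the Lagrangian and conclude. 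The ``standard monotonicity estimate with totally real boundary'' you invoke is not really an off-the-shelf lower-bound monotonicity statement; when unpacked it \emph{is} this closing-off-and-capping argument, so your black box hides exactly the work the paper does explicitly. What your rescaling buys is cleaner bookkeeping (curvature bounds instead of explicit path constructions), at the cost of relying on a statement you would still have to prove.

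One small slip: from $r\le\sigma r_0$ you get $\sigma r\ge r^2/r_0$, not $\le$; your inequality $O(\sigma r)\le O(r_0^{-1}r^2)$ points the wrong way. The conclusion you need --- that the rescaled curvature is uniformly bounded --- is still fine, since $\sigma r\le\sigma^2 r_0\le r_0$ for $\sigma\le 1$.
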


\begin{proof} In what follows, we will refer to the ``cusp edges" and ``sheets" of the Legendrian, even though we are working with the Lagrangian projection \(\pi_\mathbb{C}(\Lambda)\).   These both refer to the projection of the corresponding objects in the front projection to the Lagrangian projection.   {\color{black}Note that, since we require \(B(x, r)\) to be disjoint from \(N(2, \sigma\epsilon_1)\), we know that \(B(x, r)\) contains no higher singularities of the front projection, and we can assume that the cusp edges and sheets are ``rounded'' in the sense of section~\ref{subsec:deformation_cusprounding}.}

Choose \(\delta \leq r_0\) small enough that no solid ball in \(T^*M\) of radius \(\delta\) intersects more then one sheet of \(\pi_\mathbb{C}(\Lambda_1)\) unless it contains a cusp edge or a double point, in which case it intersects at most two sheets.   In addition, choose \(\delta\) small enough that we can always find a coordinate chart of \(T^*M\) containing any solid ball of radius \(\delta\).   The subscript \(\Lambda_1\) is used to indicate \(\sigma = 1\).   If \(\epsilon_2\) is small enough, and \(\delta < \epsilon_2\), then any ball \(B(p, \sigma\delta), p \in \pi_\mathbb{C}(\Lambda_\sigma)\), will intersect \(\pi_\mathbb{C}(\Lambda_\sigma)\) in at most one sheet unless it contains a cusp edge or a double point, in which case it will intersect at most two sheets, thanks to the cusp rounding conducted in section~\ref{subsec:deformation_cusprounding}.

We can regard \(\pi_\mathbb{C}(\Lambda_\sigma)\) as the union of the graphs of a collection of sheet functions:
\[
f_i: \pi_M(U_i) \to \mathbb{R}^n
\]
Where \(U_i \subset \Lambda_\sigma\).   Because \(\Lambda_\sigma\) is compact, outside of the \(\sigma\epsilon_2\)-neighborhood of the cusp edges, we can bound \(|\nabla f_i| \leq C'\sigma\) for some constant \(C'\).   In addition, inside the \(\sigma\epsilon_2\)-neighborhood of \(\pi_\mathbb{C}(\Sigma_1)\), but outside of a \(\sigma\epsilon_2\)-neighborhood of \(\pi_\mathbb{C}(\Sigma_2)\), we can instead describe \(\pi_\mathbb{C}(\Lambda_\sigma)\) as the graph of a function \(g_i(v_1, u_2, ..., u_n)\), and bound \(|\nabla g_i| \leq C''\).

Let \(\rho_1, \rho_2 \in \partial B(p, r) \cap \pi_\mathbb{C}(\Lambda_\sigma), r \leq \sigma\delta\).   We begin with the case where \(B(p, r)\) does not intersect a double point or the \(\sigma\epsilon_2\)-neighborhood of a cusp edge.   Let \(f_i:\pi_M(B) \to \mathbb{R}^n\), where \(f_i = (f^1_i, ..., f^n_i)\), be the sheet function.   Pick a coordinate chart in \(M\) around \(\pi_M(p)\), with \(\pi_M(\rho_i) = (\rho^1_i, ..., \rho^n_i)\).   Then we can define a path \(\gamma:[0,1] \to \pi_\mathbb{C}(\Lambda_\sigma)\) that connects \(\rho_1\) to \(\rho_2\) by:
\[
\gamma(t) = \left(t\rho^1_1 + (1-t)\rho^1_2, ..., t\rho^n_1 + (1-t)\rho^n_2, f_i(t\pi_M(\rho_1) + (1-t)\pi_M(\rho_2))\right)
\]\[
\gamma'(t) = \left(\pi_M(\rho_1) - \pi_M(\rho_2), \nabla f^1_i \cdot (\pi_M(\rho_1) - \pi_M(\rho_2)), ..., \nabla f^n_i \cdot (\pi_M(q_1) - \pi_M(q_2))\right)
\]\[
|\gamma'(t)|^2 \leq (1+(C'\sigma)^2)(2r)^2
\]
Therefore, \(\mbox{length}(\gamma) \leq 2r\sqrt{1+(C'\sigma)^2}\).

A similar argument allows us to bound the length of a path by \(4r\sqrt{1+(C'\sigma)^2} \) if the ball intersects a double point.

If the ball intersects the \(\sigma\epsilon_2\)-neighborhood of the cusp edge, then pick a coordinate chart in \(T^*M\) corresponding to the product neighborhood of the cusp edge, with \(\rho_i = (\rho^1_i, ..., \rho^n_i, \nu^1_i, ..., \nu^n_i)\), and let \(g\) be the cusp edge function.   We define \(\pi'\) to be the projection from \((u, v) \to (u_2, ..., u_n, v_1)\).   We can define a path \(\gamma:[0,1] \to \pi_\mathbb{C}(\Lambda_\sigma)\) that connects \(\rho_2\) to \(\rho_1\) by:
\[
\gamma(t) = \left(g_1(t\pi'(\rho_1) + (1-t)\pi'(\rho_2)), t\rho^2_1 + (1-t)\rho^2_2, ..., t\rho^n_1 + (1-t)\rho^n_2,\right.
\]\[
\left. t\nu^1_1 + (1-t)\nu^1_2, g_2(t\pi'(\rho_1) + (1-t)\pi'(\rho_2)), ...
\right)
\]\[
\gamma'(t) = \left(\nabla g_1 \cdot (\pi'(q_1) - \pi'(q_2)), q^2_1 - q^2_2, ..., q^n_1 - q^n_2, r^1_1 - r^1_2, \nabla g_2 \cdot (\pi'(q_1) - \pi'(q_2)), ...\right)
\]\[
|\gamma'(t)|^2 \leq (1 + (C'')^2)(2r)^2
\]
Therefore, \(\mbox{length}(\gamma) \leq 2r \sqrt{1 + (C'')^2}\).   \(C''\) is independent of \(\sigma\) because, as a result of the cusp rounding and the fact we are working in an arbitrarily small neighborhood of the cusp edge, \(g\) is independent of \(\sigma\).   Therefore, we can find \(C\) such that, for any ball of radius \(r \leq \delta\sigma\), any any two points \(\rho_1, \rho_2 \in B(p, r) \cap \pi_\mathbb{C}(\Lambda_\sigma)\), there exists a path in \(\pi_\mathbb{C}(\Lambda_\sigma)\) linking \(\rho_1\) to \(\rho_2\) of length less then or equal to \(Cr\).

Now let \(B(p, r), r \leq \delta\sigma\) be an arbitrary ball intersecting \(\Lambda_\sigma\), and let \(u\) be a pseudoholomorphic map of an open disk \((D, \partial D) \to (B(p, r), \partial B \cup \pi_\mathbb{C}(\Lambda_\sigma))\).   Then \(\partial(u(D)) - \pi_\mathbb{C}(\Lambda_\sigma)\) is a union of arcs \(\alpha_j\) in \(\partial B(p, r)\) from \(\pi_\mathbb{C}(\Lambda_\sigma)\) to \(\pi_\mathbb{C}(\Lambda_\sigma)\), which have length less then or equal to \(\pi r\).   Let \(\alpha'_j\) be arcs in \(\pi_\mathbb{C}(\Lambda_\sigma) \cap B(p, r)\) closing \(\alpha_j\).   Then they have length less then or equal to \(Cr\).   Therefore, by lemma~\ref{lemma5.2}, \(\alpha_j \cup \alpha'_j\) bound a union of disks \(W\) of area less then or equal to \(C_2r^2\), where \(C_2\) is a constant that depends only on \(T^*M\).   Therefore, \(W \cup u(D)\) is a surface with boundary on \(\partial B(p, r) \cap \pi_\mathbb{C}(\Lambda_\sigma)\).   

\(B(p, r) \cap \pi_\mathbb{C}(\Lambda_\sigma)\) must be a disk or pair of disks by our restrictions on \(r\), so it is contractible.   By the fact that \(B(p, r) \cap \pi_\mathbb{C}(\Lambda_\sigma)\) is contractible and the fact that \(||w|| \leq 1\), we obtain that the area of \(u(D)\) is less then or equal to \(C_4r^2\), where \(C_4\) is another constant that depends only on \(T^*M\).\end{proof}

\begin{lemma}[Monotonicity Lemma]\label{lemma5.5}If \(\epsilon_2\) is small enough, then there exists constants \(C, \delta > 0\) which are independent of \(\sigma\) such that, if \(0 < r \leq \sigma\delta\), then for any non-constant \(J\)-holomorphic map \(u:(D, \partial D) \to (B(p, r), \partial B \cup \pi_\mathbb{C}(\Lambda_\sigma))\), such that \(p\) is in the image of \(u\), {\color{black}\(B(p, r)\) is disjoint from \(N(2, \sigma\epsilon_1)\),} and \(D\) is an open surface, then:
\begin{equation}\label{eqn:scaledmonotonicitylemma}
\textnormal{Area}(u(D)) \geq Cr^2
\end{equation}\end{lemma}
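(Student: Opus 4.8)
The plan is to deduce the estimate from the standard monotonicity lemma, Lemma~\ref{lemma5.2}, by checking that the constant in the latter --- which a priori depends on the Lagrangian \(\pi_\mathbb{C}(\Lambda_\sigma)\) --- can be taken independent of \(\sigma\) once we work at the scale \(r\le\sigma\delta\) and stay away from \(N(2,\sigma\epsilon_1)\). The point is that the deformations of Sections~\ref{subsec:deformation_metric} and~\ref{subsec:deformation_cusprounding} were arranged precisely so that \(\pi_\mathbb{C}(\Lambda_\sigma)\) has uniformly bounded extrinsic geometry at that scale: away from the cusp locus its sheets are graphs modeled on~\eqref{eqn:unfoldedsheetform}, whose second fundamental forms are \(O(\sigma)\); inside \(U_\sigma(1,\epsilon_1)\) but outside \(N(2,\sigma\epsilon_1)\) the folded sheets take the rounded form~\eqref{eqn:manyDroundedcuspform}, so the former cusp edge is replaced by an arc of radius \(\sigma\epsilon_2\), whose curvature measured in units of the ball radius is \(\le\delta/\epsilon_2\). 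Equivalently, after the rescaling \(Z\mapsto(Z-p)/\sigma\) the ball becomes one of radius \(r/\sigma\le\delta\) and \(\pi_\mathbb{C}(\Lambda_\sigma)\) becomes a totally real submanifold with \(C^2\)-norm bounded uniformly in \(\sigma\) (and \(J,g\) become uniformly controlled). So the first step is to extract, from the proof of Lemma~\ref{lemma5.2} (equivalently the argument of \cite{AL}, Ch.~5, Sec.~4), the fact that the monotonicity constant depends only on \(r_0,C_1,C_2,C_3\) and on a bound for the second fundamental form of the Lagrangian in the ball, and then to feed in the bounds above together with the interpolation over \(\pi_M(U(2,\sigma\epsilon_1)-U(2,\frac{1}{2}\sigma\epsilon_1))\).

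Before applying this I would dispose of the degenerate possibility that \(u(\partial D)\) misses \(\partial B(p,r)\). If \(u(\partial D)\subset\pi_\mathbb{C}(\Lambda_\sigma)\cap B(p,r)\), then, since the hypotheses on \(r\) and the disjointness from \(N(2,\sigma\epsilon_1)\) force each component of \(B(p,r)\cap\pi_\mathbb{C}(\Lambda_\sigma)\) to be contractible (exactly the observation used in the proof of Lemma~\ref{lemma5.4}), and since \(\theta\) restricts to an exact form on \(\pi_\mathbb{C}(\Lambda_\sigma)\), Stokes' theorem applied through the continuous lift of \(u|_{\partial D}\) gives \(\int u^*\omega=0\); as \(u^*\omega\) is a non-negative area form for the \(J\)-holomorphic map \(u\), this makes \(u\) constant, contrary to hypothesis. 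Hence \(u(\partial D)\cap\partial B(p,r)\neq\emptyset\), and \(p\in u(D)\) is either an interior image point or lies on \(\pi_\mathbb{C}(\Lambda_\sigma)\). In the latter case the continuous lift singles out the one sheet carrying the relevant boundary arc --- in particular this disposes of double points, since then only a single nearly flat sheet is seen --- and we may straighten that sheet, or in the cusp case the rounded curve of~\eqref{eqn:manyDroundedcuspform}, by a diffeomorphism of \(B(p,r)\) with distortion bounded uniformly in \(\sigma\), double \(B(p,r)\) across it, and apply Lemma~\ref{lemma5.2} to the doubled \(J\)-holomorphic curve through \(p\); the interior case is Lemma~\ref{lemma5.2} directly. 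Rescaling back multiplies areas by \(\sigma^2\) and radii by \(\sigma\), so a lower bound of order \((r/\sigma)^2\) in the model becomes \(\mathrm{Area}(u(D))\ge Cr^2\) with \(C\) independent of \(\sigma\). For \(\sigma\) in a range bounded away from \(0\), where all the above estimates already hold, one may instead invoke compactness of the family \(\{\pi_\mathbb{C}(\Lambda_\sigma)\}\); taking the minimum of the constants and the smaller \(\delta\) finishes the proof.

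I expect the main obstacle to be the bookkeeping in the first paragraph: verifying carefully that the constant produced by the \cite{AL}-style proof of monotonicity genuinely depends only on the ambient tame data \(r_0,C_1,C_2,C_3\) and on a second-fundamental-form bound, and then that the cusp-rounding normal form~\eqref{eqn:manyDroundedcuspform}, together with the interpolation near \(\Sigma_2\), keeps that bound uniform right up to the edge of the region where \(B(p,r)\) is allowed to sit. The reflection/straightening step near the rounded cusp (a fixed-shape curve rather than a plane) and near transverse double points needs a little care as well, but reduces, as indicated, to the single-sheet case plus Lemma~\ref{lemma5.2}.
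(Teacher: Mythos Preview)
Your approach is plausible but takes a substantially different route from the paper's, and the paper's is considerably shorter. The paper does \emph{not} rescale and double across the Lagrangian; instead it derives Lemma~\ref{lemma5.5} directly from Lemma~\ref{lemma5.4} by the standard isoperimetric-to-monotonicity argument. Concretely: set \(S_t=u^{-1}(B(p,t))\), \(a(t)=\mathrm{Area}(u|_{S_t})\), and \(l(t)=\mathrm{length}(u(\partial S_t))\). The coarea formula gives \(a'(t)=l(t)\) for almost every \(t\), and Lemma~\ref{lemma5.4} gives \(a(t)\le C_4\,l(t)^2\) with \(C_4\) independent of \(\sigma\). Hence \(\frac{d}{dt}\sqrt{a(t)}\ge 1/(2\sqrt{C_4})\), and integrating from \(0\) to \(r\) yields \(a(r)\ge r^2/(4C_4)\). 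All the \(\sigma\)-uniform geometry you are worried about has already been packaged into the constant \(C_4\) of Lemma~\ref{lemma5.4}; the present lemma is then a five-line consequence.

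By contrast, your route requires you to (i) verify that the \cite{AL} monotonicity constant in the \emph{boundary} version depends only on tame data plus a second-fundamental-form bound, (ii) control the rescaled \(J\) and \(g\) uniformly, and (iii) carry out the reflection near the rounded cusp, where the boundary condition is a curved arc rather than a flat plane. None of these is wrong, but each is real work, and you yourself flag the bookkeeping as the main obstacle. The paper avoids all of it by observing that Lemma~\ref{lemma5.4} is exactly the isoperimetric input the classical ODE argument needs. If you want to shorten your write-up, switch to that argument; if you prefer your geometric rescaling picture, be aware that the doubling step needs \(J\) in a normal form along the Lagrangian, which is an extra hypothesis or an extra perturbation not otherwise present in the paper.
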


\begin{proof} Define \(S_t = u^{-1}(B(x, t))\) for \(t \leq r\), \(a(t) = \mbox{area} (u|_{S^t})\).   Since \(u\) is smooth, Sard's theorem (\cite{St}, Theorem II.3.1) implies that \(S_t\) is a subsurface with \(C^1\) boundary \(\partial S_t = u^{-1}(\partial(B(x, t)))\) for almost all \(t\).   Define \(l(t) = \mbox{length}(u(\partial S_t))\).   \(a(t)\) is an absolutely continuous function, and \(a'(t) = l(t)\) for almost all \(t\).   Then by lemma~\ref{lemma5.4}, we have \(a(t) \leq C_4(l(t))^2\) for almost all \(t \leq r\).   Since \(u\) is not constant and the image of \(u\) contains \(p\), \(a(t) > 0\) for \(t > 0\).   Thus, for \(t > 0\) we know:
\[
\frac{d}{dt}\left(\sqrt{a(t)}\right) = \frac{a'(t)}{2\sqrt{a(t)}} \geq \frac{l(t)}{2\sqrt{C_4(l(t))^2}} = \frac{1}{2\sqrt{C_4}}
\]
Therefore, integrating the above from \(t = 0\) to \(r\), we obtain:
\[
\mbox{Area}(u(D)) = a(r) \geq \frac{r^2}{4C_4}
\]
Which implies equation~\ref{eqn:scaledmonotonicitylemma}.\end{proof}

\subsection{The Slit Model of the Disk}
\label{subsec:slitmodel}

Recall that \(D_m\) is the unit disk in \(\mathbb{C}\) with \(m\)-punctured boundary.   We define \(\Delta_m\) to be the subset of \(\mathbb{R} \times [0, m-1]\) given by deleting \(m-2\) horizontal slits of width \(\epsilon < 1\), where each slit ends in a half-circle.   \(\Delta_m\) has a canonical complex and symplectic structure inherited from \(\mathbb{R}^2 = \mathbb{C}\).

\begin{lemma}(\cite{Ek}, Lemma 2.2)\label{lemma5.6} \(\Delta_m\) is biholomorphic to \(D_m\).\end{lemma}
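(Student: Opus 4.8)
The plan is to produce the biholomorphism from the Riemann mapping theorem and then pin down the boundary correspondence at the $m$ ``ends''. First I would check that $\mathrm{int}(\Delta_m)$ is a simply connected domain properly contained in $\mathbb{C}$: it lies in the open strip $\mathbb{R}\times(0,m-1)$, so it is not all of $\mathbb{C}$, and deleting finitely many horizontal slits that run off to $x=+\infty$ neither disconnects the strip nor creates a bounded complementary component, so $\pi_1(\mathrm{int}(\Delta_m))=1$. By the Riemann mapping theorem there is then a biholomorphism $\Phi\colon\mathrm{int}(\Delta_m)\to\mathbb{D}$, where $\mathbb{D}$ is the open unit disk.

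Next I would analyze $\partial\Delta_m$. It is a union of finitely many real-analytic arcs — the two lines $y=0$ and $y=m-1$, and for each of the $m-2$ slits a single smooth arc running in from $x=+\infty$, around the rounded tip, and back out to $x=+\infty$ — with no corners, since the tips are rounded by hypothesis; together with $m$ ends at infinity: one at $x=-\infty$ spanning the full height $[0,m-1]$, and one in each of the $m-1$ channels into which the slits cut the region near $x=+\infty$. Going once around $\partial\Delta_m$ meets these ends in the cyclic order: the $x=-\infty$ end, then the $+\infty$ channels from bottom to top; call them $q_0,\dots,q_{m-1}$. Each end is conformally a half-infinite strip of constant height, and the explicit map $w\mapsto\cosh(\pi w)$ (after rescaling the height to $1$) carries such a half-strip biholomorphically onto the upper half-plane, taking the end to a boundary point of $\partial\mathbb{H}$; thus each end of $\Delta_m$ is a genuine boundary puncture of the same local conformal type as a puncture of $D_m$. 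Adjoining the $m$ ideal points to form a compact bordered surface $\overline{\Delta}_m$, I would extend $\Phi$ to a homeomorphism $\overline{\Delta}_m\to\overline{\mathbb{D}}$ — across the analytic arcs by the Schwarz reflection principle, near the ends by the explicit half-strip model — so that $q_0,\dots,q_{m-1}$ map to $m$ distinct points of $\partial\mathbb{D}$ in that cyclic order. Restricting to $\mathrm{int}(\Delta_m)$ and declaring the images of the $m$ ends to be the punctures then identifies $\Delta_m$ with a disk carrying $m$ cyclically ordered boundary punctures, i.e.\ with $D_m$.

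Finally, to match a \emph{prescribed} $D_m$ one post-composes with a M\"obius automorphism of $\mathbb{D}$ to place three of the punctures and arranges the remaining $m-3$ by choosing the depths of the $m-2$ slits appropriately; this works because those depths, modulo horizontal translation, are $m-3$ free parameters realizing exactly the $(m-3)$-dimensional moduli of $m$ cyclically ordered boundary punctures of a disk. The main obstacle is precisely this boundary behavior at the $m$ ends: the Riemann mapping theorem only controls interiors, $\partial\Delta_m$ is noncompact so Carath\'eodory's theorem does not apply off the shelf, and one must rule out degenerate local pictures (a cusp, or a slit of zero interior angle). The explicit half-strip-to-half-plane model in the second step is what resolves this — it certifies that every end is an ordinary boundary puncture and simultaneously fixes the cyclic ordering — after which the extension and the moduli count are routine.
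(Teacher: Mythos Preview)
The paper does not give a proof of this lemma at all; it simply cites \cite{Ek}, Lemma~2.2. So there is no paper proof to compare against, only the referenced source.

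Your approach is sound and is essentially the standard argument one finds in the literature (and, to my knowledge, the one in \cite{Ek}): Riemann mapping for the interior, then identification of the $m$ strip-like ends with boundary punctures, with the slit endpoints furnishing the $m-3$ conformal moduli. A couple of minor remarks. First, you have fixed all $m-2$ slits to enter from $x=+\infty$; the paper's definition of $\Delta_m$ is vague on this point, and in \cite{Ek} slits are allowed from both ends, but your convention suffices for the biholomorphism statement and the moduli count is unaffected. Second, your boundary-extension step is correct in spirit but somewhat informal: rather than invoking Schwarz reflection arc-by-arc, the cleanest route is to note that near each end $\Delta_m$ is conformally a half-infinite strip $[0,\infty)\times[0,1]$ (this is immediate from the geometry), which is exactly the local model at a boundary puncture of $D_m$; the prime-end/Carath\'eodory theory then gives the global boundary homeomorphism in one stroke once the ends are compactified this way. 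With that adjustment your sketch is complete.
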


\begin{proof}  See \cite{Ek}, Lemma 2.2.\end{proof}

We will work in the slit model of the disk for most of what follows.

\subsection{Lemmas Bounding the Derivative}
\label{subsec:boundingderivative}

Recall from the introduction to this section that \(s_\sigma(x, y, z) = (x, \sigma y, \sigma z)\) is the scaling of the cofiber and \(z\) components by \(\sigma > 0\).   {\color{black}Let \(\tilde{J}_\sigma = (s_\sigma)_*^{-1} \circ J \circ (s_\sigma)_*\).   Then, there is a bijection between \(J\)-holomorphic disks on \(\pi_\mathbb{C}(\Lambda)\) and \(\tilde{J}_\sigma\)-holomorphic disks on \(\pi_\mathbb{C}(\tilde{\Lambda}_\sigma)\).   Furthermore, there exists a fiber-preserving diffeomorphism \(\Theta_\sigma:T^*M \to T^*M\) so that \(\Theta_\sigma(\tilde{\Lambda}_\sigma) = \Lambda_\sigma\), and \(\Theta_\sigma\) is \(\sigma\)-close to the identity map by the supremum norm on diffeomorphisms.   Define \(J_\sigma = (\Theta_\sigma)_*^{-1} \circ \tilde{J}_\sigma \circ (\Theta_\sigma)_*\).   Then there is a bijection between \(J_\sigma\)-holomorphic disks on \(\pi_\mathbb{C}(\Lambda_\sigma)\) and \(\tilde{J}_\sigma\)-holomorphic disks on \(\pi_\mathbb{C}(\tilde{\Lambda}_\sigma)\).

Since the space of almost complex structures \(J\) on \(T^*M\) which is part of a tame triple is contractible (\cite{AL}, Ch. 5, Sec. 4.1), the almost complex structure \(\tilde{J}_\sigma\) is part of a tame triple.   Given that \(\tilde{J}_\sigma\) is part of a tame triple and \(J_\sigma\) is \(\sigma\)-close to \(\tilde{J}_\sigma\), \(J_\sigma\) is also part of a tame triple.}

Let \(u:(\Delta_m, \partial \Delta_m) \to (T^*M, \pi_\mathbb{C}(\Lambda))\) be a pseudoholomorphic map.  Let \(u_\sigma\) be the corresponding map \((\Delta_m, \partial\Delta_m) \to (T^*M, \Lambda_\sigma)\), which can be parameterized by \(u_\sigma = (p_\sigma, q_\sigma)\), where \(p_\sigma\) is the map to the base space and \(q_\sigma\) is the map to the cofiber.

Recall from section~\ref{subsec:dga} {\color{black}and \ref{subsec:deformation_metric}} that \(\Sigma_k \subset \Lambda\) denotes the codimension-\(k\) component of the singularities of the front projection \(\pi_F\), that \(U(k, d) \subset \Lambda\) is the product neighborhood of radius \(d\) around \(\Sigma_k\) in \(\Lambda\), that \(N(k, d) \subset T^*M\) is the product neighborhood of radius \(d\) around \(\Sigma_k\) in \(T^*M\).

We analogously have \((\Sigma_k)_\sigma, (U(k, d))_\sigma \subset \Lambda_\sigma, (N(k, d))_\sigma \subset T^*M\), etc.; however, in what follows we will omit the subscript.   There is a projection map \(\pi_\Sigma:T^*U(1, d) \to T^*\Sigma_1\).   Define \(u^T = \pi_\Sigma \circ u, u^T_\sigma = \pi_\Sigma \circ u_\sigma\), with \(u, u_\sigma\) restricted to the pre-images of \(T^*U(1, d)\).

Throughout all that follows, we will restrict \(u_\sigma\) to vertical lines contained in the pre-image of \(T^*M - T^*N(2, {\color{black}\sigma}\epsilon_3)\), where \(\epsilon_3\) is arbitrarily small.   We denote the restricted domain of vertical lines by \(D_\sigma\).

Next, we put bounds on \(|Du_\sigma|\).   To do this, we will use \(O\)-notation, where \(f(\sigma) = \mathcal{O}(\sigma)\) means that:
\[
\lim_{\sigma \to 0} \frac{f(\sigma)}{\sigma} \mbox{ is finite}
\]

\begin{lemma}(\cite{Ek}, Lemma 5.2 and 5.4)\label{lemma5.7} For any \(J\)-holomorphic map \(u: (\Delta_m, \partial\Delta_m) \to (T^*M, \Lambda)\), there exist constants \(C', C'' > 0\) independent of \(\sigma\) such that the symplectic area of \(u_\sigma(D_\sigma)\) is less then \(C'\sigma\) and the length of \(u_\sigma(D_\sigma \cap \partial\Delta_m)\) is less then \(C''\) for any \(\sigma\).\end{lemma}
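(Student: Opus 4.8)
The plan is to prove the two estimates separately, with the area bound feeding into the length bound through the monotonicity lemma of the previous subsection. Throughout, "constant independent of $\sigma$" will mean depending only on $T^*M$ with its tame triple and on the fixed map $u$, never on the scaling parameter; guaranteeing this is the real content, and it is exactly what the deformation constructions of Sections~\ref{subsec:deformation_metric} and~\ref{subsec:deformation_cusprounding} were set up to make possible.

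For the symplectic-area bound, the key point is that the scaling $s_\sigma(x,y,z)=(x,\sigma y,\sigma z)$ satisfies $s_\sigma^*\theta=\sigma\theta$ on the Lagrangian projection, hence $s_\sigma^*\omega=\sigma\omega$; equivalently, $s_\sigma$ multiplies every Reeb-chord action of $\Lambda$ by $\sigma$. Since $u_\sigma$ is, up to the auxiliary fiber-preserving diffeomorphism $\Theta_\sigma$ and the cusp-rounding isotopies of Section~\ref{subsec:deformation_cusprounding} — all of which are $\mathcal{O}(\sigma)$-small and introduce no new Reeb chords — the $s_\sigma$-rescaling of the fixed disk $u$, its symplectic area equals $\sigma A+\mathcal{O}(\sigma)$, where $A$ is the finite symplectic area of $u$. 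As $D_\sigma\subset\Delta_m$, restricting can only decrease the symplectic area, so $\mathrm{Area}(u_\sigma(D_\sigma))\le C'\sigma$ with $C'$ depending on $u$ but not $\sigma$.

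For the length bound, I would run the half-ball covering argument. First convert to Riemannian area: since $J_\sigma$ belongs to a tame triple with constants independent of $\sigma$ (as in the discussion preceding the monotonicity lemma), the Riemannian area of $u_\sigma$ over $D_\sigma$ is at most a $\sigma$-independent multiple of its symplectic area, hence $\le C_1'\sigma$. By construction $D_\sigma$ consists of vertical lines of the slit model whose image avoids the $\sigma\epsilon_3$-neighbourhood of $\Sigma_2$, so choosing $\epsilon_3$ small relative to $\epsilon_1$ guarantees that every ball $B(p,\sigma\delta)$ centred at a point $p\in u_\sigma(D_\sigma)$ is disjoint from $N(2,\sigma\epsilon_1)$, and Lemma~\ref{lemma5.5} applies inside it. Writing $\gamma=u_\sigma(D_\sigma\cap\partial\Delta_m)$, a union of boundedly many arcs in $\pi_\mathbb{C}(\Lambda_\sigma)$, pick a maximal $2\sigma\delta$-separated subset $p_1,\dots,p_k$ of $\gamma$; the balls $B(p_i,\sigma\delta)$ are pairwise disjoint and each meets the image of $u_\sigma$, so by Lemma~\ref{lemma5.5} each contributes at least $C(\sigma\delta)^2$ to the Riemannian area, giving $kC(\sigma\delta)^2\le C_1'\sigma$, i.e. $k\le C_1'/(C\delta^2\sigma)$. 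Since the balls $B(p_i,2\sigma\delta)$ cover $\gamma$ by maximality of the net, $\mathrm{length}(\gamma)\le K\,k\,\sigma\delta\le C''$ for a constant $C''$ independent of $\sigma$ — the two powers of $\sigma$ cancel precisely because the monotonicity radius $\sigma\delta$ and the available area both scale linearly in $\sigma$.

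The main obstacle is not the combinatorics but the $\sigma$-uniformity of the constants that enter it: the Riemannian-to-symplectic area comparison for $J_\sigma$, and the monotonicity constant $C$ of Lemma~\ref{lemma5.5}, both a priori depend on $\pi_\mathbb{C}(\Lambda_\sigma)$ and hence on $\sigma$; this is exactly what the rounded Legendrians $\Lambda_\sigma$ and the rescaled monotonicity lemma were designed to control. The only remaining care is a bookkeeping check that excluding the $\sigma\epsilon_3$-neighbourhood of $\Sigma_2$ in the domain keeps the balls $B(p,\sigma\delta)$ along $\gamma$ genuinely disjoint from $N(2,\sigma\epsilon_1)$, so that Lemma~\ref{lemma5.5} is legitimately applicable at every point of the boundary arc.
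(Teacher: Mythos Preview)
Your area bound is essentially the paper's argument: the paper applies Stokes' theorem and splits $\int_{\partial u_\sigma(D_\sigma)}\beta$ into the region away from the cusps (where $u_\sigma$ is the literal $\sigma$-rescaling of $u$, so $\beta$ integrates to $\mathcal{O}(\sigma)$) and the cusp region (where Lemma~\ref{lemma5.4} gives $\mathcal{O}(\sigma^2)$). Your version, using $s_\sigma^*\omega=\sigma\omega$ and treating $\Theta_\sigma$ as an $\mathcal{O}(\sigma)$ perturbation, encodes the same idea but is a bit loose on the second piece; the Stokes formulation makes the cusp contribution explicit.

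The length bound, however, has a genuine gap in the direction you run it. From a maximal $2\sigma\delta$-separated net $\{p_i\}$ you correctly get $k\le C_1'/(C\delta^2\sigma)$ via monotonicity. But the final step, ``the balls $B(p_i,2\sigma\delta)$ cover $\gamma$, hence $\mathrm{length}(\gamma)\le Kk\sigma\delta$,'' is false in general: a curve can have arbitrarily large length inside a single ball. Covering by $k$ balls of radius $r$ bounds diameter, not arclength, and nothing in the setup prevents $\gamma$ from oscillating within a ball.

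The paper runs the implication the other way, which is the direction that actually works: given the length $l_\sigma$, one places $m=\lfloor l_\sigma/2\sigma\rfloor$ centers along $\gamma$ at arclength spacing $2\sigma$ and takes disjoint balls of radius $\rho\sigma$ about them (for $\rho$ small). Monotonicity then gives
\[
\mathrm{Area}\,u_\sigma(D_\sigma)\ \ge\ m\cdot C(\rho\sigma)^2\ \ge\ \tfrac{1}{2}C\rho^2\,l_\sigma\,\sigma,
\]
and combining with $\mathrm{Area}\le C'\sigma$ bounds $l_\sigma$ independently of $\sigma$. The fix to your argument is simply to reverse the logic: start from $l_\sigma$, produce many disjoint balls along the arc, and let monotonicity force $l_\sigma$ to be bounded.
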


\begin{proof}  By Stokes' Theorem:
\[
\int_{u(D_\sigma)} \omega = \int_{\partial u(D_\sigma)} \beta
\]
%Since \(\beta\) is the tautological 1-form, and \(|q_\sigma|\) is scaled by \(\sigma\), this integral equals \(C'\sigma\), giving the first result.
{\color{black}Which we can then separate into compnents:
\[
\int_{\partial u_{\color{black}\sigma}(D_\sigma)} \beta = 
\int_{\partial u_{\color{black}\sigma}(D_\sigma) \cap (M - N_\sigma(1, \epsilon_1))} \beta + 
\int_{\partial u_{\color{black}\sigma}(D_\sigma) \cap N_\sigma(1, \epsilon_1)} \beta
\]
Over \(M - N_\sigma(1, \epsilon_1)\), \(u_\sigma\) is equal to the \(\sigma\)-scaling of \(u\).   Since \(\beta\) is the tautological one-form and \(|q_\sigma|\) is scaled by \(\sigma\), the first integral is less than or equal to \(C'_1\sigma\) for some \(C'_1 > 0\).   Over \(N_\sigma(1, \epsilon_1)\), we apply Lemma~\ref{lemma5.4}, which implies that the second integral is less than or equal to \(C'_2\sigma^2\), for some \(C'_2 > 0\).   This provides us with the result.}

We obtain the second result as follows: let \(l_\sigma\) denote the length of \(u_\sigma(\partial\Delta_m \cap D_{\color{black}\sigma}) \subset T^*M\).   For \(\rho > 0\) small enough we can find \(m\) disjoint solid balls of radius \(\rho\sigma\) whose centers lie on \(u_\sigma(\partial\Delta_m \cap D_\sigma)\), where \(m = l_\sigma / 2\sigma\) rounded down.   Lemma~\ref{lemma5.5} then shows that:
\[
\mbox{area}(D_\sigma) \geq \left(\frac{l_\sigma}{2\sigma}\right)(C(\rho\sigma)^2) = \frac{1}{2}Cl_\sigma\rho^2\sigma
\]
Since \(\mbox{area}(D_\sigma) \leq C\sigma\), this shows that \(l_\sigma\) must have some maximum.\end{proof}

\begin{lemma}(\cite{EES}, Theorem 9.4)\label{lemma5.8} Fix \(q \geq 1\) and \(\delta_{k-1} > \delta_k \geq 0\).   Let \(A\) be the open unit disk in \(\mathbb{C}\) or the half disk with boundary on \(\mathbb{R}\).   For any compact \(K \subset A\), there exsts a constant \(C\) such that for all holomorphic maps \(u\in W^{k, 2+\delta_{k-1}}(A, \mathbb{C}^n)\), we have:
\begin{equation}\label{eqn:Wbound}
||u||_{W^{k+1,2+\delta_k}(K)} \leq C||u||_{W^{k,2+\delta_k}(A)}
\end{equation}\end{lemma}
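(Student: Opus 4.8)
The statement is the interior $L^p$-regularity estimate for the constant-coefficient elliptic operator $\bar\partial$, applied to the holomorphic map $u$ (for which $\bar\partial u = 0$). The plan is to reduce to a scalar holomorphic function, prove the estimate in the interior by an elementary Cauchy-integral-plus-convolution argument, and reduce the boundary (half-disk) case to the interior case by Schwarz reflection using the totally real boundary condition that $u$ carries in the setting of \cite{EES}. First I would reduce the setup: since $\bar\partial$ acts componentwise, it suffices to treat a scalar $u : A \to \mathbb{C}$ and sum the $n$ resulting estimates. Because $A$ is bounded and $2+\delta_{k-1} > 2+\delta_k$, Hölder's inequality gives a continuous inclusion $W^{k,2+\delta_{k-1}}(A) \hookrightarrow W^{k,2+\delta_k}(A)$, so it is enough to bound $\|u\|_{W^{k+1,2+\delta_k}(K)}$ in terms of $\|u\|_{W^{k,2+\delta_k}(A)}$; in fact, since a holomorphic function is $C^\infty$ in the interior of $A$, we will get the stronger bound $\|u\|_{W^{k+1,2+\delta_k}(K)} \le C\|u\|_{L^{2+\delta_k}(A)}$. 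The parameter $q$ plays no essential role: it merely fixes the number of gained derivatives, which the statement takes to be one, and the argument below would gain any fixed finite number.

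For the interior estimate, write $p = 2+\delta_k$ and choose a compact set $K'$ with $K \subset \mathrm{int}(K') \subset K' \subset A$; let $d = \mathrm{dist}(K, \partial K') > 0$. For $z \in K$, Cauchy's integral formula $u^{(j)}(z) = \frac{j!}{2\pi i}\int_{|w-z|=r}\frac{u(w)}{(w-z)^{j+1}}\,dw$ holds for every $r \in [d/2, d]$; averaging over $r$ and rewriting in polar coordinates converts this into a representation
\[
u^{(j)}(z) = \int k_j(z,w)\, u(w)\, dA(w) = \bigl(g_j * (u\,\mathbf{1}_{K'})\bigr)(z),
\]
where the kernel is the translation-invariant function $g_j(\zeta) = \tfrac{j!}{\pi d}\, e^{-ij\arg\zeta}\,|\zeta|^{-(j+1)}\,\mathbf{1}_{\{d/2 \le |\zeta| \le d\}}$, which is bounded, compactly supported, and has $\|g_j\|_{L^1}$ depending only on $j$ and $d$. (For $|\alpha| = j$, $\partial^\alpha u = \pm i^{m}u^{(j)}$ with $|\partial^\alpha u| \le |u^{(j)}|$, so it suffices to control $u^{(j)}$.) Young's inequality then gives $\|u^{(j)}\|_{L^p(K)} \le \|g_j\|_{L^1}\,\|u\|_{L^p(K')}$ with constant independent of $u$; summing over $j \le k+1$ yields $\|u\|_{W^{k+1,p}(K)} \le C\|u\|_{L^p(K')} \le C\|u\|_{W^{k,p}(A)}$, which is the desired estimate when $K$ lies in the interior of $A$. (Equivalently, this is the standard interior $L^p$ estimate for the elliptic operator $\bar\partial$, using $\bar\partial u = 0$.)

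It remains to handle the half-disk case, where $K$ may meet the flat edge of $A$ along $\mathbb{R}$. Here I would use the totally real boundary condition carried by $u$ in the context of \cite{EES}: the boundary values of $u$ along $\mathbb{R}$ lie in a fixed totally real subspace, which after a complex-linear change of coordinates we may take to be $\mathbb{R}^n$. Extend $u$ across $\mathbb{R}$ by Schwarz reflection, $u(\bar z) := \overline{u(z)}$. The extension is holomorphic on the full symmetric disk, the flat boundary segment becomes an interior set, and the $W^{m,p}$ norms of the extension over symmetric compact sets are comparable (up to a factor $2^{1/p}$) to those of $u$; applying the interior estimate of the previous paragraph to the extension and restricting back gives the estimate for $u$ near $\mathbb{R}$. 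Gluing the interior estimate on compact subsets away from $\mathbb{R}$ with the estimate near $\mathbb{R}$ via a finite cover of $K$ completes the proof.

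The step I expect to be the main obstacle is precisely this boundary reduction: checking that the reflection genuinely preserves the boundary condition in the vector-valued setting, that the reflected map is holomorphic (not merely continuous) across $\mathbb{R}$, and that all constants remain independent of $u$. This is exactly the content that is proved in \cite{EES}, Theorem 9.4, and which we invoke; by contrast, the interior part is the soft Cauchy-formula argument above and presents no difficulty.
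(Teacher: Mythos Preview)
The paper's own proof is a bare citation: ``See \cite{EES}, Theorem 9.4.'' It gives no argument at all. Your proposal therefore supplies strictly more than the paper does, and what you supply is correct: the interior case via the averaged Cauchy formula and Young's inequality is the standard elementary route to interior $L^p$ regularity for holomorphic functions, and the half-disk case via Schwarz reflection across the totally real boundary is exactly how \cite{EES} handles it. So your sketch is essentially a reconstruction of the cited result rather than an alternative to anything in the present paper.

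One point worth flagging explicitly, since you already noticed it: the lemma as stated here omits the totally real boundary condition along $\mathbb{R}$, but that hypothesis is present in \cite{EES}, Theorem 9.4 and is needed for the reflection step (and is satisfied in every application in this paper, where the boundary lies on $\pi_\mathbb{C}(\Lambda_\sigma)$). Without it the half-disk estimate up to the flat edge would fail. You handle this correctly by importing the hypothesis from the source; it would be cleaner to state it outright.
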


\begin{proof}  See \cite{EES}, Theorem 9.4.\end{proof}

Define \(G_r\) to be the open \(r\)-disk centered at the origin in \(\mathbb{C}\), and \(H_r = G_r \cap \{x \geq 0\}\).

There exists \(d > 0\) such that, if \(p, q \in \Sigma_1\), and \(\pi_M(p) = \pi_M(q) \in \pi_M(\Sigma_2)\), then the distance between \(p, q\) is greater then \(d\) when \(\sigma = 1\).   Let \(S_d \subset \Sigma_1\) be an arbitrary open set with diameter less then \(d\).   Choose \(\delta > 0\) small enough that \(S_d\) has a tubular neighborhood of radius \(\delta\).   Then let \(N(\Pi(S_d), \delta)\) denote the restriction to \(S_d\) of a \(\delta\)-tubular neighborhood in the product neighborhood.   Define \(V(S_d, \delta) \subset \Lambda_\sigma\) to be the connected segment of \(\pi_M^{-1}(N(\Pi(S_d), \delta))\) containing \(S_{\color{black}d}\).

\begin{lemma}(\cite{Ek}, Lemma 5.6)\label{lemma5.9} There exists \(C > 0\) such that if \(u:G_2\to T^*M\) is pseudo-holomorphic then:
\[
\sup_{G_1} |Du| \leq C||Du||_{L^2,G_2}
\]
In addition, for all \(K > 0\) large enough there exists \(C > 0\) such that if \(u: (H_2, \partial H_2) \to (T^*M, \pi_\mathbb{C}(\Lambda_\sigma))\) is pseudoholomorphic, and if \(u(\partial H_2)\) lies outside a \(K\sigma\)-neighborhood of \(\pi_\mathbb{C}(\Sigma_1) \subset \pi_\mathbb{C}(\Lambda_\sigma)\), then:
\[
\sup_{H_1} |Du| \leq C||Du||_{L^2,H_2}
\]
Finally, for all \(d > 0\) small enough, if \(u(H_2, \partial H_2) \to (T^*M, V(S, \delta))\) is a pseudoholomorphic map such that \(u(\partial H_2) \subset T^*(N(\Pi(S_d), \delta))\) then:
\[
\sup_{H_1} |Du^T| \leq C ||Du^T||_{L^2, H_2}
\]\end{lemma}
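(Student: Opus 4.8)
The plan is to obtain all three estimates from the standard $\epsilon$-regularity (mean value) inequality for $J_\sigma$-holomorphic curves, the only real subtlety being that every constant must be chosen independent of $\sigma$. Recall from the discussion preceding Lemma~\ref{lemma5.7} that $J_\sigma$ is $\sigma$-close to $\tilde J_\sigma$, which is the pushforward of the fixed tame $J$ by $s_\sigma$; hence $J_\sigma$ is part of a tame triple with bounds, and with $C^1$-norm, uniform in $\sigma$. Consequently, writing $w = |Du|^2$ for a $J_\sigma$-holomorphic map $u$, the Bochner-type inequality for pseudoholomorphic maps gives a pointwise estimate $\Delta w \ge -c\,w^2$ on the interior, with $c$ depending only on these uniform bounds, hence independent of $\sigma$.

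First I would prove the interior estimate. In the situations where this lemma is applied the energy $\int_{G_2}|Du|^2$ is below any prescribed threshold (by Lemma~\ref{lemma5.7} and the tameness inequality $|V|^2 \le C_2\,\omega(V,JV)$, after shrinking $G_2$ if necessary); combined with $w \ge 0$ and $\Delta w \ge -c\,w^2$, the Heinz-type mean value inequality (\cite{MS}, Lemma~4.3.1, or \cite{AL}, Ch.~5) gives $\sup_{G_1} w \le C'\int_{G_2} w$ with $C'$ independent of $\sigma$, and taking square roots yields $\sup_{G_1}|Du| \le C\,\|Du\|_{L^2,G_2}$. (One could instead bootstrap with Lemma~\ref{lemma5.8} and the Sobolev embedding $W^{k+1,2+\delta}\hookrightarrow C^1$ for $k$ large, but the mean value route makes the $\sigma$-independence most transparent.)

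Next I would reduce the two boundary estimates to the interior case by Schwarz reflection. For the second estimate, cover $\partial H_2$ by small coordinate charts of $T^*M$ in which the single sheet of $\pi_\mathbb{C}(\Lambda_\sigma)$ met by $u(\partial H_2)$ appears as a totally real submanifold. Because $u(\partial H_2)$ is assumed to avoid the $K\sigma$-neighborhood of $\pi_\mathbb{C}(\Sigma_1)$, this sheet is one of the rounded sheets of Sections~\ref{subsec:deformation_metric}--\ref{subsec:deformation_cusprounding}, whose defining functions, in the appropriate coordinates and after the rescaling $s_\sigma$ and the correction $\Theta_\sigma$, have $C^1$-geometry bounded uniformly in $\sigma$ (gradients and Hessians of size $\mathcal{O}(\sigma)$, not $\mathcal{O}(\sigma^{-1})$). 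Reflecting $u$ across this totally real submanifold extends it to a map holomorphic for an almost complex structure that is again uniformly tame, so the interior estimate applies and descends to the half-disk. For the third estimate, near $\pi_\mathbb{C}(\Sigma_1)$ I would instead compose with $\pi_\Sigma$: by the product-metric normal form of Section~\ref{subsec:deformation_metric}, $u^T = \pi_\Sigma\circ u$ is pseudoholomorphic for the induced tame structure on $T^*\Sigma_1$ with boundary on $\pi_\Sigma(V(S_d,\delta))$, a submanifold of $\sigma$-independent geometry when $d$ is small, so the same reflection argument bounds $\sup_{H_1}|Du^T|$ by $\|Du^T\|_{L^2,H_2}$.

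The main obstacle is precisely the $\sigma$-uniformity of all these constants near the cusp edges, and this is what the deformations of Sections~\ref{subsec:deformation_metric}--\ref{subsec:deformation_cusprounding} were designed to achieve: without cusp rounding the curvature of $\pi_\mathbb{C}(\Lambda_\sigma)$ near $\Sigma_1$ blows up as $\sigma\to 0$, which would make the reflected almost complex structure degenerate and produce $\sigma$-dependent constants. One must therefore verify carefully that in the rounded model (equations~\ref{eqn:unfoldedsheetform}, \ref{eqn:foldedsheetform}, \ref{eqn:manyDroundedcuspform}), in the half-space coordinates near $\Sigma_1$ and the product coordinates of Section~\ref{subsec:deformation_metric}, the sheet functions have first and second derivatives bounded independently of $\sigma$ throughout the region where $u(\partial H_2)$ is permitted, and that the reflected structure inherits uniform tameness. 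The remaining ingredients --- the Bochner inequality, the Heinz mean value lemma, and reflection across a totally real submanifold --- are standard and contribute no $\sigma$-dependence of their own.
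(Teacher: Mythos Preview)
Your approach is correct but takes a genuinely different route from the paper's. The paper runs the argument through Sobolev embedding and Lemma~\ref{lemma5.8}: it chains $\sup_{G_1}|Du| \le C_1\|Du\|_{W^{1,q}(G_1)} \le C_1 C_2(r)\|Du\|_{L^q(G_r)}$ for some $q>2$, interpolates $\|Du\|_{L^q(G_r)} \le (\sup_{G_r}|Du|)^{(q-2)/q}\|Du\|_{L^2(G_2)}^{2/q}$, and then absorbs the $\sup_{G_r}$ factor into the left-hand side by sending $r\to 1$. The two boundary cases are dismissed as ``precisely analogous'', implicitly relying on the half-disk version of Lemma~\ref{lemma5.8} rather than on an explicit reflection. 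Your Bochner--Heinz route trades this bootstrap-and-absorb machinery for the pointwise subharmonicity inequality $\Delta|Du|^2 \ge -c|Du|^4$ plus Schwarz reflection across the (rounded) totally real sheet; this has the virtue of making the $\sigma$-uniformity of every constant explicit, since they enter only through the uniform $C^1$-bound on $J_\sigma$ and the uniform geometry of the rounded sheets, exactly the data produced in Sections~\ref{subsec:deformation_metric}--\ref{subsec:deformation_cusprounding}. The trade-off is that the Heinz mean-value lemma carries an energy-smallness hypothesis that the paper's absorption argument formally avoids; you correctly observe that this is harmless in every application because Lemma~\ref{lemma5.7} places the energy below any prescribed threshold, but it does mean you are establishing a marginally weaker statement than the one displayed. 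You even flag the paper's own approach (``one could instead bootstrap with Lemma~\ref{lemma5.8}\ldots'') as an alternative, so you are aware of both.
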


\begin{proof} We begin with the first case.   If \(||Du||_{L^2, G_2} = \infty\) the inequality is trivially true, so we assume it does not.   Recall from \cite{Dr}, Theorem 27.18, that the Sobolev inequality for \(k > n / q\) is:
\[
||f||_{C^{k - \lfloor n/p\rfloor-1, \gamma}(U)} \leq C||f||_{W^{k,p}(U)}
\]
Where \(n\) is the dimension of the space, \(U\) is a bounded open subset of \(\mathbb{R}^n\) with \(C^1\) boundary, \(C^{m,\gamma}(U)\) is a Holder space, and \(\gamma\) is defined by:
\[
\gamma = \left\lfloor\frac{n}{p}\right\rfloor + 1 - \frac{n}{p}
\]
Where \(\lfloor x \rfloor\) denotes the floor of \(x\).   In our case \(f = Du\), \(U = G_1\) is a subset of \(\mathbb{C} = \mathbb{R}^2\), so \(n = 2\), and we choose \(k = 1\).   Therefore, for \(q > 2\):
\[
||Du||_{C^{0, \gamma}(G_1)} \leq C_1 ||Du||_{W^{1,q}(G_1)}
\]
Since \(||v||_{C^{0, \gamma}(U)} \geq ||v||_{C^{0}(U)}\) for any \(\gamma, U\), this gives us:
\begin{equation}\label{eqn:supofG1}
\sup_{z \in G_1}|Du| \leq C_1 ||Du||_{W^{1,q}(G_1)}
\end{equation}
Then, we apply Lemma~\ref{lemma5.8} to \(Du\) using \(K=G_1, A = G_r, 1 < r < 2\).   From this we obtain:
\begin{equation}\label{eqn:WofG1}
||Du||_{W^{1,q}(G_1)} \leq C_2(r)||Du||_{W^{0,q}(G_r)} = C_2(r)||Du||_{L^{q}(G_r)}
\end{equation}
Further, we have:
\[
||Du||^q_{L^q(G_r)} = \int_{G_r} |Du|^q \leq \left(\sup_{z\in G_r} |Du|^{q-2}\right) \left(\int_{G_r} |Du|^2\right) \leq
\left(\sup_{z\in G_r} |Du|^{q-2}\right) \left(\int_{G_2} |Du|^2\right)
\]\begin{equation}\label{eqn:supofGr}
||Du||_{L^q(G_r)} \leq \sup_{z\in G_r} |Du|^{(q-2)/q}\cdot ||Du||_{L^2(G_2)}^{2/q} 
\end{equation}
Combining equations~\ref{eqn:supofG1},~\ref{eqn:WofG1}, and~\ref{eqn:supofGr} we have:
\[
\sup_{z\in G_1} |Du| \leq C_3(r) ||Du||_{L^2(G_2)}^{2/q}
\]
Where \(C_3(r)\) is defined by:
\begin{equation}\label{eqn:C3(r)}
C_3(r) = C_1C_2(r)\sup_{z\in G_r} |Du|^{(q-2)/q}
\end{equation}
The limit of \(C_2(r)\) as \(r \to 1\) is given by:
\begin{equation}\label{eqn:limC2(r)}
\lim_{r \to 1} C_2(r) = \frac{||Du||_{W^{1, q}(G_1)}}{||Du||_{W^{0, q}(G_1)}}
\end{equation}
Combining equation~\ref{eqn:C3(r)}, equation~\ref{eqn:limC2(r)}, and the fact that the supremum of \(|Du|\) will be strictly declining as \(r\) shrinks, tells us that the limit of \(C_3(r)\) as \(r \to 1\) must be defined and finite.   Therefore the same inequality holds with \(r = 1\), giving us:
\[
\sup_{z\in G_1} |Du| \leq C ||Du||_{L^2(G_2)}
\]
The process is precisely analogous for the other cases.\end{proof}

Recall from {\color{black}earlier in the section} that \(D_\sigma\) is the closed set in \(\Delta_m\) made of vertical lines \(l\) such that \(u(l)\) lies outside a {\color{black}\(\sigma\)}-small neighborhood of \(\Sigma_2\).   Let \(A_r(p)\) denote the points in \(D_\sigma\) which are connected to \(p\) by a path in \(D_\sigma\) of length at most \(r\).

\begin{lemma}(\cite{Ek}, Lemma 5.7)\label{lemma5.10} If \(\epsilon_2\) and \(\sigma\) are small enough, if \(d_1, d_2 > 0\) are small enough, and if \(C_1 > 0\) is big enough, then there exists \(C = C(\epsilon_2) > 0\) such that:

\begin{itemize}

\item Let \(p \in D_\sigma\), and at least \(4d_2\) distance from \(\partial D_\sigma\).   Then:
\[
\sup_{B_{d_2}(p)} |Du_{\color{black}\sigma}| \leq C\sigma
\]
\item If \(p \in \partial\Delta_m \cap D_\sigma\) and \(u(A_{4d_2}(p) \cap \partial \Delta_m)\) is outside of a \(C_1\sigma\)-neighborhood of \(\pi_\mathbb{C}(\Sigma_1) \subset \pi_\mathbb{C}(\Lambda_\sigma)\), then:
\[
\sup_{A_{d_2}(p)} |Du_{\color{black}\sigma}| \leq C\sigma
\]
\item If \(p \in \partial\Delta_m \cap D_\sigma\) and \(u(A_{4d_2}(p) \cap \partial \Delta_m) \subset V(S, \delta)\) and \(u(A_{4d_2}(p)) \subset T^*N(\Pi(S), \delta)\) then:
\[
\sup_{A_{d_2}(p)} |Du^T_{\color{black}\sigma}| \leq C\sigma
\]

\end{itemize}\end{lemma}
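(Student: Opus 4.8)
The plan is to establish all three bounds by a single scheme: first a crude estimate obtained by feeding the area bound of Lemma~\ref{lemma5.7} into the elliptic estimates of Lemma~\ref{lemma5.9}, and then a rescaling (bubbling) argument that sharpens the crude estimate to the optimal order $\sigma$. The three bullets differ only in which clause of Lemma~\ref{lemma5.9} is invoked (the interior estimate for the first, the half-disk estimate for the second, the tangential estimate for $u^T_\sigma$ for the third) and in how the hypotheses place the relevant balls in the region where that clause applies. This is exactly why the statement asks for distance $4d_2$ from $\partial D_\sigma$, for $u(A_{4d_2}(p)\cap\partial\Delta_m)$ to avoid the $C_1\sigma$-neighborhood of $\pi_\mathbb{C}(\Sigma_1)$, and for the containments $u(A_{4d_2}(p))\subset T^*N(\Pi(S),\delta)$ and $u(A_{4d_2}(p)\cap\partial\Delta_m)\subset V(S,\delta)$, all with a factor $4$: that slack lets the domain of a rescaled map wander a bounded multiple of $d_2$ away from $p$ while staying in the good region.

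\emph{Crude bound.} For the first bullet, dilate the domain disk by the fixed factor $d_2$ and apply the interior case of Lemma~\ref{lemma5.9}; since $u_\sigma$ is $J_\sigma$-holomorphic and $J_\sigma$ is tame with constants that do not depend on $\sigma$ (the normalizations of Section~\ref{subsec:boundingderivative}, together with the remark after Lemma~\ref{lemma5.2}), the integral of $|Du_\sigma|^2$ over a subdomain is controlled by a fixed multiple of the symplectic area of its image, which is $\mathcal{O}(\sigma)$ by Lemma~\ref{lemma5.7}. This gives $\sup_{B_{d_2}(p)}|Du_\sigma|=\mathcal{O}(\sqrt\sigma)$. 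For the second bullet one uses the half-disk estimate of Lemma~\ref{lemma5.9}, which is legitimate precisely because $u(A_{4d_2}(p)\cap\partial\Delta_m)$ lies outside the $C_1\sigma$-neighborhood of $\pi_\mathbb{C}(\Sigma_1)$ (take $K=C_1$) and the ball is disjoint from $N(2,\sigma\epsilon_1)$, so that Lemma~\ref{lemma5.7} still applies. For the third bullet one uses the tangential estimate $\sup_{H_1}|Du^T|\le C\|Du^T\|_{L^2,H_2}$ and bounds $\|Du^T_\sigma\|_{L^2}$ by the area of $\pi_\Sigma\circ u_\sigma$, which is $\mathcal{O}(\sigma)$ because, by the product constructions of Sections~\ref{subsec:deformation_metric}--\ref{subsec:deformation_cusprounding}, the deformed sheets become, after $\pi_\Sigma$, graphs of $\sigma$-small forms over $T^*\Sigma_1$.

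\emph{Sharpening.} Suppose the bullet in question failed: there are $\sigma_j\to0$ and points in the relevant region at which $|Du_{\sigma_j}|/\sigma_j$ (respectively $|Du^T_{\sigma_j}|/\sigma_j$) tends to $\infty$. Apply Hofer's lemma to $|Du_{\sigma_j}|$ with a fixed small ambient radius, a fixed fraction of $d_2$: this yields a point $z_j$ and a radius $\epsilon_j$ with $R_j:=|Du_{\sigma_j}(z_j)|$ at least half the local supremum, with $R_j\epsilon_j\to\infty$ whenever the local supremum does, and with $\sup_{B_{\epsilon_j}(z_j)}|Du_{\sigma_j}|\le 2R_j$; the factor-$4$ margins above keep $B_{\epsilon_j}(z_j)$ in the good region. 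Set $v_j(w)=u_{\sigma_j}(z_j+w/R_j)$; then $v_j$ is $J_{\sigma_j}$-holomorphic, $|Dv_j(0)|=1$, $\sup|Dv_j|\le2$, and its energy equals $\int_{B_{\epsilon_j}(z_j)}|Du_{\sigma_j}|^2$, hence is $\mathcal{O}(\sigma_j)\to0$. If the local supremum also tends to $\infty$, the domains of the $v_j$ exhaust the plane, or a half-plane whose boundary carries the dilated piece of $\pi_\mathbb{C}(\Lambda_{\sigma_j})$; the bootstrapping of Lemmas~\ref{lemma5.8} and~\ref{lemma5.9} then extracts a $C^\infty_{\mathrm{loc}}$ limit $v_\infty$, a nonconstant pseudoholomorphic plane or half-plane. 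Since symplectic area passes to the limit along uniformly $C^1$-bounded pseudoholomorphic maps, $v_\infty^\ast\omega\equiv0$, so by tameness $Dv_\infty\equiv0$, contradicting $|Dv_\infty(0)|=1$. If the local supremum stays bounded, rescale by a bounded factor and obtain the same contradiction on a fixed disk or half-disk. For the third bullet one runs the identical argument with $u^T_{\sigma_j}$ in place of $u_{\sigma_j}$.

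The hard part will be making the rescaling argument uniform in $\sigma$, and the real issue is the rescaled Lagrangian boundary condition. One needs the dilated boundary condition to converge to an affine Lagrangian: away from $\pi_\mathbb{C}(\Sigma_1)$ this follows from compactness of $\Lambda$, but near $\Sigma_1$ the rounded cusp edge of Section~\ref{subsec:deformation_cusprounding} has curvature of order $\sigma^{-1}$ in $T^*M$, so a blow-up of it need not converge at all; this is exactly why the second bullet only asserts the bound outside a $C_1\sigma$-neighborhood of $\pi_\mathbb{C}(\Sigma_1)$, and why the third bullet works with $u^T_\sigma$, for which, by the product structure of Section~\ref{subsec:deformation_metric}, the boundary condition on $T^*\Sigma_1$ is uniformly controlled and does flatten under dilation. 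One also needs the tameness constants of $J_\sigma$ and the constants in Lemmas~\ref{lemma5.8}--\ref{lemma5.9} to be independent of $\sigma$; this is secured by the normalizations of Section~\ref{subsec:boundingderivative} and by the $\sigma$-independence of the monotonicity and area constants recorded after Lemma~\ref{lemma5.2}.
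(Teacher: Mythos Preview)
Your crude bound is correct and matches the paper exactly. The sharpening step, however, has a genuine gap.

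The bubbling argument you propose cannot close the gap between $\mathcal{O}(\sqrt\sigma)$ and $\mathcal{O}(\sigma)$. The crude bound already gives $\sup|Du_{\sigma_j}|\le C\sqrt{\sigma_j}\to 0$, so the ``local supremum'' of $|Du_{\sigma_j}|$ never tends to infinity; your first case is vacuous. In the remaining case you say to ``rescale by a bounded factor,'' but to arrange $|Dv_j(0)|=1$ you must rescale the domain by $R_j^{-1}$ with $R_j=|Du_{\sigma_j}(z_j)|\le C\sqrt{\sigma_j}\to 0$, which is an \emph{unbounded} rescaling, and Hofer's lemma then gives domain radius $\epsilon_jR_j\le d_2\cdot C\sqrt{\sigma_j}\to 0$. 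The rescaled domains shrink to a point rather than exhaust a plane or half-plane, so no limit map can be extracted and no contradiction is reached. Rescaling the target by $\sigma_j^{-1}$ instead does not help directly either: the energy of $\sigma_j^{-1}u_{\sigma_j}$ is $\sigma_j^{-2}\cdot\mathcal{O}(\sigma_j)=\mathcal{O}(\sigma_j^{-1})\to\infty$, so the zero-energy contradiction is lost.

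The paper's proof avoids this by a different mechanism. After the crude $\sqrt\sigma$ bound, it straightens the relevant sheet of $\pi_\mathbb{C}(\Lambda_\sigma)$ to the zero section by a $C^1$-small diffeomorphism $\Theta$, sets $\hat u_\sigma=\sigma^{-1}\Theta\circ u_\sigma$, and then composes with the exponential $F(z_1,\dots,z_n)=(e^{iz_1},\dots,e^{iz_n})$ to obtain $f_\sigma=F\circ\tilde u_\sigma$, which is \emph{uniformly bounded} because the imaginary part of $\hat u_\sigma$ is bounded (subharmonicity). The equation for $f_\sigma$ is $\bar\partial f_\sigma+q(z)\partial f_\sigma=0$ with $|q|=\mathcal{O}(\eta)$ small; multiplying by a cutoff and using Stokes' theorem ($\int|\partial f|^2-|\bar\partial f|^2=\int f^*\omega=0$) yields $\|Df^1_\sigma\|_{L^2}\le C\sup|f_\sigma|$, a bound \emph{independent of $\sigma$}. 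Unwinding the scaling gives $\|Du_\sigma\|_{L^2}=\mathcal{O}(\sigma)$ on the smaller half-disk, and feeding this back into Lemma~\ref{lemma5.9} gives the pointwise $\mathcal{O}(\sigma)$ bound. The essential point you are missing is this exponential-plus-Stokes trick, which converts the target rescaling (with its blowing-up energy) into a bounded problem; a pure domain-rescaling bubbling argument cannot see this structure.
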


\begin{proof}  We begin with the second case.   There is a biholomorphic map with uniformly bounded derivatives from \(A_{4d_2}(p)\) to \(H_{4d_2}\).   Then, by Lemma~\ref{lemma5.9}, there is a constant \(C'\) such that for all \(z \in H_{2d_2}, |Du_\sigma(z)| \leq C'||Du_\sigma||_{L^2(H_{4d_2})}\), and:
\[
||Du_\sigma||^2_{L^2(H_{4d_2})} = \int_{H_{4d_2}} |Du_\sigma|^2 = \int_{H_{4d_2}} g(Du_\sigma, Du_\sigma) = \int_{H_{4d_2}} \omega(Du_\sigma, JDu_\sigma) = \int_{H_{4d_2}} u_\sigma^*\omega
\]
Therefore \(||Du_\sigma||_{L^2(H_{4d_2})}\) is the square root of the symplectic area of \(u_\sigma(H_{4d_2}(p))\).   Since \(H_{4d_2}(p) \subset D_\sigma\), and the symplectic area of \(u_\sigma(D_\sigma)\) is less than the action of the positive Reeb chords, and the action of the positive Reeb chords is scaled by \(\sigma\), we obtain \(|Du_\sigma(z)| \leq C''\sigma^{1/2}\) for all \(z \in H_{2d_2}\).

We define a norm on the space of linear operators.   For a given point \(p \in {\color{black}T^*M}\) and a linear operator {\color{black}\(J_p:T_p(T^*M) \to T_p(T^*M)\)}, we define:
\[
|J_p| = \sup_{v \in T_p{\color{black}(T^*M)}, |v| = 1} \left\{ |Jv| \right\}
\]
And for an operator \({\color{black}J:T(T^*M) \to T(T^*M)}\), we define:
\[
|J| = \sup_{p\in {\color{black}N(\Lambda)}} \left\{|J_p|\right\}
\]
{\color{black}Where \(N(\Lambda)\) is an arbitrary compact neighborhood in \(T^*M\) containing \(\pi_\mathbb{C}(\Lambda_\sigma)\) for all \(\sigma\).}

Pick a complex coordinate chart around \(u_\sigma(p)\) that agrees with the ambient almost complex structure at \(u_\sigma(p)\).   We denote the complex structure from the coordinate chart by \(\tilde{J}\).   Then, for any \(q \in B_{C''\sigma^{1/2}}(u(p))\), \(|\tilde{J}_q - (J_\sigma)_q| = \mathcal{O}(\sigma^{1/2})\).

Let the sheet of \(\pi_\mathbb{C}(\Lambda_\sigma)\) in \(T^*M\) containing \(u_\sigma(p)\) be the graph \(\Gamma_{\sigma\alpha}\) of a 1-form \(\sigma\alpha\).   We claim that for all \(\sigma\) small enough, we can find a diffeomorphism \(\Theta\) such that:

\begin{itemize}

\item \(\Theta(\Gamma_{\sigma\alpha})\) equals the 0-section, and is therefore real analytic

\item \(d\Theta + J_\sigma \circ d\Theta \circ \tilde{J} = 0\) along \(\Gamma_{\sigma\alpha}\)

\item Let \(d_{C^1}\) be the sup norm metric on \(C^1\).   \(d_{C^1}(\Theta, \mbox{Id}) \leq \eta\) for arbitrarily small \(\eta\).

\end{itemize}

We prove the existence of \(\Theta\) by calculating its inverse.   Let \(z = x + iy\) be the coordinates of the local complex chart such that the 0-section corresponds to \(\{y = 0\}\).   Define \(\psi(x) = (x, \sigma\alpha_x)\).   Then, for very small \(y\), define:
\[
\Theta^{-1}(x, y) = \psi(x) + \sum_i \left(y_i \cdot J_{\psi(x)} \psi_* \partial_{x_i}\right)
\]
And extend the map arbitrarily to the rest of the chart.   Then \(\Theta^{-1}(x, 0) = \psi(x)\), so it sends the zero section to the graph \(\Gamma_{\sigma\alpha}\), and it is \(C^1\)-close to the identity.   And, along the zero-section:
\[
\Theta^{-1}_* \partial_{x_i} = \psi_* \partial_{x_i}
\]\[
\Theta^{-1}_*\partial_{y_i} = (J_\sigma)_{\psi(x)} \psi_* \partial_{x_i}
\]
So \(d\Theta^{-1} + \tilde{J} \circ d\Theta^{-1} \circ J_\sigma = 0\).   This gives us \(\Theta\).

Define \(J_\Theta = \Theta_* \circ J_\sigma \circ \Theta^{-1}_*\).   Then \(|J_\Theta - \tilde{J}| = \mathcal{O}(\eta)\), and \(\Theta \circ u\) is \(J_\Theta\)-holomorphic.

Define \(\hat{u}_\sigma = \sigma^{-1}\Theta \circ u_\sigma\) and \(\hat{J}(x, y) = J_\Theta(\sigma x, \sigma y)\).   Then \(|\hat{J} - \tilde{J}| = \mathcal{O}(\eta)\) and \(\hat{u}_\sigma\) is \(\hat{J}\)-holomorphic.   Furthermore, \(\hat{u}_\sigma\) is \(\hat{J}\)-holomorphic on \(\partial H_{4d_2}(p)\).   We extend \(\hat{u}_\sigma\) by copying it over the 0-section and the boundary, obtaining a map \(\tilde{u}_\sigma: D\to\mathbb{C}^n\), where \(D\) is a disc, \(\tilde{u}_\sigma\) is \(C^1\), and \(d\tilde{u}_\sigma + \hat{J} \circ d\tilde{u}_\sigma \circ \tilde{J} = 0\), where \(\hat{J}\) is extended over the new area by \(\hat{J}(z) = \hat{J}\overline{(\hat{u}(\bar{z}))}\), where the bars denote the complex conjugate.

Now define a map \(F:\mathbb{C}^n \to \mathbb{C}^n\) by:
\[
F(z_1, ..., z_n) = \left(e^{iz_1}, ..., e^{iz_n}\right)
\]
And define \(f_\sigma = F \circ \tilde{u}_\sigma\).

Let \(\hat{y}_\sigma, \tilde{y}_\sigma\) be the imaginary components of \(\hat{u}_\sigma, \tilde{u}_\sigma\).   Since \(\hat{u}_\sigma\) is pseudoholomorphic, the function \(|\hat{y}_\sigma|^2\) is subharmonic, and is therefore bounded.   Since \(\tilde{u}_\sigma\) is the doubling of \(\hat{u}_\sigma\), the same is true of \(|\tilde{y}_\sigma|^2\).   Therefore, \(|f_\sigma|\) is uniformly bounded, and the derivatives of \(F\) are uniformly bounded in a neighborhood of the image of \(\tilde{u}_\sigma\).   Furthermore:
\[
df_\sigma \circ \tilde{J} - J_F \circ df_\sigma = 0
\]
Where \(J_F = dF \circ \hat{J} \circ dF^{-1}\).   Since \(d = \partial_{\tilde{J}} + \bar{\partial}_{\tilde{J}}\), this implies:
\[
-\partial_{\tilde{J}} f_\sigma + \bar{\partial_{\tilde{J}}} f_\sigma + J_F \circ (\bar{\partial_{\tilde{J}}} f_\sigma - \partial_{\tilde{J}} f_\sigma) = 0
\]\[
(\tilde{J} + J_F) \circ \bar{\partial_{\tilde{J}}}f_\sigma + (\tilde{J} - J_F) \circ \partial_{\tilde{J}}f_\sigma = 0
\]
Define \(q(z) = (\tilde{J} + J_F)^{-1}(\tilde{J} - J_F)\).   This then becomes:
\[
\overline{\partial_{\tilde{J}}}f_\sigma + q(z) \partial_{\tilde{J}}f_\sigma = 0
\]
Since \(F\) is \(\tilde{J}\)-holomorphic:
\[
|J_F - \tilde{J}| \leq |dF| |\hat{J} - \tilde{J}| |dF^{-1}| = \mathcal{O}(\eta)
\]
We obtain:
\[
q(z) = \mathcal{O}(\eta)
\]

Now let \(\beta:G_1 \to \mathbb{R}\) be a cutoff function which equals 1 on \(G_{\frac{1}{2}}\) and equals 0 outside \(G_{\frac{3}{4}}\), and define \(f^1_\sigma = \beta f_\sigma\).   Then, define:
\[
g_\sigma^1 = \bar{\partial}_{\tilde{J}} f^1_\sigma + q(z)\partial_{\tilde{J}} f^1_\sigma = 
\bar{\partial}_{\tilde{J}} (\beta f_\sigma) + q(z)\partial_{\tilde{J}} (\beta f_\sigma) =
\left(\bar{\partial}_{\tilde{J}} \beta + 
q(z)\partial_{\tilde{J}} \beta\right) f_\sigma
\]
Since \(\beta\) is compactly supported, and therefore its derivatives are bounded, and since \(q(z) = \mathcal{O}(\eta)\), we conclude \(|g^1_\sigma| \leq C|f_\sigma|\).   Therefore:
\begin{equation}\label{eqn:barpartialf1}
|\bar{\partial}_{\tilde{J}}f^1_\sigma| \leq C_0\eta|\partial_{\tilde{J}}f^1_\sigma| + C_1|f^1_\sigma|
\end{equation}
Provided \(\eta\) is small enough, we can rearrange equation~\ref{eqn:barpartialf1} to obtain:
\[
2|\bar{\partial}_{\tilde{J}}f^1_\sigma|^2 \leq C'_1|f_\sigma|^2
\]
Then add \(|\partial_{\tilde{J}}f^1_\sigma|^2 - |\bar{\partial}_{\tilde{J}}f^1_\sigma|^2\) to both sides to obtain:
\begin{equation}\label{eqn:finequality}
|\partial_{\tilde{J}}f^1_\sigma|^2 + |\bar{\partial}_{\tilde{J}}f^1_\sigma|^2 \leq \left(
|\partial_{\tilde{J}}f^1_\sigma|^2 - |\bar{\partial}_{\tilde{J}}f^1_\sigma|^2
\right)
+C'_1|f_\sigma|^2
\end{equation}
Since \(\omega\) is exact, by Stokes theorem:
\[
\int\left(|\partial_{\tilde{J}}f^1_\sigma|^2 - |\bar{\partial}_{\tilde{J}} f^1_\sigma|^2\right)dA = \int (f^1_\sigma)^*\omega = 0
\]
Therefore, if we integrate both sides of equation~\ref{eqn:finequality}, we obtain:
\[
||Df^1_\sigma||^2_{L^2} \leq C \sup |f_\sigma|^2
\]
Which is less than or equal to some constant \(K'\) since \(f_\sigma\) is bounded.   Note that, since the derivatives of \(F\) are uniformly bounded on a neighborhood of the image of \(\tilde{u}_\sigma\), we have:
\[
||D\hat{u}_\sigma||_{L^2} \leq C ||Df^1_\sigma||_{L^2}
\]
Therefore \(||D\hat{u}_\sigma||_{L^2}\) is bounded by some constant, and therefore, by scaling, \(||Du_\sigma||^2_{L^2} = \mathcal{O}(\sigma^2)\).   Combining this with Lemma~\ref{lemma5.10}, we obtain the result.   The first and third part of the lemma follow analogously.\end{proof}

\subsection{Subdivision of the Domain}
\label{subsec:domainsubdivision}

Pick \(\delta> 0\) small enough such that a \(4\delta\)-neighborhood of \(\Sigma_1 - \Sigma_2\) follows the form given in section A.1, and so that the intersection of \(u(\partial\Delta_m)\) with \(T^*N(1, c\delta)\) is transverse for \(c = 1, 2, 3, 4\).   Let \(b^c_i\) denote the points in \(\partial \Delta_m\) such that \(u(b^c_i) \in \partial (T^*N_{\color{black}\sigma}(1, c\delta))\).   Choose a monotone decreasing sequence \(\sigma_i\) so that \(\sigma_0 = 1, \sigma_i \to 0\), and there is a function \(\delta(\sigma)\) so that \(\delta(\sigma)\) lies in an arbitrarily small neighborhood of \(\delta\) and the intersection of \(u_{\sigma_i}(\partial\Delta_m)\) with \(T^*N(1, c\delta(\sigma_i))\) is transverse for \(c = 1, 2, 3, 4\). We then have a sequence of points \(b^c_i(\sigma_i)\) as well. For every map \(u_{\sigma_i}\), puncture the disk \(\Delta_m\) at every \(b^c_j(\sigma_i), b^c_{j+1}(\sigma_i)\) such that there is \(b^{c-1}_k(\sigma_i)\) so that \(b^c_j(\sigma_i) < b^{c-1}_k(\sigma_i) < b^c_{j+1}(\sigma_i)\) for \(c = 2\) or \(4\).   This gives us a new disk with more punctures in the domain but, except for the punctures, the same image; we will denote the domain of this map by \(\Delta_r\), but by an abuse of notation continue to use \(u_\sigma\) to denote the function. We will also suppress the \(\sigma_i\) in \(\delta(\sigma_i), b_j^c(\sigma_i)\), and write simply \(\delta, b_j^c\). We will also generally suppress the subscript \(i\) in \(\sigma_i\).

We define the \textbf{boundary minimum} of a component \(I\) of \(\partial \Delta_r\) to be the point \((x, y) \in I\) with minimum \(x\) value.

In addition, recall from section~\ref{subsec:boundingderivative} that \(D_\sigma\) denotes the restriction of \(\Delta_r\) to vertical lines contained in the pre-image of \(T^*M - {\color{black}T^*N(2, \sigma\epsilon_3)}\), where \(\epsilon_3\) is arbitrarily small. We pass to a subsequence of \(\sigma_i\) where the topology of each component of \(D_{\sigma_i}\) is constant. At each stage, we will continue to pass to a subsequence as necessary to ensure these properties hold, without mentioning it explicitly.

\begin{lemma}(\cite{Ek}, Lemma 5.9)\label{lemma5.11} There exists a constant \(C > 0\) that does not depend on \(\sigma\) such that the number of added punctures is less than or equal to \(C\).\end{lemma}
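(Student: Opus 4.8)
The plan is to bound the number of added punctures by an area argument: each added puncture records a ``deep excursion'' of the disk boundary into the tubular neighborhood of \(\Sigma_1\), each such excursion is forced by the monotonicity lemma to cost a definite amount of symplectic area at scale \(\sigma^2\), while the total symplectic area that \(u_\sigma(D_\sigma)\) can place near \(\Sigma_1\) is itself only \(\mathcal{O}(\sigma^2)\).

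First I would unwind the bookkeeping. Punctures are added only at consecutive pairs \(b^c_j, b^c_{j+1}\) of intersections of \(u_\sigma(\partial\Delta_m)\) with the sphere \(\partial N(1,\sigma c\delta)\), \(c\in\{2,4\}\), between which there lies an intersection \(b^{c-1}_k\) with \(\partial N(1,\sigma(c-1)\delta)\). Because \(b^c_j,b^c_{j+1}\) are consecutive level-\(c\) crossings and the first level-\((c-1)\) crossing falls between them, the boundary sub-arc from \(u_\sigma(b^c_j)\) to that first level-\((c-1)\) crossing stays inside the closed annulus \(A_c=N(1,\sigma c\delta)\setminus N(1,\sigma(c-1)\delta)\) and joins its two bounding spheres; call this arc \(\alpha_j\). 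Thus \(\alpha_j\subset\pi_\mathbb{C}(\Lambda_\sigma)\), \(\mathrm{length}(\alpha_j)\ge\sigma\delta\), \(\alpha_j\) lies at distance \(\ge\sigma\delta\) from \(\Sigma_1\supseteq\Sigma_2\) (so it lies in \(D_\sigma\) when \(\epsilon_3<\delta\)), and the arcs coming from distinct added-puncture pairs are essentially disjoint --- those with \(c=2\) live in \(A_2\), those with \(c=4\) in the disjoint annulus \(A_4\), and those with the same \(c\) sit in disjoint parameter intervals of \(\partial\Delta_m\).

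Next I would apply the monotonicity lemma. On \(\alpha_j\) pick a point \(q_j\) at distance exactly \((c-\tfrac{1}{2})\sigma\delta\) from \(\Sigma_1\) (intermediate value theorem); then \(B(q_j,\tfrac{1}{4}\sigma\delta)\) is disjoint from \(\Sigma_1\) and, once \(\epsilon_1\) is small compared with \(\delta\), from \(N(2,\sigma\epsilon_1)\), and \(\tfrac{1}{4}\sigma\delta\le\sigma\delta\). Lemma~\ref{lemma5.5} applied to the component \(W_j\) of \(u_\sigma^{-1}(B(q_j,\tfrac{1}{4}\sigma\delta))\) containing a preimage of \(q_j\) gives \(\mathrm{area}(u_\sigma(W_j))\ge C(\tfrac{1}{4}\sigma\delta)^2=:C_6\sigma^2\) with \(C_6\) independent of \(\sigma\). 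On the other hand, running the Stokes computation of Lemma~\ref{lemma5.7} over the sub-region \(u_\sigma(D_\sigma)\cap N(1,4\sigma\delta)\) --- using that there, away from \(\Sigma_2\), the deformed Legendrian is in the rescaled normal form of Sections~\ref{subsec:deformation_metric}--\ref{subsec:deformation_cusprounding} (all cofiber coordinates \(\mathcal{O}(\sigma)\), \(\Sigma_1\)-normal base width \(\mathcal{O}(\sigma)\)) and that \(|Du_\sigma|=\mathcal{O}(\sigma)\) tangentially to \(\Sigma_1\) by Lemma~\ref{lemma5.10} --- shows this area is at most \(C_7\sigma^2\) with \(C_7\) independent of \(\sigma\). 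Since distinct components \(W_j\) are disjoint and \(u_\sigma(W_j)\subset N(1,4\sigma\delta)\), the number of distinct \(W_j\) is at most \(C_7/C_6\); a short additional argument (the indices \(j\) sharing a given \(W_j\) have \(q_j\in u_\sigma(W_j)\), a set of diameter \(\le\tfrac{1}{2}\sigma\delta\), so their disjoint excursion arcs are forced to crowd into a ball of radius \(\tfrac{1}{2}\sigma\delta\), which given the uniform bound \(|Du_\sigma|=\mathcal{O}(\sigma)\) can happen for only boundedly many indices) promotes this to a uniform bound on the number \(P\) of added-puncture pairs, hence on the number \(\le 2P\) of added punctures.

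The main obstacle is making the two area estimates genuinely uniform in \(\sigma\), and in particular the upper bound \(\mathrm{area}(u_\sigma(D_\sigma)\cap N(1,4\sigma\delta))=\mathcal{O}(\sigma^2)\): Lemma~\ref{lemma5.7} only asserts that the \emph{boundary integral} over \(N_\sigma(1,\epsilon_1)\) is \(\mathcal{O}(\sigma^2)\) and hence that the \emph{total} area of \(u_\sigma(D_\sigma)\) is \(\mathcal{O}(\sigma)\), so isolating the piece of the area that actually sits near \(\Sigma_1\) requires controlling the ``return'' boundary over \(\partial N(1,4\sigma\delta)\) and the contribution of the vertical-line boundary of \(D_\sigma\), while the pointwise derivative bound one wants to invoke (Lemma~\ref{lemma5.10}) is itself only available on pieces of \(\partial\Delta_m\) already known to lie away from \(\Sigma_2\) --- exactly the kind of piece the excursion arcs \(\alpha_j\) are. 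Handling this circularity carefully, as in \cite{Ek}, Lemma~5.9, is the crux of the argument.
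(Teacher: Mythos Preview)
Your proposal takes a fundamentally different and much harder route than the paper, and the difference stems from a misreading of the setup. In Section~\ref{subsec:domainsubdivision} the tubular neighborhoods used to define the added punctures are \(N(1,c\delta)\) with \emph{fixed} radius \(c\delta\), not \(N(1,\sigma c\delta)\). (The stray subscript \(\sigma\) in the definition of \(b^c_i\) is inconsistent with the transversality statement just before it and with the labels \textbf{out}/\textbf{0}/\textbf{in} and Lemma~\ref{lemma5.14}, all of which use the unscaled \(N(1,c\delta)\); it is the latter that is intended and that matches \cite{Ek}.) Consequently each excursion arc \(\alpha_j\subset u_\sigma(\partial\Delta_m)\) from level \(c\) to level \(c-1\) has length at least the infimum of \(\delta(\sigma)\), a constant close to \(\delta\) and independent of \(\sigma\). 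The paper's proof is then a one-line \emph{length} argument: these arcs are disjoint in \(\partial\Delta_m\), each contributes length \(\ge\inf\delta(\sigma)\) to \(u_\sigma(\partial\Delta_r\cap D_\sigma)\), and by Lemma~\ref{lemma5.7} the total length of \(u_\sigma(\partial\Delta_r\cap D_\sigma)\) is bounded by a constant \(C''\) independent of \(\sigma\). Hence the number of added punctures is at most \(C''/\inf\delta(\sigma)\).

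Your area argument, by contrast, needs the refined upper bound \(\mathrm{area}\bigl(u_\sigma(D_\sigma)\cap N(1,4\sigma\delta)\bigr)=\mathcal{O}(\sigma^2)\), which you correctly identify as the crux and do not prove; Lemma~\ref{lemma5.7} only gives \(\mathcal{O}(\sigma)\) for the whole disk. You also invoke Lemma~\ref{lemma5.10} to control the return boundary, but that lemma is stated for points at distance \(\ge 4d_2\) from \(\partial D_\sigma\) or with boundary images already known to avoid a neighborhood of \(\Sigma_1\), so using it here is circular in exactly the way you note. None of this machinery is needed: once you read the neighborhood radii as \(c\delta\) rather than \(\sigma c\delta\), the second assertion of Lemma~\ref{lemma5.7} (the uniform length bound) does all the work.
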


\begin{proof} Each added puncture of \(\Delta_r\) corresponds to \(b^c_j\) such that there exists \(b^{c-1}_k\) so that \(b^c_j < b^{c-1}_k < b^c_{j+1}\) on \(\partial \Delta_m\) for \(c = 2, 3, 4\).   The length of the segment between \(b^{c-1}_k, b^c_{j+1}\) is bounded from below by the infinum of \(\delta(\sigma)\), and by lemma~\ref{lemma5.7} the length of \(u(\partial \Delta_r \cap D_\sigma)\) is bounded above for any \(\sigma\), so the result follows.\end{proof}

We now label the boundary components of \(\Delta_r\) by the following types:

\bigskip

\textbf{out:} \(u(I) \subset T^*(M- N(1, 3\delta))\)

\textbf{0:} \(u(I) \subset T^*(N(1, 4\delta) - N(1, \delta))\)

\textbf{in:} \(u(I) \subset T^*N(1,2\delta)\)

\bigskip

Then, for \(I\) a boundary component and \(0 < \rho < \frac{1}{4}\), define \(N_\rho(I)\) to be a \(\rho\)-neighborhood of \(I\) in \(\Delta_{\color{black}r}\), and:
\[
\Omega_\rho = D_\sigma - \bigcup_{I \subset \partial \Delta_r} N_\rho(I)
\]

Fix a small \(\epsilon > 0\).

\begin{lemma}(\cite{Ek}, Lemma 5.10)\label{lemma5.12} There exists \(C\) such that if \(\sigma > 0\) is small enough, then:
\[
\sup_{z \in \Theta_\epsilon} |Du(z)| \leq C\sigma
\]\[
\Theta_\epsilon = \Omega_\epsilon \cup \left(\bigcup_{I \in \mbox{\normalfont out} \cup 0} N_\epsilon(I)\right)
\]\end{lemma}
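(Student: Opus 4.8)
The plan is to bootstrap from the derivative bounds already established in Lemma~\ref{lemma5.10}, using the type labels of the boundary components to verify, component by component, that its three hypotheses are satisfied on the relevant pieces of $\Theta_\epsilon$. First I would dispose of the interior region $\Omega_\epsilon$: every point of $\Omega_\epsilon$ lies in $D_\sigma$ and, by the definition of $\Omega_\epsilon = D_\sigma - \bigcup_I N_\rho(I)$ with $\rho = \epsilon$, lies at distance at least $\epsilon$ from $\partial\Delta_r$. Hence for $d_2 = \epsilon/4$ the first bullet of Lemma~\ref{lemma5.10} applies directly and gives $\sup_{B_{d_2}(p)}|Du_\sigma| \le C\sigma$ at each such $p$; taking the supremum over $p \in \Omega_\epsilon$ yields the bound there. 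The constant $C$ depends only on $\epsilon$ (through $d_2$), as required.

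Next I would handle the boundary collars $N_\epsilon(I)$ for $I$ of type \textbf{out} and type \textbf{0}. For $I$ of type \textbf{out}, by definition $u(I) \subset T^*(M - N(1,3\delta))$, so $u$ restricted to a slightly larger neighborhood of $I$ in $\partial\Delta_r$ still maps outside $T^*N(1,2\delta) \supset T^*N(1, C_1\sigma)$ once $\sigma$ is small enough relative to $\delta$ — that is, $u(A_{4d_2}(p) \cap \partial\Delta_m)$ lies outside a $C_1\sigma$-neighborhood of $\pi_\mathbb{C}(\Sigma_1)$. This is exactly the hypothesis of the second bullet of Lemma~\ref{lemma5.10}, so $\sup_{A_{d_2}(p)}|Du_\sigma| \le C\sigma$. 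For $I$ of type \textbf{0}, $u(I) \subset T^*(N(1,4\delta) - N(1,\delta))$, which means $u(I)$ stays inside the product neighborhood of $\Sigma_1 - \Sigma_2$ where the coordinates of section~A.1 are valid, and at distance $\gtrsim \delta \gg \sigma$ from $\Sigma_1$ itself; here I would invoke the third bullet of Lemma~\ref{lemma5.10}, controlling the tangential component $|Du^T_\sigma|$, together with the fact that on a type-\textbf{0} component $u$ is bounded away from $\pi_\mathbb{C}(\Sigma_1)$ by $\gtrsim\delta$, so that the normal (cusp-direction) component of $Du_\sigma$ is also $\mathcal{O}(\sigma)$ by the second bullet. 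In all cases the radius $d_2$ is comparable to $\epsilon$ (after passing through the uniformly-bounded biholomorphisms $A_{4d_2}(p) \to H_{4d_2}$ used in the proof of Lemma~\ref{lemma5.10}), so the resulting constant depends only on $\epsilon$, and $\Theta_\epsilon$ is covered by finitely many such balls because the number of boundary components is bounded (Lemma~\ref{lemma5.11}).

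The main obstacle is the interaction near the ``seams'' where $\Omega_\epsilon$ meets the collars $N_\epsilon(I)$ of type \textbf{out} or \textbf{0}, and more delicately the points of $\Theta_\epsilon$ that lie close to a type-\textbf{in} component or to $\Sigma_2$. For the seams, the balls $B_{d_2}(p)$ and $A_{d_2}(p)$ from the two cases overlap and both give the $\mathcal{O}(\sigma)$ bound, so a simple covering argument suffices; I would just be careful that $A_{4d_2}(p)$ for $p$ near $\partial\Theta_\epsilon$ does not reach a type-\textbf{in} component, which is guaranteed for $\epsilon$ small since type-\textbf{in} and type-\textbf{out} components are separated in $u(\partial\Delta_m)$ by the annular region $T^*(N(1,3\delta) - N(1,2\delta))$ whose preimage has uniformly bounded geometry by Lemma~\ref{lemma5.7}. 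As for $\Sigma_2$: by the construction of $D_\sigma$ we have already excised the preimage of $T^*N(2,\sigma\epsilon_3)$, and the added punctures of section~\ref{subsec:domainsubdivision} were placed precisely so that each component of $D_\sigma$ that survives near a codimension-2 singularity is either of type \textbf{out} or cleanly separated; hence $\Theta_\epsilon$ never comes within a fixed distance of $\Sigma_2$, and the estimates above go through. Assembling the finitely many local bounds gives the uniform estimate $\sup_{z\in\Theta_\epsilon}|Du(z)| \le C\sigma$ for $\sigma$ small.
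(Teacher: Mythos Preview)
Your proposal is correct and matches the paper's approach, which simply records that the lemma is a direct consequence of Lemma~\ref{lemma5.10}. One small simplification: for type-\textbf{0} components the image $u(I) \subset T^*(N(1,4\delta) - N(1,\delta))$ already lies at distance $\geq \delta \gg C_1\sigma$ from $\pi_\mathbb{C}(\Sigma_1)$, so the second bullet of Lemma~\ref{lemma5.10} alone gives the full bound on $|Du_\sigma|$ and the detour through the third bullet is unnecessary.
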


\begin{proof}  This is a direct consequence of lemma~\ref{lemma5.10}.\end{proof}

Let \(D'_\sigma \subset D_\sigma\) be the subset containing all vertical line segments in \(D_\sigma\) which connect a point in a boundary component of type \textbf{in} to some other boundary point.   Note \(\partial D'_\sigma - \partial D_\sigma\) is a collection of vertical line segments.

\begin{lemma}(\cite{Ek}, Lemma 5.11)\label{lemma5.13} For any \(0 < a < 1\) and sufficiently small \(\sigma > 0\), the distance from any \(p \in I\), where \(I\) is of type \textbf{out}, to \(D'_\sigma\) is larger than \(\sigma^{-a}\).   In particular, if \(l\) is a vertical line segment in \(\partial D'_\sigma - \partial D_\sigma\) and \(q \in \partial l\) then \(q\) is either a boundary minimum on a segment of type \textbf{in} or it lies on a boundary segment of type \textbf{0}.\end{lemma}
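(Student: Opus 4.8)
The plan is to reduce the statement to a length estimate for paths in the slit model and then invoke the derivative bound of Lemma~\ref{lemma5.12}, using that the images of type \textbf{out} boundary components and of type \textbf{in} boundary components lie on opposite sides of the buffer annulus over \(N(1,3\delta)-N(1,2\delta)\). For the first assertion, write \(V_{\mathrm{in}}\) for the union of the type \textbf{in} boundary components of \(\Delta_r\). Every vertical segment of \(D_\sigma\) has length at most the (\(\sigma\)-independent) height \(C_0\) of the slit strip, and every point of \(D'_\sigma\) lies on a vertical segment one of whose endpoints is in \(V_{\mathrm{in}}\); hence \(\mathrm{dist}(p,D'_\sigma)\geq \mathrm{dist}(p,V_{\mathrm{in}})-C_0\), so it suffices to show \(\mathrm{dist}(p,V_{\mathrm{in}})>\sigma^{-a}+C_0\) for small \(\sigma\). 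Suppose not, and take a path \(\gamma\) from \(p\) to \(V_{\mathrm{in}}\) of length \(\leq \sigma^{-a}+C_0\). Let \(\rho(s)=\mathrm{dist}\big(\pi_M(u_\sigma(\gamma(s))),\pi_M(\Sigma_1)\big)\). Then \(\rho\geq 3\delta\) at the start (as \(u_\sigma(p)\in T^*(M-N(1,3\delta))\)) and \(\rho<2\delta\) at the end (type \textbf{in}); choosing \(s'\) to be the last parameter with \(\rho(s')=3\delta\) preceding the first \(s''\) with \(\rho(s'')=2\delta\), the sub-arc \(\gamma|_{[s',s'']}\) has \(u_\sigma\)-image whose base point moves a distance at least \(\delta\) while staying in \(\{2\delta\leq\rho\leq 3\delta\}\). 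Once the next paragraph shows \(\gamma|_{[s',s'']}\subset\Theta_\epsilon\), Lemma~\ref{lemma5.12} gives \(|Du_\sigma|\leq C\sigma\) there, forcing the length of \(\gamma\) to be at least \(\delta/(C\sigma)\); since \(a<1\), \(\delta/(C\sigma)>\sigma^{-a}+C_0\) for all sufficiently small \(\sigma\), a contradiction.

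The main obstacle is controlling \(\gamma|_{[s',s'']}\) on the set \(D_\sigma-\Theta_\epsilon\), which is the union of the neighborhoods \(N_\epsilon(I)\) over type \textbf{in} components \(I\), where Lemma~\ref{lemma5.12} does not apply. Two facts make this possible. First, the number of boundary components of \(\Delta_r\) is bounded independently of \(\sigma\): the original number of punctures is fixed and Lemma~\ref{lemma5.11} bounds the number added. Second, near a boundary component one may use the localized estimates of Lemma~\ref{lemma5.10} in place of Lemma~\ref{lemma5.12}: near a type \textbf{in} component the second or third bullet of Lemma~\ref{lemma5.10} bounds \(|Du_\sigma|\), respectively its \(\Sigma_1\)-tangential part, by \(O(\sigma)\), while \(u_\sigma(I)\subset T^*N(1,2\delta)\); this keeps the image of \(N_\epsilon(I)\) within \(O(\epsilon)\) of \(\pi_M(\Sigma_1)\). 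Fixing \(\delta\) first and then taking \(\epsilon\) and \(\sigma\) small, the union of the \(u_\sigma(N_\epsilon(I))\) over type \textbf{in} \(I\) is disjoint from the interior of the buffer annulus \(\{2\delta\leq\rho\leq 3\delta\}\), so the buffer-crossing sub-arc is forced into \(\Theta_\epsilon\) and the estimate above goes through. Pinning down the exact order of smallness among \(\delta,\epsilon,\sigma\) — in particular checking that the merely tangential bound of the third bullet of Lemma~\ref{lemma5.10} still keeps \(\rho<2\delta\) on \(N_\epsilon(I)\) — is the one genuinely delicate point, and is where the cusp rounding of Section~\ref{subsec:deformation_cusprounding} gets used.

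For the ``in particular'' statement, let \(l\subset\partial D'_\sigma-\partial D_\sigma\) be a vertical segment and \(q\in\partial l\), lying on a boundary component \(I_q\) of \(\Delta_r\). Since \(l\subset\overline{D'_\sigma}\), the first assertion (applied to every type \textbf{out} component, with continuity) shows \(l\), hence \(q\), is at distance at least \(\sigma^{-a}\) from every type \textbf{out} boundary point, so \(I_q\) is not of type \textbf{out}. If \(I_q\) were of type \textbf{in} and \(q\) were not its boundary minimum, then for \(x\)-coordinates slightly on either side of that of \(q\) the corresponding vertical segments would still meet \(I_q\) near \(q\), hence still lie in \(D'_\sigma\), so \(l\) would be interior to \(D'_\sigma\) — contradiction; thus if \(I_q\) is of type \textbf{in} then \(q\) is its boundary minimum. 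In the remaining cases the transversality choices in the domain subdivision of Section~\ref{subsec:domainsubdivision}, which place the added punctures exactly on the level sets \(\partial T^*N(1,c\delta)\), force \(I_q\) to be of type \textbf{0}. This yields the two listed alternatives.
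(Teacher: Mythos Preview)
Your approach to the main assertion is genuinely different from the paper's, and the gap you yourself flag is real and not easily closed with the lemmas at hand.

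You argue by a \emph{length} (sup-norm) estimate: extract a buffer-crossing sub-arc $\gamma|_{[s',s'']}$ with $\rho\in[2\delta,3\delta]$, claim it lies in $\Theta_\epsilon$, and use $|Du_\sigma|\le C\sigma$ there to force the path length to exceed $\delta/(C\sigma)$. The difficulty is exactly the one you name: to know $\gamma|_{[s',s'']}\subset\Theta_\epsilon$ you must show that $u_\sigma(N_\epsilon(I'))$ has $\rho<2\delta$ for every type-\textbf{in} $I'$, i.e.\ you need a bound on the \emph{normal} (to $\Sigma_1$) component of $Du_\sigma$ near $I'$. But when the boundary image is within the $C_1\sigma$-neighborhood of $\pi_{\mathbb C}(\Sigma_1)$, the second bullet of Lemma~\ref{lemma5.10} does not apply, and the third bullet only bounds $|Du^T_\sigma|$. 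The cusp rounding makes the Lagrangian smooth but does not, by itself, upgrade a tangential bound to a full one; nothing in the paper supplies the missing normal estimate.

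The paper sidesteps this entirely by switching to an \emph{area} ($L^2$) argument. First it routes a path from $p$ inside $\Omega_\epsilon\cup N_\epsilon(I)$ (with $I$ the type-\textbf{out} component containing $p$, so this is automatically in $\Theta_\epsilon$) not to $V_{\mathrm{in}}$ but only to a point $q$ \emph{midway} between a type-\textbf{in} segment $I_1$ and its partner. Along this path Lemma~\ref{lemma5.12} locates $u_\sigma(q)$ in $T^*(M-N(1,\tfrac52\delta))$. Then, on the fixed rectangle $R=[0,1]\times[0,\tfrac12]$ between $I_1$ and a horizontal segment $I_3$ through $q$, every vertical line has image length $\ge\tfrac12\delta$ (since its endpoints lie in $T^*N(1,2\delta)$ and $T^*(M-N(1,\tfrac52\delta))$), giving a $\sigma$-independent lower bound on $\int\!\!\int_R|\partial_t u_\sigma|^2$ and hence on the symplectic area, contradicting Lemma~\ref{lemma5.7}. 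Crucially, this uses only the \emph{location} of $u_\sigma(I_1)$, never a derivative bound near $I_1$, so the type-\textbf{in} neighborhood problem never arises.

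Your argument for the ``in particular'' clause is correct and is essentially what the paper leaves implicit.
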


\begin{proof} Suppose \(p \in I\), \(I\) of type \textbf{out}, and the distance from \(p\) to \(D'_\sigma\) is less than \(\sigma^{-a}\).   Then there exists a path in \(\Omega_\epsilon \cup N_\epsilon(I) \subset D_\sigma\) of length less than \(\sigma^{-a} + 5r\), where \(r\) is the number of punctures of \(\Delta_r\), from \(p\) to a point \(q\) midway between two horizontal boundary segments of length 1, at least one of which is of type \textbf{in}.   Call the segment of type \textbf{in} \(I_1\) and the other \(I_2\).

Since \(|Du_\sigma| = \mathcal{O}(\sigma)\) along this path and \(u_\sigma(p) \in T^*(M - N(1, 3\delta))\), we know that:
\[
u_\sigma(q) \in T^*(M - N(1, 3\delta - (5r\sigma + \sigma^{1-a}))
\]
Therefore, for \(\sigma\) small enough, \(u_\sigma(q) \in T^*(M - N(1, \frac{5}{2}\delta))\).   Let \(I_3 \subset \Delta_r\) be a horizontal segment of length 1 that intersects \(q\).   For \(\sigma\) small enough, \(u_\sigma(I_3) \subset T^*(M - N(1, \frac{5}{2}\delta))\).   By definition, \(u_\sigma(I_1) \subset T^*N(1, 2\delta)\).   Call the region between \(I_1\) and \(I_3\) \(R\), and observe that \(R = [0, 1] \times [0, \frac{1}{2}]\).   Observe further that the image under \(u_\sigma\) of every vertical path in \(R\) has length at least \(\frac{1}{2}\delta\).   Therefore, if \(t\) is the vertical coordinate of \(\Delta_r\):
\[
\iint_R \left|\frac{\partial u_\sigma}{\partial t}\right|^2 dA \geq \frac{1}{2\delta}
\]
From this, we conclude that the \(L^2\)-norm of \(|Du|\) is bounded below.   This contradicts lemma~\ref{lemma5.7}, which states that the symplectic area of \(u(D_\sigma)\) is bounded above by \(C\sigma\) for some constant \(C\).\end{proof}

Let \(F_l\) be the region of points of distance \(l\) or less from \(D'_\sigma\).   Choose \(\rho'\) so that \(\log(\sigma^{-1}) \leq \rho' \leq 2\log(\sigma^{-1})\) and \(\partial F_{\rho'} - \partial D_\sigma, \partial F_{\frac{1}{2}\rho'} - \partial D_\sigma\) are vertical line segments disjoint from boundary minima.   Define \(D'_1(\sigma) = F_{\rho'}, D_0(\sigma) = D_\sigma - (F_{\frac{1}{2}\rho'})\).   Note that if \(p \in \partial D_0 \cap \partial D_\sigma\) then \(p\) is in a boundary component of type \textbf{0} or \textbf{out}, and if \(p \in \partial D'_1 \cap \partial D_\sigma\) then \(p\) is in a boundary component of type \textbf{0} or \textbf{in}.   Lemma~\ref{lemma5.12} implies that:
\[
\sup_{D_0} |Du_\sigma| \leq C\sigma
\]

\begin{lemma}(\cite{Ek}, Lemma 5.12)\label{lemma5.14} For small enough \(\sigma\), \(u(D'_1(\sigma)) \subset T^*U(1, \frac{9}{2}\delta)\) and \(u(D_0(\sigma)) \subset T^*(M - N(1, \frac{1}{2}\delta))\).\end{lemma}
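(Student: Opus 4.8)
The plan is to run the same kind of quantitative control on $u_\sigma$ that we have already set up, but now tracking how far into or out of the product neighborhoods $N(1,c\delta)$ the image is allowed to drift, given the derivative bounds established in Lemmas~\ref{lemma5.10} and~\ref{lemma5.12}. First I would treat $D_0(\sigma)$. By construction $D_0(\sigma) = D_\sigma - F_{\frac{1}{2}\rho'}$, so every point of $D_0$ is at distance at least $\tfrac{1}{2}\rho' \geq \tfrac{1}{2}\log(\sigma^{-1})$ from $D'_\sigma$, and hence (by Lemma~\ref{lemma5.13}) from any vertical segment touching a type-\textbf{in} component. Any point $p \in \partial D_0 \cap \partial\Delta_r$ therefore lies on a boundary component of type \textbf{0} or \textbf{out}, so $u_\sigma(p) \in T^*(M - N(1,\delta))$. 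Combined with the estimate $\sup_{D_0}|Du_\sigma| \leq C\sigma$ from Lemma~\ref{lemma5.12} (which applies because $D_0 \subset \Theta_\epsilon$ for small $\sigma$), I propagate this bound along paths inside $D_0$: the diameter of $D_0$ in the $\Delta_r$-metric is $O(\log(\sigma^{-1}))$ up to the finitely many punctures, so the image of $u_\sigma$ cannot move more than $C\sigma \cdot O(\log(\sigma^{-1})) = o(\delta)$ away from where the boundary pins it. Hence for $\sigma$ small enough $u_\sigma(D_0(\sigma)) \subset T^*(M - N(1,\tfrac{1}{2}\delta))$.

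Next I would handle $D'_1(\sigma) = F_{\rho'}$ symmetrically, but here the control on the boundary is different: points of $\partial D'_1 \cap \partial D_\sigma$ lie on components of type \textbf{0} or \textbf{in}, so their images lie in $T^*N(1,4\delta)$. The remaining part of $\partial D'_1$, namely $\partial F_{\rho'} - \partial D_\sigma$, consists of vertical line segments, and by Lemma~\ref{lemma5.13} each endpoint of such a segment is either a boundary minimum of a type-\textbf{in} component or a point on a type-\textbf{0} component — in either case mapping into $T^*N(1,4\delta)$. So the image of the entire boundary of $D'_1$ lies in $T^*N(1,4\delta)$. Now I cannot simply use $\sup|Du_\sigma| \leq C\sigma$ on all of $D'_1$, since $D'_1$ need not lie in $\Theta_\epsilon$; instead I would cover $D'_1$ by the region $\Theta_\epsilon$-part (where the $C\sigma$ derivative bound holds) and the thin collar neighborhoods $N_\epsilon(I)$ of type-\textbf{in} components. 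On the $\Theta_\epsilon$-part the same propagation argument as above shows the image stays within $o(\delta)$ of $T^*N(1,4\delta)$; on a collar $N_\epsilon(I)$ of a type-\textbf{in} component we already know $u_\sigma(I) \subset T^*N(1,2\delta)$, and $N_\epsilon(I)$ has bounded size, so even the crude bound $\sup|Du_\sigma| = O(\sigma)$ on such a collar (from Lemma~\ref{lemma5.10}, third bullet, using the tangential-derivative control near $\Sigma_1$, or from the second bullet away from the $C_1\sigma$-neighborhood of $\pi_\mathbb{C}(\Sigma_1)$) keeps the image inside $T^*N(1,2\delta + o(\delta)) \subset T^*U(1,\tfrac{9}{2}\delta)$. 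Taking the union, $u_\sigma(D'_1(\sigma)) \subset T^*U(1,\tfrac{9}{2}\delta)$ for $\sigma$ small.

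The main obstacle I anticipate is the region of $D'_1$ near $\pi_\mathbb{C}(\Sigma_1)$ itself, where the clean bound $|Du_\sigma| = O(\sigma)$ from Lemma~\ref{lemma5.10} fails (the hypotheses there explicitly exclude the $C_1\sigma$-neighborhood of $\pi_\mathbb{C}(\Sigma_1)$), so one only has the tangential bound $|Du^T_\sigma| = O(\sigma)$. Here I would argue that the boundary of $D'_1$ being mapped into $T^*N(1,4\delta)$ already constrains the $\Sigma_1$-coordinate ($u^T_\sigma$) to within $4\delta$, and the \emph{normal} direction to $\Sigma_1$ — where the derivative is not $O(\sigma)$ — is precisely the cusp-rounding direction, where, by the form~\ref{eqn:manyDroundedcuspform} of the rounded cusp edge, the image is confined to an $O(\sigma\epsilon_2)$-tube around the cusp locus; since $\sigma\epsilon_2 \ll \delta$ this still lands in $T^*U(1,\tfrac{9}{2}\delta)$. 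Making this last point rigorous — carefully separating the tangential propagation (controlled via $Du^T_\sigma$) from the normal confinement (controlled via the explicit cusp-rounding model) — is where the real work lies, and it mirrors the structure of the estimates in \cite{Ek}, Lemma 5.12.
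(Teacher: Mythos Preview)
Your treatment of $D_0(\sigma)$ is essentially the paper's argument: connect an arbitrary point of $D_0$ to a boundary point on a type-\textbf{0} or type-\textbf{out} component by a short path and use $\sup_{D_0}|Du_\sigma|\le C\sigma$ to propagate. (The paper in fact bounds the path length by the constant $5r$ rather than $O(\log\sigma^{-1})$, but your weaker bound still gives $o(\delta)$ drift, so this is harmless.)

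The $D'_1(\sigma)$ half, however, has a genuine gap, and the paper handles it by a different mechanism. You correctly observe that the full bound $|Du_\sigma|=O(\sigma)$ is not available near type-\textbf{in} boundary components: Lemma~\ref{lemma5.10}, second bullet, explicitly excludes the $C_1\sigma$-neighborhood of $\pi_\mathbb{C}(\Sigma_1)$, and the third bullet only yields $|Du^T_\sigma|=O(\sigma)$, i.e.\ control in the directions tangent to $\Sigma_1$. Your proposed fix---using the cusp-rounding model~\eqref{eqn:manyDroundedcuspform} to confine the normal direction---only constrains points of $\pi_\mathbb{C}(\Lambda_\sigma)$, hence only points of $u_\sigma(\partial\Delta_r)$. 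It says nothing about where \emph{interior} points of $u_\sigma(D'_1)$ land: a $J_\sigma$-holomorphic disk is free to wander away from the Lagrangian in the interior, and no pointwise derivative bound is available there to prevent it in the normal direction.

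The paper sidesteps this entirely. It first uses the derivative bound only along $\partial D'_1(\sigma)$---which consists of type-\textbf{0}/\textbf{in} arcs and short vertical segments ending on type-\textbf{0} components---to get $u_\sigma(\partial D'_1(\sigma))\subset T^*N(1,\tfrac{17}{4}\delta)$. Then, if some interior point of $u_\sigma(D'_1(\sigma))$ escaped $T^*N(1,\tfrac{9}{2}\delta)$, one could center a ball of radius $\tfrac{1}{4}\delta$ there, disjoint from the image of $\partial D'_1$, and apply the monotonicity Lemma~\ref{lemma5.5} to get $\mathrm{Area}(u_\sigma(D'_1))\ge C(\tfrac{1}{4}\delta)^2$, a positive lower bound independent of $\sigma$. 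This contradicts $\mathrm{Area}(u_\sigma(D_\sigma))\le C'\sigma$ from Lemma~\ref{lemma5.7}. So the interior confinement comes not from derivative propagation but from monotonicity plus the global area bound; this is the missing idea in your argument.
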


\begin{proof}  Let \(q \in D_0(\sigma)\).   Then \(q\) is linked by a path \(\gamma\) of length less than 5\(r\), where \(r\) is the number of punctures of \(\Delta_r\), to a point \(p \in \partial D_\sigma \cap \partial D_0(\sigma)\).   \(p\) must lie in a component of type \textbf{0} or \textbf{out}, so \(u_\sigma(p) \in T^*(M - N(1, \delta))\).   Since \(|Du_\sigma| \leq C\sigma\) along the path, for \(\sigma\) small enough the second statement follows.

\(\partial D'_1(\sigma)\) consists of boundary segments of types \textbf{0} and \textbf{in} and vertical lines ending on boundary components of type \textbf{0}.   We know from the definition that the image of the boundary segments will lie in \(T^*N(1, 4\delta)\), while the bound on \(|Du_\sigma|\) allows us to ensure that \(u(\partial D_1(\sigma)) \subset T^*N(1, \frac{17}{4}\delta)\) for \(\sigma\) small enough.   Then, if \(u(D'_1(\sigma))\) does not lie inside \(T^*N(1, \frac{9}{2}\delta)\), we may bound its area from below for any \(\sigma\) by Lemma~\ref{lemma5.5}, our modified monotonicity lemma.\end{proof}

We can then repeat this process: we add additional punctures at the intersections of \(u(\partial\Delta_r)\) with \(T^*N(2, c\delta), c = 1,2,3,4\).   We label the boundary components of \(D_1'(\sigma)\) with \textbf{out', 0', in'}.   We bound \(u^T_\sigma\) in a neighborhorhood \(\Theta'_\epsilon\), and use equivalents of Lemmas 5.13 and 5.14 for \(u^T_\sigma\) to split \(D'_1(\sigma)\) into \(D_1(\sigma)\), which stays away from \(\Sigma_2\), and \(D_2'(\sigma) = D_2(\sigma)\), which does not.   We only do this once, unlike in \cite{Ek}.   We have thus divided \(\Delta_r = D_0(\sigma) \cup D_1(\sigma) \cup D_2(\sigma)\), so that \(D_0(\sigma)\) maps to a neighborhood away from the singularities, \(D_1(\sigma)\) maps to a neighborhood of the cusp edges, and \(D_2(\sigma)\) maps to a neighborhood of \(\Sigma_2\).

\subsection{Convergence of Disk Boundaries to Flow Lines}
\label{subsec:convergenceofdiskboundaries}

Let \(W_j(\sigma)\) be a neighborhood of the boundary minima of \(D_j(\sigma)\) such that:

\begin{itemize}
\item \(\partial W_j(\sigma)\) is a union of arcs in \(\partial D_j(\sigma)\) and of vertical line segments;

\item Each component of \(W_j(\sigma)\) contains at least one boundary minimum;

\item And the width of each component of \(W_j(\sigma)\) is at most \(\log \sigma^{-1}\).

\end{itemize}

Now consider a sequence:
\[
u_{\sigma}: (\Delta_m, \partial \Delta_m) \to (T^*M, \pi_\mathbb{C}(\Lambda_\sigma)), \sigma \to 0
\]

Let \(\sigma_0\) be the value of \(\sigma\) for which the number of components of \(D_0(\sigma) - W_0(\sigma)\) reaches its maximum.   (Since the total number of added punctures is bounded by Lemma~\ref{lemma5.11}, we must be able to select \(W_0(\sigma)\) in such a way that a maximum is eventually achieved.)   Restrict \(\sigma\) to \(\sigma < \sigma_0\), and consider a vertical line segment \(l \subset D_0(\sigma) - W_0(\sigma)\).   Let \(X\) be the component of \(D_0(\sigma) - W_0(\sigma)\) containing \(l\).   If \(X\) is an infinite or half-infinite rectangle, it can be parameterized by \((-\infty, 0] \times [0,1]\) or \([0, \infty) \times [0,1]\) or \(\mathbb{R} \times [0, 1]\).   In each case, \(l\) is specified by a choice of \(x\), so our choice of \(l\) is well-defined for any \(\sigma < \sigma_0\).   If \(X\) is a finite rectangle, it can be parameterized by \(X = [0, d_\sigma] \times [0, 1]\), where \(d_\sigma\) depends on \(\sigma\).   Pick some value of \(\sigma\), which we call \(\hat{\sigma}\).   Then, for \(\sigma = \hat{\sigma}\), \(l\) is specified by a choice of \(x \in [0, d_{\hat{\sigma}}]\).   Then we can define a function \(x_\sigma = (d_\sigma x / d_{\hat{\sigma}})\).   This lets us consider our choice of \(l\) to be well-defined for any value of \(\sigma < \sigma_0\).   We can similarly show that a choice of vertical line segment \(l \subset D_1(\sigma) - W_1(\sigma)\) is well-defined for any \(\sigma < \sigma_0\).

Recall from section~\ref{subsec:boundingderivative} that we can parameterize \(u_\sigma\) by \((q_\sigma, p_\sigma)\), where \(q_\sigma\) is the point in the base space and \(p_\sigma\) is the cofiber coordinate.

Let \(l \subset D_0(\sigma) - W_0(\sigma)\).   By the definition of \(D_0(\sigma)\), the image of \(l\) is outside of a neighborhood of \(\Sigma_1\).   We can therefore find a neighborhood of the image of \(l\) in which \(\pi_\mathbb{C}(\Lambda) \subset T^*M\) can be parameterized as the graph of some collection of functions \(M \to T^*M\).   We refer to these graphs as sheets, by analogy to the sheets of \(\pi_F(\Lambda)\).   Let \(b_1, b_0\) be functions of the sheets containing the image of \(\partial l\).

\begin{lemma}(\cite{Ek}, Lemma 5.13)\label{lemma5.17} For all sufficiently small \(\sigma > 0\), along any vertical line segment \(l \subset D_0(\sigma) - W_0(\sigma)\):
\[
\frac{1}{\sigma} \nabla_t p_\sigma(0, t) - \left(b_1(q_\sigma(0, 0)) - b_0(q_\sigma(0, 0))\right) = \mathcal{O}(\sigma)
\]\[
\frac{1}{\sigma} \nabla_\tau p_\sigma(0, t) = \mathcal{O}(\sigma)
\]
Where \(\nabla_*\) denotes the connection, the subscripts \(t, \tau\) are used to indicate \(\partial_t, \partial_\tau\), and \(t, \tau\) are the vertical, horizontal coordinates respectively of \(D_0(\sigma) - W_0(\sigma)\).\end{lemma}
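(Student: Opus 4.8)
The plan is to follow the argument of \cite{Ek}, Lemma 5.13, essentially unchanged: because $l \subset D_0(\sigma) - W_0(\sigma)$, the image of $u_\sigma$ near $l$ is bounded away from every singularity of $\pi_F$, so none of the modifications needed elsewhere for the higher-codimension strata of $\Sigma$ play a role here, and the estimate is purely local and ``away from the cusps''.

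First I would localize. Since the image of $l$ avoids a neighborhood of $\Sigma_1$, near $u_\sigma(l)$ the Lagrangian $\pi_\mathbb{C}(\Lambda_\sigma)$ is a union of graphs $x \mapsto (x,\sigma\,dh_i(x))$ of $\sigma$-scaled exact one-forms, where the $h_i$ are sheet (height) functions of $\pi_F(\Lambda)$ and $b_1 = dh_{i_1}$, $b_0 = dh_{i_0}$ are the one-forms of the two sheets meeting the endpoints of $\partial l$. Writing $u_\sigma = (q_\sigma,p_\sigma)$ and using the bound $|Du_\sigma| \le C\sigma$ on $D_0(\sigma) - W_0(\sigma)$ furnished by Lemmas~\ref{lemma5.10} and~\ref{lemma5.12}, the two endpoints of the base projection of $l$ differ by $\mathcal{O}(\sigma)$, so on $\partial l$ one has $p_\sigma = \sigma\,b_0(q_\sigma(0,0)) + \mathcal{O}(\sigma^2)$ at the lower endpoint and $p_\sigma = \sigma\,b_1(q_\sigma(0,0)) + \mathcal{O}(\sigma^2)$ at the upper endpoint, the difference being the gap in sheet height across the unit-height vertical segment of the slit model.

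Next I would rescale. Passing to a fixed coordinate chart about $u_\sigma(0,0)$ and setting $\hat u_\sigma = \sigma^{-1}(u_\sigma - u_\sigma(0,0))$ over a bounded neighborhood of $l$ produces a map with $C^1$-norm bounded uniformly in $\sigma$, pseudoholomorphic for an almost complex structure converging to the standard structure $J_0$ on $T^*M$, and with boundary on the affine Lagrangian planes obtained by linearizing the two sheets at $q_\sigma(0,0)$. Elliptic bootstrapping --- Lemma~\ref{lemma5.8} together with the gradient estimates of Lemma~\ref{lemma5.9} --- upgrades this to $C^k$ bounds that are uniform in $\sigma$ on compact subsets, and along a further subsequence $\hat u_\sigma$ converges in $C^\infty_{\mathrm{loc}}$ to a $J_0$-holomorphic map with the same linear boundary data. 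For this linear model the assertion reduces to an explicit computation: the cofiber component along the image of $l$ interpolates affinely from $b_0(q_\sigma(0,0))$ to $b_1(q_\sigma(0,0))$ over the unit-height segment and is independent of the horizontal coordinate, whence $\nabla_t\hat p = b_1 - b_0$ and $\nabla_\tau \hat p = 0$ at the relevant point; unwinding the rescaling and collecting the $\mathcal{O}(\sigma)$ errors gives the two stated identities, the connection $\nabla_*$ serving only to make $\nabla_t p_\sigma$ and $\nabla_\tau p_\sigma$ well-defined covectors at $q_\sigma(0,0)$.

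The main obstacle I anticipate is making this comparison quantitative with all constants independent of $\sigma$. The component of $D_0(\sigma) - W_0(\sigma)$ containing $l$ may be a rectangle whose modulus varies with $\sigma$, or a half- or bi-infinite strip, so one must argue --- using the normalization of the vertical line segments and the fact, set up at the end of the previous subsection, that the number of components of $D_0(\sigma) - W_0(\sigma)$ is maximal along the chosen subsequence --- that the relevant domain geometry stabilizes, and one must invoke the exponential decay of holomorphic strips near their ends to confine all $\mathcal{O}(\sigma)$-size deviations to a region not meeting $(0,0)$. The delicate point is to show that in the limit the base component is constrained enough that the cofiber component genuinely interpolates affinely between the two sheet heights (in particular, ruling out the half-infinite case, where no bounded interpolating strip exists when the two heights differ); this --- rather than the algebra of the model computation --- is where the work lies, and it is exactly the place where the weighted-Sobolev estimate of Lemma~\ref{lemma5.8}, whose constants do not depend on $\sigma$, is indispensable.
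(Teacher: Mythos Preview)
Your plan is the paper's plan: localize away from $\Sigma_1$, rescale by $\sigma^{-1}$, compare to an explicit affine strip, and use a $\sigma$-independent elliptic estimate to control the error. Two refinements are worth noting.

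First, the subsequential $C^\infty_{\mathrm{loc}}$ compactness step you insert is a red herring: convergence along a subsequence gives only $o(1)$, not $\mathcal{O}(\sigma)$. The paper never passes to a limit. Instead it writes down the linear comparison map $u^0_\sigma(\tau+it) = ((b_1-b_0)\tau,\,(1-t)b_0 + tb_1)$ directly, corrects the boundary of $\hat u_\sigma$ onto the two affine Lagrangian planes by adding a function $f_\sigma$ with $\sup|D^k f_\sigma| = \mathcal{O}(\sigma)$, and sets $v_\sigma = (\hat u_\sigma + f_\sigma) - u^0_\sigma$. The quantitative bound $\sup|D^k v_\sigma| = \mathcal{O}(\sigma)$ is then obtained from an honest inverse estimate, not from compactness.

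Second, and more substantively, the tool you point to (Lemma~\ref{lemma5.8}) is only local elliptic regularity; it does not by itself control a strip map against its linear model. What the paper actually invokes is the Fredholm theory of $\bar\partial$ on the infinite strip $\mathbb{R}\times[0,1]$ with real boundary conditions, in \emph{negatively} weighted Sobolev spaces $\mathcal{H}_{k,2,-\gamma}$ (this is \cite{EES}, Prop.~6.3): on the complement of the constants one has $\|w\|_{3,-\gamma} \le C\|\bar\partial w\|_{2,-\gamma}$ with $C$ independent of $\sigma$. To apply this, the paper restricts to a thin strip $\Theta_{c_\sigma} = [-c_\sigma,c_\sigma]\times[0,1]$ around $l$, multiplies $v_\sigma$ by a cutoff $B$ supported there, and bounds $\|\bar\partial(Bv_\sigma)\|_{2,-\gamma}$ term by term; the negative weight absorbs the cutoff error. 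This device entirely sidesteps your concern about the global modulus of the component of $D_0(\sigma)-W_0(\sigma)$: the estimate is genuinely local to a strip around $l$, and no stabilization of domain geometry or exponential-decay-at-ends argument is needed.
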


\begin{proof} Let \(\Theta_{c_\sigma} = [-c_\sigma, c_\sigma] \times [0, 1] \subset D_0(\sigma) - W_0(\sigma)\) be an arbitrarily small neighborhood around \(l\), with \(c_\sigma \leq \sigma \log(\sigma^{-1})\).   By Lemma~\ref{lemma5.10}, \(|Du_\sigma| \leq C\sigma\) on \(\Theta_{c_\sigma}\).   Therefore, we can pick a radius \(R\) such that \(\pi_M(u_\sigma(\Theta_c)) \subset T^*D_{\sigma R}\), where \(D_{\sigma R}\) is the geodesic disk in \(M\) of radius \(\sigma R\) centered around \(\pi_M(u_\sigma(0, 0))\).   We think of \(T^*D_R\) as a subset of \(\mathbb{C}^n\), \((T^*D_R, J_1) = (\{q+ip : |q| \leq R\}, i)\).      Let \(g_0\) be the flat metric on \(D_R\), and let \(J_1\) be the corresponding standard complex structure on \(T^*D_R\), with the coordinates chosen such that \((J_1)_{u_\sigma(0, 0)} = (J_\sigma)_{u_\sigma(0, 0)}\) and \(u_\sigma(0, 0) = 0\).   Recall that \(q_\sigma\) is the base space component and \(p_\sigma\) is the cofiber component of \(u_\sigma\); there exists some \(K\) such that \(|p_\sigma(\Theta_c)| \leq K\sigma\) for all \(\sigma\).   Define \(U_\sigma\) to be \(\{q + ip : |q| \leq R\sigma, |p| \leq K\sigma\}\).

Define an almost complex structure \((\hat{J}_\sigma)_{(q, p)} = (J_\sigma)_{(\sigma q, \sigma p)}\), and define \(\hat{u}_\sigma = \sigma^{-1}u_\sigma:\Theta_c \to \mathbb{C}^n\).   \(\hat{u}_\sigma\) is \(\hat{J}_\sigma\)-holomorphic.

We claim that \(|\hat{J}_\sigma - J_1|_{C^2(U_\sigma)} = \mathcal{O}(\sigma)\).   We will prove the statement in dimension 2; the proof can be extended to higher dimensions by adding appropriate summation signs.   Define \(f_1, f_2\) by:
\[
(J_\sigma)_{z}\partial_q = f_1(z, \sigma)\partial_q + f_2(z, \sigma)\partial_p
\]
We know that \(f_1(0, \sigma) = 0, f_2(0, \sigma) = 1\) because \((J_1)_{u_\sigma(0, 0)} = (J_\sigma)_{u_\sigma(0, 0)} = (\hat{J})_{(0, 0)}\).   We will ordinarily omit the \(\sigma\) coordinate of \(f_1, f_2\).   Since \(U_\sigma\) is compact, \(|f'_1(\sigma z)|, |f'_2(\sigma z)|, |f''_1(\sigma z)|, |f''_2(\sigma z)|\) all obtain some maximum; let \(K_1\) be some number greater then all of them.

Using the Taylor expansion, we obtain:
\[
|(\hat{J}_\sigma)_z(\partial_q) - (J_1)_z(\partial_q)|^2 = (f_1(\sigma z))^2 + (1 - f_2(\sigma z))^2 = \mathcal{O}(\sigma^2)
\]
Therefore \(|(\hat{J}_\sigma)_z(\partial_q) - (J_1)_z(\partial_q)| = \mathcal{O}(\sigma)\).   From this, we obtain:
\[
D \left(|(\hat{J}_\sigma)_z(\partial_q) - (J_1)_z(\partial_q)|^2\right) = 2\sigma f_1(\sigma z)f'_1(\sigma z) - 2\sigma(1 - f_2(\sigma z))f'_2(\sigma z)
\]\[
\left(D |(\hat{J}_\sigma)_z(\partial_q) - (J_1)_z(\partial_q)|\right)\left(|(\hat{J}_\sigma)_z(\partial_q) - (J_1)_z(\partial_q)|\right) \leq 
\]\[
K_1 \sigma f_1(\sigma z) - K_1\sigma(1 - f_2(\sigma z)) = \mathcal{O}(\sigma^2)
\]
Therefore \(D |(\hat{J}_\sigma)_z(\partial_q) - (J_1)_z(\partial_q)| = \mathcal{O}(\sigma)\).   From this we further obtain:
\[
D^2 \left(|(\hat{J}_\sigma)_z(\partial_q) - (J_1)_z(\partial_q)|^2\right) = 2 \sigma^2 f_1(\sigma z)f''_1(\sigma z) +
\]\[
2\sigma^2 (f'_1(\sigma z))^2 - 2\sigma^2(1 - f_2(\sigma z))f''_2(\sigma z) + 2\sigma^2(f''_2(\sigma z))^2
\]\[
\left(D^2 |(\hat{J}_\sigma)_z(\partial_q) - (J_1)_z(\partial_q)|\right)\left(|(\hat{J}_\sigma)_z(\partial_q) - (J_1)_z(\partial_q)|\right) + 
\left(D |(\hat{J}_\sigma)_z(\partial_q) - (J_1)_z(\partial_q)|\right)^2 \leq
\]\[
\sigma^2 K_1 f_1(\sigma z) +\sigma^2 K_1^2 - \sigma^2(1 - f_2(\sigma z))K_1 + \sigma^2K_1^2
\]\[
\left(D^2 |(\hat{J}_\sigma)_z(\partial_q) - (J_1)_z(\partial_q)|\right)\left(|(\hat{J}_\sigma)_z(\partial_q) - (J_1)_z(\partial_q)|\right) + \mathcal{O}(\sigma^2) \leq \mathcal{O}(\sigma^2)
\]
Therefore \(D^2 |(\hat{J}_\sigma)_z(\partial_q) - (J_1)_z(\partial_q)| \leq \mathcal{O}(\sigma^2)\).   Combining these, we obtain:
\[
|(\hat{J}_\sigma)(\partial_q) - (J_1)(\partial_q)|_{C^2(U_\sigma)} = \mathcal{O}(\sigma)
\]
We can then repeat this calculation for \(\partial_p\).   Combining the two, we obtain \(|\hat{J}_\sigma - J_1|_{C^2} = \mathcal{O}(\sigma)\).   This calculation can be extended to higher dimension in a straight-forward manner by adding appropriate summation signs.

Recall that we defined the functions \(\sigma b_0, \sigma b_1: D_R \to T^*D_R\) to be the two sheets of \(\pi_\mathbb{C}(\Lambda_\sigma)\) over \(D_R\) corresponding to the restriction of \(u_\sigma|_{\partial\Theta_c}\) to \(D_R \subset \mathbb{C}^n\).   After scaling \(\mathbb{C}^n\) by \(\sigma^{-1}\), we replace \(x \to \sigma b_i(x)\) with \(x \to b_i(\sigma x)\).   Define \(L_i = \{x + iy: y = b_i(0)\} \subset \mathbb{C}^n\); note that these are Lagrangian subspaces.   Observe that:
\begin{equation}\label{eqn:pminusb1}
\hat{p}_\sigma(\tau + i) - b_1(q(0 + i)) = b_1(\sigma q(\tau + i)) - b_1(\sigma q(0 + i))
\end{equation}
In the next step, we will assume again that we are working in two dimensions without loss of generality.   We can rewrite the righthand side of equation~\ref{eqn:pminusb1} using its Taylor expansion around \(\sigma q(0 + i)\) to obtain:
\[
\hat{p}_\sigma(\tau + i) - b_1(q(0 + i)) = b'_1(\sigma q(0 + i))\sigma\left(q(\tau + i) - q(0 + i)\right) + \mathcal{O}(\sigma^2)
\]

Therefore, for any \(\tau\), \(\hat{p}_\sigma(\tau + i) - b_1(q(0 + i)) = \mathcal{O}(\sigma)\), as does its derivative and double derivative by \(\tau\).   We can repeat this calculation for \(\hat{p}_\sigma(\tau + 0i) - b_0(q(0 + 0i))\).   Therefore there exists a function \(f_\sigma:\Theta_c \to \mathbb{C}^n\) such that:
\[
f_\sigma(0, 0) = 0
\]\[
\sup_{\Theta_c} |D^k f_\sigma| = \mathcal{O}(\sigma)\mbox{ for } k = 1,2,3
\]\[
\hat{u}_\sigma(\tau + 0i) + f_\sigma(\tau + 0i) \in L_0
\]\[
\hat{u}_\sigma(\tau + i) + f_\sigma(\tau + i) \in L_1
\]

And \(\hat{u}_\sigma + f_\sigma\) is \(J_1\)-holomorphic to first order on \(\partial\Theta_c\), that is, \(\bar{\partial}_{J_1}(\hat{u}_\sigma + f_\sigma) = \mathcal{O}(\sigma)\).   Define \(u^1_\sigma = \hat{u}_\sigma + f_\sigma:\Theta_c \to \mathbb{C}^n\), and define \(u^0_\sigma:\Theta_c\to\mathbb{C}^n\) to be the linear solution to:
\[
\bar{\partial}_{J_1}u^0_\sigma= 0
\]\[
u^0_\sigma(\tau + 0i) \in L_0
\]\[
u^0_\sigma(\tau + i) \in L_1
\]\[
u^0_\sigma(\tau + it) = \left((b_1 - b_0)\tau, (1-t)b_0 + tb_1\right)
\]
(There is a typo in the definition of \(u^0_\sigma\) in \cite{Ek}, and we believe this is what he means.)

Define \(v_\sigma = u^1_\sigma - u^0_\sigma:\Theta_c \to \mathbb{C}^n\).   Then \(v_\sigma(\partial\Theta_c) \subset \mathbb{R}^n\), \(v_\sigma\) is \(J_1\)-holomorphic to first order on \(\partial\Theta_c\), \(v_\sigma(0, 0) = 0\), and:
\[
\sup_{\Theta_c} \left|
D^k\left(\bar{\partial}_{J_1} v_\sigma\right)
\right| = \mathcal{O}(\sigma), k = 0,1,2
\]
Let \(\mathcal{H}_{k,p,-\gamma}(\mathbb{R}\times[0,1], \mathbb{C}^n)\) denote the Hilbert space with the weight function \(w(\tau) = e^{-\gamma}\) for \(|\tau| \leq 1\) and \(w(\tau) = e^{-\gamma|\tau|}\) for \(|\tau| \geq 1\).   Let \(-\gamma = -3\).   Define \(\mathcal{H}_{3,2,-\gamma}(\mathbb{R}\times[0,1], \mathbb{C}^n; \mathbb{R}^n, 0_2)\) to be the space of functions \(F\) with boundary on \(\mathbb{R}^n\) and with three derivatives in the weighted Sobolev space \(L^2\), and such that the restriction to the boundary of \(\bar{\partial}_{J_1}F\) and its first derivatives vanishes.   We similarly define \(\mathcal{H}_{2, 2,-\gamma}(\mathbb{C}^n;0_1)\) to be the space of functions with 2 derivatives in the weighted Sobolev space \(L^2\) and which vanish to first order along the boundary.   As shown in \cite{EES}, Prop. 6.3, \(\bar{\partial}_{J_1}: \mathcal{H}_{3,-\gamma}(\mathbb{R}\times[0,1], \mathbb{C}^n; \mathbb{R}^n, 0_2) \to \mathcal{H}_{2,-\gamma}(\mathbb{C}^n;0_1)\) is a Fredholm operator of index \(n\) with kernel spanned by the constant functions.   We will generally write \(\bar{\partial}_{J_1} = \bar{\partial}\).

Let \(W \subset \mathcal{H}_{3,-\gamma}(\mathbb{R}\times[0,1],\mathbb{C}^n;\mathbb{R}^n,0_2)\) be the subspace of non-constant functions \(F\) with \(F(0) = 0\).   Then there exists a constant \(C\) such that:
\[
||w||_{\mathcal{H}_{3,2,-\gamma}} \leq C||\bar{\partial}w||_{\mathcal{H}_{2,2,-\gamma}}
\]

Now let \(B:\Theta_c\to\mathbb{C}\) be a cutoff function such that:
\begin{itemize}

\item \(B\) is real-valued and holomorphic to first order on \(\partial\Theta_{c_\sigma}\)

\item \(B(z) = 1\) on \([-\frac{1}{4}c_\sigma, \frac{1}{4}c_\sigma]\times[0,1]\)

\item \(B(z) = 0\) outside \([-\frac{1}{2}c_\sigma, \frac{1}{2}c_\sigma]\times[0,1]\)

\end{itemize}
Then:
\[
||\bar{\partial}(B v_\sigma)||^2_{\mathcal{H}_{2,2,-\gamma}} = \int\int_{\mathbb{R}\times[0,1]} \left(|\bar{\partial}v_\sigma|^2 + |D\bar{\partial}v_\sigma|^2 + |D^2\bar{\partial}v_\sigma|^2\right)
\]
We split this into cases.   First, observe that:
\[
 \int_{0}^1\int_{-\infty}^\infty |\bar{\partial}(Bv_\sigma)|^2w(\tau) d\tau dt =
\left|\left|\left.
\bar{\partial}(B v_\sigma)\right|_{[0,1]\times[-\frac{1}{4}c,\frac{1}{4}c]}
\right|\right|^2_{H_{2,2,-\gamma}} 
\]\[
+ \int^1_0\int_{|\tau| > \frac{1}{4}c} \left|(\bar{\partial}v_\sigma)\right|^2 |B(\tau)|^2e^{-\gamma|\tau|}d\tau dt +
\int^1_0\int_{|\tau| > \frac{1}{4}c} \left|(\bar{\partial}B)\right|^2 |v_\sigma|^2e^{-\gamma|\tau|}d\tau dt
\]

Given that \(B(\tau) = 1\) for \(|\tau| \leq \frac{1}{4}c_\sigma\), the first term is equal to:
\[
\left|\left|\left.
\bar{\partial}(B v_\sigma)\right|_{[0,1]\times[-\frac{1}{4}c,\frac{1}{4}c]}
\right|\right|^2_{H_{2,2,-\gamma}} = 
\left|\left|\left.
\bar{\partial}v_\sigma\right|_{[0,1]\times[-\frac{1}{4}c,\frac{1}{4}c]}
\right|\right|^2_{H_{2,2,-\gamma}}
\]
Since \(|\bar{\partial}v_\sigma| = \mathcal{O}(\sigma)\), this tells us that the first term of \(||\bar{\partial}Bv_\sigma||^2_{\mathcal{H}_{2,2,-\gamma}}\) equals \(\mathcal{O}(\sigma^2)\).

The second term, we calculate similarly; the bound on \(|\bar{\partial}v|\) tells us that it also equals \(\mathcal{O}(\sigma^2)\), and then no other term depends on \(t\), giving us:
\[
\int^1_0\int_{|\tau| > \frac{1}{4}c} \left|(\bar{\partial}v_\sigma)\right|^2 |B(\tau)|^2e^{-\gamma|\tau|}d\tau dt \leq
2K_0\sigma^2\int_1^\infty |B(\tau)|^2e^{-\gamma|\tau|}d\tau = \mathcal{O}(\sigma^2)
\]

Finally, the third term we calculate as follows: since \(|Du_\sigma|, |D^2u_\sigma|\) are bounded, \(|Dv_\sigma|, |D^2v_\sigma|\) are bounded, and thus \(|v_\sigma|, |Dv_\sigma| = \mathcal{O}(|c_\sigma|)\).   Therefore:
\[
\int^1_0\int_{|\tau| > \frac{1}{4}c} \left|(\bar{\partial}B)\right|^2 |v_\sigma|^2e^{-2\gamma|\tau|}d\tau dt = 
2K_1|c|^2\int_{\frac{1}{4}c_\sigma}^{\infty} e^{-2\gamma\tau}d\tau = 
-\frac{2K_1c^2e^{-\frac{1}{2}c_\sigma\gamma}}{2\gamma}
\]
This equals \(\mathcal{O}(c^2e^{-\frac{1}{2}\gamma c})\).   Since \(c_\sigma \leq \sigma \log (\sigma^{-1})\), this equals \(\mathcal{O}(\sigma^2 (\log (\sigma^{-1}))^2 \sigma^{3/2} e^{-\frac{3}{2}\sigma}) = \mathcal{O}(\sigma^{7/2}\log(\sigma^{-1})) \leq \mathcal{O}(\sigma^2)\).

We can then repeat this process for \(B (Dv_\sigma), B (D^2v_\sigma)\) to show that:
\[
||B v_\sigma||_{H_{2,3,-\gamma}} = \mathcal{O}(\sigma)
\]
Since this controls the supremum norm over \([-\frac{1}{4}c_\sigma, \frac{1}{4}c_\sigma] \times [0,1]\), we obtain:
\[
\sup_{[0,1]\times[-\frac{1}{4}c_\sigma,\frac{1}{4}c_\sigma]} |D^kv_\sigma| = \mathcal{O}(\sigma), k = 0,1
\]
If we write \(\hat{u}_\sigma(\tau, t) = (\hat{q}_\sigma(\tau, t), \hat{p}_\sigma(\tau,t))\), we obtain:
\[
\nabla_t \hat{p}_\sigma(0, t) = \frac{\partial \hat{p}_\sigma}{\partial t}(0, t) + \Gamma(\hat{q}_\sigma(0, t))\left(\frac{\partial \hat{q}_\sigma}{\partial t}, \hat{p}_\sigma\right)
\]\[
\nabla_\tau \hat{p}_\sigma(0, t) = \frac{\partial \hat{p}_\sigma}{\partial \tau}(0, t) + \Gamma(\hat{q}_\sigma(0, t))\left(\frac{\partial \hat{q}_\sigma}{\partial \tau}, \hat{p}_\sigma\right)
\]
Where \(\Gamma\) denotes the linear operator in \(\nabla_u v = D_uv + \Gamma(u, v)\).

Recall that \(\hat{u}_\sigma = \sigma^{-1}u_\sigma\).   Further, \(\hat{u}_\sigma - u^0_\sigma = v_\sigma - f_\sigma\), where \(\sup |D^kf_\sigma| = \mathcal{O}(\sigma)\) for \(k=1,2,3\), and \(\frac{\partial p^0_\sigma}{\partial \tau} =\frac{\partial q^0_\sigma}{\partial t} = 0, \frac{\partial p^0_\sigma}{\partial t} = b_1(0) - b_0(0)\).   Therefore:
\[
\frac{\partial\hat{p}_\sigma}{\partial t} = \frac{\partial p^0_\sigma}{\partial t} + \mathcal{O}(\sigma) = (b_1(0) - b_0(0)) + \mathcal{O}(\sigma)
\]\[
\frac{\partial\hat{p}_\sigma}{\partial \tau} = \mathcal{O}(\sigma)
\]\end{proof}

{\color{black}\begin{lemma}\label{lemma5.16}For any component \(C(\sigma) \subset W_0(\sigma)\), \(\pi_M \circ  u_\sigma(C(\sigma) \cap \partial\Delta_r)\) converges to a point as \(\sigma \to 0\).\end{lemma}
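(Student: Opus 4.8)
The plan is to read this off directly from the uniform derivative bound $\sup_{D_0(\sigma)}|Du_\sigma|\le C\sigma$ supplied by Lemma~\ref{lemma5.12}, together with the fact that each component of $W_0(\sigma)$ has intrinsically small diameter in the domain. The starting observation is that $W_0(\sigma)$ is a subregion of $D_0(\sigma)$: by its defining properties its boundary consists of arcs of $\partial D_0(\sigma)$ and of vertical line segments interior to $D_0(\sigma)$, so the estimate $|Du_\sigma|\le C\sigma$ holds on all of $W_0(\sigma)$, in particular on the component $C(\sigma)$.

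Next I would bound the intrinsic (Euclidean) diameter of $C(\sigma)$ inside $\Delta_r$. By construction the horizontal width of $C(\sigma)$ is at most $\log\sigma^{-1}$, and $C(\sigma)$ lies in the strip $\mathbb{R}\times[0,r-1]$, whose height $r-1$ is bounded independently of $\sigma$ because the number of added punctures is uniformly bounded by Lemma~\ref{lemma5.11}. Since $C(\sigma)$ is obtained from an honest rectangle by deleting only these boundedly many thin slits, any two of its points can be joined by a path \emph{inside} $C(\sigma)$ of length $O(\log\sigma^{-1})$; hence the intrinsic diameter of $C(\sigma)$ is $O(\log\sigma^{-1})$.

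Putting these together, the diameter of $u_\sigma(C(\sigma))$ in $T^*M$ is at most $C\sigma$ times the intrinsic diameter of $C(\sigma)$, so
\[
\mathrm{diam}\bigl(u_\sigma(C(\sigma))\bigr)\ \le\ C\sigma\cdot O(\log\sigma^{-1})\ =\ O\bigl(\sigma\log\sigma^{-1}\bigr)\ \longrightarrow\ 0 .
\]
Because all the sets $u_\sigma(C(\sigma))$ lie in a fixed compact subset of $T^*M$ on which $\pi_M$ is Lipschitz, this forces $\mathrm{diam}\bigl(\pi_M\circ u_\sigma(C(\sigma)\cap\partial\Delta_r)\bigr)\to 0$ as well; and since $M$ is compact, after passing to a subsequence (consistently with the running convention of this appendix) the base points $\pi_M(u_\sigma(p_\sigma))$ for any choice $p_\sigma\in C(\sigma)\cap\partial\Delta_r$ converge in $M$, which together with the vanishing diameter gives convergence of $\pi_M\circ u_\sigma(C(\sigma)\cap\partial\Delta_r)$ to a single point. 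The one step that deserves care — and the one I would write out in full — is the intrinsic-diameter estimate for $C(\sigma)$, namely that the connecting path can be kept inside the slitted region with length $O(\log\sigma^{-1})$ rather than only inside the ambient rectangle; since there are at most a constant number of slits (Lemma~\ref{lemma5.11}), this is routine, and I do not expect any real obstacle in this lemma.
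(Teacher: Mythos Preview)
Your argument is correct, and in fact it is more direct than the paper's. The paper proceeds in two steps: first it uses $|Du_\sigma|=\mathcal{O}(\sigma)$ only along the horizontal boundary arcs of $C(\sigma)$, concluding that each individual arc of $C(\sigma)\cap\partial\Delta_r$ has image of length $\mathcal{O}(\sigma\log\sigma^{-1})$ and hence shrinks to a point; but since $C(\sigma)$ may contain slit tips, these arcs could a priori converge to \emph{different} points. To rule this out the paper invokes the much sharper estimate of Lemma~\ref{lemma5.17}, namely $\nabla_t q_\sigma=\mathcal{O}(\sigma^2)$ on $D_0(\sigma)-W_0(\sigma)$, to see that boundary points linked by a vertical line just outside $C(\sigma)$ have the same base-projection limit, and then appeals to continuity to glue the arc-limits together.

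Your route avoids Lemma~\ref{lemma5.17} entirely: you connect points of $C(\sigma)$ through its \emph{interior}, using that $\sup_{D_0(\sigma)}|Du_\sigma|\le C\sigma$ holds on all of $C(\sigma)\subset D_0(\sigma)$ (stated explicitly after Lemma~\ref{lemma5.14}). The only point requiring care, as you note, is the intrinsic-diameter bound: since $C(\sigma)$ is connected, any slit meeting it must have its tip inside $C(\sigma)$ (otherwise the slit would separate $C(\sigma)$), so one can route around each of the boundedly many tips at cost $O(\log\sigma^{-1})$ per slit, giving the $O(\log\sigma^{-1})$ bound. The payoff of your approach is that the lemma becomes logically independent of Lemma~\ref{lemma5.17}; the paper's approach has the minor advantage that it only needs the derivative bound along $\partial\Delta_r$, but this is not exploited elsewhere.
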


\begin{proof} By Lemma~\ref{lemma5.12}, \(|Du_\sigma| = \mathcal{O}(\sigma)\) in \(W_0(\sigma)\).  The width of \(C(\sigma)\) is at most \(\log(\sigma^{-1})\), so the length of \(\partial\Delta_r \cap C(\sigma)\) is \(\mathcal{O}(\log(\sigma^{-1})\).   Therefore, the length of \(u_\sigma(C(\sigma) \cap \partial\Delta_r)\) is \(\mathcal{O}(\sigma \log (\sigma^{-1}))\), so \(C(\sigma)\) converges to a collection of discrete points in \(p_1, ..., p_m \in T^*M\).

Since \(u_\sigma\) is \(J_\sigma\)-holomorphic, lemma~\ref{lemma5.17} implies that \(\nabla_tq_\sigma = \mathcal{O}(\sigma^2)\) in \(D_0(\sigma) - W_0(\sigma)\), which means that for any pair of points \(p^+, p^- \in \partial\Delta_r\) linked by a vertical line in the interior of \(D_0(\sigma) - W_0(\sigma)\), we know that \(\pi_M \circ u_\sigma(p^+), \pi_M \circ u_\sigma(p^-)\) converge to the same point in \(M\) as \(\sigma\to 0\).

Since \(u_\sigma\) is continuous, these two facts imply that \(\pi_M \circ u_\sigma (C(\sigma))\) converges to a point.\end{proof}}

\begin{lemma}\label{lemma5.19}For all sufficiently small \(\sigma > 0\), along any vertical line segment \(l \subset D_1(\sigma) - W_1(\sigma)\):
\[
\frac{1}{\sigma} \nabla_t p^T_\sigma(0, t) - \left(b_1(q^T_\sigma(0, 0)) - b_0(q^T_\sigma(0, 0))\right) = \mathcal{O}(\sigma)
\]\[
\frac{1}{\sigma} \nabla_\tau p^T_\sigma(0, t) = \mathcal{O}(\sigma)
\]\end{lemma}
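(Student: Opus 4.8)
The plan is to reprise the proof of Lemma~\ref{lemma5.17} almost verbatim, replacing the map $u_\sigma$ throughout by its transversal projection $u^T_\sigma = \pi_\Sigma \circ u_\sigma$ into $T^*\Sigma_1$. This is possible because, by Lemma~\ref{lemma5.14}, $D_1(\sigma)$ maps into a neighbourhood of the cusp edges which, by the metric normalisation of section~\ref{subsec:deformation_metric} and the cusp rounding of section~\ref{subsec:deformation_cusprounding}, is (away from $\Sigma_2$) a metric product $T^*\Sigma_1 \times T^*(\text{transversal})$ on which $\pi_\Sigma$ is a Riemannian submersion; hence $u^T_\sigma$ solves a $\bar\partial$-type equation up to an error of order $\mathcal{O}(\sigma)$ after rescaling. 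Projecting the normal forms of equations~\ref{eqn:unfoldedsheetform} and~\ref{eqn:manyDroundedcuspform} by $\pi_\Sigma$, and using that the transversal coordinates occurring in them are small on $D_1(\sigma)$, the two sheets of $\pi_\mathbb{C}(\Lambda_\sigma)$ meeting $u_\sigma(\partial l)$ become, to leading order in $\sigma$, graphs of $1$-forms $\sigma b_0, \sigma b_1$ on $\Sigma_1$ of norm $\mathcal{O}(\sigma)$; these are the $b_0, b_1$ of the statement.

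First I would record the derivative bound. Along any vertical segment $l \subset D_1(\sigma) - W_1(\sigma)$ the hypotheses of the third clause of Lemma~\ref{lemma5.10} hold --- by Lemma~\ref{lemma5.14} a $4d_2$-neighbourhood of $l$ maps into the relevant $T^*N(\Pi(S),\delta)$ with boundary in $V(S,\delta)$ --- so $\sup|Du^T_\sigma| = \mathcal{O}(\sigma)$, and the third clause of Lemma~\ref{lemma5.9} supplies the interior elliptic estimate $\sup|Du^T| \le C\|Du^T\|_{L^2}$ needed for the bootstrap. Next I would rescale: on a thin box $[-c_\sigma, c_\sigma]\times[0,1]$ with $c_\sigma \le \sigma\log(\sigma^{-1})$, set $\hat u^T_\sigma = \sigma^{-1}u^T_\sigma$, pull back the product almost complex structure, and show exactly as in Lemma~\ref{lemma5.17} that the rescaled structure agrees with the flat $J_1$ on $T^*\Sigma_1 \cong \C^{n-1}$ to order $\mathcal{O}(\sigma)$ in $C^2$. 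The two sheets become affine Lagrangian subspaces $L_0, L_1 \subset \C^{n-1}$, and there is a correction $f_\sigma$ with $\sup|D^k f_\sigma| = \mathcal{O}(\sigma)$ for $k = 1,2,3$ such that $u^1_\sigma := \hat u^T_\sigma + f_\sigma$ lies on $L_i$ along the two horizontal sides and is $J_1$-holomorphic there to first order.

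Finally I would subtract the explicit linear solution $u^0_\sigma(\tau+it) = \bigl((b_1-b_0)\tau,\ (1-t)b_0 + tb_1\bigr)$, form $v_\sigma = u^1_\sigma - u^0_\sigma$, cut off with the function $B$, and invoke the Fredholm estimate for $\bar\partial_{J_1}$ on the weighted Sobolev spaces of the proof of Lemma~\ref{lemma5.17}, now with $\C^{n-1}$-valued functions; the three pieces of $\|\bar\partial(Bv_\sigma)\|^2$ are estimated as there, using $|\bar\partial v_\sigma| = \mathcal{O}(\sigma)$ and $c_\sigma \le \sigma\log(\sigma^{-1})$. This yields $\sup|D^k v_\sigma| = \mathcal{O}(\sigma)$ for $k = 0,1$, and undoing the rescaling and converting between $\partial_t$ and the connection $\nabla_t$ as in Lemma~\ref{lemma5.17} gives the two claimed asymptotics for $\tfrac{1}{\sigma}\nabla_t p^T_\sigma$ and $\tfrac{1}{\sigma}\nabla_\tau p^T_\sigma$. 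The main obstacle is organising this reduction cleanly for the transversal map: one must check that, after rescaling, $u^T_\sigma$ genuinely solves a $\bar\partial$-equation with only an $\mathcal{O}(\sigma)$ error relative to the flat structure on $\C^{n-1}$, including at the folded sheets, where the cusp-rounding functions $\gamma_\sigma(s)$ and $\Psi_\sigma(s)$ of section~\ref{subsec:deformation_cusprounding} enter --- this is exactly the situation the product metric and rounding normal forms were constructed to handle, and making it precise is the real content of the argument.
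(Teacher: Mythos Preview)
Your proposal is correct and follows essentially the same approach as the paper: the paper's proof of Lemma~\ref{lemma5.19} simply states that it is ``precisely analogous'' to Lemma~\ref{lemma5.17} with $u_\sigma$ replaced by $u^T_\sigma$, the base space by $\pi_M(\Sigma_1)$, $b_i$ by their restrictions to $T^*\pi(\Sigma_1)$, $\Lambda_\sigma$ by its projection there, and $J_\sigma$ by its restriction. You have in fact supplied more detail than the paper does, correctly identifying that the third clauses of Lemmas~\ref{lemma5.9} and~\ref{lemma5.10} are the relevant inputs and that the product metric and cusp-rounding normal forms of sections~\ref{subsec:deformation_metric}--\ref{subsec:deformation_cusprounding} are what make the transversal reduction go through.
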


\begin{lemma}\label{lemma5.18}{\color{black}For any component \(C(\sigma) \subset W_1(\sigma)\), \(\pi_M \circ  u_\sigma(C(\sigma) \cap \partial\Delta_r)\) converges to a point as \(\sigma \to 0\). }\end{lemma}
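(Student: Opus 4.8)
The plan is to follow the proof of Lemma~\ref{lemma5.16}, with $W_1(\sigma)$, $D_1(\sigma)$ and $u^T_\sigma$ playing the roles of $W_0(\sigma)$, $D_0(\sigma)$ and $u_\sigma$, and then to deal separately with the one direction --- transverse to $\Sigma_1$ --- that is not recorded by $u^T_\sigma$. First I would note that, by the analogues of Lemmas~\ref{lemma5.12},~\ref{lemma5.13} and~\ref{lemma5.14} for $u^T_\sigma$ invoked at the end of Section~\ref{subsec:domainsubdivision}, $|Du^T_\sigma| = \mathcal{O}(\sigma)$ on $W_1(\sigma)$. Since each component $C(\sigma) \subset W_1(\sigma)$ has width at most $\log\sigma^{-1}$, the length of $C(\sigma)\cap\partial\Delta_r$ is $\mathcal{O}(\log\sigma^{-1})$, hence the length of $u^T_\sigma(C(\sigma)\cap\partial\Delta_r)$ is $\mathcal{O}(\sigma\log\sigma^{-1})$ and this set converges to a finite collection of points in $T^*\Sigma_1$. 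Moreover, since $u_\sigma$ is $J_\sigma$-holomorphic and $J_\sigma$ is $\mathcal{O}(\sigma)$-close to the standard structure interchanging base and cofiber directions, Lemma~\ref{lemma5.19} gives $\nabla_t q^T_\sigma = \mathcal{O}(\sigma^2)$ on $D_1(\sigma) - W_1(\sigma)$, exactly as Lemma~\ref{lemma5.17} is used in the proof of Lemma~\ref{lemma5.16}; so any two points of $\partial\Delta_r$ joined by a vertical segment inside $D_1(\sigma) - W_1(\sigma)$ have the same limit under $\pi_\Sigma\circ u_\sigma$. Together with the continuity of $u_\sigma$, this shows that the component of $\pi_M\circ u_\sigma(C(\sigma)\cap\partial\Delta_r)$ tangent to $\pi_M(\Sigma_1)$ converges to a single point.

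It remains to show the component of $\pi_M\circ u_\sigma(C(\sigma)\cap\partial\Delta_r)$ transverse to $\pi_M(\Sigma_1)$ also converges. Here I would use that, by Lemma~\ref{lemma5.14}, $u_\sigma(D_1(\sigma))$ lies in $T^*U(1,\tfrac{9}{2}\delta)$, where, after the cusp rounding of Section~\ref{subsec:deformation_cusprounding}, the sheets of $\pi_\mathbb{C}(\Lambda_\sigma)$ take the normal forms in equations~\ref{eqn:unfoldedsheetform} and~\ref{eqn:manyDroundedcuspform}. In those coordinates the transverse cofiber $\nu$ ranges over an interval of length $\mathcal{O}(\sigma)$; on a folded sheet the transverse base coordinate $\tfrac{1}{2}s^2$ is pinned, up to $\mathcal{O}(\sigma)$, by $\nu$ and the (by Paragraph~1 now nearly constant along $C(\sigma)\cap\partial\Delta_r$) coordinate $q$, while on an unfolded sheet the sheet is a $\sigma$-scaled graph over $(q,s)$-space, so once the $q$-motion is controlled one in fact gets $|Du_\sigma| = \mathcal{O}(\sigma)$ there. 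Thus it suffices to bound the total variation of $\nu_\sigma$ along $C(\sigma)\cap\partial\Delta_r$, and I would obtain this just as for $p^T_\sigma$: the analogue of Lemma~\ref{lemma5.19} for the transverse cofiber, combined with $|Du^T_\sigma| = \mathcal{O}(\sigma)$ and the width bound on $C(\sigma)$, forces $\nu_\sigma$ to move by $\mathcal{O}(\sigma\log\sigma^{-1})$ across $C(\sigma)$. The claim then follows by continuity, the limit being the point determined by the limits found in Paragraphs~1 and~2.

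The hard part is exactly this transverse-to-$\Sigma_1$ direction. In the $W_0$ setting of Lemma~\ref{lemma5.16} the bound $|Du_\sigma| = \mathcal{O}(\sigma)$ holds on all of $W_0(\sigma)$ and the conclusion is essentially immediate; near a cusp edge only $u^T_\sigma$ is controlled directly, and the transverse base coordinate $\tfrac{1}{2}s^2$ sweeps an interval whose size is independent of $\sigma$, so one must genuinely use the rounded normal form and the Lagrangian boundary condition to rule out $C(\sigma)\cap\partial\Delta_r$ running along a macroscopic arc of the cusp. Checking that the estimates of Section~\ref{subsec:convergenceofdiskboundaries} apply to the transverse cofiber $\nu_\sigma$ with the same weighted-Sobolev machinery that controls the tangential cofiber is the step I expect to require the most care.
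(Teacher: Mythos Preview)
Your first paragraph is the paper's proof in full. The paper treats Lemmas~\ref{lemma5.18} and~\ref{lemma5.19} together with the single remark that their proofs are ``precisely analogous to the proofs of Lemmas~\ref{lemma5.17} and~\ref{lemma5.16}, but with $u_\sigma$ replaced with $u^T_\sigma$; the base space replaced with $\pi_M(\Sigma_1)$; $b_i$ replaced by their restriction to $T^*\pi(\Sigma_1)$; $\Lambda_\sigma$ replaced with its projection to $T^*\pi(\Sigma_1)$; and $J_\sigma$ replaced by its restriction to $T^*\pi(\Sigma_1)$.'' It does not separately address the direction transverse to $\Sigma_1$; the substitution $u_\sigma \mapsto u^T_\sigma$, $M \mapsto \pi_M(\Sigma_1)$ is offered as the entire argument.

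Your second and third paragraphs therefore go beyond what the paper writes. Your instinct that the transverse coordinate deserves attention is sound---the literal analogue of Lemma~\ref{lemma5.16} under those substitutions yields convergence only of the $\pi_M(\Sigma_1)$-component---but be aware that your proposed fix has a loose end. On a folded sheet outside the rounding region the normal form reads $\nu = \sigma\beta(q) + \sigma\alpha(q)\,s$, so the map $\nu \mapsto s$ has sensitivity of order $\sigma^{-1}$; an $\mathcal{O}(\sigma\log\sigma^{-1})$ bound on the variation of $\nu$ thus gives only an $\mathcal{O}(\log\sigma^{-1})$ bound on the variation of $s$, and the claim that ``$\tfrac{1}{2}s^2$ is pinned, up to $\mathcal{O}(\sigma)$, by $\nu$ and $q$'' does not follow from a cofiber bound of that strength. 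Getting the transverse direction genuinely under control would require a sharper estimate than the one you sketch---which is exactly the difficulty you anticipate in your final paragraph, and which the paper simply does not engage with.
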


\textbf{Proof of Lemmas~\ref{lemma5.19} and~\ref{lemma5.18}:} The proofs are precisely analogous to the proofs of Lemmas~\ref{lemma5.17} and~\ref{lemma5.16}, but with \(u_\sigma\) replaced with \(u^T_\sigma\); the base space replaced with \(\pi_M(\Sigma_1)\); \(b_i\) replaced by their restriction to \(T^*\pi(\Sigma_1)\); \(\Lambda_\sigma\) replaced with its projection to \(T^*\pi(\Sigma_1)\); and \(J_\sigma\) replaced by its restriction to \(T^*\pi(\Sigma_1)\).

\bigskip

%{\color{black}Now recall from section~\ref{subsec:generalizedflowgraphs} that a generalized flow graph is a collection of partial flow trees \(F_i:\Gamma_i \to M\) such that:
%
%\begin{itemize}
%
%\item Each partial flow tree has a single positive puncture, which is either a one-valent special puncture or a positive puncture at a Reeb chord.
%
%\item Every special puncture lies on \(\pi_M(\Sigma_2)\).
%
%\item There exist a set of arcs \(\gamma_1, ..., \gamma_k\) in \(\pi_M(\Sigma_2)\) such that:
%\[
%\left(\bigcup_i F_i(\Gamma_i)\right) \cup \left(\bigcup_j \gamma_j\right)
%\]
%is connected.
%
%\end{itemize}
%
%We begin by defining a weaker version of a generalized flow graph.   A \textbf{weak flow graph} is a collection of partial flow trees \(F_i:\Gamma_i \to M\) such that:
%
%\begin{itemize}
%
%\item Every special puncture lies on \(\pi_M(\Sigma_2)\).
%
%\item There exist a set of arcs \(\gamma_1, ..., \gamma_k\) in \(\pi_M(\Sigma_2)\) such that:
%\[
%\left(\bigcup_i F_i(\Gamma_i)\right) \cup \left(\bigcup_j \gamma_j\right)
%\]
%is connected.
%
%\end{itemize}
%
%A generalized flow graph is then a weak flow graph with the additional property that every partial flow tree has a single positive puncture, which is either a one-valent special puncture or a positive puncture at a Reeb chord.
%
%\bigskip

\begin{lemma}\label{WFGLemma} If \(u:(\Delta_r, \partial\Delta_r) \to (T^*M, \pi_\mathbb{C}(\Lambda))\) is a pseudoholomorphic disk with a single positive puncture, then there exists a sequence \(\sigma \to 0\) such that \(\pi_M \circ u_{\sigma}(\partial \Delta_r)\) restricted to \(M - \pi_M(\Sigma_2)\) converges to a collection of partial flow trees as \(\sigma \to 0\).\end{lemma}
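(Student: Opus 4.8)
The plan is to assemble the local analysis of Sections~\ref{subsec:domainsubdivision} and~\ref{subsec:convergenceofdiskboundaries} into a single global statement. First I would pass to a subsequence $\sigma_i\to 0$ along which all the combinatorial data stabilizes: the homeomorphism types of $D_0(\sigma), D_1(\sigma), D_2(\sigma)$, the number and configuration of the punctures added in Section~\ref{subsec:domainsubdivision} (finite by Lemma~\ref{lemma5.11}), the number of components of $D_0(\sigma)-W_0(\sigma)$ and of $D_1(\sigma)-W_1(\sigma)$, and the way these pieces are glued along $\partial\Delta_r$. Since $D_2(\sigma)$ maps into $T^*N(2,\sigma\epsilon_3)$, whose $\pi_M$-image is contained in an $\mathcal{O}(\sigma)$-neighborhood of $\pi_M(\Sigma_2)$, the arcs $\partial\Delta_r\cap D_2(\sigma)$ leave every fixed compact subset of $M-\pi_M(\Sigma_2)$ as $\sigma\to 0$; hence on $M-\pi_M(\Sigma_2)$ the limit is governed entirely by the arcs of $\partial\Delta_r$ lying in $D_0(\sigma)$ and $D_1(\sigma)$.

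Next I would identify the edges and vertices of the limiting objects. Each component $X$ of $D_0(\sigma)-W_0(\sigma)$ is a finite, half-infinite, or bi-infinite rectangle whose two horizontal boundary arcs lie over two definite sheets of $\pi_F(\Lambda)$, say graphs of $f_1>f_0$; by Lemma~\ref{lemma5.17} together with the pseudoholomorphic equation, the $\pi_M$-images of these two arcs converge, after the reparametrization $x_\sigma = d_\sigma x/d_{\hat\sigma}$ of Section~\ref{subsec:convergenceofdiskboundaries}, to a single gradient flow line of $f_1-f_0$. The analogous statement holds over $D_1(\sigma)-W_1(\sigma)$ using the transverse projections $u^T_\sigma$ and Lemma~\ref{lemma5.19}, producing flow lines of the sheets that fold along, or run across, a cusp edge. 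These limit curves are the edges. By Lemmas~\ref{lemma5.16} and~\ref{lemma5.18}, every component of $W_0(\sigma)$ and of $W_1(\sigma)$ has $\pi_M\circ u_\sigma$-image converging to a single point; these points, together with the original Reeb-chord punctures of $\Delta_m$ and the punctures added over the boundary of the $\Sigma_2$-neighborhood, are the vertices.

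I would then check that the pieces assemble into genuine partial flow trees. Adjacency in $\Delta_r$ (two arcs meeting at a point, or joined across a vertical segment) passes to the limit by continuity of $u_\sigma$ and the derivative bounds of Lemma~\ref{lemma5.12}, so each edge abuts the vertices dictated by the combinatorics of $\Delta_r$; since $\Delta_r$ is a disk, the resulting graph, once we cut along $\pi_M(\Sigma_2)$, is a forest. At each vertex the matching condition of Section~\ref{subsec:flowtrees} — agreement of the Lagrangian projections of the one-jet lifts, and compatibility of the flow orientations — follows from continuity of the lift $\tilde u_\sigma$ and the orientation of $\partial\Delta_r$. The vertex types are forced by locality: the cusp-rounding deformations of Section~\ref{subsec:deformation_cusprounding} and the layered subdivision guarantee that each component of $W_1(\sigma)$ meets only a single rounded cusp edge and each component of $W_0(\sigma)$ no cusp edge at all, so the former limits to an end, a switch, or a $Y^1$ vertex and the latter to a $Y^0$ vertex or a one- or two-valent Reeb-chord vertex; the punctures over the boundary of the $\Sigma_2$-neighborhood limit to one-valent special punctures lying over $\pi_M(\Sigma_2)$, which is exactly why the limits are \emph{partial} flow trees. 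Finally, the single positive puncture of $u$ limits to the unique positive Reeb-chord puncture of the component of the forest containing it, while each remaining component acquires exactly one positive special puncture where it was severed near $\Sigma_2$; uniqueness of the positive puncture on a connected component follows from monotonicity of $f_1-f_0$ along the flow, which precludes two sources in a connected piece.

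The main obstacle is this last assembly step rather than any single estimate: one must verify that the local limits fit together into a \emph{valid} flow tree — correct valences, matching conditions, flow orientations at every vertex, and no forbidden configurations — and then keep correct track of the positive puncture on each severed component. The purpose of the deformations and of the careful domain subdivision in the preceding sections is precisely to localize every potential vertex to a standard model, so that this verification reduces to a finite check; a minor additional subtlety is the well-definedness of a chosen vertical line segment $l$ across varying $\sigma$ for finite rectangles, which is handled by the rescaling $x_\sigma = d_\sigma x/d_{\hat\sigma}$.
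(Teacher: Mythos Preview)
Your core approach matches the paper's: use Lemmas~\ref{lemma5.17} and~\ref{lemma5.19} to get edge convergence in $D_0(\sigma)-W_0(\sigma)$ and $D_1(\sigma)-W_1(\sigma)$, use Lemmas~\ref{lemma5.16} and~\ref{lemma5.18} to collapse $W_0(\sigma)$ and $W_1(\sigma)$ to vertices, and let $\epsilon_3\to 0$ so that $D_2(\sigma)$ recedes into $\pi_M(\Sigma_2)$, leaving special punctures there. The paper's own proof is terser about the assembly step than yours, but the skeleton is the same.

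One point to flag: your final paragraph drifts into proving that each limiting partial flow tree has a single positive puncture. That is \emph{not} part of Lemma~\ref{WFGLemma}; it is the content of the subsequent Lemma~\ref{GFGlemma}. Moreover, the argument you sketch for it --- ``monotonicity of $f_1-f_0$ along the flow, which precludes two sources in a connected piece'' --- is not adequate: monotone decrease of the local height difference along each edge does not by itself rule out a tree with two positive special punctures on $\pi_M(\Sigma_2)$, since the tree can branch and the relevant height-difference functions change at $Y^0$ vertices. The paper handles this separately in Lemma~\ref{GFGlemma} via a topological argument (Lemma~\ref{NoCircuitLemma}) combined with a Stokes-theorem computation of the symplectic area of a sub-disk, showing that a second positive puncture would force negative area. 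For the present lemma you should simply stop after assembling the partial flow trees and not claim anything about the count of positive punctures.
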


\begin{proof}By Lemma~\ref{lemma5.17}, we know that, for any vertical line segment \(l \subset D_0(\sigma) - W_0(\sigma)\):
\[
\frac{1}{\sigma} \nabla_t p_\sigma(0, t) - \big(b_1(q_\sigma(0, 0) - b_0(q_\sigma(0, 0)))\big) = \mathcal{O}(\sigma)
\]\[
\frac{1}{\sigma} \nabla_\tau p_\sigma(0, t) = \mathcal{O}(\sigma)
\]
Where \(b_i\) are the gradients of sheet height functions.   Since \(u_\sigma\) is \(J_\sigma\)-holomorphic, this implies that:
\[
\nabla_\tau q_\sigma(0, t) - \sigma\big(b_1(q_\sigma(0, 0) - b_0(q_\sigma(0, 0)))\big) = \mathcal{O}(\sigma^2)
\]\[
\nabla_t q_\sigma = \mathcal{O}(\sigma^2)
\]
Therefore, every edge in \(\partial \Delta_r \cap (D_0(\sigma) - W_0(\sigma))\) converges to the flow line of a pair of height functions.   Furthermore, any pair of edges linked by a vertical line in \(D_0(\sigma) - W_0(\sigma)\) converge to the same flow line.   Furthermore, by Lemma~\ref{lemma5.16}, we know that for every component \(C(\sigma) \subset W_0(\sigma)\), \(\pi_M \circ u_\sigma(C(\sigma))\) converges to a point as \(\sigma \to 0\).   This tells us that \(\pi_M \circ u_\sigma(\partial\Delta_r)\) converges to a partial flow tree or trees over \(D_0(\sigma)\).   Lemma~\ref{lemma5.18} and \ref{lemma5.19} tell us that the same is true over \(D_1(\sigma)\).

Thus, \(\pi_M\circ u_\sigma(\partial\Delta_r)\) converges to a collection of partial flow trees outside of an \(\epsilon_3\)-neighborhood of \(\pi_M(\Sigma_2)\).   Since \(\epsilon_3\) is arbitrarily small, we can take the limit as \(\epsilon_3 \to 0\), and conclude that \(((\pi_M\circ u_\sigma)(\partial\Delta_r)) - \pi_M(\Sigma_2)\) converges to a collection of partial flow trees \(F_i: \Gamma_i \to M\) with special punctures on \(\Sigma_2\).   We can assume that these special punctures are one-valent because, if we have any multi-valent special punctures, we can cut the vertex corresponding to the special puncture into multiple vertices, one for each flow line adjacent to the vertex.   The existence of the arcs in \(\pi_M(\sigma_2)\) follows from observing that the image of \(\pi_M \circ u_\sigma(\partial \Delta_r)\) remains connected as \(\sigma \to 0\); the arcs are the components of \(\pi_M \circ u_\sigma(\partial \Delta_r) \cap \pi_M(\Sigma_2)\).   %Therefore, \(\pi_M \circ u_\sigma(\partial \Delta_r)\) restricted to \(M - \pi_M(\Sigma_2)\) converges to a weak flow graph as \(\sigma \to 0\).
\end{proof}

We define a \textbf{\(\Sigma_2\)-complex} to be a set of arcs \(\gamma_1, ..., \gamma_k \in \Sigma_2\) that are in the limit of the image of \(((\pi_M \circ u_\sigma)(\partial \Delta_r)) \cap \pi_M(\Sigma_2)\) as \(\sigma \to 0\), such that \(\gamma_1\) shares endpoints with \(\gamma_2\) and \(\gamma_k\); \(\gamma_2\) shares endpoints with \(\gamma_1\) and \(\gamma_3\); etc.   We refer to the endpoints of the arcs as special punctures, since they will form the special punctures of partial flow trees \(F_i:\Gamma_i \to M\).   We can similarly consider the special punctures to be positive or negative based on the orientation of the arcs entering the puncture.   A positive special puncture in a \(\Sigma_2\)-complex will connect to a negative special puncture in a partial flow tree and vice-versa.

Given a disk \(u_\sigma:\Delta_r \to T^*M\), we can treat the limit of \(\pi_M \circ u_\sigma(\partial\Delta_r)\) as \(\sigma \to 0\) as a graph, consisting of the partial flow trees \(\Gamma_i\), with the special punctures connected by the arcs \(\gamma_j\) in the \(\Sigma_2\)-complexes.

\begin{lemma}\label{NoCircuitLemma} Let \(G\) be the graph created by combining the partial flow trees of the limit of \((\pi_M \circ u_\sigma)(\partial \Delta_r) - \pi_M(\Sigma_2)\) with its \(\Sigma_2\)-complexes.   Then the only circuits in \(G\) are contained entirely in a \(\Sigma_2\)-complex.\end{lemma}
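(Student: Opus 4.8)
The plan is to exploit that the domain $\Delta_r$ is simply connected, so that the combinatorics of the subdivision constructed in Section~\ref{subsec:domainsubdivision} is a tree, and to transport this tree structure to $G$ away from $\pi_M(\Sigma_2)$.

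First I would set up the subdivision. For each $\sigma$ in the chosen subsequence, $\Delta_r = D_0(\sigma)\cup D_1(\sigma)\cup D_2(\sigma)$, where each component of $D_0(\sigma)-W_0(\sigma)$ and of $D_1(\sigma)-W_1(\sigma)$ is a rectangular strip, each component of $W_0(\sigma)$ and of $W_1(\sigma)$ is a boundary-minimum neighborhood, and distinct pieces meet only along the vertical line segments making up $\partial D_j(\sigma)-\partial\Delta_r$ and $\partial W_j(\sigma)-\partial\Delta_r$. Using the genericity already present in that construction, these cuts may be taken to be pairwise disjoint embedded arcs with endpoints on $\partial\Delta_r$; since $\Delta_r$ is a disk, cutting along disjoint arcs produces pieces whose dual graph $T$ (one vertex per piece, one edge per cut) is a tree. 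Next I would read off the limit image of each piece under $\pi_M\circ u_\sigma$: by Lemmas~\ref{lemma5.16} and~\ref{lemma5.18} every $W_j(\sigma)$-component, and every vertical cut inside $D_j(\sigma)-W_j(\sigma)$, collapses to a point; by Lemmas~\ref{lemma5.17},~\ref{lemma5.19} and the argument of Lemma~\ref{WFGLemma}, the two $\partial\Delta_r$-edges of a strip of $D_j(\sigma)-W_j(\sigma)$ converge to one and the same flow line; and each $D_2(\sigma)$-piece contributes, on its boundary, arcs lying in $\pi_M(\Sigma_2)$. Thus $G$ is obtained by gluing, along single points indexed by the edges of $T$, a family of pieces each of which is a point, a single flow-line edge (the strip-pieces), or a subgraph contained in $\pi_M(\Sigma_2)$ (the $D_2$-pieces) — and the gluing pattern is exactly the tree $T$.

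The key consequence is that every flow-line edge of $G$ is a bridge. Such an edge is the image of a single strip-piece $P$, whose two endpoints in $G$ come from the cuts on $\partial P$; since $T$ is a tree, these cuts separate $P$ from the rest of $T$ (or leave the edge pendant when $P$ is a leaf), and because the pieces are glued into $G$ at single points following $T$, deleting the interior of this edge disconnects $G$. Hence no circuit of $G$ runs through any flow-line edge, so every circuit is supported entirely on arcs of $\Sigma_2$-complexes. Finally I would observe that two distinct $\Sigma_2$-complexes are joined inside $G$ only through flow-line edges (or at most a single common vertex), so a circuit supported on $\Sigma_2$-arcs cannot meet two of them, and therefore lies in a single $\Sigma_2$-complex. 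Here I would also record the matching between pieces and complexes: a $\Sigma_2$-complex is a cyclic chain of arcs $\gamma_1,\dots,\gamma_k$ in which consecutive arcs share a special puncture; a special puncture is a corner of the subdivision lying on $\partial\Delta_r$, hence belongs to exactly two pieces, both necessarily $D_2$-pieces sharing the cut through that corner; so the $D_2$-pieces of one complex form a connected subtree of $T$, which is how the complex sits inside $G$.

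The main obstacle is precisely this last piece of bookkeeping: one must pin down how the abstract graph $G$, defined from the partial flow trees $\Gamma_i$ and the connecting arcs $\gamma_j$, is reconstructed from the geometric subdivision and its dual tree $T$, so that "gluing the piece-contributions at single points along $T$" is literally correct, the bridge argument is valid, and the confinement of circuits to a single $\Sigma_2$-complex genuinely follows. A secondary, routine point is the genericity claim underlying $T$ being a tree, namely that all the cuts produced in Section~\ref{subsec:domainsubdivision} can simultaneously be chosen to be disjoint embedded arcs with endpoints on $\partial\Delta_r$.
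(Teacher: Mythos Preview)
Your approach is correct but takes a genuinely different route from the paper. The paper argues by contradiction: assuming a circuit in $G$ not contained in a $\Sigma_2$-complex, it takes the flow-line edges $e_1,\dots,e_m$ of that circuit, notes that each $e_k$ has two preimages $e_k^+,e_k^-\subset\partial\Delta_r$, and places a curve $c_k$ in the interior of $\Delta_r$ running between them (at height $\tau=\tfrac12$ in the slit model). It then joins consecutive $c_k$'s by short paths $g_k$ through the relevant $W_0(\sigma)$, $W_1(\sigma)$, or $D_2(\sigma)$ regions, producing an embedded circle $C\subset\mathrm{int}\,\Delta_r$. Since $C$ separates $\overline{\partial\Delta_r}$ into two pieces, $\Delta_r$ cannot be a disk, a contradiction. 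Your approach instead builds the full dual tree $T$ of the subdivision and argues structurally that $G$ is assembled along $T$ so that every flow-line edge is a bridge. Both arguments hinge on the simple-connectivity of the disk, but the paper works directly with a single hypothetical circuit and sidesteps exactly the global bookkeeping you flag as your main obstacle; in exchange, your version, once that bookkeeping is nailed down, yields a clearer global picture of how $G$ sits over the domain subdivision.
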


\begin{proof} Suppose that \(G\) has a circuit that is not contained in a \(\Sigma_2\)-complex.   Let the edges of this circuit that lie in partial flow trees be \(e_1, e_2, ..., e_m \in M\), with \(e_1\) lying in a partial flow tree.   We will use this circuit to construct a curve \(C\) in the interior of \(\Delta_r\) that separates \(\overline{\partial \Delta_r}\) into two components, where \(\overline{I}\) denotes the closure of \(I\).

By a slight abuse of notation, we can consider \(e_k\) to have two preimages in \(\partial\Delta_r\), \(e_k^+\) and \(e_k^-\), where the \(+\) superscript denotes the edge in the slit model with higher vertical coordinate.   (That is, \(e_k^\pm\) is an interval in \(\partial\Delta_r\) so that \(\pi_M \circ u_\sigma(e_k^\pm)\) limits to \(e_k\) as \(\sigma \to 0\).)   We can assume without loss of generality that, for every \(e_k\), there are coordinates \((t, \tau)\) on \(\Delta_r\) so that:
\[
e_k^+ = \{(t, 1) | 0 \leq t \leq 1\}
\]\[
e_k^- = \{(t, 0) | 0 \leq t \leq 1\}
\]
And define:
\[
c_k = \{(t, 0.5) | 0 \leq t \leq 1\}
\]
Observe that \(\pi_M \circ u_\sigma(c_k)\) limits to the flow line \(e_k\) as \(\sigma \to 0\) for all \(k\).   Consider \(c_k, c_{k+1}\).   If \(e_k, e_{k+1}\) share a vertex \(p_k\) in a partial flow tree, then we can find a path \(g_k\) in \(W_0(\sigma)\) or \(W_1(\sigma)\) linking the end points of \(c_k, c_{k+1}\) and so that \(\pi_M \circ u_\sigma(g_k)\) limits to the vertex \(p_k\) as \(\sigma \to 0\).   If \(e_k, e_{k+1}\) are separated by a \(\Sigma_2\)-complex, then we can find a path \(g_k\) in \(W_2(\sigma)\) linking the end points of \(c_k, c_{k+1}\) and so that the \(g_k\) lies arbitrarily close to the edges in the circuit that lie in the \(\Sigma_2\)-complex.   If we use the convention that \(e_{m+1} = e_1\), so \(g_{m+1}\) links the endpoints of \(c_1, c_m\), then the union:
\[
C = c_1 \cup g_1 \cup c_2 \cup g_2 \cup ... \cup c_m \cup g_{m+1}
\]
Is a circle that lies in the interior of \(\Delta_r\).   \(C\) does not intersect itself: by definition, none of the edges or vertices in the circuit are repeated, so there can be no intersection between \(c_i\) or \(g_i\) that lie in \(W_0(\sigma), W_1(\sigma)\) and some other part of the circle, and the requirement that \(g_i\) that lie in \(W_2(\sigma)\) lie arbitrarily close to the edges of the \(\Sigma_2\)-complex ensures that they will not intersect either.   Therefore, we have a circle \(C\) that separates \(\Delta_r\) into two components, \(D^1, D^2\).   If we return to the disk model of \(\Delta_r\), we see that \(D^1 \cap \overline{\partial\Delta_r}, D^2 \cap \overline{\partial\Delta_r}\) is a separation of \(\overline{\partial\Delta_r}\) into two components, which means that \(\overline{\partial\Delta_r}\) is disconnected, and therefore that \(\Delta_r\) is not a disk.   This is a contradiction.   Therefore, \(G\) can contain no circuits that do not lie entirely in \(\Sigma_2\)-complexes.
\end{proof}

We are now ready to prove that every partial flow tree in the limit of \(\pi_M \circ u_\sigma(\partial\Delta_r)\) restricted to \(M-\pi_M(\Sigma_2)\) as \(\sigma\to 0\) has a single positive puncture.

\begin{lemma}\label{GFGlemma} If \(u:(\Delta_r, \partial\Delta_r) \to (T^*M, \pi_\mathbb{C}(\Lambda))\) is a pseudoholomorphic disk with a single positive puncture, then there exists a sequence \(\sigma \to 0\) such that \(\pi_M \circ u_\sigma(\partial \Delta_r)\) restricted to \(M - \pi_M(\Sigma_2)\) converges to a collection of partial flow trees as \(\sigma \to 0\) such that each partial flow tree has a single positive puncture.\end{lemma}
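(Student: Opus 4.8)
The plan is to run a connectivity and acyclicity argument on the limit graph $G$, in the spirit of the proof of Lemma~\ref{NoCircuitLemma}. First I would equip all of $G$ with the flow orientation: on each partial flow tree the edges are flow lines $\gamma'=-\nabla(f_i-f_j)$ and so are oriented, and on each $\Sigma_2$-complex the arcs inherit an orientation from the disk boundary. The statement ``a positive special puncture of a $\Sigma_2$-complex connects to a negative special puncture of a partial flow tree and vice versa'' is precisely the assertion that these orientations glue into a globally consistent orientation of $G$. I would then check vertex by vertex that every vertex other than the single positive Reeb chord $a$ has in-degree at least one: interior vertices of flow trees (switches, $Y^0$, $Y^1$, ends, and two-valent negative maxima) have in-degree one by Theorem~\ref{theorem2.5} and the definition of a gradient flow tree; negative Reeb chords are sinks; and each special puncture is three-valent in $G$ (two complex arcs plus one tree edge) with at least one incoming edge in every configuration. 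The two-valent positive-minimum vertex is $a$ itself, and $a$ is the unique source.

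Next I would contract each $\Sigma_2$-complex to a point, obtaining a graph $G'$. Since $\pi_M\circ u_\sigma(\partial\Delta_r)$ is connected, $G$ is connected, and by Lemma~\ref{NoCircuitLemma} all of its circuits lie inside single $\Sigma_2$-complexes; hence $G'$ is a tree. The flow orientation descends, and combining the in-degree count above with the fact that no $\Sigma_2$-complex can carry a consistently oriented arc-cycle (a directed circuit, which is ruled out because the scaled $z$-coordinate of the boundary lift is strictly monotone along the limiting flow, so each complex has a ``source'' endpoint feeding an incoming tree edge into its contracted vertex), one sees that the image $a'$ of $a$ is still the unique in-degree-zero vertex of the tree $G'$. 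In a tree with a unique in-degree-zero vertex, every vertex is reachable from it along the orientation — this elementary fact drives the rest. Note also that each partial flow tree $\Gamma$ contains no $\Sigma_2$-complex arcs, so it embeds in $G'$ as a connected subtree $\Gamma'$ whose leaves are negative Reeb chords, ends, and contracted-complex vertices; at the image of a positive special puncture the unique $\Gamma'$-edge points away from that leaf, and at the image of a negative special puncture it points toward it.

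Then comes the main step. Suppose some $\Gamma$ has two positive punctures $P_1\neq P_2$ (not both can be $a$). Their images $v_1',v_2'$ are distinct leaves of $\Gamma'$ — distinct because two positive special punctures of $\Gamma$ attached to the same complex would give two $\Gamma'$-edges out of one contracted vertex into the connected subtree $\Gamma'$, a circuit in the tree $G'$. Now $v_2'$ is reachable from $a'$; in the tree $G'$ the reaching path is the unique path $a'\to v_2'$, and its portion from the last vertex common to the paths $a'\to v_1'$ and $a'\to v_2'$ out to $v_2'$ lies inside the connected subtree $\Gamma'$ (since $v_1',v_2'\in\Gamma'$ forces their join to lie in $\Gamma'$) and arrives at $v_2'$ along a $\Gamma'$-edge oriented \emph{toward} $v_2'$. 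But if $P_2$ is a positive Reeb chord then $v_2'=a'$, impossible as the endpoint of a nontrivial directed path into itself; and if $P_2$ is a positive special puncture the $\Gamma'$-edge at $v_2'$ points \emph{away} — a contradiction in either case. So each partial flow tree has at most one positive puncture, and it has at least one: the directed path from $a'$ into $\Gamma'$ first meets $\Gamma'$ at a leaf that must be the image of a positive special puncture, unless $a\in\Gamma$ already. Letting $\epsilon_3\to 0$ as in Lemma~\ref{WFGLemma} completes the argument.

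The step I expect to be the main obstacle is the acyclicity of the flow orientation on the $\Sigma_2$-complexes — equivalently, strict monotonicity of the scaled $z$-coordinate of the boundary lift along the limiting flow carried all the way into the codimension-$2$ stratum. This is the one place where the higher-codimension singularities genuinely complicate Ekholm's reasoning (in which $\Sigma_2$ is empty in the relevant cases), and it is exactly where Lemma~\ref{NoCircuitLemma} is indispensable.
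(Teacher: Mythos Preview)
Your approach is genuinely different from the paper's, and the obstacle you flag at the end is real and not dispatched by what you wrote.

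The paper does not attempt to orient $G'$ globally or to argue that the contracted $\Sigma_2$-complex vertices have in-degree at least one. Instead it argues by symplectic area at finite $\sigma$. Assuming some $\Gamma_i$ has two positive punctures $p_1,p_2$, Lemma~\ref{NoCircuitLemma} forces the components $G_1,G_2$ of $G\setminus F_i(\Gamma_i)$ attached at $p_1,p_2$ to be disjoint; take $G_1$ to be the one not containing the positive Reeb chord. For small $\sigma$ one cuts $\Delta_r$ along a short arc $\gamma(\sigma)$ in $u_\sigma^{-1}(N_\epsilon(p_1))$ to produce a lens $E(\sigma)$ whose boundary limits to $G_1$. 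Stokes' theorem gives
\[
\mathrm{Area}\bigl(u_\sigma(E(\sigma))\bigr)=-\sum_i \sigma\alpha_i+\int_{u_\sigma(\gamma(\sigma))}\beta,
\]
where the $\alpha_i$ are the (positive) actions of the negative Reeb chords in $G_1$, and the second integral equals $z(p_-)-z(p_+)<0$ by lifting $\gamma(\sigma)$ to $J^1(M)$. Hence the area is negative, a contradiction. The point is that this computation never looks inside a $\Sigma_2$-complex: everything about $G_1$ is absorbed into the Stokes boundary term, and the only place the $z$-coordinate enters is across the single short arc $\gamma(\sigma)$ near $p_1$, where the sheets are well separated.

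Your route needs the contracted-complex vertices of $G'$ to have in-degree $\geq 1$, and you justify this by asserting strict $z$-monotonicity of the boundary lift ``along the limiting flow'' inside a $\Sigma_2$-complex. But over $\pi_M(\Sigma_2)$ there \emph{is} no limiting gradient flow in the sense of Lemmas~\ref{lemma5.17}--\ref{lemma5.19}: those lemmas are proved only on $D_0(\sigma)$ and $D_1(\sigma)$, and the arcs in a $\Sigma_2$-complex are precisely the part of the limit on which the flow-line description is unavailable. So the monotonicity you invoke is not established, and without it nothing prevents a complex from being a second source of $G'$ (a tree can certainly have two in-degree-zero vertices, e.g.\ $a'\to b\leftarrow c$), which breaks the reachability step your contradiction rests on. The paper's area argument is exactly the device that replaces this missing monotonicity: it trades a pointwise statement about $z$ along the limit for an integrated statement about $\int\beta$ at finite $\sigma$, where positivity of area does the work. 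If you want to rescue your combinatorial proof, you would effectively have to reproduce that Stokes computation to rule out a source complex --- at which point you have reinvented the paper's argument.
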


\begin{proof}Let \(G\) be the graph created by combining the partial flow trees with the \(\Sigma_2\)-complexes.   Suppose that we have some partial flow tree \(F_i:\Gamma_i \to M\) with two positive punctures, \(p_1\) and \(p_2\).   We will assume for notational simplicity that these are special positive punctures, but the argument is strictly analogous even if one is a positive puncture at a Reeb chord.   Since a tree is connected by definition, we can find a sequence of edges in \(F_i(\Gamma_i)\) linking \(p_1\) to \(p_2\).   Therefore, in order for \(G\) to not contain a circuit outside of a \(\Sigma_2\)-complex - which is prohibited by Lemma~\ref{NoCircuitLemma} - deleting \(F_i(\Gamma_i)\) from \(G\) must produce at least two separate components, one which has \(p_1\) in its boundary and one which has \(p_2\).   We label these \(G_1\) and \(G_2\).   Since there is only a single positive puncture at a Reeb chord anywhere in the disk, only one at most of these two components can contain that positive puncture at a Reeb chord; assume without loss of generality that \(G_2\) does.

Let \(N_\epsilon(p_1) \subset M\) denote an \(\epsilon\)-neighborhood of \(p_1\), and choose \(\epsilon\) small enough that it does not intersect \(\pi_M \circ u_\sigma(\partial\Delta_r)\) except for the arcs adjacent to \(p_1\).   Assume \(\sigma\) is small enough that \(\pi_M \circ u_\sigma(\partial\Delta_r)\) intersects \(\partial N_\epsilon(p_1)\) in four places, two of which limit to \(\partial G_1 \cap N_\epsilon(p_1)\) as \(\sigma \to 0\), and two limiting to \(F_i(\Gamma_i) \cap N_\epsilon(p_1)\).   Let \(p_+(\sigma), p_-(\sigma)\) be the preimages in \(\partial \Delta_r\) of the points where the arcs of the disk limiting to \(G_1\) intersect \(\partial N_\epsilon(p_1)\), with \(p_+(\sigma)\) having the higher \(z\) coordinate in the lift of \(\partial\Delta_r\) to \(J^1(M)\).   For \(\sigma\) small enough, we can find an arc \(\gamma(\sigma) \subset u_\sigma^{-1}(N_\epsilon(p_1))\) linking \(p_+(\sigma)\) to \(p_-(\sigma)\), oriented from \(p_+(\sigma)\) to \(p_-(\sigma)\).   Let \(I(\sigma)\) be the component of \(\partial \Delta_r\) which limits to \(G_1\) as \(\sigma \to 0\).   We can then separate \(\Delta_r\) along \(\gamma(\sigma)\) to produce a lens \(E(\sigma)\) with boundary \(\gamma(\sigma) \cup I(\sigma)\), so that \(\pi_M \circ u_\sigma(\partial E(\sigma))\) limits to \(G_1\) as \(\sigma \to 0\), and \(\Delta_r - E(\sigma)\) limits to \(G - G_1\).   Let \(\tilde{u}_\sigma:E(\sigma) \to T^*M\) be the restriction of \(u_\sigma\) to \(E(\sigma)\).

By Stokes' Theorem, the symplectic area of \(\tilde{u}_\sigma(E(\sigma))\) is given by:
\[
A(\sigma) = -\sum_{i=1}^N \sigma \alpha_i + \int_{\tilde{u}_\sigma(\gamma(\sigma))} \beta
\]
Where \(\alpha_i\) are the actions of the Reeb chords so that there are punctures in \(\partial E(\sigma)\) limiting to them, and \(\beta\) is the tautological one-form on \(T^*M\).   There exists a lift \(\tilde{\gamma}(\sigma)\) of \(\tilde{u}_\sigma(\gamma(\sigma))\) to \(J^1(M)\), so that:
\[
\pi_\mathbb{C}(\tilde{\gamma}(\sigma)) = \tilde{u}_\sigma(\gamma(\sigma))
\]\[
\alpha \big|_{\tilde{\gamma}(\sigma)} = 0
\]
Where the \(z\) coordinate of \(\tilde{\gamma}(\sigma)\) is given by integrating \(\beta\) along \(\tilde{u}_\sigma(\gamma(\sigma))\).   Therefore, the restriction of \(\beta\) to \(\tilde{u}_\sigma(\gamma(\sigma))\) equals \(dz\), where \(z:\tilde{u}_\sigma(\gamma(\sigma)) \to \mathbb{R}\) is the \(z\) coordinate of the corresponding point in \(\tilde{\gamma}(\sigma)\).   Therefore, by Stokes' Theorem:
\[
\int_{\tilde{u}_\sigma(\gamma(\sigma))} \beta = z(p_-(\sigma)) - z(p_+(\sigma)) < 0
\]
Therefore, \(A(\sigma) < 0\), which is a contradiction.   Therefore, there exists no partial flow tree \(F_j:\Gamma_j \to M\) with more than one positive puncture.\end{proof}

}
\section{Proof of Morse Lemmas}
\label{sec:MorseLemmas}

In this section we prove~\ref{lemma3.1},~\ref{lemma3.2},~\ref{lemma3.3}, and~\ref{lemma3.4}.   First, recall that we define:
\[
\mathcal{F}(f_1, \delta) = \left\{f_2: M \to \mathbb{R} | f_2\mbox{ is Morse}, |f_1 - f_2|_{C^1} < \delta\right\}
\]

In addition, we need the well-known tubular flow theorem:

\begin{theorem}[Tubular Flow Theorem]\label{tubularflowtheorem} Let \(V\) be a vector field on an \(n\)-dimensional manifold \(M\).   Then, at any point \(p \in M\) such that \(V_p \neq 0\), there exists a neighborhood \(U\) of \(p\) and a diffeomorphism \(\Psi:(-1,1)^n \to U\) such that the trajectories of \(V\) in \(U\) are mapped to the trajectories of \(\partial_{x_1}\) in \((-1,1)^n\).
\end{theorem}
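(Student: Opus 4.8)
The plan is to straighten the flow of $V$ near $p$ by composing the local flow with a chart on a hypersurface transverse to $V$, and then to invoke the inverse function theorem; this is the classical flow-box construction.

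First I would reduce to a local model: choose a coordinate chart around $p$, so that without loss of generality $M$ is an open subset of $\R^n$, $p = 0$, and $V_0 \neq 0$. Applying a linear change of coordinates (a diffeomorphism), I may further assume $V_0 = \partial_{x_1}|_0$. Let $\Sigma = \{x_1 = 0\}$. Since $V_0 = \partial_{x_1}$ is not tangent to $\Sigma$ and transversality is an open condition, $V$ is transverse to $\Sigma$ on a neighborhood of $0$. By the existence and uniqueness theorem for ordinary differential equations, together with smooth dependence on initial conditions, there exist $\epsilon > 0$ and an open neighborhood $W$ of $0$ in $\Sigma$ such that the flow $\theta_t(y)$ of $V$ is defined and smooth for $(t,y) \in (-\epsilon,\epsilon) \times W$.

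Next I would set $\Psi(x_1, x_2, \dots, x_n) = \theta_{x_1}(0, x_2, \dots, x_n)$, which is defined and smooth on a neighborhood of $0$ in $\R^n$. Its Jacobian at the origin satisfies $\partial_{x_1}\Psi|_0 = \left.\tfrac{d}{dt}\right|_{t=0}\theta_t(0) = V_0 = \partial_{x_1}$, while $\partial_{x_i}\Psi|_0 = \partial_{x_i}$ for $i \geq 2$ because $\theta_0 = \mathrm{id}$; hence $D\Psi_0 = \mathrm{Id}$, and the inverse function theorem shows $\Psi$ is a diffeomorphism from an open neighborhood of $0$ onto an open neighborhood of $p$. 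Precomposing with a linear rescaling $x \mapsto cx$ for $c>0$ small enough that $(-c,c)^n$ lies inside the domain, and relabeling, produces a diffeomorphism $\Psi : (-1,1)^n \to U$ for some neighborhood $U$ of $p$.

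Finally I would verify that $\Psi$ intertwines $\partial_{x_1}$ with $V$: the group law $\theta_{s+t} = \theta_s \circ \theta_t$ gives $\left.\tfrac{d}{ds}\right|_{s=0}\Psi(x_1+s, x_2, \dots, x_n) = \left.\tfrac{d}{ds}\right|_{s=0}\theta_s(\Psi(x)) = V_{\Psi(x)}$, that is, $\Psi_*\partial_{x_1} = V$. Consequently $\Psi$ carries the integral curves of $\partial_{x_1}$ in $(-1,1)^n$ onto the trajectories of $V$ in $U$, which is exactly the assertion. I do not expect a genuine obstacle here — the construction is entirely standard; the only point needing mild care is arranging all the relevant domains to have product form simultaneously (handled by the rescaling step) and shrinking $U$ so that every trajectory of $V$ meeting $U$ arises as the image of a trajectory of $\partial_{x_1}$.
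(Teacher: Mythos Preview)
Your argument is the standard flow-box construction and is correct. The paper does not supply its own proof of this statement; it simply cites \cite{PdM}, Theorem 2.1.1, so your write-up is in fact more detailed than what appears in the paper, and the classical argument you give is essentially the one found in that reference.
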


\begin{proof}  See \cite{PdM}, Theorem 2.1.1.\end{proof}
%Reference 203, Theorem 2.1.1, in Geiges

Then:

\textbf{Lemma~\ref{lemma3.1}:} \emph{Let \(f_1: M \to \mathbb{R}\) be a Morse function, where \(M\) is a compact manifold.   For any \(\epsilon > 0\), there exists \(\delta > 0\) such that, for any \(f_2 \in \mathcal{F}(f_1, \delta)\), there is a bijection between the critical points of \(f_1\) and \(f_2\), the ascending manifold of every critical point of \(f_2\) lies within \(\epsilon\) of the ascending manifold for the corresponding critical point of \(f_1\), and the descending manifold of every critical point of \(f_2\) lies within \(\epsilon\) of the descending manifold of the corresponding critical point of \(f_1\).}

\begin{proof}  It is well known that, if \(f_1\) is Morse, then there exists \(\delta'\) such that all \(f_2 \in \mathcal{F}(f_1, \delta')\) have the same number of critical points as \(f_1\), and each critical point of \(f_2\) has the same Morse index as the corresponding critical point of \(f_1\).   Restrict \(\delta\) to \(\delta < \delta'\).

Now, since \(f_1\) is smooth, \(|\nabla f_1|:M \to \mathbb{R}\) is continuous, and \(|\nabla f_1|^{-1}(0)\) is equal to the set of critical points of \(f_1\).   Further, we know that:
\[
\sup_{p \in M} \left| |\nabla f_1(p)| - |\nabla f_2(p)| \right| \leq \left| \nabla f_1 - \nabla f_2\right|_{C^0} \leq |f_1 - f_2|_{C^1}
\]
Therefore, if \(f_2 \in \mathcal{F}(f_1, \delta)\), then \(|\nabla f_2|^{-1}(0) \subset |\nabla f_1|^{-1}(0, \delta)\).   Therefore, if \(\delta > 0\) is small enough, the critical points of \(f_2\) will lie arbitrarily close to the corresponding critical points of \(f_1\).   Let \(p_1, ..., p_m\) denote the critical points of \(f_1\), and let \(p'_i\) denote the critical point of \(f_2\) corresponding to \(p_i\).   Pick open ball-shaped Morse neighborhoods \(\psi_i:N_i \to M\) for every critical point \(p_i\), and assume that \(\delta\) is small enough that \(p'_i \in \psi_i(N_i)\) for every \(f_2 \in \mathcal{F}(f_1, \delta)\) and every critical point \(p'_i\).   Define:
\[
Q = M - \bigcup_i \psi_i(N_i)
\]
Observe that \(Q\) is compact.

By the Tubular Flow Theorem, for every point \(p\) that is not a critical point, we can find a radius \(\rho(p)\) such that there is an open neighborhood of \(p\) of radius \(\rho(p) < \frac{1}{2}\epsilon\) that is diffeomorphic to \((-1, 1)^n\), and such that the diffeomorphism carries flow lines of \(\nabla f_1\) to flow lines of \(\partial_{x_1}\).   Let \(U(p)\) denote the tubular flow neighborhood of \(p\), and observe that \(U(p)\) is an infinite open cover of the compact submanifold \(Q\).   We can therefore find a finite subcover \(U_1, ..., U_m\).   Let \(\phi_1:[-1,1]^n \to U_1, ..., \phi_m:[-1,1]^n\to U_m\) denote the diffeomorphisms.   Equip each copy of \([-1,1]^n\) with the pullback of the metric of \(M\), rather then the standard Euclidean metric.

Pick an arbitrary flow line \(\gamma:[0, l]\to M\) parameterized by arc length, so that \(\gamma(0) = p_i, \gamma(l) = p_j\).   Assume without loss of generality that \(\psi_i(N_i), U_1, ..., U_k, \psi_j(N_j)\) form an open cover of the image of \(\gamma\) (see Figure~\ref{fig:coverOfFlowline}).

\begin{figure}\begin{center}
\includegraphics{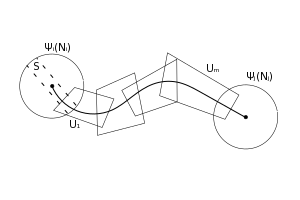}
\caption{Open Cover of \(\gamma\)\label{fig:coverOfFlowline}}\end{center}
\end{figure}

Let \(q_0\) be the point where the pullback of \(\gamma\) to \(N_i\) intersects \(\partial N_i\).   \(\psi_i^{-1}(U_1)\) will be open in \(N_i\), so there is some disk of radius \(\epsilon_0\) around \(q_0\) in \(\partial N_i\) which is contained within \(\partial N_i \cap \psi_i^{-1}(U_1)\).   Define the set:
\begin{equation}\label{eqn:defineS}
S = \left\{x \in N_i | x_{k_i + 1}^2 + ... + x_n^2 < (\epsilon_0)^2\right\}
\end{equation}
Where \(k_i\) is the Morse index of \(p_i\).   The boundary of \(S\) can be considered as the union of two (possibly disconnected, possibly empty) overlapping parts:
\begin{equation}\label{eqn:defineVW}
V = \left\{x \in N_i | x_{k_i + 1}^2 + ... + x_n^2 = (\epsilon_0)^2\right\}
\end{equation}\[
W = \left\{x \in \partial N_i | x_{k_i + 1}^2 + ... + x_n^2 \leq (\epsilon_0)^2\right\}
\]
Schematically, this takes the form shown in figure~\ref{fig:schematicOfSVW}.

\begin{figure}\begin{center}
\includegraphics{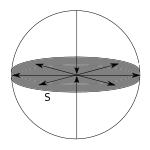}
\includegraphics{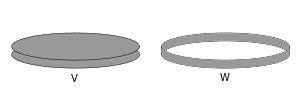}
\caption{\(S, V\), and \(W\)\label{fig:schematicOfSVW}}\end{center}
\end{figure}

Note that, by construction, \(q_0 \in W\).   We now define a pair of vector fields, \(R_V\) and \(R_W\), on \(V, W\) respectively, as seen in Figure~\ref{fig:RVandRW}:
\[
R_V = -x_{k_i+1}\partial_{x_{k_i+1}} - .... - x_n\partial_{x_n}
\]\[
R_W = x_1\partial_{x_1} + ... x_{k_i}\partial_{x_{k_i}}
\]

\begin{figure}\begin{center}
\includegraphics{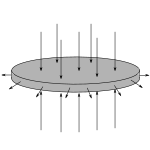}
\caption{\(R_V\) and \(R_W\)\label{fig:RVandRW}}\end{center}
\end{figure}

Observe that, using the ambient metric \(g\) of \(N_i \subset \mathbb{R}^n\):
\[
\left.\nabla \psi_i^*f_1\right|_{V} \cdot R_V = -x_{k_i+1}^2 - ... - x_n^2 < 0
\]\[
\left.\nabla \psi_i^*f_1\right|_{W} \cdot R_W = -x_1^2 - ... - x_{k_i}^2 < 0
\]

What this means is that the flow lines of \(\nabla \psi_i^*f_1\) may enter \(S\) through \(V\) and leave through \(W\), but not the other way around.

Now observe that, if we pull \(f_2\) back to \(N_i\) with \(\psi_i\):
\[
\left.\nabla \psi_i^*f_2\right|_{V} \cdot R_V = 
\left.\nabla \psi_i^*f_1\right|_{V} \cdot R_V + \left.(\nabla \psi_i^*f_2 - \psi_i^*\nabla f_1)\right|_{V} \cdot R_V
\]\[
\left.\nabla \psi_i^*f_2\right|_{W} \cdot R_W = 
\left.\nabla \psi_i^*f_1\right|_{W} \cdot R_W + \left.(\nabla \psi_i^*f_2 - \nabla \psi_i^*f_1)\right|_{W} \cdot R_W
\]
And we know that:
\[
\left|\nabla \psi_i^*f_2 - \nabla \psi_i^*f_1\right| \leq |\psi_i|_{C^1} |f_2 - f_1|_{C^1}
\]
Therefore, if \(\delta\) is small enough:
\begin{equation}\label{eqn:nablaf2RVandRW}
\left.\nabla \psi_i^*f_2\right|_{V} \cdot R_V < 0\mbox{ and }
\left.\nabla \psi_i^*f_2\right|_{W} \cdot R_W < 0
\end{equation}
For all \(f_2 \in \mathcal{F}(f_1, \delta)\).   And, for \(\delta\) small enough, \(p'_i \in S\).   What this means is that the flow lines of \(\nabla \psi_i^*f_2\) for any \(f_2 \in \mathcal{F}(f_1, \delta)\) will enter \(S\) through \(V\) and leave through \(W\), but not vice-versa, if \(\delta\) is small enough.   This means, first, that \(\mathcal{D}_{f_2}(p'_i) \cap \partial N_i \subset W\).   In addition, it means that \(\mathcal{A}_{f_2}(p'_i) \cap \partial N_i \not\subset W\).

Define \(Z = [\mathcal{D}_{f_2}(p'_i) \cap W] \in H_{k_i-1}(W)\), where \(H_*(W)\) is the singular homology of \(W\) with \(\mathbb{Z}\) coefficients.   Since \(W\) is diffeomorphic to \(S^{k_i-1} \times [-1,1]^{n-k_i+1}\), we know that \(H_{k_i-1}(W) \tilde{=} \mathbb{Z}\).   If \(Z\) is trivial as an element of the homology of \(H_{k_i-1}\), then it bounds some disk \(Y \subset W\), so that \(\partial Y = Z\).   Since the descending manifold of \(p'_i\) is transverse to the ascending manifold at \(p'_i\), and intersects it at no other point, that implies that \(\mathcal{A}_{f_2}(p'_i) \cap \partial N_i \subset Y \subset W\), which is a contradiction.   Therefore, \(Z\) cannot be trivial.   This, in turn, implies that for any choice of \((x_1, ..., x_k, 0, ..., 0) \in W\), there exists some point \((x_1, ..., x_k, x_{k+1}, ..., x_n) \in \mathcal{D}_{f_2}(p'_i) \cap W\).

Therefore, if \(\delta\) is small enough to ensure equation~\ref{eqn:nablaf2RVandRW} holds, then the intersection \(\psi_i^{-1}(U_1) \cap \mathcal{D}_{f_2}(p'_i)\) will be non-empty, and contain at least some open \(k_i\)-disk.   Call the image of this disk in \(M\) \(D_1\).

%\begin{figure}
%\begin{center}\includegraphics{Fig3-2-8.png}
%\caption{Intersection of \(\mathcal{D}_{f_2}(p'_i)\) and \(\psi^{-1}_i(U_1)\)}\end{center}
%\label{fig:intersectionD1andU1inN1}
%\end{figure}

\begin{figure}
\begin{center}\includegraphics{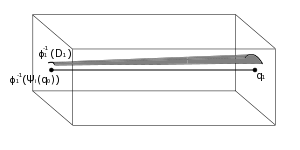}
\caption{Preimage of \(U_1\)\label{fig:U1}}\end{center}
\end{figure}

Now consider \(\phi_1^{-1}(D_1)\), as shown in figure~\ref{fig:U1}.   From the Tubular Flwo Theorem, we know that, in \([-1,1]^n\), the pullback of \(\nabla f_1\) has the form:
\[
\nabla (\phi_1^* f_1) = \lambda_1(x) \partial_{x_1}
\]
Where \(\lambda_1(x) > 0\).   Since \([-1,1]^n\) is compact, we can find \(l_1 > 0\) such that \(\lambda_1(x) > l_1\) for all \(x\).   Let \(q_1\) be the point where \(\phi_1^{-1} \circ \gamma\) leaves \([-1,1]^n\).   We also know that:
\begin{equation}\label{eqn:nablaph1f2}
\nabla (\phi_1^* f_2) = \nabla (\phi_1^* f_1) + \nabla (\phi_1^* (f_2 - f_1))
\end{equation}\begin{equation}\label{eqn:boundnablaphi2f2minusf1}
\left|\nabla (\phi_1^* (f_2 - f_1))\right| \leq \delta||\phi_1||_{C^1}
\end{equation}
Now, pick an arbitrary point in \(\phi_1^{-1}(D_1)\), and let \(\gamma^1:[0,T_1] \to [-1,1]^n\) be the flow line beginning at that point for \(\nabla (\phi_1^*f_2)\), parameterized by \((\gamma^1)' = \nabla (\phi_1^*f_2)\), such that \(\gamma^1(T_1) \in \partial [-1,1]^n\).   Then:
\begin{equation}\label{eqn:dgamma1dt}
\frac{d\gamma^1}{dt} = \lambda(\gamma^1(t))\partial_{x_1} + \nabla \phi_1^*(f_2 - f_1)
\end{equation}
Let \(\gamma^1_i\) denote the \(x_i\) coordinate of \(\gamma^1\).   Then equations~\ref{eqn:nablaph1f2},~\ref{eqn:boundnablaphi2f2minusf1}, and~\ref{eqn:dgamma1dt} together imply that:
\[
\frac{d\gamma^1_1}{dt} > l_1 - ||\phi_1||_{C^1}\delta
\]\[
\left|\frac{d\gamma^1_i}{dt}\right| < ||\phi_1||_{C^1}\delta \mbox{ for }i \neq 1
\]
Therefore, since the width of \([-1,1]^n\) is 2, we can bound \(T_1\) by:
\[
T_1 < \frac{2}{l_1 - ||\phi_1||_{C^1}\delta}
\]
And, for \(i \neq 1\):
\[
\left|\gamma^1_i(T_1) - \gamma^1_i(0)\right| < ||\phi_1||_{C^1}\delta T_1 < 
\frac{2||\phi_1||_{C^1}\delta}{l_1 - ||\phi_1||_{C^1}\delta}
\]
Therefore, for any choice of \(\epsilon_1 > 0\), if \(\epsilon_0\) and \(\delta\) are small enough, then \(\gamma^1(T_1)\) will be within \(\epsilon_1\) of \(q_1\).   Since this is an open condition, if \(\epsilon_0\) and \(\delta\) are small enough, there will be some \(k_i\)-disk in \(\phi_1^{-1}(\mathcal{D}_{f_2}(p'_i))\) within \(\epsilon_1\) of \(q_1\).   Since \(q_1 \in \phi_1^{-1}(U_2)\), this means that, if \(\epsilon_1\) is small enough, this \(k_i\)-disk will lie within \(\phi_1^{-1}(U_2)\).   Call the image of this disk in \(M\) \(D_2\).

Now repeat this process for \(D_2\) in \(\phi^{-1}_2(U_2)\).   If \(\epsilon_1\) and \(\delta\) are small enough, there will be some \(D_3\) within \(\epsilon_2\) of \(q_2\), which is the point where \(\gamma\) leaves \(U_2\).   Then repeat the process for \(D_3\) in \(\phi^{-1}_3(U_3)\), and so on, until we reach an open \(k_i\)-disk \(D_m \subset \psi_j(N_j)\).   Then \(\psi_j^{-1}(D_m)\) will be a \(k_i\)-disk in \(N_j\).   Analogous to \(N_i\), we can define:
\[
S' = \left\{x \in N_j | x_{1}^2 + ... + x_{k_j}^2 < \epsilon_{m+1}^2\right\}
\]\[
V' = \left\{x \in N_j | x_{1}^2 + ... + x_{k_j}^2 = \epsilon_{m+1}^2\right\}
\]\[
W' = \left\{x \in \partial N_j | x_{1}^2 + ... + x_{k_j}^2 \leq \epsilon_{m+1}^2\right\}
\]
And, if \(\delta\) is small enough, then \(p'_j \in S'\), the ascending manifold of \(p'_j\) intersects \(\partial N_j\) in \(W'\), and the descending manifold of \(p'_j\) intersects \(\partial S'\) in \(V'\).   Since the pullback of \(\gamma\) lies within \(\mathcal{A}_{f_1}(p_j)\), which is within \(S'\), then if \(\epsilon_m\) is small enough, \(\psi_j^{-1}(D_m)\) will lie within \(S'\).   Generically, the intersection of \(\psi_j^{-1}(D_m)\) with \(\mathcal{A}_{f_2}(p'_j)\) will be \((k_i - k_j)\)-dimensional.

Therefore, for any choice of flow line \(\gamma\) from \(p_i\) to \(p_j\) for \(f_1\), we can find some \(\delta_\gamma\) such that if \(\delta < \delta_\gamma\), then for any \(f_2 \in \mathcal{F}(f_1, \delta)\) there exists some flow line from \(p'_i\) to \(p'_j\) for \(f_2\) that lies within \(\epsilon\) of \(\gamma\).   Since the set of flow lines emerging from \(p_i\) is diffeomorphic to \(S^{k_i - 1}\), we can find some \(\delta_i\) such that if \(\delta < \delta_i\), then for any flow line \(\gamma\) emerging from \(p_i\) and for any \(f_2 \in \mathcal{F}(f_1, \delta)\) there exists some flow line from \(p'_i\) to \(p'_j\) for \(f_2\) that lies within \(\epsilon\) of \(\gamma\).   And since there are only finitely many critical points of \(f_1\), we can find \(\delta\) such that, if \(f_2 \in \mathcal{F}(f_1, \delta)\), this holds for any critical point.   This concludes the proof.\end{proof}

\textbf{Lemma~\ref{lemma3.2}:} \emph{Let \(f:M \to \mathbb{R}\) be a Morse function, where \(M\) is a closed manifold.   Let \(Q\) be a compact codimension-0 subset of \(M\) that includes no critical points of \(f\).   Then for any \(\epsilon > 0\) there exists \(\delta > 0\) such that for any critical point \(q\) and any points \(p_1, p_2 \in Q\), if:
\[
d(p_1, p_2) < \delta
\textnormal{, and}
\]\[
p_1, p_2\textnormal{ lie in the same component of }\mathcal{A}_f(q)
\]
Then:
\[
d(\mathcal{D}_f(p_1), \mathcal{D}_f(p_2)) < \epsilon
\]
And, for any \(\epsilon > 0\), there exists \(\delta > 0\) such that, for any points \(p_1, p_2 \in Q\), if
\[
d(p_1, p_2) < \delta
\textnormal{, and}
\]\[
p_1, p_2\textnormal{ lie in the same component of }\mathcal{D}_f(q)
\]
Then:
\[
d(\mathcal{A}_f(p_1), \mathcal{A}_f(p_2)) < \epsilon
\]}

\begin{proof}  We begin by proving that, given a \emph{specific} point \(p_1 \in Q\), we can find \(\delta\) depending on \(p_1\) such that if \(d(p_1, p_2) < \delta\), then \(d(\mathcal{D}_f(p_1), \mathcal{D}_f(p_2)) < \epsilon\)..   We will then use compactness to extend the result to all of \(Q\).   We prove the statement for the descending manifold, since \(\mathcal{A}_f(p) = \mathcal{D}_{-f}(p)\).

Assume \(p_1\) is in the ascending manifold of \(q\).   Let \(\psi_0: N_0 \to M\) be a Morse neighborhood of \(q\), so that \(N_0\) is an open \(n\)-ball and the image \(\psi_0(N_0)\) is contained in an open ball of radius \(\epsilon_0 \leq \frac{1}{2}\epsilon\) centered at \(q\).   Let \(x_1, ..., x_n\) be the coordinates of \(N_0 \subset \mathbb{R}^n\).   By definition, \(\psi_0^*f = f(q_1) \pm x_1^2 \pm ... \pm x_n^2\).   Therefore:
\[
\nabla (\psi_0^* f) = \pm 2x_1\partial_{x_1} \pm ... \pm 2x_n\partial_{x_n}
\]
This means that if \(p \in N_0\) is in the ascending manifold of \(q = 0\), its descending manifold in \(N_0\) is simply a straight line starting at \(p\) and ending at 0.

Therefore, for every point in the image of \(\psi_0(N_0)\) that lies in the ascending manifold of \(q\), its descending manifold is contained in \(\psi_0(N_0)\).

Now, let \(\gamma:[0, l] \to M\) be a parameterization of the descending manifold of \(p_1\) by arc length, \(\gamma(0) = p_1, \gamma(l) = q\).   By the Tubular Flow Theorem, for every point \(\gamma(t), 0 \leq t < l\), we can find a local neighborhood \(\tilde{\psi}_t: N'_t \to M\) of \(\gamma(t)\), where \(N'_t\) is diffeomorphic to \((-1, 1)^n\), where the flow lines of \(\nabla f\) are mapped to lines with constant \(x_2, ..., x_n\) coordinates, and where the image \(\tilde{\psi}_t(N'_t)\) is contained in a \(\frac{1}{2}\epsilon\)-neighborhood of the image of \(\gamma\).   Then, with the addition of \(\psi_0(N_0)\), \(\tilde{\psi}_t(N'_t)\) form an infinite cover of the image of \(\gamma\), as shown in figure~\ref{fig:gammaAndItsCovering}.   Since the image of \(\gamma\) is compact, we can find a finite subcover \(N_0, N'_{t_1}, ..., N'_{t_k}, t_1 > t_2 > ... > t_k\) - that is, \(\psi_1(N_1)\) is close to the critical point \(q_1\), while \(\psi_k(N_k)\) contains our starting point \(p_1\).   We write \(N_i = N'_{t_i}\) and let \(\psi_i\) denote the embedding \(\tilde{\psi}_{t_i}:N'_{t_i} \to M\) for succinctness.

\begin{figure}
\begin{center}\includegraphics{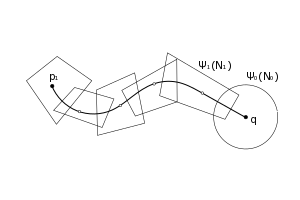}
\caption{\(\gamma\) and its covering by tubular flow neighborhoods and \(\psi_0(N_0)\)\label{fig:gammaAndItsCovering}}\end{center}\end{figure}

Now, consider the preimage \(\psi_1^{-1}(\psi_0(N_0))\).   As shown in Figure~\ref{fig:tubularNeighborhood}, let \(W_1\) denote the projection of \(\psi_1^{-1}(\psi_0(N_0))\) to \((-1,1)^{n-1}\), dropping the \(x_1\) coordinate, and let \(V_1\) denote the ascending manifold of \(\psi_1^{-1}(\psi_0(N_0))\) in \(N_1\).   Since the pullback of the vector field \(\nabla f\) to \(N_1\) is \(\lambda(x)\partial_{x_1}, \lambda(x) > 0\), \(V_1\) will be diffeomorphic to \((-1, 1) \times W_1\):
\[
V_1 = \{(x_1, x_2, ..., x_n) : (x_2, ..., x_n) \in W_1\}
\]
Note that \(V_1\) will contain \(\psi_1^{-1}(\mathcal{D}_f(p_1))\).   Note also that, since \(\psi_0\) is a diffeomorphism onto its image and its domain is open, \(\psi_0(N_0)\) is open.   Therefore \(\psi^{-1}(\psi_0(N_0))\) is open, so \(W_1 \subset (-1,1)^{n-1}\) is open, so \(V_1\) is open.

\begin{figure}
\begin{center}\includegraphics{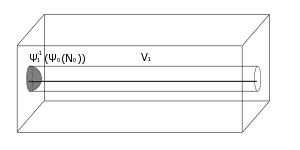}
\caption{\(N_1, W_1, V_1,\) and \(\gamma\)\label{fig:tubularNeighborhood}}\end{center}
\end{figure}

Now consider \(\psi_2^{-1}(\psi_1(V_1))\).   Let \(W_2\) be the projection of \(\psi_2^{-1}(\psi_1(V_1))\) that drops the \(x_1\) coordinate, let \(V_2\) be the ascending manifold of \(\psi_2^{-1}(\psi_1(V_1))\) in \(N_2\), and observe that \(V_2 = (-1, 1) \times W_2\), that \(V_2\) contains \(\psi_2^{-1}(\mathcal{D}_f(p_1)\), and that \(V_2\) is open.   Continue repeating this process until you reach \(V_k\).

Now consider \(\psi_k(V_k)\).   Since \(\psi_k\) is a diffeomorphism onto its image, \(\psi_k(V_k)\) will be open.   Since \(V_k\) contains the \(\psi_k^{-1}(\mathcal{D}_f(p_1))\), \(\psi_k(V_k)\) contains \(p_1\).   Therefore, \(\psi_k(V_k)\) contains an open ball \(B_\delta(p_1)\) of radius \(\delta\) around \(p_1\).

Suppose \(p_2 \in B_\delta(p_1)\), and \(p_2\) lies in the same component of \(\mathcal{D}_f(q)\) as \(p_1\).   Then \(\psi_k^{-1}(p_2) \in V_k\), so its descending manifold in \(N_k\) lies in \(V_k\).   We can continue forward from here, showing that the preimage in \(N_i\) of the descending manifold \(\mathcal{D}_f(p_2)\) lies in \(V_i\) for all \(i\).   Then, once we reach \(N_0\), since \(p_2 \in \mathcal{D}_f(q)\), the descending manifold terminates in a curve to \(q\).   Since \(N_i\) lie in an \(\epsilon\)-neighborhood of \(\mathcal{D}_f(p_1)\), we may conclude that, for any \(p\in Q\), we can find \(\delta(p)\) such that, if:
\[
d(p, p_2) < \delta(p)\mbox{, and}
\]\[
p, p_2\mbox{ are in the same component of }\mathcal{D}_f(q)
\]
Then:
\[
d(\mathcal{D}_f(p), \mathcal{D}_f(p_2)) < \frac{1}{2}\epsilon
\]
Let \(B_{\frac{1}{2}\delta(p)}(p)\) denote an open ball of radius \(\frac{1}{2}\delta(p)\) around \(p\), where \(p \in Q\).   These balls form an infinite open cover of \(Q\).   Since \(Q\) is compact, we can find a finite subcover \(B_{\frac{1}{2}\delta(p'_1)}(p'_1), ..., B_{\frac{1}{2}\delta(p'_m)}(p'_m)\).   Define:
\[
\delta = \frac{1}{2}\min_i \delta(p'_i)
\]
Suppose \(p_1, p_2 \in Q\) and \(d(p_1, p_2) < \delta\).   \(p_1\) must be contained in some ball \(B_{\frac{1}{2}\delta(p'_k)}(p'_k)\).   Then, the distance from \(p_2\) to \(p'_k\) is:
\[
d(p_2, p'_k) \leq d(p_2, p_1) + d(p_1, p'_k) \leq \delta + \frac{1}{2}\delta(p'_k) \leq \delta(p'_k)
\]
So \(p_2 \in B_{\frac{1}{2}\delta(p'_k)}(p'_k)\).   Therefore:
\[
d(\mathcal{D}_f(p_1), \mathcal{D}_f(p'_k)) < \frac{1}{2}\epsilon
\]\[
d(\mathcal{D}_f(p_2), \mathcal{D}_f(p'_k)) < \frac{1}{2}\epsilon
\]
Which in turn implies that:
\[
d(\mathcal{D}_f(p_1), \mathcal{D}_f(p_2)) < \epsilon
\]
This concludes the proof.\end{proof}

\textbf{Lemma~\ref{lemma3.3}:} \emph{Let \(f_1:M \to \mathbb{R}\) be a Morse function, where \(M\) is a compact manifold.   For any choice of \(\epsilon > 0\) and any compact codimension-0 submanifold \(Q \subset M\) that lies in the descending manifold of maxima of \(f_1\) and contains no critical points of \(f_1\), there exists \(\delta\) such that, for any generic choice of \(f_2 \in \mathcal{F}(f_1, \delta)\) and any \(p \in Q\), the ascending manifold of \(p\) for \(\nabla f_2\) will lie within an \(\epsilon\)-neighborhood of the ascending manifold of \(p\) for \(\nabla f_1\).}

\begin{proof}  The proof of this lemma is essentially analogous to the proof of lemma~\ref{lemma3.1}.   Pick any point \(p \in Q\), and let \(\gamma:[0,l] \to M\) be the flow line from \(p\) to its maximum \(q\) for \(-\nabla f_1\) parameterized by arc length, so that \(\gamma(0) = p, \gamma(l) = q\).   Let \(\Psi:U(q) \to M\) be a Morse neighborhood of \(q\) of radius less then \(\epsilon\).   Find a cover of \(\mbox{Im }\gamma - \Psi(U(q))\) by Tubular Flow neighborhoods, \(\phi_1:[-1,1]^n \to U_1, ..., \phi_m:[-1,1]^n \to U_m\).   Restrict \(\delta\) to be small enough that the critical points of any \(f_2 \in \mathcal{F}(f_1, \delta)\) will be in one-to-one correspondence with the critical points of \(f_1\), and the critical point of \(f_2\) corresponding to \(q\) will lie inside \(\Psi(U(q))\).   Note that we require \(f_2\) to always be generic, to ensure that \(p\) will still lie in the descending manifold of the maximum of \(f_2\) corresponding to \(q\).

Consider \(-\nabla(\phi_1^* f_1)\).   By the Tubular Flow Theorem, this will equal:
\[
\nabla (-\phi_1^* f_1) = \lambda_1(x) \partial_{x_1}
\]
For some function \(\lambda_1:[-1,1]^n \to \mathbb{R}, \lambda_1(x) > 0\).   Since \([-1,1]^n\) is compact, we can find \(l_1 > 0\) such that \(\lambda_1(x) > l_1\) for all \(x\).   Let \(q_1\) be the point where \(\phi_1^{-1} \circ \gamma\) leaves \([-1,1]^n\).   We know that, for \(f_2 \in \mathcal{F}(f_1, \delta)\):
\begin{equation}\label{eqn:lemma3-3eq1}
\nabla (\phi_1^* f_2) = \nabla (\phi_1^* f_1) + \nabla (\phi_1^* (f_2 - f_1))
\end{equation}\begin{equation}\label{eqn:lemma3-3eq2}
\left|\nabla (\phi_1^* (f_2 - f_1))\right| \leq \delta ||\phi_1||_{C^1}
\end{equation}

Define \(\gamma_{f_2}:[0,T_1]\to[-1,1]^n\) to be the flow line beginning at \(\phi_1^{-1}(p)\) and ending on \(\partial [-1,1]^n\) for \(-\nabla (\phi_1^* f_2)\), parameterized by \((\gamma_{f_2})' = -\nabla (\phi_1^*f_2)\).   Then:
\begin{equation}\label{eqn:lemma3-3eq3}
-\frac{d\gamma_{f_2}}{dt} = \lambda(\gamma_{f_2}(t))\partial_{x_1} + \nabla (\phi_1^*(f_2 - f_1))
\end{equation}
Let \((\gamma_{f_2})_i\) denote the \(x_i\) coordinate of \(\gamma_{f_2}\).   Then equations~\ref{eqn:lemma3-3eq1},~\ref{eqn:lemma3-3eq2}, and~\ref{eqn:lemma3-3eq3} together imply that:
\[
-\frac{d(\gamma_{f_2})_1}{dt} > l_1 - ||\phi_1||_{C^1}\delta
\]\[
\left|-\frac{d(\gamma_{f_2})_i}{dt}\right| < ||\phi_1||_{C^1}\delta \mbox{ for }i \neq 1
\]
Therefore, since the width of \([-1,1]^n\) is 2, for any \(f_2 \in \mathcal{F}(f_1, \delta)\) we can bound \(T_1\) by:
\[
T_1 < \frac{2}{l_1 - ||\phi_1||_{C^1}\delta}
\]
And, for \(i\neq 1\):
\[
|(\gamma_{f_2}(T_1))_i - (\gamma_{f_2}(0))_i| < ||\phi_1||_{C^1}\delta T_1 < \frac{2||\phi_1||_{C^1}\delta}{l_1 - ||\phi_1||_{C^1}\delta}
\]
Since the pullback of \(\gamma(t)\) to \([-1,1]^n\) has constant \(x_2, ..., x_n\) coordinates for all \(t\), we may conclude that, for any choice of \(\epsilon_1 > 0\), if \(\delta\) is small enough then \(\gamma_{f_2}(T_1)\) will lie within \(\epsilon_1\) of \(q_1\) for any \(f_2 \in \mathcal{F}(f_1, \delta)\) - and, indeed, all points of \(\gamma_{f_2}\) will lie within \(\epsilon_1\) of \(\gamma\) within \(U_1\).   And, if \(\epsilon_1\) is small enough, \(\phi_1(\gamma_{f_2}(T_1))\) will lie inside \(\phi_2([-1,1]^n)\) for any choice of \(f_2 \in \mathcal{F}(f_1, \delta)\).

Now define \(D_1\) to be a closed \(\epsilon_1\)-ball around \(\phi_1(q_1)\) in \(M\), and consider \(\phi_2^{-1}(D_1) \subset [-1,1]^n\).   Define \(q_2\) to be the point in \([-1,1]^n\) where \(\phi_2^{-1} \circ \gamma\) leaves the cube.   For any point \(p_2 \in \phi_2^{-1}(D_1)\), define \(\gamma_{f_2}^{p_2}:[0,T_2] \to [-1,1]^n\) to be the flow line of \(-\nabla \phi_2^*f_2\) starting at \(p_2\) and ending on \(\partial [-1,1]^n\).   By an analogous argument we can again bound:
\[
|(\gamma_{f_2}^{p_2})_i(T_2) - (\gamma_{f_2}^{p_2})_i(0)| < \frac{2||\phi_2||_{C^1}\delta}{l_2 - ||\phi_2||_{C^1}\delta}
\]
Where:
\[
l_2 < \lambda_2(x)\mbox{ for all }x\in [-1,1]^n
\]\[
-\nabla (\phi_2^*f_1) = \lambda_2(x)\partial_{x_1}
\]
Since \(D_1\) is compact, for any choice of \(\epsilon_2 > 0\), for any point \(p_2 \in \phi_2^{-1}(D_1)\), and for any \(f_2 \in \mathcal{F}(f_1, \delta)\), if \(\delta\) is small enough then the flow line of \(-\nabla (\phi_2^*f_2)\) starting at \(p_2\) and ending on \(\partial [-1,1]^n\) will be within \(\epsilon_1 + \epsilon_2\) of \(q\).   Therefore, for any \(\epsilon_1, \epsilon_2 > 0\) and any \(f_2 \in \mathcal{F}(f_1, \delta)\), if \(\delta\) is small enough then the flow line of \(-\nabla f_2\) beginning at \(p\) and ending on \(\phi_2(\partial[-1,1]^n)\) will be within \(\epsilon_1 + \epsilon_2\) of \(\gamma\).

Proceeding in this way, we can ultimately show that, if \(\delta\) is small enough, then for any \(f_2 \in \mathcal{F}(f_1, \delta)\), the flow line of \(-\nabla f_2\) beginning at \(p\) and ending on \(\phi_m(\partial[-1,1]^n)\) will lie within \(\epsilon_1 + ... + \epsilon_m\) of \(\gamma\).   If we choose \(\epsilon_1+... + \epsilon_m\) to be small enough, then the flow line will end on \(\partial( \Psi(U(q)))\), where \(\Psi:U(q) \to M\) is the Morse neighborhood of the maximum \(q\).

Now, consider \(U(q)\).   The pullback of \(f_1\) to \(U(q)\) is:
\[
\Psi^*f_1 = f_1(q) - x_1^2 - ... - x_n^2
\]
Therefore, if we define \(R = x_1\partial_{x_1} + ... + \partial_{x_n}\), then in the ambient metric of \(U(q)\):
\[
(R \cdot \nabla(\Psi^*f_1))|_{\partial U(q)} < 0
\]
And, if we choose \(\delta\) small enough, we can therefore ensure that:
\[
(R \cdot \nabla(\Psi^*f_2))|_{\partial U(q)} < 0
\]
Therefore, if \(\delta\) is small enough, the ascending manifold of any point on \(\partial U(q)\) for any \(\nabla f_2, f_2 \in \mathcal{F}(f_1, \delta)\), will lie inside \(U(q)\).   And since \(U(q)\) has radius less then \(\epsilon\), the image of this ascending manifold in \(M\) must lie within \(\epsilon\) of \(\gamma\).

Therefore, for any \(p \in Q\) we can find \(\delta(p)\) such that, for any generic \(f_2 \in \mathcal{F}(f_1, \delta(p))\), the ascending manifold of \(p\) for \(f_2\) lies within \(\epsilon\) of the ascending manifold of \(p\) for \(f_1\).   And, since \(Q\) is compact, we can therefore find \(\delta\) such that \(\delta(p) < \delta\) for all \(p\).   This concludes the proof.\end{proof}

\textbf{Lemma~\ref{lemma3.4}:}\emph{ Let \(\Lambda \subset J^1(M)\) be a front-generic Legendrian submanifold whose front projection is defined by sheet functions \(f_1:U_1 \to \mathbb{R}, ..., f_m:U_m \to \mathbb{R}\), where \(U_1, ..., U_m \subset M\).   Let \(\hat{\Lambda} \subset J^1(M)\) be a second front-generic Legendrian submanifold whose front projection is defined by sheet functions \(\hat{f}_1:U_1 \to \mathbb{R}, ..., \hat{f}_m:U_m \to \mathbb{R}\).   Then, for any choice of \(\epsilon > 0\), there exists \(\delta > 0\) such that, if:
\[
\left|\left| f_i - \hat{f}_i\right|\right|_{C^1} < \delta\textnormal{ for all }i
\]
Then, there exists a bijection between the rigid gradient flow trees \(\Lambda\) with one positive Reeb chord and the rigid gradient flow trees of \(\hat{\Lambda}\) with one positive Reeb chord, such that a tree \(\Gamma\) of \(\Lambda\) shares the same Reeb chords with its corresponding tree \(\hat{\Gamma}\) of \(\hat{\Lambda}\), and such that the projection of \(\hat{\Gamma}\) to \(M\) lies within an \(\epsilon\)-neighborhood of the projection of \(\Gamma\) to \(M\).}

\begin{proof}  Restrict \(\delta\) to be small enough that the Reeb chords of \(\hat{\Lambda}\) are in one-to-one correspondence with the Reeb chords \(\Lambda\).   Consider some rigid gradient flow tree \(\Gamma\) of \(\Lambda\), which has only one positive Reeb chord and whose vertices are only Reeb chords and \(Y^0\) vertices.   Begin by considering the case where \(\Gamma\) has no 2-valent vertices at Reeb chords.

\(\Gamma\) is equipped with a natural direction, given by the direction of the gradient flow functions.   Label each vertex of \(\Gamma\) with the number of edges of the longest directed path from that vertex to a negative Reeb chord - so, e.g., Reeb chords would be labeled with 0.   Let \(V_i\) denote the set of vertices of \(\Gamma\) which are labeled with \(i\).   For a \(Y^0\) vertex \(v \in V_i\), define \(w_{v}^1, w_{v}^2\) to be the pair of vertices in \(\Gamma\) such that there is an edge in \(\Gamma\) connecting \(v\) to them, directed from \(v\) to \(w_{v}^1, w_{v}^2\).   Define \(f_{w_{v}^j}, g_{w_{v}^j}\) to be the sheet height functions such that the edge from \(v\) to \(w_{v}^j\) flows on the gradient flow \(-\nabla (f_{w_{v}^j} - g_{w_{v}^j})\).   Let \(\hat{f}_{w_v^j}, \hat{g}_{w_v^j}\) denote the corresponding sheet height functions of \(\hat{\Lambda}\).

We will define sets \(Y(v), Z(v)\) iteratively.   We begin by defining, for any \(v \in V_0\), \(Y(v) = \{v\}\).   Then, for any \(v\) - not just in \(V_0\) - we define:
\[
Z(v) = \mathcal{A}_{f_{v} - g_{v}}(Y(v))
\]
And for \(v \notin V_0\), we define:
\[
Y(v) = Z(w_{v}^1) \cap Z(w_{v}^2)
\]

Now, for any Reeb chord of \(\Lambda\) designated by \(v \in V_0\), there is a corresponding Reeb chord of \(\hat{\Lambda}\).   We denote this Reeb chord by \(\hat{v}\), and we define:
\[
\widehat{Y}(v) = \{\hat{v}\}
\]
Then, for \(v \notin V_0\), we define:
\[
\widehat{Z}(\mathcal{A}_{\hat{f}_{v} - \hat{g}_{v}}(\widehat{Y}(v))
\]\[
\widehat{Y}(v) = \widehat{Z}(w_{v}^1) \cap \widehat{Z}(w_{v}^2)
\]
Now, consider any \(v\in V_0\).   By Lemma~\ref{lemma3.1}, if \(\delta > 0\) is small enough, then \(\widehat{Z}(v)\) lies within an \(\epsilon_0\)-neighborhood of \(Z(v)\) for all \(v \in V_0\), for any choice of \(\epsilon_0 < \epsilon\).

Then, let \(N_{\epsilon_0}(Z(v))\) denote an \(\epsilon_0\)-neighborhood of \(Z(v)\).   For \(v \in V_1\), consider \(N_{\epsilon_0}(Z(w_{v}^1)) \cap N_{\epsilon_0}(Z(w_{v}^2))\).   If \(\epsilon_0\) is small enough, then for any choice of \(\delta_1 > 0\), every point in \((N_{\epsilon_0}(Z(w_{v}^1)) \cap N_{\epsilon_0}(Z(w_{v}^2))\) will lie within a \(\delta_1\)-neighborhood of \(Z(w_{v}^1) \cap Z(w_{v}^2) = Y(v)\).   Since \(\widehat{Z}(w_{v}^1) \subset N_{\epsilon_0}(Z(w_{v}^1)),\widehat{Z}(w_{v}^1) \subset N_{\epsilon_0}(Z(w_{v}^1))\), we know \(\widehat{Y}(v)\) is non-empty, and \(\widehat{Y}(v) \subset N_{\epsilon_0}(Z(w_{v}^1)) \cap N_{\epsilon_0}(Z(w_{v}^2))\).   From this we conclude that if \(\epsilon_0\) is small enough, then \(\widehat{Y}(v)\) lies within a \(\delta_1\)-neighborhood of \(Y(v)\).

Now consider \(\widehat{Z}(v)\) for \(v \in V_1\).   We know that \(Y(v)\) must be at least codimension-1, because if they are codimension-0, then \(\Gamma\) is not rigid; and since \(\delta\) is small enough that our Reeb chords correspond, this means \(\widehat{Y}(v)\) must also be at least codimension-1.   Therefore, we can pick \(Q \subset M\) containing \(Y(v)\) but not containing any Reeb chords of \(f_{v_i} - g_{v_i}\).   Therefore, by Lemma~\ref{lemma3.3}, if \(\delta_1\) is small enough, then ascending manifold of \(\widehat{Y}(v)\) for \(-\nabla (f_{v} - g_{v})\) will lie within \(\frac{1}{2}\epsilon_1\) of \(\widehat{Z}(v)\).   And by Lemma~\ref{lemma3.2}, if \(\delta_1\) is small enough, the ascending manifold of \(\widehat{Y}(v)\) for \(-\nabla (f_{v} - g_{v})\) will lie within \(\frac{1}{2}\epsilon_1\) of \(Z(v)\).   Therefore, if \(\delta_1\) is small enough, then \(\widehat{Z}(v)\) will lie within \(\epsilon_1\) of \(Z(v)\).

Now, let \(v \in V_2\), and consider \(N_{\epsilon_1}(Z(w_{v}^1)) \cap N_{\epsilon_1}(Z(w_{v}^2))\).   Note that \(w_{v}^1, w_{v}^2 \in V_1 \cup V_0\).   If \(\epsilon_1\) is small enough, then for any choice of \(\delta_2 > 0\), every point in \((N_{\epsilon_0}(Z(w_{v}^1)) \cap N_{\epsilon_0}(Z(w_{v}^2))\) will lie within \(\delta_2\) of \(Z(w_{v}^1) \cap Z(w_{v}^2) = Y(v)\).   Since \(\widehat{Y}(v) \subset N_{\epsilon_1}(Z(w_{v}^1)) \cap N_{\epsilon_1}(Z(w_{v}^2))\), we conclude that if \(\epsilon_1\) is small enough, then \(\widehat{Y}(v)\) is non-empty and lies within a \(\delta_2\)-neighborhood of \(Y(v_i)\).

We keep repeating this process until we reach \(Z(v_a), \widehat{Z}(v_a)\), where \(a\) is the positive Reeb chord of \(\Gamma\), and \(v_a\) is the \(Y^0\) vertex connected by an edge to \(a\).   By this process, we show that \(\widehat{Z}(v_a)\) lies within an \(\epsilon_m\)-neighborhood of \(Z(v_a)\).   Let \(\hat{a}\) denote the Reeb chord of \(\hat{\Lambda}\) corresponding to \(a\).

Now define \(X(a), \widehat{X}(a)\) to be:
\[
X(a) = \mathcal{D}_{-(f_{v_a} - g_{v_a})}(a) \cap Z(v_a)
\]\[
\widehat{X}(a)= \mathcal{D}_{-(\hat{f}_{v_a} - \hat{g}_{v_a})}(\hat{a}) \cap \widehat{Z}(v_a)
\]
Since \(\Gamma\) is rigid, \(X(a), \widehat{X}(a)\) must be one-dimensional, and \(X(a)\) must be equal to the image in the base space of the edge of \(\Gamma\) emerging from \(a\).   By Lemma~\ref{lemma3.1}, if \(\delta\) is small enough, then \(\mathcal{D}_{-(f_{v_a} - g_{v_a})}(a)\) will lie within \(\epsilon_{m+1}\) of \(\mathcal{D}_{-(\hat{f}_{v_a} - \hat{g}_{v_a})}(\hat{a})\).   And we already know that if \(\delta, \epsilon_1,..., \epsilon_m\) are small enough, then \(Z(v_a)\) will lie within \(\epsilon_{m+1}\) of \(\widehat{Z}(v_a)\).   Therefore, if \(\epsilon_{m+1}\) is small enough, then \(X(a)\) will lie within \(\delta_m\) of \(\widehat{X}(a)\).   And, since \(X(a), \widehat{X}(a)\) are one-dimensional, we know that \(\partial X(a) = \{a, v_a\}\) and \(\partial \widehat{X}(a) = \{\hat{a}, \hat{v}_a\}\).

Now define \(X_i(v_a), \widehat{X}_i(v_a)\) to be:
\[
X_i(v_a) = \mathcal{D}_{-(f_{w_{v_a}^i} - g_{w_{v_a}^i})}(v_a) \cap Z(w_{v_a}^i)
\]\[
\widehat{X}_i(v_a) = \mathcal{D}_{-(\hat{f}_{w_{v_a}^i} - \hat{g}_{w_{v_a}^i})}(\hat{v}_a) \cap \widehat{Z}(w_{v_a}^i)
\]

Once again, since \(\Gamma\) is rigid, we can conclude that \(X_i(v_a), \widehat{X}_i(v_a)\) are one-dimensional.   We know that \(Z(w_{v_a}^i)\) lies within \(\epsilon_{m-1}\) of \(\widehat{Z}(w_{v_a}^i)\).   And, by Lemma~\ref{lemma3.2}, if \(\delta, \delta_m\) are small enough, \(\mathcal{D}_{-(f_{w_{v_a}^i} - g_{w_{v_a}^i})}(v_a)\) will lie within \(\epsilon_{m+2}\) of \(\mathcal{D}_{-(\hat{f}_{w_{v_a}^i} - \hat{g}_{w_{v_a}^i})}(\hat{v}_a)\).   Therefore, if \(\delta, \epsilon_1,..., \epsilon_m, \epsilon_{m+1}\) are small enough, \(\widehat{X}_i(v_a)\) will lie within \(\delta_{m+1}\) of \(X_i(v_a)\).   Repeat this process until we reach the Reeb chords.   The trace of the \(\widehat{X}_i(v)\) form a unique rigid gradient flow tree \(\hat{\Gamma}\) of \(\hat{\Lambda}\) with Reeb chords corresponding to the Reeb chords of \(\Gamma\), and lying within an \(\epsilon\)-neighborhood of \(\Gamma\).

Now consider the case where \(\Gamma\) has a two-valent Reeb chord.   We can break \(\Gamma\) into sub-trees \(\Gamma_1, ..., \Gamma_k\) at every two-valent Reeb chord.   We can then repeat the process.

Therefore, for every rigid \(\Gamma\) there exists some \(\delta_\Gamma\) such that if \(||f_i - \hat{f}_i||_{C^1} < \delta\) for all \(i\), then there exists a unique rigid gradient flow tree \(\hat{\Gamma}\) of \(\hat{\Lambda}\) that lies within an \(\epsilon\)-neighborhood of \(\Gamma\).   Since there are only finitely-many rigid flow trees, we can therefore find \(\delta\) such that, if \(||f_i - \hat{f}_i||_{C^1} < \delta\) for all \(i\), then \(\hat{\Gamma}\) lies within an \(\epsilon\)-neighborhood of \(\Gamma\) for all \(\Gamma\).   This completes the proof.\end{proof}

\end{document}